\newtheoremstyle{fancy}{}{}{\itshape}{}{\textsc\bgroup}{.\egroup}{ }{}
\newtheoremstyle{fancy2}{}{}{\rm}{}{\textsc\bgroup}{.\egroup}{ }{}
\newcounter{counteroman}
\theoremstyle{fancy}
\newcounter{intro}
\numberwithin{equation}{section}
\newtheorem{cor}[equation]{Corollary}
\newtheorem{lem}[equation]{Lemma}
\newtheorem{prop}[equation]{Proposition}
\newtheorem{thm}[equation]{Theorem}
\newtheorem{named}[equation]{\name}
\newcommand{\name}{Proof of}
\theoremstyle{fancy2}
\newtheorem{dfn}[equation]{Definition}
\newtheorem{exa}[equation]{Example}
\newtheorem{exas}[equation]{Examples}
\newtheorem{rem}[equation]{Remark}
\newtheorem{rems}[equation]{Remarks}
\newcommand{\cref}[1]{Corollary~\ref{#1}}
\newcommand{\lref}[1]{Lemma~\ref{#1}}
\newcommand{\pref}[1]{Proposition~\ref{#1}}
\newcommand{\tref}[1]{Theorem~\ref{#1}}
\def\C{\mathbb C}       
\def\R{\mathbb R}       \def\Q{\mathbb Q}
\def\Z{\mathbb{Z}}  
\def\la{\langle} \def\ra{\rangle}
\newcommand{\ad}{\mathrm{ad}}
\newcommand{\Ad}{\mathrm{Ad}}
\newcommand{\cl}{\operatorname{\C{\rm l}}}
\newcommand{\coker}{\operatorname{coker}}
\newcommand{\diam}{\operatorname{diam}}
\renewcommand{\div}{\operatorname{div}}
\newcommand{\dom}{\operatorname{dom}}
\newcommand{\ext}{\operatorname{ext}}
\newcommand{\Gl}{\operatorname{Gl}}
\newcommand{\id}{\operatorname{id}}
\newcommand{\ind}{\operatorname{ind}}
\newcommand{\grad}{\operatorname{grad}}
\newcommand{\im}{\operatorname{im}}
\renewcommand{\Im}{\operatorname{Im}}
\newcommand{\Lip}{\operatorname{Lip}}
\newcommand{\loc}{\operatorname{loc}}
\renewcommand{\O}{\operatorname{O}}
\renewcommand{\Re}{\operatorname{Re}}
\newcommand{\Ric}{\operatorname{Ric}}
\newcommand{\Se}{\mathrm{S}}
\newcommand{\sign}{\operatorname{sign}}
\newcommand{\SO}{\operatorname{SO}}
\newcommand{\spec}{\operatorname{spec}}
\newcommand{\Spin}{\operatorname{Spin}}
\newcommand{\SU}{\operatorname{SU}}
\newcommand{\tr}{\operatorname{tr}}
\newcommand{\U}{\operatorname{U}}
\newcommand{\vol}{\operatorname{vol}}
\begin{document}


\title[Index Theorems on Manifolds with Straight Ends.]
{Index Theorems on Manifolds with Straight Ends.}
\author{Werner Ballmann, Jochen Br\"uning, \and Gilles Carron}
\address{
Mathematisches Institut\\
Universi\-t\"at Bonn\\
Endenicher Allee 60, 53115 Bonn
and Max-Planck-Institut f\"ur Mathematik,
Vivatsgasse 7, 53111 Bonn, Deutschland\\}
\email{hwbllmnn\@@math.uni-bonn.de}
\address{
Institut f\"ur Mathematik\\
Humboldt--Universit\"at\\
Rudower Chaussee 5, 12489 Berlin, Germany\\}
\email{bruening\@@mathematik.hu-berlin.de}
\address{
D\'epartement de Math\'ematiques\\
Universit\'e de Nantes\\
2 rue de la Houssini\'ere, BP 92208\\
44322 Nantes Cedex 03, France\\}
\email{Gilles.Carron\@@math.univ-nantes.fr}
\date{\today}

\subjclass{53C20, 58J20}
\keywords{Dirac operator, Fredholm type, non-parabolic, index}

\begin{abstract}
We study Fredholm properties and index formulas for Dirac operators
over complete Riemannian manifolds with straight ends.
An important class of examples of such manifolds are complete
Riemannian manifolds with pinched negative sectional curvature and finite volume.
\end{abstract}

\maketitle

\newpage
\tableofcontents

\newpage
\section{Introduction}
\label{secint}

The celebrated Atiyah-Singer index theorem establishes a connection
between analysis, geometry, and topology of closed manifolds.
It contains the Chern-Gauss-Bonnet formula, Hirzebruch's signature theorem,
and the Hirze\-bruch-Riemann-Roch formula as special cases.
Later, Atiyah, Patodi, and Singer found a generalization of the index theorem
for certain first order differential operators
on compact manifolds with boundary \cite{APS}.
In this article,
they also discuss index theory for their class of operators
on non-compact manifolds with cylindrical ends,
and our work builds on that part of their work.

It is obvious that the structure of the underlying manifold
and of the differential operator close to infinity
plays an important role in this theory.
Without restrictions on these data, not much can be expected.

Motivated by previous work of Barbasch-Moscovici \cite{BM},
Lott \cite{Lo1,Lo2}, and the first two authors \cite{BB1,BB2},
our main objective are Dirac operators on complete manifolds
with pinched negative sectional curvature and finite volume.
The structure of the ends of such manifolds has been determined
by Patrick Eberlein
and is related to the existence of so-called strictly invariant horospheres,
see \cite{Eb}.

To set the stage, let $M$ be a complete and connected
Riemannian manifold of dimension $m$
with Levi-Civita connection $\nabla$ and curvature tensor $R$.
Let $E\to M$ be a complex Dirac bundle\footnote{in the sense
of Gromov and Lawson, compare Section \ref{susdb}}
with Hermitian connection $\nabla^E$, curvature tensor $R^E$,
and Dirac operator $D$.
For convenience,
we assume throughout that $R$ and $R^E$ are uniformly bounded,
\begin{equation}
  | R(X,Y)Z | \le C_R | X | | Y | | Z | , \quad
  | R^E(X,Y)\sigma | \le C_R^E | X | | Y | | \sigma | ,
  \label{boundrre}
\end{equation}
for all vector fields $X$, $Y$, $Z$ on $M$ and sections $\sigma$ of $E$.
The bound on $R$ is equivalent to assuming a uniform bound
on the modulus of the sectional curvature $K_M$ of $M$.

Recall that $D$ is an elliptic differential operator of first order.
Consider $D$ as an unbounded operator on $L^2(M,E)$
with domain $C^\infty_c(M,E)$,
where $L^2(M,E)$ denotes the space of square-integrable sections of $E$
and $C^\infty_c(M,E)$ the space  of smooth sections of $E$
with compact support,
and note that $D$ is symmetric on the latter.
The closure of $D$ has domain $H^1(M,E)$,
by \eqref{boundrre} and the general Bochner identity,
see \eqref{lic1} and \eqref{lic2}.
Furthermore, $D:H^1(M,E)\to L^2(M,E)$ is self-adjoint,
see \cite{Wo} or Theorem II.5.7 in \cite{LM}.

We may ask under which conditions $D:H^1(M,E)\to L^2(M,E)$
is a Fredholm operator.
By self-adjointness, this is the case if and only if $0$
is not in the essential spectrum of $D$;
according to a result of Nicolae Anghel,
this holds if and only if there is a compact subset $L$
in $M$ and a constant $C=C(L)$ such that
\begin{equation}
  \| \sigma \|_{L^2(M,E)} \le C \| D\sigma \|_{L^2(M,E)} ,
  \label{infred}
\end{equation}
for all smooth sections $\sigma$ of $E$
with compact support in $M\setminus L$, see \cite{An}.
If such an estimate holds, we say that $D$ is of {\em Fredholm type}.

Better adapted to our investigations and more flexible
is a somewhat weaker notion,
introduced by the third named author in \cite{Ca1}:

\begin{dfn}\label{nonpai0}
We say that $D$ is {\em non-parabolic at infinity}
if there is a compact subset $L$ in $M$ such that,
for any relatively compact open subset $K$ of $M$,
there is a constant $C=C(K,L)$ such that
\begin{equation}
  \| \sigma \|_{L^2(K,E)} \le C \| D\sigma \|_{L^2(M,E)} ,
  \label{nonpai}
\end{equation}
for all smooth sections $\sigma$ of $E$ with compact support in $M\setminus L$.
\end{dfn}

It follows from \cite[Th\'eor\`eme 1.2]{Ca1} that $D$ is non-parabolic at infinity
if and only if there is a Hilbert space $W$ of sections of $E$
which are locally $H^1$, such that $H^1(M,E)$ is a dense subspace of $W$,
such that the inclusions
\begin{equation}
  H^1(M,E) \subseteq W \subseteq H^1_{\loc}(M,E)
  \label{introw}
\end{equation}
are continuous,
and such that $D$ extends to a Fredholm operator
\begin{equation}
  D_{\ext}: W \to L^2(M,E)\,.
  \label{indext}
\end{equation}
It then follows that $\im D_{\ext}$ is equal
to the closure of $\im D$ in $L^2(M,E)$
and that $H^1(M,E) = W$ if and only if $D$
is of Fredholm type\footnote{Compare Section \ref{subdec}.}.

If $D$ is non-parabolic at infinity, with associated Hilbert space $W$,
then elements of $\ker D_{\ext}$ will be called {\em extended solutions} of $D$.
In the case of cylindrical ends,
they correpond exactly to the extended solutions in \cite{APS}.
By the density of $C^\infty_c(M,E)$ in $W$,
the orthogonal complement of the image of $D_{\ext}$ in $L^2(M,E)$
is equal to the space of $L^2$-solutions of $D$.
Since $D_{\ext}$ is a Fredholm operator,
the spaces of extended solutions and $L^2$-solutions of $D$ are of finite dimension,
and their difference, $\ind D_{\ext}$, is called the {\em extended index} of $D$.
As a consequence of one of our main results concerning non-parabolicity,
\tref{mainth2} below,  we obtain the following assertion:

\begin{thm}\label{mainth}
If the sectional curvature of $M$ is negatively pinched
and the volume of $M$ is finite, then $D$ is non-parabolic at infinity.
In particular, the space of $L^2$-solutions of $D$ is finite-dimensional.
\end{thm}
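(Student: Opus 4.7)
The plan is to derive \tref{mainth} as a direct consequence of the structural result \tref{mainth2}, by verifying that a complete finite-volume manifold $M$ with pinched negative sectional curvature has ends of the type to which \tref{mainth2} applies — namely, straight ends in the sense developed in the paper. Once non-parabolicity at infinity is established, the claim about the finite-dimensionality of the space of $L^2$-solutions is automatic: such solutions lie in $\ker D_{\ext}$, and $D_{\ext}$ is Fredholm.

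The first step is to invoke Eberlein's description \cite{Eb} of the ends of $M$. Under the pinched-curvature and finite-volume hypotheses, the thick–thin decomposition yields a compact core $M_0$ whose complement is a finite disjoint union of cusp ends. Each such end carries a smooth, inward-pointing Busemann function $b$ with $|\nabla b| = 1$ whose level sets are strictly invariant horospheres, and the end is diffeomorphic to $N \times [0, \infty)$ for some closed infra-nilmanifold $N$. In these coordinates the metric has the form $dr^2 + g_r$, where the cross-sectional metrics $g_r$ on $N$ collapse exponentially as $r \to \infty$.

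The second step is to check that each end meets the axioms of a straight end. The features one expects such axioms to codify — a smooth product decomposition of the end, a distinguished unit-gradient coordinate function, uniform two-sided control on the second fundamental forms of the level sets (obtained by Riccati comparison under pinched curvature), and a uniform bound on the bundle curvature supplied by \eqref{boundrre} — are all inherited from Eberlein's picture together with the standing curvature assumptions on $M$ and on the Dirac bundle. Once straightness is verified on each end, \tref{mainth2} applies and delivers non-parabolicity at infinity.

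I expect the main obstacle to lie in this second step: converting Eberlein's coarse geometric description into the quantitative conditions demanded by the notion of a straight end. Because the cross-sections of the ends are infra-nilmanifolds rather than flat tori, the family $g_r$ collapses anisotropically — shrinking exponentially along certain horospherical directions while possibly decaying at slower rates in others — so one must maintain uniform quantitative control on the transverse geometry of the horospherical foliation and on the restriction of $\nabla^E$ to the level sets of $b$. Once straightness is secured on each end, the conclusion is mechanical, and the finite-dimensionality statement follows from the Fredholm property of $D_{\ext}$.
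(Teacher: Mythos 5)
Your overall strategy is exactly the paper's: by Eberlein's structure theory the ends of a finite-volume manifold with $-b^2\le K_M\le -a^2<0$ are cusps whose cross-sectional metrics contract exponentially along the Busemann flow, so they are cuspidal in the sense of Definition~\ref{defthin} (with $c=2a$, $C=1$), and \tref{mainth2} then gives non-parabolicity at infinity; finite-dimensionality of the $L^2$-kernel follows since $\ker D_{\ext}\supseteq\ker D_{\max}$ and $D_{\ext}$ is Fredholm.

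One claim in your second step is wrong and touches the main technical point of the whole setup: the Busemann functions here are \emph{not} smooth. They are only $C^2$ (see \cite{HI} or Proposition IV.3.2 in \cite{Ba}), and better regularity genuinely fails for nonpositively curved manifolds \cite{BBB}. Consequently the gradient flow gives only a $C^1$ product structure $F:(-r,\infty)\times N\to U$, the metrics $g_t$ are only $C^1$ in $(t,x)$ (though twice continuously differentiable in $t$), and the induced boundary operators $A_t$ have merely Lipschitz dependence on $t$. This is precisely why Definition~\ref{straight} is formulated at $C^2$ regularity and why the analysis is routed through Dirac systems with Lipschitz coefficients from \cite{disy} (Sections \ref{secdifu} and \ref{subdisy}). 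Your conclusion survives because \tref{mainth2} only needs cuspidal (hence $C^2$-straight) ends, but an argument that presupposes a smooth product decomposition would be unavailable here, and several of the finer results in the paper (e.g.\ those invoking the APS formula termwise) do require the additional hypothesis that the ends are smooth — a hypothesis that holds for homogeneous cusps but not for the general finite-volume pinched-curvature case.
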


Under a more general assumption on the geometry of the ends of $M$,
similar to Condition (1) in \tref{mainth1} below,
John Lott showed that the space of square-integrable harmonic
differential forms is finite dimensional, see Theorem 1 in \cite{Lo1}.

For manifolds with ends as in the case of finite volume manifolds
of pinched negative sectional curvature,
Lott also discusses the essential spectrum of $(d+d^*)^2$
on the space of differential forms, see Theorem 2 in \cite{Lo1}.
Under the same assumption on the geometry of the ends
and for Dirac bundles as in Condition (2) of \tref{mainth1} below,
he investigates the essential spectrum of the associated Dirac operator,
see Theorem 5 in \cite{Lo2}.
Similar results have been obtained in \cite{BB2}.
In this article, we do not deal with the essential spectrum,
but would like to mention that our investigations lead to extensions
of these results.

It is clear from the definition of non-parabolicity
that it only depends on the structure of $D$ at infinity\footnote{
The same applies to the essential spectrum of $D$.}.
To state our results in that context,
we need to introduce a further notion.

\begin{dfn}\label{straight}
We say that the ends of $M$ are {\em straight} if $M$ can be decomposed
into a compact part $M_0$ and an unbounded part $U_0$
with common boundary $N$
such that there is an open set $U\supseteq U_0$
and a $C^2$ distance function\footnote{Compare Section \ref{secdifu}.}
$f:U\to\R$ whose gradient flow establishes a $C^1$ diffeomorphism
\begin{equation}
  F: (-r,\infty)\times N \to U ,
\end{equation}
where $r>0$, $U_0=f^{-1}([0,\infty))$, $N = f^{-1}(0)$, and $f(F(t,x))=t$.
If $f$ is smooth, then we say that the ends of $M$ are {\em smooth}.
\end{dfn}

If the ends of $M$ are straight,
then $M$ is diffeomorphic to the interior of the compact manifold $M_0$,
and the connected components of $N$ correspond to the different ends of $M$.
Furthermore, the induced Riemannian metric on $\R_+\times N$ is of the form
\begin{equation}
  dt^2 + g_t ,
  \label{straight2}
\end{equation}
where $(g_t)_{t\ge0}$ is a family of Riemannian metrics on $N$.
The regularity of this family is a technical problem
which we address in Section \ref{secdifu}
and which motivated our previous work \cite{disy}
on Dirac systems with Lipschitz coefficients.
{\em Cylindrical ends} as mentioned above
correspond to the case of Riemannian products,
that is, $f$ is smooth and $g_t=g_0$, for all $t\in(-r,\infty)$.

If the ends of $M$ are straight,
we fix the setup as in Definition \ref{straight},
identify $(-r,\infty)\times N\simeq U$ via $F$,
and call the hypersurfaces $N_t=f^{-1}(t)$,
endowed with the Riemannian metric $g_t$, the {\em cross sections} of $U$.
For convenience, we will always assume in this situation
that the Weingarten operators $W=W_t$ of the cross sections are uniformly bounded,
\begin{equation}
  | WX | \le C_W | X | ,
  \label{boundw}
\end{equation}
for all vector fields $X$ on $U$.

\begin{dfn}\label{defthin}
Let $\varepsilon>0$.
We say that the ends of $M$ are $\varepsilon$-{\em thin}
if they are straight and the connected components of the cross sections $N_t$
have diameter at most $\varepsilon$, for all sufficiently large $t$.
We say that the ends of $M$ are {\em cuspidal}
if they are straight and there are positive constants $c$ and $C$
such that the metrics $g_t$ as in \eqref{straight2}
satisfy $g_t\le Ce^{c(s-t)}g_s$, for all sufficiently large $s<t$.
\end{dfn}

For example,
if $M$ has finite volume and pinched negative sectional curvature,
say $-b^2\le K_M\le-a^2<0$,
then the ends of $M$ are cuspidal with $c=2a$ and $C=1$.
We note that, in this example, the distance function arises from
Busemann functions on the universal covering space of $M$
and that such Busemann functions are $C^2$,
see \cite{HI} or Proposition IV.3.2 in \cite{Ba}.
Better regularity is, in general, not expected
and, at least for non-positively curved manifolds,
better regularity does not hold, see \cite{BBB}.

\begin{thm}\label{mainth1}
There is a positive constant $\varepsilon=\varepsilon(m,C_R,C_W)$
such that $D$ is non-parabolic at infinity
if the following two conditions hold:
\begin{enumerate}
\item
All ends of $M$ are $\varepsilon$-thin,
for all sufficiently large $t$.
\item
$E$ is a Hermitian vector bundle associated to $M$
via a unitary representation of $\O(m)$, $\SO(m)$ (if $M$ is oriented),
or $\Spin(m)$ (with respect to a spin structure of $M$), respectively.
\end{enumerate}
\end{thm}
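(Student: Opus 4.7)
The plan is to verify \eqref{nonpai} directly on the end. Non-parabolicity depends only on the structure at infinity, so I may choose $L$ to be a compact set containing $M_0$ large enough that $M\setminus L \subset (T_0,\infty)\times N$, where $T_0$ is so large that every connected component of each cross section $N_t$, $t\ge T_0$, has diameter at most $\varepsilon$. It then suffices to establish the inequality for $\sigma\in C^\infty_c((T_0,\infty)\times N,E)$. By hypothesis~(2), $E$ is a naturally associated bundle, so $E|_{N_t}$ inherits a Dirac bundle structure in its own right, and Clifford multiplication by $\partial_t=\grad f$ provides a pointwise orthogonal endomorphism anti-commuting with the tangential Clifford action. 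This yields a splitting
\begin{equation*}
  D \;=\; c(\partial_t)\bigl(\partial_t + B_t + V_t\bigr)
\end{equation*}
on the end, where $B_t$ is the natural Dirac operator on $(N_t,g_t)$ acting on $E|_{N_t}$, and $V_t$ is a zero-order endomorphism bounded in operator norm by a constant $c_1=c_1(m,C_W)$ by \eqref{boundw}.

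The decisive step is a spectral gap for $B_t$ off a uniformly finite-rank ``soft'' subspace. The tangential Bochner--Lichnerowicz identity gives $B_t^{2} = (\nabla^{N_t})^{*}\nabla^{N_t} + \mathcal{R}_t$ with $\|\mathcal{R}_t\| \le c_2 = c_2(m,C_R,C_R^E,C_W)$, thanks to \eqref{boundrre}, \eqref{boundw}, and the Gauss equation. Coupling this with a Poincar\'e inequality on each connected component of $N_t$, whose diameter is at most $\varepsilon$ by hypothesis~(1), I expect
\begin{equation*}
  \|B_t\phi\|_{L^2(N_t)}^2 \;\ge\; \bigl(c_3\varepsilon^{-2} - c_2\bigr)\,\|\phi-\pi_t\phi\|_{L^2(N_t)}^2,
\end{equation*}
where $\pi_t$ is the $L^2$-orthogonal projection onto a finite-rank subspace $\mathcal{H}_t\subset\Gamma(N_t,E|_{N_t})$ of near-kernel modes. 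Hypothesis~(2) is essential here: it identifies $\mathcal{H}_t$ with a subspace of a fixed finite-dimensional representation space, bounding its rank uniformly in $t$. Choosing $\varepsilon$ small enough that $c_3\varepsilon^{-2} - c_2 \ge 1$ then produces a uniform spectral gap for $B_t$ on $\mathcal{H}_t^\perp$.

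With the gap in hand, I would split $\sigma = \sigma_\parallel + \sigma_\perp$ with $\sigma_\parallel(t)=\pi_t\sigma(t,\cdot)$. Expanding $\|D\sigma\|_{L^2(U)}^2$ using the cylindrical splitting, and absorbing cross terms via the gap and the bound on $V_t$, the perpendicular part yields the global estimate $\|\sigma_\perp\|_{L^2(U,E)} \le C\|D\sigma\|_{L^2(M,E)}$, which is strictly stronger than required. For $\sigma_\parallel$, viewed as a compactly supported section of the finite-rank bundle $\mathcal{H} \to (T_0,\infty)\times N$, the Dirac equation reduces to a first-order ODE-type system $(\partial_t + A_t)\sigma_\parallel = \pi_t(D\sigma) + (\text{error})$ with $A_t$ uniformly bounded in $t$. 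Integrating along the end from $t=+\infty$ downward (using that $\sigma$ has compact support) produces pointwise $L^2(N_t)$-bounds for $\sigma_\parallel(t,\cdot)$ in terms of $\|D\sigma\|_{L^2(M,E)}$, and integrating over any relatively compact $K$ then yields the local estimate \eqref{nonpai}.

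The main obstacle is the spectral-gap step: producing $\mathcal{H}_t$ and a Poincar\'e inequality with the claimed $c_3\varepsilon^{-2}$-constant uniformly in $t$. This requires a bounded-geometry argument on the small cross sections, linking the diameter hypothesis with the curvature and Weingarten bounds to produce a lower bound on the injectivity radius, together with the representation-theoretic uniformity provided by hypothesis~(2), which is needed both to endow each $N_t$ with a well-defined natural Dirac operator $B_t$ and to identify the near-kernels $\mathcal{H}_t$ coherently as $t$ varies, in spite of the merely $C^1$ regularity of the collar coordinate discussed in Section~\ref{secdifu}.
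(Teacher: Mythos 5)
Your overall architecture coincides with the paper's: separate variables along the end, establish a uniform spectral gap for the tangential operator off a finite\--rank subspace, and then run a one\--dimensional argument. But the step you yourself flag as ``the main obstacle'' --- producing $\mathcal H_t$ and the $c_3\varepsilon^{-2}$ gap --- is exactly where the paper invests an entire chapter, and the route you propose to it does not work. A bounded\--geometry argument giving ``a lower bound on the injectivity radius'' is unavailable: a closed cross section of diameter $\le\varepsilon$ has injectivity radius $\le\varepsilon$, and the $\varepsilon$-thin cross sections here are precisely \emph{collapsed} almost flat manifolds (infranilmanifolds, by Gromov's theorem), typically with large fundamental group and with non-trivial holonomy of the induced connection on $E|_{N_t}$. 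Consequently there is no obvious candidate for the ``near\--kernel'' $\mathcal H_t$: the restricted connection $\nabla^E|_{N_t}$ has no parallel sections in general, and hypothesis (2) does not by itself ``identify $\mathcal H_t$ with a subspace of a fixed representation space''. What is needed --- and what the paper supplies via Buser--Karcher and Ruh --- is a \emph{flat} metric connection $\bar\nabla$ on $N_t$ with holonomy in a finite group of order at most $w=w(m)$ and with $|\bar\nabla-\nabla|\lesssim\diam N_t$. Hypothesis (2) then enters solely to transfer $\bar\nabla$ to a flat connection $\bar\nabla^{E_t}$ on $E|_{N_t}$ (not to make $N_t$ carry a Dirac operator; every Dirac bundle restricts to one on a hypersurface). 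One takes $\mathcal H_t$ to be the globally $\bar\nabla^{E_t}$-parallel sections; on its orthogonal complement the $\varepsilon^{-2}$ lower bound comes from Li--Yau on the holonomy\--trivial components and from the eigenvalue\--and\--holonomy estimate of \cite{eiho} on the non\--trivial ones, where the finiteness of the holonomy group (order $\le w$) is what yields a uniform angle $\ge\pi/2w$ and hence a gap independent of $t$. Since $|\bar\nabla^{E_t}-\nabla^E|_{N_t}|$ is uniformly bounded, the gap passes to $B_t$ via the Bochner identity. None of this is present in your sketch, and it cannot be replaced by a Poincar\'e inequality on a manifold of small diameter alone.

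A secondary issue: your treatment of $\sigma_\parallel$ by ``integrating from $t=+\infty$ downward'' gives, via Gronwall, $\|\sigma_\parallel(t)\|\le\int_t^\infty e^{c_1(s-t)}\|f(s)\|\,ds$, which is not controlled by $\|D\sigma\|_{L^2}$. The paper instead feeds the uniform gap for $A_t$ into \pref{nonpar2}, whose proof rests on weighted integration\--by\--parts identities (\pref{proest}, \lref{lemest}) and a Hardy\--type inequality; once the gap is established you should simply invoke that proposition rather than re\--derive the one\--dimensional estimate.
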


Extending \tref{mainth} above, we also have:

\begin{thm}\label{mainth2}
If the ends of $M$ are cuspidal,
then $D$ is non-parabolic at infinity.
\end{thm}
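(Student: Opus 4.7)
The plan is to exploit the exponential contraction of the cross sections on a cuspidal end to establish the estimate of Definition~\ref{nonpai0}. Identify one of the straight end components of $M$ with $(-r,\infty)\times N$ as in Definition~\ref{straight}. Choose $T_0$ large enough that the cuspidal contraction $g_t\le Ce^{c(s-t)}g_s$ holds for all $T_0\le s<t$, and let $L:=M\setminus\{f>T_0\}$, which is compact. For any $\sigma\in C^\infty_c(M\setminus L,E)$, the support of $\sigma$ lies in $\{f>T_0\}$ and in particular $\sigma|_{N_{T_0}}\equiv 0$.

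The first ingredient is a weighted Hardy inequality on the cusp. Write $dV_M=\rho(t,x)\,dt\,dV_{g_0}(x)$ with $\rho(t,x):=(\det g_t/\det g_0)^{1/2}$ and set $R(t,x):=\int_t^\infty\rho(s,x)\,ds$. An integration by parts in $t$ (using $\sigma(T_0,\cdot)=0$ and the compact support of $\sigma$) gives, pointwise in $x$,
$$\int_{T_0}^\infty |\sigma(t,x)|^2\,\rho(t,x)\,dt \;=\; 2\Re\!\int_{T_0}^\infty\!\langle\sigma(t,x),\partial_t\sigma(t,x)\rangle\,R(t,x)\,dt.$$
The cuspidal contraction yields the uniform bound $R(t,x)\le (C^{(m-1)/2}/\beta)\,\rho(t,x)$ with $\beta:=c(m-1)/2$. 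Integrating the above identity over $N$ against $dV_{g_0}$ and applying Cauchy--Schwarz produces
$$\|\sigma\|_{L^2(M,E)}^2 \;\le\; C_1\,\|\partial_t\sigma\|_{L^2(M,E)}^2, \qquad C_1:=4C^{m-1}/\beta^2.$$

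The second ingredient is the Bochner--Lichnerowicz identity $D^2=\nabla^*\nabla+\mathcal R$, where $\mathcal R$ has pointwise operator norm bounded by some $C_2$ in view of \eqref{boundrre}. Since $\sigma$ is compactly supported, pairing with $\sigma$ gives $\|\nabla\sigma\|_{L^2(M,E)}^2\le\|D\sigma\|_{L^2(M,E)}^2+C_2\|\sigma\|_{L^2(M,E)}^2$, and a fortiori $\|\partial_t\sigma\|_{L^2(M,E)}^2\le\|D\sigma\|_{L^2(M,E)}^2+C_2\|\sigma\|_{L^2(M,E)}^2$. Combining with the Hardy estimate yields $(1-C_1C_2)\|\sigma\|_{L^2(M,E)}^2\le C_1\|D\sigma\|_{L^2(M,E)}^2$. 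Provided $C_1C_2<1$, absorption gives $\|\sigma\|_{L^2(M,E)}\le C\|D\sigma\|_{L^2(M,E)}$, which in turn implies the local bound $\|\sigma\|_{L^2(K,E)}\le C\|D\sigma\|_{L^2(M,E)}$ required by non-parabolicity at infinity.

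The principal obstacle is the condition $C_1C_2<1$. The Hardy constant $C_1$ depends only on the cuspidal parameters $c,C,m$, while $C_2$ is fixed by the uniform curvature bounds; when these do not satisfy $C_1C_2<1$, the crude combination of Hardy and Bochner fails. To handle this regime, one refines the argument fiber-wise using the decomposition $D=c(\partial_t)(\partial_t+A_t-H_t/2)$: modes of $\sigma$ with non-trivial spectrum of the tangential Dirac operator $A_t$ produce large contributions to $\|D\sigma\|^2$ because $A_t$ scales up as $g_t$ contracts, while the zero-mode component is controlled using the strict negativity $-H_t/2\ge c(m-1)/4>0$ of the cuspidal mean curvature as an ``effective mass'' term. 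This fiber-wise refinement, together with the integration-by-parts computation of the cross term $\int\Re\langle\partial_t\sigma,A_t\sigma\rangle$, is the technical heart of the argument.
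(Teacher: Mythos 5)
Your first two ingredients are sound: the Poincar\'e-type inequality $\|\sigma\|^2\le C_1\|\partial_t\sigma\|^2$ from the exponential decay of the volume density is correct, and so is the Bochner step. But, as you concede, they only close the argument when $C_1C_2<1$, and this smallness condition fails for the cusps the theorem is actually about (already for real hyperbolic cusps the curvature term is not small relative to the Hardy constant). Everything therefore rests on the ``fiber-wise refinement'', which you do not carry out -- and the mechanism you sketch for it is not the right one. It is false that every mode on which $A_t$ has non-trivial spectrum ``scales up as $g_t$ contracts'': the spectrum of $A_t$ splits into a \emph{finite-dimensional} part that stays uniformly bounded (but is in general non-zero -- cf.\ the constant low-energy operator $A^{\rm le}_{\mathcal C}$ in \eqref{hass} and \eqref{lesys}) and a complementary part whose spectral gap tends to infinity. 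Identifying which modes stay bounded is the whole point, and it requires the flat connection $\bar\nabla^E=\nabla^E-\bar S^E$ of Section \ref{susthi}, whose existence uses cuspidality to make the curvature integrals \eqref{barsm}--\eqref{barse} converge; the bounded part is the space of $\bar\nabla^E$-parallel sections, and the blow-up of the Rayleigh quotient on its orthogonal complement (\lref{ray}) needs the Li--Yau estimate for the function-like components and the eigenvalue--holonomy estimate of \cite{eiho} for the components with non-trivial $\bar\nabla^E$-holonomy. Your proposed substitute -- controlling the ``zero-mode component'' by a pointwise mean-curvature bound $-H_t/2\ge c(m-1)/4$ -- does not work: the cuspidal hypothesis $g_t\le Ce^{c(s-t)}g_s$ only bounds $\kappa$ on average (via the Jacobian), not pointwise, and in any case the low-energy part is handled in the paper not by a mass term but by the Hardy inequality together with the weighted integration-by-parts of \pref{nonpar2}.

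There is a second, quieter gap in the cross-term computation you defer to at the end. Once one has the spectral gap, one still has to absorb the commutator $\Re(A_t'\sigma,\sigma)$ arising from \pref{proest}; this is controlled by the constant $c_0$ of \eqref{c0} (which in turn rests on \cref{aprim2} and the uniform bounds on $R$, $R^E$, $W$), and the gap condition $(\Lambda_t-\lambda_0)^2>4c_0(c_0+2+\lambda_0+\Lambda_t)$ must be verified -- it holds only for sufficiently large starting time $t$, because $\Lambda_t\to\infty$ while $\lambda_0$ and $c_0$ are fixed. So the correct skeleton is: flat connection $\Rightarrow$ splitting of $L^2(N_t,E)$ $\Rightarrow$ \lref{ray} $\Rightarrow$ \tref{nonparc} $\Rightarrow$ \pref{nonpar2} $\Rightarrow$ \pref{nopanopa}. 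Your proposal reproduces the elementary outer layer of this argument but leaves out, or misstates, its core.
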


Suppose now that the dimension $m$ of $M$ is even
and that $E=E^+\oplus E^-$ is a super-symmetry\footnote{
See Section \ref{susdb}.}.
Then $D$ maps sections of $E^+$ to sections of $E^-$ and conversely,
and thus we obtain operators
\begin{equation}
  D^\pm: H^1(M,E^\pm) \to L^2(M,E^\mp) .
\end{equation}
In the case of closed manifolds,
these are the operators the Atiyah-Singer index theorem is concerned with.
The local index theorem associates an index form $\omega_{D^+}$
to $D^{+}$, determined by local data of $D^+$,
whose integral is equal to the index of $D^+$.
If $D$ is non-parabolic at infinity,
we obtain corresponding Fredholm operators
\begin{equation}
  D_{\ext}^\pm : W^\pm \to L^2(M,E^\mp) ,
\end{equation}
where $W^\pm$ consists of those sections in $W$ which take values in $E^\pm$.
One of our general results on the extended index of $D^+$ is

\begin{thm}\label{pindfor}
If $M$ has at most finitely many ends,
$D$ is non-parabolic at infinity, and $\omega_{D^+}$ is integrable, then
\begin{equation*}
  \ind D_{\ext}^+ = \int_M \omega_{D^+}
  + \sum_{\mathcal C} {\rm Corr} (\mathcal C) ,
\end{equation*}
where $\omega_{D^+}$ is the index form associated to $D^+$,
$\mathcal C$ runs over the ends of $M$,
and $ {\rm Corr} (\mathcal C)$ is a correction term determined
by the end $\mathcal C$.
\end{thm}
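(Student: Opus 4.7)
The plan is to carry out an Atiyah--Patodi--Singer style argument adapted to the non-cylindrical ends of $M$, in which the boundary contribution from each end, together with all transgression terms forced by the non-product metric, is absorbed into the single quantity $\mathrm{Corr}(\mathcal C)$. Concretely, I would exhaust $M$ by the compact manifolds with boundary
\begin{equation*}
  M_T := M_0 \cup F([0,T]\times N),
\end{equation*}
whose boundary is the disjoint union $N_T = \bigsqcup_{\mathcal C} N_T^{\mathcal C}$, one component per end. On each $M_T$ the restriction of $D^+$ admits an APS--type spectral boundary condition at $N_T$, defined by the nonnegative spectral projection of the tangential Dirac operator induced on the cross sections, producing a Fredholm operator $D^+_{M_T,\mathrm{APS}}$.

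Next I would invoke an index theorem for manifolds with boundary in the non-product case (APS combined with the Gilkey / Bismut--Cheeger transgression formulae) to obtain, for each $T$,
\begin{equation*}
  \ind D^+_{M_T,\mathrm{APS}}
   = \int_{M_T} \omega_{D^+}
   + \sum_{\mathcal C} T_{\mathcal C}(T),
\end{equation*}
where $T_{\mathcal C}(T)$ collects the transgression form on $N_T^{\mathcal C}$, the $\eta$-invariant and the kernel defect of the boundary operator. Every summand on the right-hand side of this identity is determined by the geometry of $D$ on the piece of $\mathcal C$ between $N$ and $N_T$ alone.

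Then I would let $T\to\infty$. Integrability of $\omega_{D^+}$ gives $\int_{M_T}\omega_{D^+}\to\int_M\omega_{D^+}$. The left-hand side is identified with $\ind D^+_{\ext}$ by the standard argument of \cite[\S3]{APS} recast in the non-parabolic framework of \cite{Ca1}: elements of $\ker D^+_{\ext}$ and of the cokernel (the $L^2$-solutions of $D^-$) produce, by restriction and cutoff near $N_T$, elements of $\ker D^+_{M_T,\mathrm{APS}}$ and its cokernel for $T$ large, and non-parabolicity at infinity is precisely what is needed to rule out stable loss of solutions in the limit. Combining these ingredients forces $T_{\mathcal C}(T)$ to converge for each end $\mathcal C$, and its limit, which by construction depends only on $D$ restricted to $\mathcal C$, is defined to be $\mathrm{Corr}(\mathcal C)$.

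The main obstacle is controlling the pieces of $T_{\mathcal C}(T)$ along the passage to the limit. Because $f$ is only $C^2$ the family $(g_t)$ on $N$ is merely continuous in $t$, so classical variation formulae for the eta invariant, which presuppose smoothness in $t$, are not directly available; this is exactly where the Lipschitz--coefficient Dirac--system analysis of \cite{disy} must be brought in. Moreover in the $\varepsilon$-thin or cuspidal regime the cross sections collapse and spectral gaps of the tangential operator around zero can close, so the $\eta$-contribution and the kernel defect need not converge individually; only the combined expression $T_{\mathcal C}(T)$ is guaranteed to have a limit, which is precisely why the theorem records the boundary contribution as an opaque end-correction $\mathrm{Corr}(\mathcal C)$ rather than as a concrete sum of spectral invariants.
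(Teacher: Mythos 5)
Your overall strategy (exhaust by compact pieces, apply APS with transgression on each, pass to the limit) differs from the paper's, and it contains a genuine gap at its central step: the claim that $\ind D^+_{M_T,\mathrm{APS}}$ converges to $\ind D^+_{\ext}$ as $T\to\infty$. This is false in general. The correct relation, which is \tref{nonpar5} (with $\lambda=0$ and the cut placed at $N_T$), is
\begin{equation*}
  \ind D^+_{\ext} = \ind D^+_{M_T,\ge0} + \ind D^+_{U_T,<0,\ext} ,
\end{equation*}
and the second term --- the index of the extended operator on the exterior piece with boundary condition $H^{1/2}_{<0}$ --- is an end-local, $T$-independent quantity that is typically nonzero; by \eqref{imperp} it equals $\dim\ker D^+_{U_T,<0,\ext}-\dim\ker D^+_{U_T,\le0,\max}$, i.e.\ it sees the $L^2$- and extended solutions living purely on the ends (compare \cref{dwcor}). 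Your cutoff argument, which restricts extended solutions of $D^+_{\ext}$ to $M_T$, does not produce elements of $\ker D^+_{M_T,\mathrm{APS}}$ (the truncation violates the spectral boundary condition at $N_T$ and is no longer in the kernel), and no passage to the limit will recover the missing term, since it does not decay in $T$. Consequently your $\mathrm{Corr}(\mathcal C)$, defined as $\lim_{T\to\infty} T_{\mathcal C}(T)$, would omit exactly this contribution. A secondary gap: even granting convergence of the total sum $\sum_{\mathcal C}T_{\mathcal C}(T)$, nothing in your argument forces each individual $T_{\mathcal C}(T)$ to converge.

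The paper's proof avoids the limit entirely: it makes a single cut at a fixed smooth hypersurface $N$, invokes the splitting formula \tref{nonpar5} (which rests on the decoupling of the transmission boundary condition in \cite{disy}, not on any exhaustion), applies the APS formula \eqref{aps} once on the compact piece, and then defines
\begin{equation*}
  \mathrm{Corr}(\mathcal C):=\ind D^+_{\mathcal C,<0,\ext}-\int_{\mathcal C}\omega_{D^+}+\int_C\tau_{D^+}
  +\tfrac12\bigl(\eta(A^+_C)+\dim\ker A^+_C\bigr) ,
\end{equation*}
with independence of the choice of cut supplied by Theorem 3.24 of \cite{disy}. The end-index term $\ind D^+_{\mathcal C,<0,\ext}$, absent from your scheme, is precisely what makes the identity exact. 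Your proposal could be repaired by inserting this term at the cut $N_T$, but then the exhaustion and all the delicate limit analysis of transgression and eta terms become unnecessary.
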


\tref{pindfor} is a kind of relative index theorem and,
assuming the non-parabolicity of $D$,
can also be proved along the lines of relative index formulas
as in Theorem 4.18 in \cite{GL} (see also Proposition 4.33),
Theorem 6.2 in \cite{Do}, or Theorem 0.5 in \cite{Ca1}.

Clearly, the assumptions of \tref{pindfor} are satisfied
if the ends of $M$ are cuspidal.
We assume the latter in the following discussion.

In dimension $m=2$,
the correction terms are known explicitly in terms of the type of $E$
along the ends, see \cite{BB1}.
In higher dimensions and under strong pinching assumptions
on the sectional curvature of $M$,
they are known explicitly for the Gauss-Bonnet operator, see \cite{BB2}.

In the case where the ends of $M$ are homogeneous cusps and,
over them, $E$ is a twisted associated bundle as in Chapter \ref{sechc},
we obtain explicit formulas for the extended index of $D^+$.
Over each cusp $\mathcal C$,
$D^+$ can then be decomposed into two orthogonal parts,
a low energy part $D^{\rm{le},+}_{\mathcal C}$
and a high energy\footnote{
Our usage of the notion {\em high energy} follows
the  terminology introduced in \cite{Lo1}.}
part $D^{\rm{he},+}_{\mathcal C}$.
Moreover, these are naturally equivalent to operators of the form
\begin{equation}
  \frac{d}{dt} + A^{\rm{le},+}_{\mathcal C}
  \quad\text{and}\quad
  \frac{d}{dt} + A^{\rm{he},+}_{\mathcal C,t} ,
  \label{inttrafo}
\end{equation}
respectively,
where $A^{\rm{le},+}_{\mathcal C}$ is a self-adjoint operator
on some finite dimensional Hilbert space 
and where $A^{\rm{he},+}_{\mathcal C,t}$, $t\ge0$,
is a family of invertible self-adjoint operators on some (other) Hilbert space,
with eigenvalues of finite multiplicity
and a spectral gap that tends to infinity as $t\to\infty$.

\begin{thm}\label{intexinfin}
If the ends $\mathcal C$ of $M$ are homogeneous cusps and,
over them, $E$ is a twisted associated bundle as in Chapter \ref{sechc}, then
\begin{equation*}
  \ind D_{\ext}^+ = \int_M \omega_{D^+}
  + \frac12\sum_{\mathcal C} \left(
  \lim_{t\to\infty} \eta(A^{\rm{he},+}_{\mathcal C,t})
  + \eta(A^{\rm{le},+}_{\mathcal C})
  + \dim\ker  A^{\rm le,+}_{\mathcal C} \right) .
\end{equation*}
Moreover, $D^+$ is of Fredholm type if and only if
$\dim\ker  A^{\rm le,+}_{\mathcal C}=0$, for all ends $\mathcal C$ of $M$.
\end{thm}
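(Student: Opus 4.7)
The plan is to invoke \tref{pindfor} and then evaluate the correction term ${\rm Corr}(\mathcal C)$ on each cusp explicitly. Since homogeneous cusps are in particular cuspidal, $D$ is non-parabolic at infinity by \tref{mainth2}. Moreover $M$ has only finitely many cusps and $\omega_{D^+}$ decays rapidly on each cusp (owing to the exponential decay of volume together with the uniform curvature bounds \eqref{boundrre}), so the hypotheses of \tref{pindfor} are satisfied and
\begin{equation*}
  \ind D_{\ext}^+ = \int_M \omega_{D^+} + \sum_{\mathcal C} {\rm Corr}(\mathcal C).
\end{equation*}
It remains to identify each ${\rm Corr}(\mathcal C)$ with the bracketed expression in the statement.

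I would then exploit the orthogonal decomposition $D^+|_{\mathcal C} = D^{\rm le,+}_{\mathcal C} \oplus D^{\rm he,+}_{\mathcal C}$ on each cusp, together with the corresponding splitting of the $L^2$- and $W$-spaces. A standard relative-index argument (cf.\ Theorem~4.18 in \cite{GL} or Theorem~0.5 in \cite{Ca1}) reduces the computation of ${\rm Corr}(\mathcal C)$ to the index defect of the two model operators in \eqref{inttrafo} on the half-cylinder $[0,\infty)$, compared with the APS problem on truncations $[0,T]$ as $T\to\infty$. Accordingly one writes ${\rm Corr}(\mathcal C) = {\rm Corr}^{\rm le}(\mathcal C) + {\rm Corr}^{\rm he}(\mathcal C)$ and treats the two pieces independently.

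For the low-energy piece, the model $\frac{d}{dt} + A^{\rm le,+}_{\mathcal C}$ with $t$-independent self-adjoint endpoint operator on a finite-dimensional Hilbert space is precisely the Atiyah-Patodi-Singer setup \cite{APS} on a half-cylinder, and the defect evaluates to $\frac{1}{2}(\eta(A^{\rm le,+}_{\mathcal C}) + \dim\ker A^{\rm le,+}_{\mathcal C})$, the kernel term accounting for the constant-in-$t$ extended solutions. For the high-energy piece, each $A^{\rm he,+}_{\mathcal C,t}$ is invertible and the spectral gap tends to infinity, so the family-spectral flow vanishes and no kernel contribution arises; computing the index on the truncation $[0,T]$ via APS boundary conditions at $t=T$ produces a boundary eta term $\frac{1}{2}\eta(A^{\rm he,+}_{\mathcal C,T})$, and the passage $T\to\infty$ yields $\frac{1}{2}\lim_{t\to\infty}\eta(A^{\rm he,+}_{\mathcal C,t})$. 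The main technical hurdle is establishing that this limit exists and is independent of the truncation; this relies on the growing spectral gap and on the homogeneous structure of the cusp, which together yield uniform trace-class control of the heat-kernel representation of $\eta$, and, via the vanishing spectral flow, independence from the choice of $T$.

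The Fredholm-type characterization follows from the discussion of \eqref{introw}: $D^+$ is of Fredholm type if and only if $W^+ = H^1(M,E^+)$, equivalently, every extended solution is already in $L^2$. On the high-energy side, the growing spectral gap forces every solution of $\frac{d}{dt} + A^{\rm he,+}_{\mathcal C,t} = 0$ to decay sufficiently fast to be $L^2$, so the high-energy part contributes nothing to $W^+/H^1(M,E^+)$. On the low-energy side, the non-$L^2$ extended solutions are exactly the constant sections with values in $\ker A^{\rm le,+}_{\mathcal C}$. Hence $D^+$ is of Fredholm type if and only if $\dim\ker A^{\rm le,+}_{\mathcal C} = 0$ for every cusp $\mathcal C$.
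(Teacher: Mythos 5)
Your derivation of the index formula is, in substance, the paper's own argument repackaged: after \tref{pindfor}, one evaluates the correction term of each cusp by splitting $D^+$ over the cusp into its low and high energy parts (legitimate here because homogeneous cusps satisfy the compatibility conditions \eqref{hass}, so that the mixed terms $\bar P D\bar Q$, $\bar Q D\bar P$ vanish and both families are parallel), observing that the extended boundary value problems for both model operators with the boundary condition $H^{1/2}_{<0}$ are isomorphisms (\pref{exinle} and \lref{heiso}), and then applying \eqref{aps} on the truncations and letting the truncation parameter tend to infinity. Two points in that part deserve attention. First, your truncation argument needs, besides the integrability of $\omega_{D^+}$, that the transgression integral $\int_{N_T}\tau_{D^+}$ tends to $0$ as $T\to\infty$; this is \eqref{limint} and uses the exponential decay of $\vol N_T$ together with the uniform bounds on curvature and second fundamental form. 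Second, once one has $\ind D^+_{\ext}=\ind D^+_{M_T,\ge0}$ for all $T$, the existence of $\lim_{T\to\infty}\eta(A^{\rm he,+}_{\mathcal C,T})$ is automatic, since all other terms in \eqref{aps} converge and the low energy data are $T$-independent; the "uniform trace-class control of the heat-kernel representation of $\eta$" you invoke is neither carried out nor needed.

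The genuine gap is in the Fredholm-type characterization. You claim that $D^+$ is of Fredholm type if and only if every extended solution is already in $L^2$, and then identify the non-$L^2$ extended solutions with constant sections valued in $\ker A^{\rm le,+}_{\mathcal C}$. Only the forward implication of the first equivalence is clear: $W^+=H^1(M,E^+)$ forces $h^+_\infty=0$. The converse --- that the absence of non-$L^2$ extended solutions implies the Anghel estimate \eqref{infred} --- is not justified; a priori $W^+$ could strictly contain $H^1(M,E^+)$ while $\ker D^+_{\ext}=\ker D^+_{\max}$. Moreover, whether the local bounded solution $j^{-1/2}v$ with $v\in\ker A^{\rm le,+}_{\mathcal C}$ on one end globalizes to an extended solution of $D^+$ on $M$ is a global question; the paper settles it only through the relation $h^+_\infty+h^-_\infty=\sum_{\mathcal C}\dim\ker A^{\rm le,+}_{\mathcal C,0}$ of \eqref{maxinf5}, which requires applying the index formula to both $D^+$ and $D^-$. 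A direct argument avoids extended solutions altogether: if some $\ker A^{\rm le,+}_{\mathcal C}\ne0$, then slowly spreading cut-offs of $j^{-1/2}v$ violate the estimate \eqref{fredt} for the low energy system, so $D$ is not of Fredholm type; conversely, if all these kernels vanish, the low energy system has a spectral gap and \pref{nonpar0} applies to it with $a>0$, while \cref{nonpar1} (with strict inequality, valid after increasing the starting time) handles the high energy system, so each end satisfies \eqref{fredt} and $D$ is of Fredholm type.
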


Denote by $h^\pm_\infty$ the difference in dimension of the spaces of extended
and $L^2$-solutions of $D^\pm$.
These quantities  determine the difference
between the extended and $L^2$-indices of $D^\pm$,
\begin{equation}
  \ind D^\pm_{\ext} = \ind_{L^2} D^\pm + h^\pm_{\infty} ,
  \label{hinfty}
\end{equation}
where $\ind_{L^2} D^\pm := \dim \ker D^\pm - \dim \ker D^\mp$.

\begin{thm}\label{intmaxinf}
If the ends $\mathcal C$ of $M$ are homogeneous cusps and,
over them, $E$ is a twisted associated bundle as in Chapter \ref{sechc}, then
\begin{equation*}
  \ind_{L^2} D^+ = \int_M \omega_{D^+}
  + \frac12\sum_{\mathcal C} \left(
  \lim_{t\to\infty} \eta(A^{\rm{he},+}_{\mathcal C,t})
  + \eta(A^{\rm{le},+}_{\mathcal C}) \right)
  - \frac12 \big( h^+_{\infty} - h^-_{\infty} \big) .
\end{equation*}
\end{thm}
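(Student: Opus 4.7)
The plan is to derive Theorem \ref{intmaxinf} from Theorem \ref{intexinfin} applied to both $D^+$ and $D^-$, together with the relation \eqref{hinfty} between extended and $L^2$ indices. First I would apply Theorem \ref{intexinfin} to $D^+$ to obtain the explicit formula for $\ind D^+_{\ext}$, and the same theorem to $D^-$ to obtain the analogous formula for $\ind D^-_{\ext}$ in terms of the data associated with $D^-$.

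Second, I would invoke the formal adjoint relation $D^- = (D^+)^*$ to translate the data for $D^-$ into the data for $D^+$, via the following symmetries: (a) the pointwise identity $\omega_{D^-} = -\omega_{D^+}$, which is classical from local index theory since the local index form arises from a supertrace that reverses sign under the exchange $E^+ \leftrightarrow E^-$; (b) over each homogeneous cusp $\mathcal C$, the boundary operators $A^{\rm{le},-}_{\mathcal C}$ and $A^{\rm{he},-}_{\mathcal C,t}$ are unitarily equivalent to $-A^{\rm{le},+}_{\mathcal C}$ and $-A^{\rm{he},+}_{\mathcal C,t}$, so that $\eta(A^{\rm{le},-}_{\mathcal C}) = -\eta(A^{\rm{le},+}_{\mathcal C})$, $\eta(A^{\rm{he},-}_{\mathcal C,t}) = -\eta(A^{\rm{he},+}_{\mathcal C,t})$, and $\dim\ker A^{\rm{le},-}_{\mathcal C} = \dim\ker A^{\rm{le},+}_{\mathcal C}$. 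Subtracting the two formulas from Theorem \ref{intexinfin} then eliminates the $\dim\ker$ contributions and doubles the remaining ones, giving
\[
\ind D^+_{\ext} - \ind D^-_{\ext} = 2\int_M \omega_{D^+} + \sum_{\mathcal C}\Bigl(\lim_{t\to\infty}\eta(A^{\rm{he},+}_{\mathcal C,t}) + \eta(A^{\rm{le},+}_{\mathcal C})\Bigr).
\]

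Third, I would use \eqref{hinfty}, namely $\ind D^\pm_{\ext} = \ind_{L^2} D^\pm + h^\pm_{\infty}$, together with the immediate identity $\ind_{L^2} D^- = -\ind_{L^2} D^+$ coming from the definition $\ind_{L^2} D^\pm = \dim\ker D^\pm - \dim\ker D^\mp$. The left-hand side above then equals $2\ind_{L^2} D^+ + (h^+_\infty - h^-_\infty)$. Solving for $\ind_{L^2} D^+$ and dividing by $2$ yields the claimed formula.

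The main obstacle lies in verifying the symmetry (b) above. One must unpack the explicit decomposition of $D$ into low- and high-energy parts constructed in Chapter \ref{sechc} over a homogeneous cusp and confirm that the adjoint relation on $E = E^+ \oplus E^-$ descends, block by block, to the sign-reversal of the boundary operators appearing in \eqref{inttrafo}; this requires care with the self-adjointness conventions in the representation of $D^{\pm}$ as $\frac{d}{dt} + A^{\pm}$ and with the orientation of the normal coordinate $t$ governing the sign of the eta invariant. Item (a) is classical, and once (b) is in place, the remainder of the argument is purely formal algebra.
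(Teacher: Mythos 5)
Your proposal is correct and follows essentially the same route as the paper: both apply Theorem~\ref{intexinfin} to $D^+$ and $D^-$, exploit the symmetries $\omega_{D^-}=-\omega_{D^+}$ and $A^-_{\mathcal C,t}\sim -A^+_{\mathcal C,t}$ (the latter already supplied by \eqref{susyac} and \eqref{lesys2}, so it is not an obstacle), and combine with \eqref{hinfty}. The only cosmetic difference is that you subtract the two index formulas so the $\dim\ker$ terms cancel outright, whereas the paper adds them to record $h^+_\infty+h^-_\infty=\sum_{\mathcal C}\dim\ker A^{\rm le,+}_{\mathcal C,0}$ before eliminating that term.
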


The term $h^+_{\infty} - h^-_{\infty}$ is non-local,
and therefore the $L^2$-index does not admit an analog of \tref{pindfor},
where the correction terms are determined by the different ends separately.

The most important class of examples to which Theorems \ref{intexinfin}
and \ref{intmaxinf}
apply are finite volume quotients of symmetric spaces
of negative sectional curvature,
that is, of real, complex, or quaternionic hyperbolic spaces
or of the Cayley hyperbolic plane.
The work of Barbasch-Moscovici \cite{BM} is a milestone
in the index theory of Dirac operators of homogeneous Dirac bundles
over such spaces.
Their arguments rely on harmonic analysis on symmetric spaces,
notably the Selberg trace formula.

Werner M\"uller has obtained an index formula for manifolds with cusps of $\Q$-rank one.
His approach follows the one of Atiyah-Patodi-Singer:
using a good approximation of the heat operator $e^{-tD^2}$ and harmonic analysis,
he computed the correction terms  (see theorems 10.32  and 11.77 in \cite{Mu2}).
In particular,
he determined the non-local term $h^+_\infty - h^-_\infty$
using the scattering matrix at zero energy,
see also \cite{Mu1}.

M\"uller's approach has been extended by Mark Stern \cite{St1}.
The derivations of index formulas by Boris Vaillant in \cite{Va}
and Leichtnam, Mazzeo, and Piazza in \cite{LMP}
are based on the pseudodifferential calculus of Mazzeo-Melrose \cite{MM}.
However, the fibered boundary geometry discussed in \cite{Va} and \cite{LMP}
does not cover all finite volume locally symmetric space with $\Q$-rank one.
In their recent paper \cite{GH},
Daniel Grieser and Eugenie Hunsicker settle the main properties
of the pseudodifferential calculus for multi-fibred boundary structures.

Our proof is different in nature and is based on our previous work:
applying our results from \cite{disy},
we are able to discuss the contribution of each end individually.
This leads to a more general setting and more transparent index formulas.
Note, in particular, that our results also apply in the case
where $D$ is not of Fredholm type.

In this article, we concentrate on complex hyperbolic cusps,
more precisely,
cusps as they arise for quotients of complex hyperbolic space $\C H^n$
of dimension $m=2n$.
Here we discuss only two specific results for such quotients
and refer the reader to our r\'esum\'e in Section \ref{exaeta}
for our general index formulas pertaining to complex hyperbolic cusps.
The first result concerns the $L^2$-arithmetic genus and follows
from the Hirzebruch proportionality principle
and Theorems \ref{intexinfin} and \ref{corrcom};
see also Example 1 in Section \ref{exaeta}.

\begin{thm}\label{thmdolb}
If $X$ is a quotient of complex hyperbolic space $\C H^{n}$
by a neat lattice\footnote{
Here neat means that the group generated by the eigenvalues of any non-identity
element of the given lattice contains no roots of unity. Neat lattices are torsion-free.},
then the Dolbeault operator on X is of Fredholm type
and its index $\chi_{L^2}(X, \mathcal O)$ is given by
\begin{equation*}
  \chi_{L^2}(X, \mathcal O)
  = (-1)^n \frac{\vol X}{\vol\C P^{n}}
  + \begin{cases}
  0 &\text{if $n$ is odd} , \\
  \zeta(1-n)\sum\nolimits_{\mathcal C} |\Gamma_{\mathcal C}|
  &\text{if $n$ is even} .
  \end{cases}
\end{equation*}
\end{thm}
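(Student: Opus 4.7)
The plan is to apply \tref{intexinfin} to the Dolbeault operator $D = \bar\partial + \bar\partial^{*}$ on $X$, whose $\pm$-decomposition is by parity of antiholomorphic form degree. Since $\Gamma$ is neat and hence torsion-free, $X = \Gamma\backslash\C H^n$ is a smooth complete K\"ahler manifold whose ends are homogeneous complex hyperbolic cusps modelled on quotients $\Gamma_{\mathcal C}\backslash N$ of a Heisenberg-type nilpotent Lie group, and the Dolbeault bundle $\Lambda^{0,*}T^{*}X$ is a twisted associated bundle in the sense of Chapter \ref{sechc}. Thus the hypotheses of \tref{intexinfin} are met, and it yields
\[
  \ind D^+_{\ext} = \int_X \omega_{D^+} + \frac12\sum_{\mathcal C}\Bigl(\lim_{t\to\infty}\eta(A^{\rm he,+}_{\mathcal C,t}) + \eta(A^{\rm le,+}_{\mathcal C}) + \dim\ker A^{\rm le,+}_{\mathcal C}\Bigr).
\]

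For the bulk term I invoke the Hirzebruch proportionality principle for the dual symmetric pair $(\C H^n,\C P^n)$. The index form of $D^+$ is the Todd form, and on the compact dual one has $\chi(\C P^n,\mathcal O) = 1$; proportionality then yields
\[
  \int_X \omega_{D^+} = (-1)^n \frac{\vol X}{\vol \C P^n},
\]
the sign $(-1)^n$ arising because the Chern forms of a noncompact symmetric space and of its compact dual are identified up to an alternation $c_i \mapsto (-1)^i c_i$, which feeds into the top-degree piece of the Todd polynomial.

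The cusp contributions are precisely the content of \tref{corrcom}, which gives closed expressions for the eta-type corrections on a complex hyperbolic cusp in terms of $|\Gamma_{\mathcal C}|$ and spectral data on the Heisenberg nilmanifold fibre. A Serre-duality/parity symmetry on the Dolbeault complex forces these eta contributions to vanish when $n$ is odd; for $n$ even the zeta-regularised Heisenberg spectral sum is evaluated in closed form to give $2\zeta(1-n)|\Gamma_{\mathcal C}|$ per cusp. Moreover, the same explicit description in \tref{corrcom} shows that $\ker A^{\rm le,+}_{\mathcal C} = 0$ at every cusp of a neat lattice, so the second assertion of \tref{intexinfin} guarantees that $D^+$ is of Fredholm type and $\ind D^+_{\ext} = \chi_{L^2}(X,\mathcal O)$. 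Assembling the three inputs produces the stated formula.

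The decisive step is \tref{corrcom}; once it is in hand the present result is a routine combination. The hard part inside that theorem is the existence and explicit evaluation of $\lim_{t\to\infty}\eta(A^{\rm he,+}_{\mathcal C,t})$ for the genuinely $t$-dependent family of self-adjoint operators on the Heisenberg fibre, which requires an adiabatic-type analysis along the cusp and an identification of the surviving spectral contribution with a value of the Riemann zeta function at a negative integer.
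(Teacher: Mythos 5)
Your proposal follows the paper's own route: \tref{intexinfin} plus Hirzebruch proportionality for the bulk term, and \tref{corrcom} specialized as in Example 1 of Section \ref{exaeta} (where $\pi_*$ has highest weight $\lambda_j=(n+1)/2$, $b_k=\binom{n-1}{k}$, and $D_Z|_{H^k}=(-1)^k(k-n)$, whence the trivial low-energy kernel and the Fredholm property). The one inaccurate gloss is your appeal to Serre duality for the odd-$n$ vanishing: in the paper the low-energy $\eta$-invariant vanishes for \emph{every} $n$ by the identity $\sum_k(-1)^k\binom{n-1}{k}=0$, and it is the high-energy term $\zeta_c(1-n)+(-1)^n\zeta_{1-c}(1-n)$ with trivial twist $c=0$ that vanishes exactly when $n$ is odd — but since you defer the actual evaluation to \tref{corrcom}, this does not affect the argument.
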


Here $|\Gamma_{\mathcal C}|$ denotes a basic invariant of the fundamental
group $\Gamma_{\mathcal C}$ of the cusp $\mathcal C$,
for each cusp $\mathcal C$ of $X$; compare \eqref{type}.

The second result concerns the signature operator on $X$
when $n$ is even, that is, when $m$ is a multiple of $4$.
It is also a consequence of the Hirzebruch proportionality principle
and Theorems \ref{intexinfin} and \ref{corrcom};
see also Example 2 in Section \ref{exaeta}.

\begin{thm}\label{thmsign}
If $X$ is a quotient of complex hyperbolic space $\C H^{n}$
by a neat lattice, where $n$ is even,
then the signature operator on $X$ is of Fredholm type
and its index $\sigma_{L^2}(X)$ is given by
\begin{multline*}
  \sigma_{L^2}(X) = \frac{\vol X}{\vol\C P^{n}}
   + 2^{n} \zeta(1-n) \sum\nolimits_{\mathcal C} |\Gamma_{\mathcal C}| \\
  + \nu (-1)^{n/2} \big({\textstyle\binom{n-2}{n/2}-\binom{n-2}{n/2-1}}\big) ,
\end{multline*}
where $\nu$ is equal to the number of ends of $X$.
\end{thm}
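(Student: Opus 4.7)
The plan is to apply \tref{intexinfin} to the signature operator $D^+$ on $X$, using the standard chirality grading of $\Lambda^*T^*X\otimes\C$. Since $X=\Gamma\backslash\C H^n$ with $\Gamma$ neat, the ends of $X$ are homogeneous Heisenberg cusps, and $\Lambda^*T^*X$ with its signature grading is a twisted associated bundle over each such cusp in the sense of Chapter \ref{sechc}; the hypotheses of \tref{intexinfin} are therefore satisfied. The theorem yields
\begin{equation*}
  \ind D^+_{\ext} = \int_X \omega_{D^+} + \tfrac12\sum_{\mathcal C}\left( \lim_{t\to\infty}\eta(A^{\rm he,+}_{\mathcal C,t}) + \eta(A^{\rm le,+}_{\mathcal C}) + \dim\ker A^{\rm le,+}_{\mathcal C} \right),
\end{equation*}
and once $\dim\ker A^{\rm le,+}_{\mathcal C}=0$ is verified for every cusp, the same theorem will give the Fredholm assertion and $\sigma_{L^2}(X)=\ind D^+_{\ext}$.

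The local term is handled by the Hirzebruch proportionality principle: $\omega_{D^+}$ is the Hirzebruch $L$-form, and since $\C H^n$ is dual to $\C P^n$ as a symmetric space, $\int_X\omega_{D^+}$ equals $\vol(X)/\vol(\C P^n)$ times $\sigma(\C P^n)$. Because $n$ is even, $\sigma(\C P^n)=1$, producing the first summand of the stated identity. For the high-energy part, I would invoke \tref{corrcom} specialised to the signature bundle over a complex hyperbolic cusp: after tracing over the Heisenberg nilmanifold cross-section and summing over its unitary dual, the limit $\tfrac12\lim_{t\to\infty}\eta(A^{\rm he,+}_{\mathcal C,t})$ reduces to a Hurwitz-type zeta value and simplifies to $2^n\zeta(1-n)|\Gamma_{\mathcal C}|$, giving the second sum.

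The remaining contribution $\tfrac12\sum_{\mathcal C}(\eta(A^{\rm le,+}_{\mathcal C})+\dim\ker A^{\rm le,+}_{\mathcal C})$ depends only on the Heisenberg algebra, not on the lattice $\Gamma_{\mathcal C}$; this is why it appears multiplied by $\nu$ in the final formula. To evaluate it I would decompose the space of parallel forms on the cusp under the action of the central $U(1)$ of the Heisenberg group, diagonalise the finite matrix representing $A^{\rm le,+}_{\mathcal C}$ on each weight space, and read off the signature-type difference $(-1)^{n/2}\bigl(\binom{n-2}{n/2}-\binom{n-2}{n/2-1}\bigr)$ as the net contribution of two adjacent weight spaces of opposite $\tau$-sign. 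The non-vanishing of $A^{\rm le,+}_{\mathcal C}$ on this finite-dimensional space will follow from the fact that neatness of $\Gamma$ rules out the flat summand that would otherwise produce a zero mode.

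The main obstacle is this last step. Identifying the low-energy subspace for the signature operator on a $(2n-1)$-dimensional Heisenberg nilmanifold and computing its $\eta$-invariant and kernel precisely is a finite-dimensional but delicate linear-algebra problem; it requires unpacking the twisting data from Chapter \ref{sechc}, combining it with the complex-hyperbolic eta computation of \tref{corrcom}, and carefully tracking the chirality signs that yield the particular binomial combination in the statement.
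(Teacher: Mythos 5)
Your overall architecture coincides with the paper's: apply \tref{intexinfin} to the signature operator, evaluate $\int_X\omega_{D^+}$ by Hirzebruch proportionality using $\sigma(\C P^n)=1$ for $n$ even, obtain the term $2^{n}\zeta(1-n)|\Gamma_{\mathcal C}|$ from the high-energy statement of \tref{corrcom} in the untwisted case with $\dim V=\dim\Sigma_{\mathfrak p}=2^n$, and identify $\sigma_{L^2}(X)$ with $\ind D^+_{\ext}$ once Fredholmness is known. However, the step you defer as ``the main obstacle'' --- the low-energy $\eta$-invariant and kernel --- is precisely the mathematical content that produces the binomial term, and your sketch of it (diagonalising over weight spaces of the central $U(1)$ and reading off ``two adjacent weight spaces of opposite $\tau$-sign'') omits the two structural facts that make the computation tractable. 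The paper's route is: (i) \pref{etadd} shows $D^{\rm le}_N=D_{\mathfrak z}+D_{\mathfrak x}$ with $\{D_{\mathfrak z},D_{\mathfrak x}\}=0$, so that $\eta(D^{\rm le}_N)=\eta\bigl(D_Z|_{\ker D_{\mathfrak x}}\bigr)$; (ii) \tref{leech} identifies $\ker D_{\mathfrak x}$ with the Lie algebra cohomology $H^k(\mathfrak u,\pi)$ via Kostant's theorem and computes both the multiplicities $b_k$ and the scalar by which $D_Z$ acts there; (iii) for the signature operator one takes $\pi=\Sigma_{\mathfrak p}=\oplus_l V_l$ with highest weights \eqref{uspinl} and carries out the binomial manipulation of Example \ref{exaeta2}.2, in which the double sum collapses to the coefficient of $x^n$ in $(1-x)^2(1-x^2)^{n-2}$, yielding $\eta(A^{\rm le,+}_{\mathcal C,0})=2(-1)^{n/2}\bigl(\binom{n-2}{n/2}-\binom{n-2}{n/2-1}\bigr)$. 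Without (i) and (ii) you cannot even name the finite-dimensional space on which the relevant operator acts, let alone its spectrum.

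A second, concrete error: you attribute the vanishing of $\ker A^{\rm le,+}_{\mathcal C}$ (hence Fredholmness) to neatness of $\Gamma$ ruling out a flat summand. That is not how the argument goes. The signature bundle is untwisted along the cusps, so one is in case (2) of \tref{corrcom}, and the kernel vanishes because the explicitly computed eigenvalues of $D_Z$ on $H^k(V_l)$, namely $(-1)^k(k-l)$ for $k<l$ and $(-1)^k(k+1-l)$ for $k\ge l$, are never zero; equivalently, the Kronecker condition $2\lambda_{k+1}=2k+1-n$ of \tref{corrcom} is never met for the half-integral weights of the spin representation. Neatness enters only to guarantee that the lattice is torsion-free and that the ends are homogeneous Heisenberg cusps over which the bundle is an associated bundle in the sense of Chapter \ref{sechc}; it plays no role in excluding low-energy zero modes.
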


For complex hyperbolic manifolds with finite volume,
the space of $L^2$-harmonic forms has a topological interpretation.
In fact, in this case, the  $L^2$-signature coincides with the topological signature \cite{Zu}.
Formulas for $\sigma_{L^2}(X)$ are also stated in Theorem 7.6 of \cite{BM}
and Stern's article \cite{St} (compare Formula 6.4 there).
Our correction terms consist of two terms:
What we call the {\em high energy $\eta$-invariant}
can be identified with a zeta contribution in \cite{St} and with the unipotent
contribution in the Arthur-Selberg trace formula in \cite{BM}.
Our {\em low energy $\eta$-invariant} corresponds to the eta term in \cite{St}
and the weighted unipotent contribution in \cite{BM}.
Since our correction terms are obtained by different methods,
we obtain, in particular, different interpretations of the corresponding terms
in \cite{BM} and \cite{St}.

The formulas in Theorems \ref{thmdolb} and \ref{thmsign}
show that the volume of the quotient $X$ of $\C H^n$ in question
is a rational multiple of the volume of $\C P^n$.
This was already known by Harder's Gauss-Bonnet theorem
which says that $(n+1)\vol(X)/\vol\C P^{n}=(-1)^n\chi(X)$,
where $\chi(X)\in\Z$ denotes the Euler characteristic of $X$, see \cite{Ha}.
\tref{thmdolb} implies that $\vol(X)/\vol\C P^{n}$ is integral for odd $n$.
The question of the integrality of $\vol(X)/\vol\C P^{n}$ has been brought
to our attention by Martin Olbrich:
The half-integrality of $\vol(X)/\vol\C P^{n}$ implies
that certain Selberg type zeta functions are meromorphic.

As another example where our methods apply,
we mention the Dirac operator $D$ on the spinor bundle,
supposing that $M$ admits a spin structure.
The case $n=1$, that is, of surfaces of finite area with cusps of constant
negative curvature, has been dealt with in \cite{Bae}, see also \cite{BB1}.
In particular, $D$ is of Fredholm type if and only if the spin structure
is not periodic along (the cross sections of) any of the cusps,
see Theorem 2 in \cite{Bae} or Theorem 0.1 in \cite{BB1}.
We will discuss the correction terms for spinor bundles along complex hyperbolic cusps
in more detail in Examples \ref{exaspin} and Example \ref{exaeta2}.3.

Our formulas for complex hyperbolic cusps apply to more examples,
but we refer the reader to \tref{corrcom} for the full scope of our results.

In Chapter \ref{secpre} we discuss some notions and results
which are basic for our later investigations.
Chapter 3 is devoted to distance functions and their relation
to Dirac systems.
In particular, Section \ref{secdifu} contains a detailed study
of $C^2$ distance functions as we need it in our application
to Busemann functions.
In this section, we clarify and correct some of the statements
from \cite{BB2}.
Some essential parts of our later analysis
depend on our previous results in \cite{disy}\footnote{
In some cases, the work of  Marius Mitrea could also be used:
In Section 5 of \cite{Mi},
Mitrea investigates the regularity of
the Calder\'on projector for Dirac operators on Lipschitz domains
 with $C^{1,1}$ symbol  and  metric tensor,
using paradifferential calculus.}.
That the applications of these results are justified is the topic
of Section \ref{subdisy}.
In Chapter 4, we discuss boundary value problems and Fredholm
properties of Dirac systems which are underlying Dirac operators
over straight ends.
\pref{nonpar2} is one of the corner stones of our later discussion.
Chapter \ref{subdec} contains the first applications
to index formulas and a proof of \tref{pindfor}.
Chapter \ref{almflat} and Chapter \ref{secthi}
contain the proofs of Theorems \ref{mainth1} and \ref{mainth2},
respectively.
In Chapter \ref{susifo}, we prove Theorems \ref{intexinfin} and \ref{intmaxinf}.
The last three chapters are devoted to a discussion
of the index contributions of homogeneous cusps.
Ideas from the work of Deninger-Singhof \cite{DS} are basic
in our computation of high energy $\eta$-invariants
of Dirac operators on compact quotients of Heisenberg groups.
Following the discussion of Gordon-Wilson in \cite{GW},
we compute in Section \ref{replap} the spectrum
of twisted Laplacians on compact quotients of Heisenberg groups.
This is needed in our computation of high energy $\eta$-invariants
in Section \ref{heieta}.
In Chapter \ref{secle},
we discuss the low energy $\eta$-invariants of Dirac bundles over
complex hyp�erbolic cusps.
One of the main ingredients in this latter discussion is a theorem
of Kostant concerning Lie algebra cohomology
(Theorem 4.139 in \cite{KV}).

We would like to thank Patrick Ghanaat, Jean Louis Milhorat,
Henri Moscovici, and Martin Olbrich for helpful discussions.
We are also very grateful to the referee whose many detailed comments
and suggestions helped to improve the exposition of the article considerably.
We enjoyed the hospitality of the ESI in Vienna (W.B.),
the University of Nantes (W.B. and J.B.),
the MPIM in Bonn (J.B. and G.C.),
the University of Kyoto (J.B.),
and the MSRI in Berkeley (G.C.).
W.B. would like to thank the MPIM and HCM in Bonn for their continuous support.
J.B.  appreciates the support by SFB 647.
G.C. acknowledges the support by the grant GeomEinstein 06-BLAN-0154
gratefully.

\newpage
\section{Preliminaries}
\label{secpre}

Let $M$ be a Riemannian manifold of dimension $m$
with Levi-Civita connection $\nabla$ and curvature tensor $R$.
Let $E\to M$ be a Hermitian vector bundle over $M$,
endowed with a Hermitian connection $\nabla^E$
and associated curvature $R^E$.
Recall that we assume that the norms of $R$ and $R^E$ are uniformly bounded,
compare \eqref{boundrre}.

We denote by $C^\infty(M,E)$ and $L^2(M,E)$ the spaces of smooth
and square-integrable sections of $E$, respectively.
We let $H^1(M,E)$ be the closure of $C^\infty(M,E)$ with respect
to the $H^1$-norm, that is, the norm associated to the inner product
\begin{equation}
  (\sigma,\tau)_{H^1(M,E)} := (\sigma,\tau)_{L^2(M,E)}
  + (\nabla^E\sigma,\nabla^E\tau)_{L^2(M,E\otimes T^*M)} .
  \label{preh1}
\end{equation}
We denote by $C^\infty_c(M,E)$, $L^2_c(M,E)$, and $H^1_c(M,E)$
the subspaces of corresponding sections with compact support
and by $L^2_{\loc}(M,E)$ and $H^1_{\loc}(M,E)$
the spaces of measurable sections $\sigma$ of $E$
such that $\varphi\sigma$ belongs to $L^2(M,E)$ and $H^1(M,E)$,
respectively, for any smooth function $\varphi$ on $M$ with compact support.
In the case where the boundary of $M$ is non-empty,
we use a double index $cc$ to indicate compact support in the interior of $M$
and an index $0$ to indicate vanishing along the boundary.

For better readability,
we have arranged the rest of the preliminaries into sections.
In Section \ref{susdb} we introduce Dirac bundles and operators,
in Section \ref{decsig}
we collect some generalities about spinors,
and in Section \ref{susech}
we introduce complex hyperbolic spaces.

\subsection{Dirac Bundles}
\label{susdb}
We say that $E$ is a {\em Dirac bundle} over $M$
if $E$ is endowed with a compatible {\em Clifford multiplication}, that is, a field
\begin{equation}
  TM\times E\to E , \quad (x,v) \mapsto x\cdot v ,
\end{equation}
of bilinear maps such that
\begin{align}
  XX\sigma
  &= - |X|^2 \sigma ,
  \label{cl0} \\
  |X\sigma|
  &= |X| |\sigma| ,
  \label{cl1} \\
  \nabla^E_X(Y\sigma)
  &= (\nabla_XY)\sigma + Y(\nabla^E_X\sigma) ,
  \label{cl2}
\end{align}
for all vector fields $X,Y$ on $M$ and sections $\sigma$ of $E$,
where we use $X\sigma$ as a shorthand for $X\cdot\sigma$.

Suppose now that $E$ is a Dirac bundle over $M$.
Then the {\em Dirac operator} $D$ associated to $E$ is given by
\begin{equation}
  D\sigma = \sum_{1\le i\le m} X_i\nabla^E_{X_i}\sigma ,
  \label{diop}
\end{equation}
where $(X_1,\dots,X_m)$ is a local orthonormal frame of $M$
and $\sigma$ is a section of $E$.
For any function $\varphi$ on $M$ and section $\sigma$ of $E$,
\begin{equation}
  D(\varphi\sigma) = \grad\varphi\cdot\sigma + \varphi D\sigma .
  \label{diopps}
\end{equation}
In particular, the principal symbol of $D$ at $\xi\in T^*M$
is given by Clifford multiplication with the dual vector $\xi^\sharp\in TM$,
and hence $D$ is elliptic.
Note also that $D$ is formally self-adjoint, that is,
$D$ is symmetric on $C^\infty_{cc}(M,E)$.

Suppose now that $M$ has boundary, $N:=\partial M$,
let $T$ be the inward normal field along $N$, and set $W:=\nabla T$,
the {\em Weingarten operator} of $N$ with respect to $T$.
We assume that the operator norm of $W$ is uniformly bounded
by a constant $C_W$.
Change Clifford multiplication and connection of $E$ along $N$ by
\begin{align}
  X\ast\sigma
  &:= TX\sigma , \label{tast} \\
  \nabla^ T_X\sigma
  &:= \nabla^E_X\sigma - \frac12(WX)\ast\sigma
  = \nabla^E_X\sigma - \frac12(T\nabla_XT)\sigma .
  \label{nast}
\end{align}
It is well known that, with these new data,
the restriction of $E$ to $N$ is again a Dirac bundle
such that Clifford multiplication by $T$ is $\nabla^T$-parallel,
see for example Section 3.10.1 in \cite{Gi}.
The associated Dirac operator is given by
\begin{equation}
  D^T\sigma = \sum_{2\le i\le m} X_i\ast\nabla^ T_{X_i}\sigma
  = \sum_{2\le i\le m} TX_i\nabla^E_{X_i}\sigma + \frac{\kappa}2\sigma,
  \label{dast}
\end{equation}
where $(X_1,X_2,\dots,X_m)$ is a local orthonormal frame of $M$ along $N$
with $X_1=T$, and
\begin{equation}
  \kappa = \tr W
  \label{meanc}
\end{equation}
is the {\em mean curvature} of $N$ with respect to $T$.
The curvature of $\nabla^T$ is
\begin{equation}
  R^T(X,Y)\sigma
  = R^E(X,Y)\sigma - \frac12(R(X,Y)T)\ast\sigma
  - \frac14[WX,WY]\sigma .
  \label{cast}
\end{equation}
The general Bochner identity \cite[Theorem II.8.2]{LM} implies that
\begin{multline}
  ( \nabla^E\sigma_1,\nabla^E\sigma_2 )_{L^2(M,E\otimes T^*M)}
  + ( K^E\sigma_1,\sigma_2 )_{L^2(M,E)} \\
  = ( D\sigma_1,D\sigma_2 )_{L^2(M,E)}
  + ( D^T\sigma_1 - \frac{\kappa}2\sigma_1,\sigma_2 )_{L^2(N,E)} ,
  \label{lic1}
\end{multline}
for all $\sigma_1,\sigma_2\in C^\infty_{c}(M,E)$,
where $K^E$ is a curvature term,
\begin{equation}
  K^E\sigma = \sum_{1\le i<j\le m}X_iX_j R^E(X_i,X_j)\sigma .
  \label{lic2}
\end{equation}
We see that the operator norm of $K^E$ is bounded by $m(m-1)C_R^E/2$
and conclude that on sections with compact support in the interior of M,
the graph norm of $D$ is equivalent to the $H^1$-norm .

Since $N$ has no boundary, \eqref{lic1} applied to $N$ turns into
\begin{multline}
  ( \nabla^T\sigma_1,\nabla^T\sigma_2 )_{L^2(N,E\otimes T^*N)}
  + ( K^T\sigma_1,\sigma_2 )_{L^2(N,E)} \\
  = ( D^T\sigma_1, D^T\sigma_2 )_{L^2(N,E)} ,
  \label{lic3}
\end{multline}
where $K^T$ denotes the curvature term built from $R^T$
as $K^E$ is built from $R^E$ in \eqref{lic2}.
We see that the operator norm of $K^T$ is bounded here by
\begin{equation}
  \frac{(m-1)(m-2)}2 \big( C_R^E + \frac12 C_R + \frac14 C_W^2 \big) ,
  \label{lic4}
\end{equation}
where $C_W$ is a uniform bound for the operator norm of $W$
(compare \eqref{boundw}),
and conclude now that, along $N$, the graph norm of $D^T$
is equivalent to the $H^1$-norm.

Let $E$ be a Dirac bundle over $M$.
A {\em super-symmetry} of $E$ is an orthogonal decomposition $E=E^+\oplus E^-$,
where $E^\pm$ are parallel Hermitian subbundles of $E$
such that $XE^+\subseteq E^-$ and $XE^-\subseteq E^+$,
for all vector fields $X$ of $M$.
In particular, $E^+$ and $E^-$ are of the same dimension.
If $E=E^+\oplus E^-$ is a super-symmetry, then the Dirac operator $D$ of $E$
maps sections of $E^+$ into sections of $E^-$ and conversely
and therefore can be written as
\begin{equation}
  D = \begin{pmatrix} 0 & D^- \\ D^+ & 0 \end{pmatrix}
  \label{disusy}
\end{equation}
with respect to the super-symmetry.
We can also think of a super-symmetry as a parallel field
of unitary involutions of $E$ which anti-commute with Clifford multiplication,
where $E^\pm$ is the subbundle of eigenspaces of the involutions
for the eigenvalue $\pm1$, respectively.

If $M$ is oriented and $m=\dim M$ is even,
then the {\em complex volume form} of $M$ is defined to be
\begin{equation}
  \omega_{\C} := i^{m/2}X_1\cdots X_m\in\cl(M) ,
  \label{covofo}
\end{equation}
where $(X_1,\ldots,X_m)$ is an oriented  local orthonormal frame of $M$.
For any Dirac bundle $E$ over $M$,
multiplication by $\omega_{\C}$ is a parallel field of unitary involutions
of $E$ which anti-commutes with Clifford multiplication with vector fields,
and hence it defines a  super-symmetry $E=E^+\oplus E^-$.

Suppose now $M$ is complete and that the boundary of $M$ is empty,
and consider $D$ as an unbounded operator in $L^2(M,E)$
with domain $C^\infty_c(M,E)$.
Since $D$ is symmetric on $C^\infty_c(M,E)$, it is closable in $L^2(M,E)$.
Since the graph norm of $D$ is equivalent to the $H^1$-norm,
$H^1(M,E)$ is the domain of the closure of $D$.
By \cite{Wo} or Theorem II.5.7 in \cite{LM},
$D$ on $H^1(M,E)$ is self-adjoint in $L^2(M,E)$.

\subsection{Decomposition of Spinors}
\label{decsig}
Let $m$ be even, $m=2n$,
and consider the complex Clifford algebra $\cl(2n)=\cl(\R^{2n})$,
where we denote the complex structure on $\cl(2n)$ by $\sqrt{-1}$.
Fix an orthonormal basis $(e_1, \dots , e_{2n})$ of $\R^{2n}$
and set\footnote{Note that the sign convention
is opposite to the one in \cite{LM}, page 43.}
\begin{align}
  \omega_j := \sqrt{-1}e_{2j-1}e_{2j} \in \cl(2n) , \quad 1\le j \le n .
  \label{involu}
\end{align}
Then
\begin{equation}\label{omecom}
  \omega_j^2 = 1
  \quad\text{and}\quad
  \omega_j\omega_k = \omega_k\omega_j ,
\end{equation}
for all $1\le j,k\le n$,
and the complex volume form is given by
\begin{equation}
  \omega_{\C} = \omega_1 \cdots \omega_n ,
   \label{covofo2}
\end{equation}
compare \eqref{covofo}.
Let $\Sigma=\Sigma_{2n}$ be the spinor representation.
Then Clifford multiplication by the $\omega_j$
defines unitary involutions of $\Sigma$.
By \eqref{omecom},
there is an orthogonal decomposition of $\Sigma$
into simultaneous eigenspaces $\Sigma_\varepsilon$,
where $\varepsilon$ runs over all $n$-tuples in $\{1,-1\}^n$
and where $\omega_j$ acts by multiplication with $\varepsilon_j$
on $\Sigma_\varepsilon$, $1\le j\le n$.
Because Clifford multiplication with $e_{2j-1}$ or $e_{2j}$
anti-commutes with $\omega_j$ and commutes with $\omega_k$,
for $1\le k\ne j\le n$, we have
\begin{equation}
  e_{2j-1}\Sigma_\varepsilon
  = e_{2j}\Sigma_\varepsilon
  = \Sigma_\delta ,
  \label{sigeps}
\end{equation}
where $\delta_k=\varepsilon_k$ for all $1\le k\ne j\le n$ and $\delta_j=-\varepsilon_j$.
In particular, all the subspaces $\Sigma_\varepsilon$ have the same dimension,
which is, for that reason, equal to $\dim\Sigma/2^n=1$.
Clifford multiplication by the complex volume form
acts by $\varepsilon_1\cdots\varepsilon_n$ on $\Sigma_\varepsilon$,
by \eqref{covofo2},
and hence the summands of the usual super-symmetry
\begin{equation}
  \Sigma = \Sigma^+ \oplus \Sigma^-
  \label{sisusy}
\end{equation}
are given by
\begin{equation}
  \Sigma^+ = \oplus_{\varepsilon_1\cdots\varepsilon_n=1} \Sigma_{\varepsilon}
  \quad\text{and}\quad
  \Sigma^- = \oplus_{\varepsilon_1\cdots\varepsilon_n=-1} \Sigma_{\varepsilon} .
  \label{sisusy2}
\end{equation}

\subsection{Complex Hyperbolic Spaces}
\label{susech}
The discussion in this section will be used in Section \ref{suschc}.
We represent complex hyperbolic space $\C H^n$
by the symmetric pair $(\SU(1,n), \Se(\U(1)\times\U(n)))$
and endow the Lie algebra $\mathfrak{su}(1,n)$ of $\SU(1,n)$
with the non-degenerate symmetric bilinear from
\begin{equation}
  (X,Y) := \frac12\Re\tr XY ,
  \label{chpr}
\end{equation}
a multiple of the Killing form of  $\mathfrak{su}(1,n)$.
We identify
\begin{equation}
  \Se(\U(1)\times\U(n))
  = \left\{ \begin{pmatrix}
  \det A^{-1} & 0 \\ 0 & A \end{pmatrix} \biggm| A\in\U(n) \right\}
  \cong \U(n)
  \label{chK}
\end{equation}
and, correspondingly,
\begin{equation}
  \mathfrak{s}(\mathfrak{u}(1)\oplus\mathfrak{u}(n))
  = \left\{  \begin{pmatrix}
  - \tr A & 0 \\ 0 & A \end{pmatrix} \biggm| A \in \mathfrak u(n) \right\}
  \cong \mathfrak u(n) .
  \label{chk}
\end{equation}
The orthogonal complement $\mathfrak p$
of $\mathfrak u(n)$ in $\mathfrak{su}(1,n)$ is
\begin{equation}
  \mathfrak p = \left\{
  \begin{pmatrix} 0 & x^* \\ x & 0 \end{pmatrix} \biggm| x \in \C^n \right\}
  \cong \C^n ,
  \label{chp}
\end{equation}
where we note that the latter isomorphism corresponds to the standard
complex structure and Riemannian metric of $\C H^n$.
With respect to the identifications \eqref{chK} -- \eqref{chp},
we get
\begin{equation}
  [A,B] = AB - BA , \;
  [A,x] = Ax + x \cdot \tr A , \;
  [x,y] = xy^* - yx^*
  \label{chlb}
\end{equation}
for the different Lie brackets and
\begin{equation}
  \alpha(A) x := \Ad_A x = A x \det A
  \label{chir}
\end{equation}
for the adjoint representation $\alpha:\U(n)\to\SO(\mathfrak p)\cong\SO(2n)$.
We note that $\alpha$ is an $n+1$ to $1$ immersion.
If $n$ is odd,
then $\alpha$ lifts to $\hat\alpha:\U(n)\to\Spin(\mathfrak p)$.
If $n$ is even, then $\alpha$ does not lift.

We note that the coefficients of the matrix $xy^* - yx^*\in\mathfrak u(n)$
in \eqref{chlb} are $x_j\bar y_k-y_j\bar x_k$.
In particular, for the standard unit vectors $e_j$ and $e_k$ in $\C^n$
and complex numbers $x,y$, we have
\begin{equation}\label{sypach4}
  [xe_j,ye_k] = x\bar yE_{jk} - y\bar xE_{kj} \in\mathfrak u(n) ,
\end{equation}
where $E_{jk}$ denotes the matrix with entries $\delta_{jk}$.

Let $T=e_1\in\C^n\cong\mathfrak p$ and set $\mathfrak a:=\R T$.
The orthogonal complement of $\mathfrak a$ in $\C^n$ consists of all
$x\in\C^n$ with $x_1\in\Im\C$, that is, $x_1$ is purely imaginary.
Let $\mathfrak z:=\R Z$ with
\begin{equation}
  Z := ie_1 - iE_{11} \in \mathfrak p \oplus \mathfrak u(n) .
  \label{chz}
\end{equation}
We have $[\mathfrak z,\mathfrak z]=0$ and
\begin{equation}
  [T,Z] = 2Z .
  \label{chtz}
\end{equation}
Let $\mathfrak x$ be the space of all
\begin{equation}
  X_x:= x + \bar x_2E_{12} - x_2E_{21}
  + \ldots + \bar x_nE_{1n} - x_nE_{n1}
  \in \mathfrak p \oplus \mathfrak u(n) ,
  \label{chx}
\end{equation}
where $x\in\C^{n-1}=\{x\in\C^n \mid x_1=0\}$.
Then $[\mathfrak z,\mathfrak x]=0$ and
\begin{align}
  [T,X_x] &= X_x , \label{chtx} \\
  [X_x,X_y] &= 2\Im(\bar xy) Z . \label{chxx}
\end{align}
Set $\mathfrak n:=\mathfrak z\oplus\mathfrak x$
and $\mathfrak s:=\mathfrak a\oplus\mathfrak n$.
By the above, $\mathfrak n$ is a two-step nilpotent subalgebra
of $\mathfrak{su}(1,n)$ and $\mathfrak s$ is a
solvable extension of $\mathfrak n$.
The subgroups $A$, $N$, and $S$ of $\SU(1,n)$
corresponding to $\mathfrak a$, $\mathfrak n$, and $\mathfrak s$
satisfy $S=AN$ and $\SU(1,n)=\U(n)AN$
(Iwasawa decomposition of $\SU(1,n)$).

Let $p\in\C H^n$ be the point fixed by $\U(n)$.
Then the orbit map
\begin{equation}
  \Phi: S \to \C H^n , \quad \Phi(s) = sp ,
  \label{chom}
\end{equation}
is a diffeomorphism,
that is, $S$ acts simply transitively on $\C H^n$.
Endow $S$ with the left-invariant Riemannian metric
such that the differential $d\Phi:T_eS\to T_p\C H^n$ is isometric.
Since $S$ acts isometrically on $\C H^n$,
we then get that $\Phi$ is an $S$-equivariant isometry.
That is, we can think of $\C H^n$ as $S$,
endowed with the chosen left-invariant metric.
With respect to this metric,
we get that $\mathfrak a$, $\mathfrak z$, and $\mathfrak x$
are perpendicular and that
\begin{equation}
  |T| = 1 , \quad  |Z| = 1 , \quad
  \langle X_{x},X_{y}\rangle = \Re\bar xy .
  \label{chsm}
\end{equation}
Define
\begin{equation}
  JX_x = J_ZX_{x} := X_{ix} .
  \label{chj}
\end{equation}
Then $J$ is skew-symmetric with $J^2=-1$.
Moreover, by \eqref{chxx} and \eqref{chsm},
\begin{equation}
  \langle [X_{x},X_{y}] , Z\rangle
  = 2 \langle JX_{x},X_{y}\rangle .
  \label{chht}
\end{equation}
As a preferred basis of $\mathfrak s$,
we choose  the $2n$-tuple of vectors $X_1:=T$, $Y_1=Z$,
\begin{equation}
  X_j := e_j + E_{1j} - E_{j1}
  \quad\text{and}\quad
  Y_j := JX_j = ie_j - iE_{1j} - iE_{j1} ,
  \label{heibas}
\end{equation}
where $2\le j\le n$.
By \eqref{chxx} and \eqref{chj},
\begin{equation}
  [X_j,Y_k] = 2 \delta_{jk} Z .
  \label{heibas2}
\end{equation}
In conclusion,
$N$ is isomorphic to the standard Heisenberg group of dimension $2n-1$.
Since left-invariant vector fields on $S$ are $T$-parallel,
the Weingarten operator $W=\nabla T$ of $N$ in $S$ is given by
\begin{equation}
  WZ = [Z,T] = - 2Z \quad\text{and}\quad  WX_x = [X_x,T] = - X_x ,
  \label{heiwzx}
\end{equation}
by \eqref{chtz} and \eqref{chtx}.

\section{Dirac Systems and Distance Functions}
\label{dsdf}

\subsection{Dirac Systems}
\label{preds}
The setup and the results from \cite{disy}
are fundamental for the discussion of this section.
Let $I\subseteq\R$ be an interval
and $H$ be a separable complex Hilbert space.
Fix an origin $t_0\in I$.

For each $t\in I$, let $(.,.)_t$ be a scalar product on $H$
which is compatible with the Hilbert space structure of $H$
and such that $(.,.)_{t_0}$ coincides with the given scalar product of $H$.
Let $\|.\|_t$ be the norm associated to $(.,.)_t$.
Let $H_t$ be $H$, but equipped with $(.,.)_t$,
and denote by $\mathcal H$ the family of Hilbert spaces $H_t$, $t\in I$.
Assume that, for all $a<b$ in $I$,
there is a constant $C=C(a,b)$ such that
\begin{equation}
   |(\sigma_1,\sigma_2)_s - (\sigma_1,\sigma_2)_t|
   \le C \|\sigma_1\|_{s}\,\|\sigma_2\|_{s} |s-t| ,
   \label{dsax1}
\end{equation}
for all $s,t\in[a,b]$ and $\sigma_1,\sigma_2\in H$.
In other words,
if $G_t\in\mathcal L(H)$ denotes the positive definite and symmetric operator
on $H=H_{t_0}$ with
\begin{equation}
  (G_t\sigma_1,\sigma_2)_{t_0} = (\sigma_1,\sigma_2)_t ,
  \label{dsgt}
\end{equation}
for all $\sigma_1,\sigma_2\in H$, then the map
\begin{equation*}
  G: I\to\mathcal L(H) , \quad G(t):=G_t ,
   \label{dsax1a}
\end{equation*}
is in $\Lip_{\loc}(I,\mathcal L(H))$.
In particular, $G$ is weakly differentiable almost everywhere in $I$
with weak derivative $G'$ in $L_{\loc}^\infty(I,\mathcal L(H))$.
Moreover, $G'_t$ is symmetric on $H_{t_0}$ (for almost all $t\in I$)
and we have
\begin{equation}
  \Gamma := \frac12G^{-1}G' \in L_{\loc}^\infty(I,\mathcal L(H)) .
   \label{dsGamm}
\end{equation}
We set
\begin{equation}
  \partial := \big(\frac{d}{dt} + \Gamma\big):
  \Lip_{\loc}(I,H) \to L_{\loc}^\infty(I,H) .
  \label{dspart}
\end{equation}
By the definition of $\partial$,
the function $(\sigma_1,\sigma_2)=(\sigma_1(t),\sigma_2(t))_t$ satisfies
\begin{equation}
  (\sigma_1,\sigma_2)'
  = (\partial\sigma_1,\sigma_2) + (\sigma_1,\partial\sigma_2) ,
  \label{dspain}
\end{equation}
for all $\sigma_1,\sigma_2\in\Lip_{\loc}(I,H)$,
where the prime indicates differentiation with respect to $t$.

As a second data, let $\mathcal A$ be a  family of operators $A_t$, $t\in I$, on $H$
with common dense domain $H_A$
such that  $A_t$ is self-adjoint in $H_t$
and such that the inclusion $H_A\hookrightarrow H$ is compact
with respect to the graph norms of the $A_t$.
Assume that, for all $a<b$ in $I$,
there is a constant $C=C(a,b)$ such that
\begin{equation}
   | (A_s\sigma_1,\sigma_2)_{s} - (A_t\sigma_1,\sigma_2)_{t}|
   \le C (\|\sigma_1\|_s + \|A_s\sigma_1\|_{s}) \|\sigma_2\|_{s} |s-t| ,
   \label{dsax2}
\end{equation}
for all $s,t\in[a,b]$ and $\sigma_1,\sigma_2\in H_A$.

As a final data, let
\begin{equation}
  T \in \Lip_{\loc}(I,\mathcal L(H)) \cap L_{\loc}^\infty(I,\mathcal L(H_A)) ,
  \label{dsax3}
\end{equation}
and suppose that
\begin{alignat}{2}
  T_t^* = T_t^{-1} &= - T_t \quad &
  &\text{on $H_t$, $\forall t\in I$,} \label{dsax3a}  \\
  A_tT_t &= - T_tA_t \quad&
  &\text{on $H_A$, $\forall t\in I$,} \label{dsax3b}  \\
  \partial T &= T\partial & &\text{on $\Lip_{\loc}(I,H)$.}
  \label{dsax3c}
\end{alignat}
Following \cite{disy},
a {\em Dirac system} over $I$ consists of data
$\mathcal H$, $\mathcal A$, and $T$ as above.

Let $\mathcal D:=(\mathcal H,\mathcal A,T)$ be a Dirac system over $I$.
Set
\begin{equation}
   \mathcal L_{\loc}(\mathcal D) := \Lip_{\loc}(I,H) \cap L_{\loc}^\infty(I,H_A) ,
   \label{dslloc}
\end{equation}
and denote by $\mathcal L_{c}(\mathcal D)$ and $\mathcal L_{cc}(\mathcal D)$
the subspaces of $\mathcal L_{\loc}(\mathcal D)$ of maps with compact support
in $I$ and the interior of $I$, respectively.
On $\mathcal L_{c}(\mathcal D)$, we define the inner product
\begin{equation}
  (\sigma_1,\sigma_2) := \int_I (\sigma_1,\sigma_2)
  = \int_I (\sigma_1(t),\sigma_2(t))_t dt ,
  \label{l2d}
\end{equation}
and let $L^2(\mathcal D)$
be the corresponding Hilbert space of square-integrable maps,
also denoted by $L^2(\mathcal H)$.

The {\em Dirac operator} of $\mathcal D$ is the operator
\begin{equation}
  D:= T(\partial + A): \mathcal L_{\loc}(\mathcal D) \to L_{\loc}^\infty(I,H) .
  \label{dsdo}
\end{equation}
By \eqref{dspain} and \eqref{dsax3a}--\eqref{dsax3c},
\begin{equation}
  \int_{[a,b]} (D\sigma_1,\sigma_2)
  = \int_{[a,b]} (\sigma_1,D\sigma_2) - (\sigma_1,T\sigma_2)\big|_a^b ,
  \label{dspaind}
\end{equation}
for all $\sigma,\tau\in\mathcal L_{\loc}(\mathcal D)$
and $a<b$ in $I$.

A {\em super-symmetry} for a Dirac system $\mathcal D$ as above
is a decomposition $H=H^+\oplus H^-$ such that,
with $H_A^{\pm}:=H_A\cap H^\pm$,
\begin{align}
  &\text{$H^+ \perp H^-$ in $H_t$}
  \quad\text{and}\quad
  \text{$T_tH^\pm=H^\mp$} , \label{susy1} \\
  &\text{$H_A = H_A^+ \oplus H_A^-$}
  \quad\text{and}\quad
  \text{$A_t H_A^{\pm} \subseteq H^{\pm}$} . \label{susy2}
\end{align}
We write $H_t^\pm$ for $H^\pm$ endowed with the inner product $(.,.)_t$.
By \eqref{susy1}, $H_t=H_t^+\oplus H_t^-$ as a Hilbert space.
By \eqref{susy1},
\begin{equation}
  A_t = \begin{pmatrix} A_t^+ & 0 \\ 0 & A_t^- \end{pmatrix} ,
  \label{susya}
\end{equation}
where $A_t^\pm$ is a self-adjoint operator in $H_t^\pm$
with domain $H_A^\pm$ and where
\begin{equation}
  A_t^- = T_tA_t^+T_t = T_t(-A_t^+)T_t^{-1} ,
  \label{susyac}
\end{equation}
by \eqref{dsax3b}.
We can decompose
\begin{equation}
  L^2(\mathcal D)
  = L^{2+}(\mathcal D) \oplus L^{2-}(\mathcal D)
  =  L^{2}(\mathcal H^+) \oplus L^{2}(\mathcal H^-) ,
  \label{susysdc}
\end{equation}
where $L^{2}(\mathcal H^\pm)$ consists of the subspace of sections
in $L^{2}(\mathcal H)$ with image in $H^+$.
Similar notation will be employed for other spaces.

By \eqref{susy2} and the definition of $\partial$, see \eqref{dspart},
\begin{equation}
  \partial = \begin{pmatrix} \partial^+ & 0 \\ 0 & \partial^- \end{pmatrix} ,
  \quad\text{where}\quad
  \partial^- = T\partial^+T^{-1} ,
  \label{susypc}
\end{equation}
by \eqref{dsax3c}.
Hence, by \eqref{susy1},
\begin{equation}
  D = \begin{pmatrix} 0 & D^- \\ D^+ & 0 \end{pmatrix} .
  \label{susydd}
\end{equation}
Clearly, $D^-$ is the formal adjoint of $D^+$.

\subsection{Distance Functions}
\label{secdifu}
Let $U$ be an open subset in a Riemannian manifold $M$.
We say that a function $f:U\to\R$ is a {\em distance function}
if $f$ is $C^1$ and $T:=\grad f$ is a unit vector field.
There is a synthetic characterization of distance functions,
compare \cite[pp 24--25]{BGS} or also Proposition IV.3.1 in \cite{Ba}.
If $f$ is a distance function, then the solution curves
of the vector field $T$ are unit speed geodesics,
called {\em $T$-geodesics}.

Busemann functions are $C^2$ distance functions,
see Proposition 3.1 in \cite{HI} or Proposition IV.3.2 in \cite{Ba}.
We assume from now on that $f:U\to\R$ is a $C^2$ distance function.
Then $T:=\grad f$ is a $C^1$ unit vector field
and the {\em cross sections} $N_t=f^{-1}(t)$ are $C^2$ hypersurfaces.
For simplicity, we assume throughout that the cross sections $N_t$ are compact
and that the flow of $T$ induces a $C^1$ diffeomorphism
\begin{equation}
  F: I\times N \to U,
  \label{uprodu}
\end{equation}
where $I$ is some interval and $N=N_{t_0}$ for some $t_0\in I$.
In what follows, we often identify $U$ with $I\times N$
by identifying $(t,x)\in I\times N$ with $F(t,x)\in U$.

Let $c = c(s)$ be a $C^1$ curve in $U$ and $T(s) := T(c(s))$ be $T$ along $c$,
a $C^1$ curve of unit vectors.
Then the variation field $J=J(t):=(\partial_s\gamma)(0,t)$
of the geodesic variation $\gamma_s=\gamma(s,t):=\exp(t T(s))$
satisfies $J(0)=\dot c(0)$.
A vector field which arises in this way will be called a {\em $T$-Jacobi field}.

\begin{lem}\label{jacob}
A $T$-Jacobi field $J$ satisfies the Jacobi equation
\begin{equation*}
   J'' + R(J,T)T = 0 .
\end{equation*}
Moreover, $J$ and $J'$ depend continuously on $J(0)$.
\end{lem}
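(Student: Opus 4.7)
The idea is to bypass the limited regularity of $T$ (only $C^1$) by working with the geodesic flow on $TM$, which is smooth because the metric on $M$ is smooth. Write $\pi:TM\to M$ for the projection and $\phi_t:TM\to TM$ for the geodesic flow, so that the variation can be rewritten as
\begin{equation*}
  \gamma(s,t) = \pi\bigl(\phi_t(T(c(s)))\bigr).
\end{equation*}
Because $c$ and $T$ are both $C^1$, the curve $s\mapsto T(c(s))$ in $TM$ is $C^1$; composing with the smooth maps $\pi$ and $\phi_t$ shows that $J(t)=\partial_s|_{s=0}\gamma(s,t)$ is well defined, and in fact
\begin{equation*}
  J(t) = d\pi\bigl(d\phi_t(\xi)\bigr), \qquad \xi := dT_{c(0)}(\dot c(0))\in T_{T(c(0))}TM.
\end{equation*}
For each fixed $\xi$ this expression is smooth in $t$, so $J$ is a smooth vector field along the geodesic $\gamma_0(t)=\exp(tT(c(0)))$, and the same holds for $J'$.

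The main step is to identify this $J$ as the Jacobi field solving the stated ODE. I would invoke (or briefly derive in local coordinates, where only smooth data appear) the standard geodesic-flow description of Jacobi fields: for any $v\in TM$ and $\xi\in T_vTM$, the vector field $t\mapsto d\pi(d\phi_t\xi)$ along the geodesic $t\mapsto\pi(\phi_tv)$ is the unique Jacobi field with initial conditions $Y(0)=d\pi(\xi)$ and $Y'(0)=K(\xi)$, where $K:TTM\to TM$ is the connection map. Applied to $v=T(c(0))$ and $\xi=dT(\dot c(0))$ this gives
\begin{equation*}
  J(0) = d\pi\circ dT(\dot c(0)) = \dot c(0), \qquad J'(0) = K\circ dT(\dot c(0)) = \nabla_{\dot c(0)}T,
\end{equation*}
and the Jacobi equation $J''+R(J,T)T=0$ along $\gamma_0$. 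Note that $\nabla_{\dot c(0)}T$ is defined, as a continuous object, because $T$ is $C^1$.

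The continuity statement is then essentially automatic. Since $T$ is $C^1$, the assignment $v\mapsto (v,\nabla_vT)$ is continuous (indeed linear with continuous coefficients), so the initial data for the Jacobi equation depend continuously on $v=J(0)$. The Jacobi equation is a linear second-order ODE along $\gamma_0$ whose coefficients (the curvature $R$ pulled back along $\gamma_0$, and $T$ which along $\gamma_0$ equals the smooth velocity $\dot\gamma_0$) are smooth; hence its solutions and their derivatives depend continuously on initial data, giving continuous dependence of $J$ and $J'$ on $J(0)$.

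\textbf{Expected obstacle.} The one place where care is required is deriving the Jacobi equation itself: the usual argument $\nabla_t\nabla_t\partial_s\gamma = \nabla_t\nabla_s\partial_t\gamma = \nabla_s\nabla_t\partial_t\gamma - R(\partial_t,\partial_s)\partial_t = -R(\partial_t,\partial_s)\partial_t$ tacitly uses $C^2$ regularity of $\gamma$ in $s$, which we do not have. Routing the proof through the smooth geodesic flow on $TM$ — where the needed commutation is applied only to smooth objects — is the key trick, and reduces the entire argument to the known smooth case plus the observation that $C^1$ regularity of $T$ suffices to produce the single tangent vector $\xi=dT(\dot c(0))$ on which the smooth machinery acts.
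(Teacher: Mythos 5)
Your proposal is correct and follows essentially the same route as the paper: both arguments push the $C^1$ data $\xi=dT(\dot c(0))$ through the smooth geodesic flow on $TM$ and invoke the standard identification of $d\Phi_t(\xi)$ with the pair $(J,J')$ under the horizontal/vertical splitting of $TTM$ (your $d\pi$ and connection map $K$ are exactly that splitting, for which the paper cites Proposition IV.1.13 in \cite{Ba}). The continuity statement then follows in both cases from smoothness of the flow and continuity of $v\mapsto\nabla_vT$.
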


\begin{proof}
Recall that the Riemannian manifold $M$ is smooth,
hence its geodesic flow $\Phi=\Phi(t,v)$, $t\in\R$ and $v\in TM$, is also smooth.
Since $\gamma'(s,t)=\Phi(t,T(s))$, we get that $\gamma$ and $\gamma'$ are $C^1$.
Therefore
\begin{equation}
  J=\partial_s\gamma
  \quad\text{and}\quad
 J' = \nabla_t \partial_s \gamma = \nabla_s \partial_t \gamma = \nabla_sT
 \label{jacob2}
\end{equation}
exist and are continuous.
Moreover,
\begin{equation}
  (\partial_s\gamma')(s,t) = \Phi_{t*}(\partial_sT(s))  = (J(s,t),J'(s,t))
  \label{dgeoflo}
\end{equation}
with respect to the standard decomposition of $TTM$
in horizontal and vertical component,
see for example Proposition IV.1.13 in \cite{Ba}.
Hence $J$ and $J'$ depend continuously on $\dot c(0)$
and $J$ satisfies the asserted Jacobi equation.
\end{proof}

\begin{rem}\label{corrbb}
With respect to the $(t,x)$-coordinates,
the Riemannian metric on $U$ is of the form $g = dt^2 + g_t$,
where $g_t$, $t\in I$, is a family of Riemannian metrics on $N$.
In \cite{BB2}, pages 596 and 609, it is stated erroneously
that $g_t$ and $\partial_tg_t$ are $C^1$ on $U$.
This is wrong in general, since it would imply that $T$ is $C^2$.
Clearly, since $T$ is $C^1$, $g_t(x)$ is $C^1$ in $(t,x)$.
\lref{jacob} implies that $g_t(x)$ is two times continuously differentiable in $t$.
This is sufficient for the discussion in \cite{BB2} and the arguments below.
\end{rem}

For $t\in I$, we let $S=S_t$ and $W=W_t$  be, respectively,
the {\em second fundamental form} and the {\em Weingarten operator}
of the $C^2$ submanifold $N_t$
with respect to the normal vector field $T$,
\begin{equation}\label{sff}
    WX = \nabla_XT ,\quad
    S(X,Y) = \la \nabla_XY, T \ra = - \la WX, Y \ra ,
\end{equation}
where $X$ and $Y$ are $C^1$ vector fields tangent to $N_t$.
Since $T$ is $C^1$, $S$ and $W$ are continuous tensor fields over $U$.
By \eqref{jacob2}, Jacobi fields $J$ as in \lref{jacob} satisfy $J'=WJ$.

Let $E\to M$ be a smooth vector bundle with smooth connection $\nabla^E$.

\begin{lem}\label{c1reg}
Let $X$ be a vector field and $\sigma$ be a section of $E$ over $U$,
respectively.
Assume that the restrictions of $X$ and $\sigma$ to $N$ are $C^1$
and that $X$ and $\sigma$ are parallel in the $T$-direction.
Then $X$ and $\sigma$ are $C^1$.
Moreover, $\nabla^E_T\nabla^E_X\sigma$ exists, is continuous, and satisfies
\begin{equation*}
  \nabla^E_T\nabla^E_X\sigma + \nabla^E_{WX}\sigma + R^E(X,T)\sigma = 0 .
\end{equation*}
\end{lem}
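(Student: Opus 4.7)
The proof splits into two parts: the $C^1$ regularity of $X$ and $\sigma$, and the existence and formula for $\nabla^E_T\nabla^E_X\sigma$.

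For the regularity, the key observation is that parallel transport along a $T$-geodesic is governed by a linear ODE whose coefficients depend smoothly on $M$ (through the Levi-Civita connection on $TM$ and the connection on $E$) and, through $T|_N$ and $F$, depend $C^1$-smoothly on the initial point $x\in N$. Smooth dependence of ODE solutions on parameters then implies that the parallel transports $P_t(x)\colon T_xM\to T_{F(t,x)}M$ and $P^E_t(x)\colon E_x\to E_{F(t,x)}$ depend $C^1$ on $(t,x)\in I\times N$. Since $X|_N$ and $\sigma|_N$ are $C^1$, the parallel-transport extensions $X(F(t,x))=P_t(x)X(x)$ and $\sigma(F(t,x))=P^E_t(x)\sigma(x)$ are $C^1$ on $U$. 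This is the natural analogue of \lref{jacob}, with Jacobi fields replaced by parallel fields.

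The formula is formally immediate. Since $|T|=1$, we have $\la\nabla_XT,T\ra=\tfrac12 X|T|^2=0$; and $\nabla_TT=0$ because $T$-trajectories are geodesics. Hence, extending $W$ by $WT=0$, we have $\nabla_XT=WX$ for every vector $X$ on $U$. Combined with $\nabla_TX=0$ and the vanishing of torsion, this gives $[T,X]=\nabla_TX-\nabla_XT=-WX$. The curvature identity, applied formally,
\[
  \nabla^E_T\nabla^E_X\sigma-\nabla^E_X\nabla^E_T\sigma-\nabla^E_{[T,X]}\sigma=R^E(T,X)\sigma,
\]
together with $\nabla^E_T\sigma=0$, then rearranges to the claimed equation.

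The main obstacle is the rigorous justification of this identity: with $T$ merely $C^1$, the sections $X$ and $\sigma$ are only $C^1$, so the curvature identity---which presupposes $C^2$---cannot be invoked directly. To handle this, I would work in the $(t,x)$-coordinates from $F$ (in which $T=\partial_t$) and in a smooth local trivialization of $E$ with connection $1$-form $A$. In this setup, $\nabla^E_T\sigma=0$ reads $\partial_t\sigma=-A(T)\sigma$; since $A$ is smooth on $M$ and $T$ is $C^1$, the coefficient $A(T)$ is a $C^1$ function on $U$, and with $\sigma\in C^1$ the right-hand side is $C^1$. Thus $\partial_t\sigma$ is actually $C^1$, so $\partial_{x^\alpha}\partial_t\sigma$ exists and is continuous; Clairaut's theorem applied to $\sigma$ then gives that $\partial_t\partial_{x^\alpha}\sigma$ exists and equals $\partial_{x^\alpha}\partial_t\sigma$. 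The analogous argument using $\nabla_TX=0$ yields the corresponding enhanced regularity of $X$. With these mixed partials available, a direct coordinate computation of $\partial_t[\nabla^E_X\sigma]$ shows that $\nabla^E_T\nabla^E_X\sigma$ exists as a continuous section, and the resulting pointwise expression coincides with the formal rearrangement of the curvature identity, yielding the claimed equation.
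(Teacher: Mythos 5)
Your proposal is correct and follows essentially the same route as the paper: the paper encodes your ODE-dependence argument in the smooth map $\Psi:\R\times(TM\oplus E)\to E$ given by parallel translation along geodesics, so that $\sigma(F(t,x))=\Psi(t-t_0,T(x),\sigma(x))$ is $C^1$, and it justifies the curvature identity exactly as you do, by noting that $\nabla^E_T\sigma=0$ together with $T\in C^1$ makes the $T$-derivatives of the frame coefficients of $\sigma$ themselves $C^1$. Your explicit mixed-partials step just fills in what the paper leaves implicit.
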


\begin{proof}
Let $\Psi:\R\times(TM\oplus E)\to E$ be the smooth map
which associates to $t\in\R$ and $(v,e)\in TM\oplus E$
(where $v\in TM$ and $e\in E$ have the same foot point)
the parallel translate $\sigma(t)$ of $e$
along the geodesic $\gamma$ with $\gamma'(0)=v$.
Then, with $\sigma$ as in the assertion,
we have $\sigma(F(t,x))=\Psi(t-t_0,T(x),\sigma(x))$,
where we recall that $N=N_{t_0}$.
Hence $X$ and $\sigma$ are $C^1$,
where $X$ corresponds to the special case $E=TM$.

Since $\nabla^E_T\sigma=0$ and $T$ is $C^1$,
the $T$ derivatives of the coefficients of $\sigma$
with respect to a smooth local frame of $E$ are $C^1$.
Hence $\nabla^E_T\nabla^E_X\sigma$ exists, is continuous,
and is given by
\begin{align*}
  \nabla^E_T\nabla^E_X\sigma
  &= \nabla^E_X\nabla^E_T\sigma
  - \nabla^E_{\nabla_XT}\sigma + R^E(T,X)\sigma \\
  &= - \nabla^E_{WX}\sigma - R^E(X,T)\sigma .
  \qedhere
\end{align*}
\end{proof}

Among others, the case $E = TM$ is interesting.
In this case, vector fields over $N$ which are tangent to $N$ can,
in general, only be chosen to be $C^1$.

\begin{cor}\label{ricca}
The tensor field $W$ has a continuous derivative $W'$ in the $T$-direction
and satisfies the Riccati equation
\begin{equation*}
  W' + W^2 + R(.,T)T = 0 .
\end{equation*}
\end{cor}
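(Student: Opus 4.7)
The plan is to reduce the Riccati equation to a pointwise application of \lref{c1reg} with the section $\sigma$ chosen to be $T$ itself. The crucial observation is that $T$ satisfies the hypotheses of \lref{c1reg} with $E = TM$: since the $T$-integral curves are unit-speed geodesics, $\nabla_T T = 0$, that is, $T$ is parallel in its own direction; and since $T$ is a $C^1$ vector field on $U$, its restriction to $N$ is in particular $C^1$. Thus $\sigma := T$ is admissible in \lref{c1reg}.

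Next I would construct, at an arbitrary point $p \in U$ lying on a cross section $N_t$, a suitable vector field $X$. Pick any vector $X_0 \in T_p N_t$, extend it to a $C^1$ vector field tangent to $N_t$ on a neighborhood of $p$ in $N_t$, and then propagate by parallel translation along the $T$-geodesics to obtain a vector field $X$ on a neighborhood of $p$ in $U$, so that $\nabla_T X = 0$. Because $\nabla_T T = 0$, one has $T\langle X, T\rangle = \langle \nabla_T X, T\rangle + \langle X, \nabla_T T\rangle = 0$, so $\langle X, T\rangle \equiv 0$ along the flow, and $X$ stays tangent to every cross section it meets. In particular $\nabla_X T = WX$ everywhere in this neighborhood.

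Applying \lref{c1reg} to the pair $(X, T)$ now yields that $\nabla_T \nabla_X T$ exists and is continuous, and
\begin{equation*}
  \nabla_T(WX) + \nabla_{WX}T + R(X,T)T \;=\; 0 .
\end{equation*}
Because $WX$ is itself tangent to the cross sections, $\nabla_{WX}T = W(WX) = W^2 X$, and because $\nabla_T X = 0$ we may identify $\nabla_T(WX)$ with the covariant derivative $(\nabla_T W)(X)$. This gives, at the point $p$,
\begin{equation*}
  (\nabla_T W)(X)(p) + W^2 X(p) + R(X(p), T(p))T(p) = 0 .
\end{equation*}

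Finally I would observe that the right-hand side of this identity depends only on the value $X_0 = X(p)$ and not on the extension, since $W^2$ and the curvature term are tensorial. This shows that the expression $W'(X_0) := -W^2 X_0 - R(X_0,T)T$ is a genuine well-defined endomorphism field on $U$, continuous as a function of the base point (because $W$, $R$, and $T$ are continuous), and that it coincides with the $T$-derivative of $W$ in the sense that $\nabla_T(WX) = W'(X)$ for every $T$-parallel tangential extension $X$; hence $W$ has a continuous covariant derivative $W'$ in the $T$-direction satisfying the Riccati equation. The only real subtlety, and the step I would write out most carefully, is verifying that $T$ genuinely satisfies the hypotheses of \lref{c1reg} and that the formula obtained pointwise assembles into a globally continuous tensor field despite $T$ being only $C^1$; everything else is an algebraic rearrangement of the Jacobi-type identity supplied by \lref{c1reg}.
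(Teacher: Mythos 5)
Your proposal is correct and is exactly the paper's argument: the paper's proof reads ``Choose $\sigma=T$ in \lref{c1reg} and recall that $W=\nabla T$,'' and your write-up simply fills in the routine details (admissibility of $\sigma=T$, the $T$-parallel tangential extension of $X$, and the rearrangement $\nabla_T(WX)=W'X$, $\nabla_{WX}T=W^2X$). No gaps.
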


\begin{proof}
Choose $\sigma=T$ in \lref{c1reg} and recall that $W=\nabla T$.
\end{proof}

The eigenvalues $\kappa_2,\ldots,\kappa_m$ of $W_t$
are the {\em principal curvatures} of the cross section $N_t$.
We let
\begin{equation}
    \kappa := \kappa_2 + \cdots + \kappa_m = \tr W = \div T .
    \label{kappa}
\end{equation}
The maps
\begin{equation}\label{shift}
    F_{t}: N = N_{t_0} \to N_t , \quad F_t(x) := F(t,x)
\end{equation}
are diffeomorphisms whose Jacobian determinants will be denoted by $j=j(t,x)$.
Since $\kappa=\div T$, the latter satisfy the differential equation
\begin{equation}
    j' = \kappa j .
    \label{odejac}
\end{equation}
By \cref{ricca}, we also have
\begin{equation}
  \kappa' = - \| W \|^2 - \Ric(T,T) ,
  \label{ricca2}
\end{equation}
where $\|W\|=(\tr W^2)^{1/2}$ is the Euclidean norm of $W$.

Let $C_R$, $C_R^E$, and $C_W$ be uniform upper bounds
for the operator norms of the curvature $R$ of $M$,
the curvature $R^E$ of $E$, and $W$, respectively.
Then $\kappa$, the $t$-derivative $\kappa'$ of $\kappa$, and $\|W\|$
are uniformly bounded,
and as respective uniform upper bounds
$C_\kappa$, $C_\kappa'$, and $C_w$ we may take
\begin{equation}
  C_\kappa = mC_W , \quad C_\kappa' = m(C_W^2 + C_R) ,
   \quad C_w = \sqrt m C_W ,
  \label{bounds1}
\end{equation}
where we use \eqref{ricca2} for the second assertion.
By \eqref{odejac}, we have
\begin{equation}
  e^{-C(t-s)} j(s,x) \le j(t,x) \le e^{C(t-s)} j(s,x) ,
  \label{estjac}
\end{equation}
or all $s<t$ in $I$ and $x\in N$, where $C=C_\kappa$.

\subsection{From Distance Functions to Dirac Systems}
\label{subdisy}
Let $E\to M$ be a smooth Dirac bundle.
Denote the Hermitian product on $E$ by  $\la .\,,.\ra$.
Our aim is to identify these data over $U$ with a Dirac system over $I$
as in Section \ref{preds}.

For any $t\in I$ and any given Riemannian or Hermitian vector bundle over $U$
with any given metric connection,
we let $P_t^{/\!\!/}$ be parallel translation along the $T$-geodesics from $N$ to $N_t$.
For a section $\sigma$ of the vector bundle over $N$,
we define a section $P^{/\!\!/}\sigma$ over $U$ by
\begin{equation}
  (P^{/\!\!/}\sigma)(t,x) := P_t^{/\!\!/}(\sigma(x)) , \quad x \in N .
  \label{sigmat}
\end{equation}
Thus $P^{/\!\!/}\sigma$ is the extension of $\sigma$ to $U$
which is parallel along the $T$-geodesics,
and this point of view is convenient in arguments and formulations below.
Furthermore, time dependent sections over $N$
correspond to the space of all sections over $U$,
\begin{equation}
  (P^{/\!\!/}\sigma)(t,x) := P_t^{/\!\!/}(\sigma(t,x)) , \quad t\in I, x\in N .
  \label{sigmatt}
\end{equation}
We also let $P_t^{/\!\!/}\sigma:=P^{/\!\!/}\sigma|_{N_t}$.

Now let $H:=L^2(N,E)$,
the Hilbert space of square integrable sections of $E$ over $N=N_{t_0}$.
For $\sigma,\tau\in H$,
the $L^2$ product of the sections $P_t^{/\!\!/}\sigma,P_t^{/\!\!/}\tau$
with respect to $N_t$ is given by
\begin{equation}
  ( \sigma,\tau )_t
  := \int_{N} \la \sigma(x),\tau(x) \ra j(t,x) dx ,
  \label{l2proj}
\end{equation}
where $dx$ denotes the volume element of $N$.
Hence, for each $t\in I$,
the correspondence $\sigma\leftrightarrow P_t^{/\!\!/}\sigma$
identifies the Hilbert space $L^2(N_t,E)$ topologically with $H$.
The following estimate settles the requirement on the family $\mathcal H$
formulated in \eqref{dsax1}.

\begin{lem}\label{ax1}
For all $s<t$ in $I$ and $\sigma_1,\sigma_2\in H$,
\begin{equation*}
   |(\sigma_1,\sigma_2)_{t} - (\sigma_1,\sigma_2)_{s}|
   \le (e^{C(t-s)} - 1) \|\sigma_1\|_{s}\,\|\sigma_2\|_{s} ,
\end{equation*}
where $C=C_\kappa$.
\end{lem}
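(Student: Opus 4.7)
The plan is to reduce the estimate to a pointwise bound on the Jacobian difference $j(t,x)-j(s,x)$ and then apply Cauchy--Schwarz with respect to the measure $j(s,x)\,dx$ on $N$.

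First I would write the difference of inner products as a single integral over the fixed cross section $N$:
\begin{equation*}
(\sigma_1,\sigma_2)_t - (\sigma_1,\sigma_2)_s
= \int_N \langle \sigma_1(x),\sigma_2(x)\rangle \bigl(j(t,x)-j(s,x)\bigr)\,dx.
\end{equation*}
This is possible because the parallel transport identification is independent of $t$, and the $t$-dependence of $(\cdot,\cdot)_t$ enters solely through the Jacobian factor $j(t,x)$ in the definition \eqref{l2proj}.

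Next, I would invoke the two-sided exponential bound \eqref{estjac}, namely $e^{-C(t-s)}j(s,x)\le j(t,x)\le e^{C(t-s)}j(s,x)$ with $C=C_\kappa$, which itself follows from the ODE \eqref{odejac} $j'=\kappa j$ together with the uniform bound $|\kappa|\le C_\kappa=mC_W$ from \eqref{bounds1}. This yields the pointwise estimate
\begin{equation*}
|j(t,x)-j(s,x)| \le (e^{C(t-s)}-1)\,j(s,x),
\end{equation*}
since $1-e^{-C(t-s)}\le e^{C(t-s)}-1$ for $s<t$.

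Finally, I would combine these two observations:
\begin{equation*}
|(\sigma_1,\sigma_2)_t - (\sigma_1,\sigma_2)_s|
\le (e^{C(t-s)}-1)\int_N |\sigma_1(x)|\,|\sigma_2(x)|\,j(s,x)\,dx,
\end{equation*}
and apply the Cauchy--Schwarz inequality for the weighted $L^2$ inner product $(\cdot,\cdot)_s$ on the right. This produces exactly $(e^{C(t-s)}-1)\|\sigma_1\|_s\|\sigma_2\|_s$, as claimed.

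There is really no obstacle here; the lemma is essentially a book-keeping consequence of the Riccati-type control on $W$ and its trace $\kappa$ that was established in Corollary~\ref{ricca} and packaged into \eqref{estjac}. The only point requiring mild care is ensuring that one uses $(e^{C(t-s)}-1)$ rather than the sharper $(1-e^{-C(t-s)})$ when bounding the case $j(t,x)<j(s,x)$, so that a single clean factor appears on the right-hand side.
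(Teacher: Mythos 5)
Your proof is correct and coincides with the paper's own argument: write the difference as a single integral over $N$, bound $|j(t,x)-j(s,x)|$ by $(e^{C(t-s)}-1)j(s,x)$ via \eqref{estjac}, and finish with Cauchy--Schwarz in $(\cdot,\cdot)_s$. The extra remark that $1-e^{-C(t-s)}\le e^{C(t-s)}-1$ is a useful explicit justification of a step the paper leaves implicit.
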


\begin{proof}
By \eqref{l2proj} and \eqref{estjac},
\begin{align*}
  |(\sigma_1,\sigma_2)_{t} - (\sigma_1,\sigma_2)_{s}|
  &\le \int_N | \la  \sigma_1(x),\sigma_2(x)\ra (j(t,x) - j(s,x) | dx \\
  &\le \int_N |\sigma_1(x)| |\sigma_2(x)| (e^{C(t-s)} - 1) j(s,x) dx \\
  &\le (e^{C(t-s)} - 1) \|\sigma_1\|_s \|\sigma_2\|_s .
  \qedhere
\end{align*}
\end{proof}

\begin{lem}\label{ax2a}
For all $s<t$ in $I$ and $C^1$ sections $\sigma$ of $E$ over $U$
which are parallel in the $T$-direction,
\begin{equation*}
  e^{C_0(s-t)} ( \| \sigma \|_{s}^2 + \| \nabla^E\sigma \|_{s}^2 )
  \le \| \sigma \|_t^2 + \| \nabla\sigma \|_t^2
  \le e^{C_0(t-s)} ( \| \sigma \|_{s}^2 + \| \nabla^E\sigma \|_{s}^2 ) ,
\end{equation*}
where $C_0=C_\kappa + mC_R^E + 2 C_W$.
\end{lem}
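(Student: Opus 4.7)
My plan is to set $f(t) := \|\sigma\|_t^2 + \|\nabla^E\sigma\|_t^2$ and show that $|f'(t)| \le C_0 f(t)$; then Gronwall's inequality applied on $[s,t]$ yields both asserted inequalities. Expressed as an integral over the fixed manifold $N$ via the flow $F$, we have
\begin{equation*}
  f(t) = \int_N \bigl(|\sigma|^2 + |\nabla^E\sigma|^2\bigr)(t,x)\,j(t,x)\,dx,
\end{equation*}
so, using \eqref{odejac},
\begin{equation*}
  f'(t) = \int_N \bigl(T|\nabla^E\sigma|^2 + \kappa(|\sigma|^2+|\nabla^E\sigma|^2)\bigr) j\,dx,
\end{equation*}
because $T|\sigma|^2 = 2\Re\la\nabla^E_T\sigma,\sigma\ra = 0$ by hypothesis. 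It remains to bound the integrand pointwise.

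To compute $T|\nabla^E\sigma|^2$ I choose an orthonormal frame $X_1=T,X_2,\ldots,X_m$ on $U$ obtained by parallel-transporting an orthonormal frame of $TM|_N$ along the $T$-geodesics; by \lref{c1reg} each $X_i$ is $C^1$, the frame remains orthonormal at every point, and
\begin{equation*}
  \nabla^E_T\nabla^E_{X_i}\sigma = -\nabla^E_{WX_i}\sigma - R^E(X_i,T)\sigma .
\end{equation*}
Since $|\nabla^E\sigma|^2 = \sum_i|\nabla^E_{X_i}\sigma|^2$ for \emph{any} orthonormal frame, this gives
\begin{equation*}
  T|\nabla^E\sigma|^2 = -2\Re\sum_i \la\nabla^E_{WX_i}\sigma,\nabla^E_{X_i}\sigma\ra
  - 2\Re\sum_i \la R^E(X_i,T)\sigma,\nabla^E_{X_i}\sigma\ra .
\end{equation*}
For the first sum, writing $W_{ij}=\la WX_j,X_i\ra$ (symmetric) and $G_{ij}=\Re\la\nabla^E_{X_i}\sigma,\nabla^E_{X_j}\sigma\ra$ (symmetric and positive semi-definite with $\tr G = |\nabla^E\sigma|^2$), the expression equals $\tr(WG)$, which is bounded in absolute value by $\|W\|_{\rm op}\tr G \le C_W|\nabla^E\sigma|^2$. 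For the second sum, $|R^E(X_i,T)\sigma| \le C_R^E|\sigma|$ and Cauchy--Schwarz give a bound $C_R^E|\sigma|\sqrt{m}\,|\nabla^E\sigma|$; applying weighted AM--GM with weight $\sqrt m$ yields $mC_R^E|\sigma|^2 + C_R^E|\nabla^E\sigma|^2$. Altogether
\begin{equation*}
  |T|\nabla^E\sigma|^2| \le 2C_W|\nabla^E\sigma|^2 + mC_R^E|\sigma|^2 + C_R^E|\nabla^E\sigma|^2 .
\end{equation*}

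Combining with $|\kappa| \le C_\kappa$, the integrand in $f'(t)$ is bounded by
\begin{equation*}
  (C_\kappa + mC_R^E)|\sigma|^2 + (C_\kappa + 2C_W + C_R^E)|\nabla^E\sigma|^2
  \le C_0\bigl(|\sigma|^2 + |\nabla^E\sigma|^2\bigr),
\end{equation*}
so $|f'(t)| \le C_0 f(t)$ on $I$. Integrating this differential inequality on $[s,t]$ yields the stated two-sided estimate. The one place requiring care is that the parallel-transported frame is only $C^1$ and the vectors $X_2,\ldots,X_m$ are not tangent to $N_t$ for $t\ne t_0$; however, orthonormality is all that matters for the identity $|\nabla^E\sigma|^2 = \sum_i|\nabla^E_{X_i}\sigma|^2$, and the necessary $T$-regularity of $\nabla^E_{X_i}\sigma$ is exactly the content of \lref{c1reg}. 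Getting the constants to fit inside $C_0$ is the main bookkeeping point: the symmetry of $W$ (yielding $C_W$ rather than $\sqrt m\,C_W$ in the first sum) and the weighted AM--GM for the curvature term are essential.
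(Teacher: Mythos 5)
Your proof is correct and follows essentially the same route as the paper's: differentiate $\|\sigma\|_t^2+\|\nabla^E\sigma\|_t^2$ using \eqref{odejac} and the formula of \lref{c1reg} for $\nabla^E_T\nabla^E_{X_i}\sigma$, bound the Weingarten term by $2C_W\|\nabla^E\sigma\|_t^2$ and the curvature term by $mC_R^E\|\sigma\|_t^2+C_R^E\|\nabla^E\sigma\|_t^2$ via weighted AM--GM, and integrate the resulting differential inequality. Your explicit remarks on the regularity of the parallel-transported frame and the trace estimate for the $W$-term only make explicit what the paper leaves implicit.
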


\begin{proof}
Using $\la \sigma,\sigma \ra'=0$, we obtain
\begin{equation*}
  (\| \sigma \|_t^2 + \| \nabla^E\sigma \|_t^2 )'
  = \int_N \big(\la \nabla^E\sigma,\nabla^E\sigma \ra'
    + (\la \sigma,\sigma \ra + \la \nabla^E\sigma,\nabla^E\sigma \ra) \kappa \big) j .
\end{equation*}
By \lref{c1reg},
\begin{equation*}
  \la \nabla^E\sigma,\nabla^E\sigma \ra'
  = 2\sum_{2\le i\le m} \big( \la R^E(T,X_i)\sigma, \nabla^E_{X_i}\sigma \ra
   - \la \nabla^E_{WX_i}\sigma, \nabla^E_{X_i}\sigma \ra \big) ,
\end{equation*}
where $(T,X_1,\ldots,X_n)$ is a local orthonormal frame of $M$.
Hence
\begin{align*}
   |(\| \sigma \|_t^2 + \| \nabla^E\sigma \|_t^2 )'|
  &\le mC_R^E \| \sigma \|_t^2 + C_R^E \| \nabla^E\sigma \|_t^2 \\
  &\qquad + 2C_W \| \nabla^E\sigma \|_t^2
  + C_\kappa (\| \sigma \|_t^2 + \| \nabla^E\sigma \|_t^2) \\
  &\le (C_\kappa + mC_R^E + 2C_W) ( \| \sigma \|_t^2 + \| \nabla^E\sigma \|_t^2 ) .
  \qedhere
\end{align*}
\end{proof}

Along the cross sections $N_t$,
we change Clifford multiplication and connection of $E$
according to \eqref{tast} and \eqref{nast}.
Denote by $\nabla^t$ the new connection
and by $D_t$ the associated Dirac operator as in \eqref{dast}.
We note that Clifford multiplication with $T$ is $\nabla^t$-parallel.
For convenience, we will not keep the $\ast$ notation,
but will write $TX\sigma$ instead of $X\ast\sigma$.
With this in mind,
the Dirac operators $D$ and $D_t$ are related by
\begin{equation}
  D = T(\nabla^E_T + \sum TX_i\nabla^E_{X_i})
  = T\big(\big(\nabla^E_T + \frac{\kappa}2\big) - D_t\big) ,
  \label{sepvar}
\end{equation}
where $(T,X_2,\ldots,X_m)$ is a local orthonormal frame of $M$.

\begin{lem}\label{ddt}
For any $C^1$ section $\sigma$ of $E$ over $U$
which is parallel in the $T$-direction,
\begin{equation*}
  \| \nabla^E\sigma|_{N_t} - \nabla^t\sigma \|^2
  = \frac14 \|W\|^2 |\sigma|^2
  \quad\text{and}\quad
  | TD\sigma - D_t\sigma |^2 = \frac14\kappa^2 |\sigma|^2 .
\end{equation*}
\end{lem}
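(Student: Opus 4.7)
The plan is to prove both identities pointwise on $N_t$ by picking, at each point, a local orthonormal frame $(T, X_2, \ldots, X_m)$ of $M$ with $X_2, \ldots, X_m$ tangent to $N_t$, and then exploiting the basic Clifford identities $T^2 = -1$, $TX_i = -X_iT$ for $i\ge 2$, and $|v\sigma|=|v||\sigma|$, together with the hypothesis $\nabla^E_T\sigma = 0$ that $\sigma$ is $T$-parallel. No analytic estimates are needed; both statements are really algebraic rearrangements of the modified Clifford datum \eqref{tast}--\eqref{nast} and the separation-of-variables formula \eqref{sepvar}.

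For the first identity, I would unwind the definition \eqref{nast}, which gives
\begin{equation*}
  (\nabla^E_X - \nabla^t_X)\sigma = \frac12 (WX)\ast\sigma = \frac12 T(WX)\sigma
\end{equation*}
for every vector field $X$ on $U$. Applying this to the tangential basis $X_2, \ldots, X_m$ and using $|T(WX_i)\sigma| = |T|\,|WX_i|\,|\sigma| = |WX_i|\,|\sigma|$, the squared fibre norm of $\nabla^E\sigma|_{N_t}-\nabla^t\sigma$, regarded as an element of $T^*N_t\otimes E$, is
\begin{equation*}
  \sum_{i=2}^m \frac14 |WX_i|^2 |\sigma|^2 = \frac14 \|W\|^2 |\sigma|^2,
\end{equation*}
where the last equality uses that $W$ is self-adjoint on $TN_t$ so that $\|W\|^2 = \tr W^2 = \sum_{i=2}^m |WX_i|^2$, as in \eqref{ricca2}.

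For the second identity, I would invoke \eqref{sepvar} to write $D\sigma = T\bigl((\nabla^E_T+\kappa/2)\sigma - D_t\sigma\bigr)$. Multiplying on the left by $T$ and using $T^2 = -1$ gives
\begin{equation*}
  TD\sigma - D_t\sigma = -\nabla^E_T\sigma - \frac{\kappa}{2}\sigma.
\end{equation*}
Because $\sigma$ is parallel in the $T$-direction, the first term on the right vanishes, so $TD\sigma - D_t\sigma = -(\kappa/2)\sigma$, and taking pointwise squared norms yields the claim.

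The only thing that could go wrong here is a slip in the sign bookkeeping of Clifford multiplication, but since the nontrivial work is already absorbed into the formulas \eqref{dast} and \eqref{sepvar} derived in Section~\ref{susdb} and Section~\ref{subdisy}, I do not anticipate any genuine obstacle; the lemma is essentially a direct corollary of those two identities together with the Clifford identity $|v\sigma|=|v||\sigma|$.
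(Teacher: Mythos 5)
Your proof is correct and follows essentially the same route as the paper: the first identity by expanding $\nabla^E_{X_i}\sigma-\nabla^t_{X_i}\sigma=\tfrac12 T(WX_i)\sigma$ over an orthonormal frame and summing $|WX_i|^2$, and the second as an immediate consequence of \eqref{sepvar} together with $T^2=-1$ and $\nabla^E_T\sigma=0$.
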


\begin{proof}
The second assertion is immediate from \eqref{sepvar}.
As for the first, let $(T,X_2,\ldots,X_m)$ be an orthonormal frame of $M$.
Then
\begin{align*}
  4 \| \nabla^E\sigma|_{N_t} - \nabla^t\sigma \|^2
  &= 4 \sum \la \nabla^E_{X_i}\sigma - \nabla^t_{X_i}\sigma,
  \nabla^E_{X_i}\sigma - \nabla^t_{X_i}\sigma \ra \\
  &= \sum \la TWX_i\sigma,TWX_i\sigma \ra \\
  &= \sum |WX_i |^2 |\sigma|^2 = \|W\|^2 |\sigma|^2 .
  \qedhere
\end{align*}
\end{proof}

Since the cross sections $N_t$ are $C^2$ submanifolds of $U$,
the restrictions of $E$ to them are $C^2$ bundles.
However, because of the term involving $W=\nabla T$,
the connection $\nabla^t$ is, in the generality we strive for, only continuous.
If $\nabla^t$ were a $C^1$ connection,
we would get \eqref{cast} for its curvature, now denoted $R^t$.
The right hand side of \eqref{cast} makes sense in the case
where $W$ is only continuous,
so that we may consider it as defining $R^t$.
Approximating $N_t$ by smooth submanifolds
and $C^1$ sections by smooth sections, \eqref{lic3} implies that
\begin{equation}
  (\nabla^t\sigma_1,\nabla^t\sigma_2)_{t} + (K^t\sigma_1,\sigma_2)_{t}
  = (D_t\sigma_1,D_t\sigma_2)_{t}
  \label{licht}
\end{equation}
for all $C^1$ sections $\sigma$ and $\tau$ of the restriction of $E$ to $N_t$,
where the curvature term in the Lichnerowicz formula as in \eqref{lic3}
is now denoted by $K^t$.
We recall from \eqref{lic4} that $K^t$ is uniformly bounded.

We extend our correspondence $\sigma\leftrightarrow P^{/\!\!/}\sigma$
as in \eqref{sigmat} and \eqref{sigmatt}:
Since $T$ is parallel in the $T$-direction,
Clifford multiplication by $T$ along $N$ satisfies
\begin{equation}
  TP^{/\!\!/}\sigma = P^{/\!\!/}T\sigma
  \quad\text{and}\quad
  \nabla_TP^{/\!\!/}\sigma = P^{/\!\!/}\sigma' ,
  \label{corrtg}
\end{equation}
for any time dependent section $\sigma$ of $E$ over $N$.
Finally, we define $A_t$ to be the differential operator
on sections of $E$ over $N$ which corresponds to the operator $-D_t$,
\begin{equation}
  P_t^{/\!\!/}(A_t\sigma) = - D_tP_t^{/\!\!/}\sigma .
  \label{defat}
\end{equation}
In this notation, $D$ corresponds to the operator
\begin{equation}
  T(\partial + A) , \quad\text{where $\partial:=\frac{d}{dt} + \frac{\kappa}{2}$} ,
  \label{corrd}
\end{equation}
where $\kappa/2$ takes the role of $\Gamma$ in \eqref{dsGamm} and \eqref{dspart}.
Thus we have associated the Dirac system
\begin{equation}
  \mathcal D := (\mathcal H,\mathcal A,T)
  \label{corrd2}
\end{equation}
to the distance function $f$ on and the Dirac bundle $E$ over $U$,
where we recall from \eqref{odejac} that $\kappa$
(which occurs in the definition of $\partial$) is defined by these data.
We will now proceed with discussing the requirement for Dirac systems
as formulated in Section \ref{preds}.
We already observed that \lref{ax1} settles \eqref{dsax1}.
Furthermore, Clifford multiplication by $T$ satisfies the requirements
\eqref{dsax3}--\eqref{dsax3c},
by \eqref{corrtg} and since Clifford multiplication by $T$ is $\nabla^t$-parallel.

It follows from \eqref{licht} that,
on sections of the restriction of $E$ to $N_t$,
the graph norm of $D_t$ is equivalent to the $H^1$ norm.
In particular,
$D_t$ is self-adjoint with domain $H^1(N_t,E)$ in $L^2(N_t,E)$.
Moreover,
since the inclusion $H^1(N_t,E)\hookrightarrow L^2(N_t,E)$ is compact,
the spectrum of $D_t$ consists of eigenvalues with finite multiplicities.
We also observe that, for any section $\sigma$ of $E$ over $N$,
$P\sigma|_{N_t}\in H^1(N_t,E)$
if and only if $\sigma\in H^1(N,E)$, by \lref{ax2a}.
Thus the operators $A_t$ are all self-adjoint with the same domain,
$H_A:=H^1(N,E)$, in $H=L^2(N,E)$,
and the embedding $H_A\to H$ is compact
with respect to the graph norm of any of the operators $A_t$.
This settles the first part of the requirements for the $A_t$
in Section \ref{preds}.

\begin{lem}\label{aprim}
For any $C^1$ section $\sigma$ of $E$ over $U$
which is parallel in the $T$-direction,
\begin{equation*}
  D_t' \sigma
  =  \sum_{2\le i\le m}
  TX_i \{ R^E(T,X_i)\sigma - \nabla^E_{WX_i}\sigma \} + \frac{\kappa'}2\sigma ,
\end{equation*}
where $(T,X_2,\ldots,X_m)$ is a local orthonormal frame of $M$.
\end{lem}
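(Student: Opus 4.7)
The plan is to differentiate the explicit formula for $D_t$ given by \eqref{dast} in the $T$-direction. The key reinterpretation is that, since $\sigma$ is $T$-parallel, the map $t\mapsto D_t\sigma|_{N_t}$ corresponds (under parallel translation along $T$-geodesics, as in \eqref{sigmatt}--\eqref{defat}) to a one-parameter family of sections on $N$, and $D_t'\sigma$ is its pointwise $T$-derivative, which may be computed as $\nabla^E_T$ applied to the section of $E$ over $U$ obtained by taking $D_t\sigma$ at the point of $N_t$ reached by the $T$-geodesic.

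First I would choose a local orthonormal frame in a convenient way: fix an orthonormal basis $(X_2,\dots,X_m)$ of $T_xN$ at some $x\in N$ and extend it by parallel translation along the $T$-geodesic through $x$. Since $\nabla_TT=0$, the inner products $\langle X_i,T\rangle$ and $\langle X_i,X_j\rangle$ are constant along the geodesic, so $(T,X_2,\dots,X_m)$ stays orthonormal with $X_i\in T_\cdot N_t$ (tangent to each cross section). In particular $\nabla_TX_i=0$ and $\nabla_TT=0$ throughout.

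Next, apply $\nabla^E_T$ term by term to
\begin{equation*}
D_t\sigma \;=\; \sum_{2\le i\le m} TX_i\,\nabla^E_{X_i}\sigma \;+\; \tfrac{\kappa}{2}\sigma.
\end{equation*}
Using the Clifford compatibility \eqref{cl2} twice together with $\nabla_TT=\nabla_TX_i=0$, the derivative of the $i$-th summand reduces to $TX_i\,\nabla^E_T\nabla^E_{X_i}\sigma$. By \lref{c1reg},
\begin{equation*}
\nabla^E_T\nabla^E_{X_i}\sigma \;=\; -\nabla^E_{WX_i}\sigma - R^E(X_i,T)\sigma \;=\; R^E(T,X_i)\sigma - \nabla^E_{WX_i}\sigma.
\end{equation*}
For the scalar summand, $\nabla^E_T\!\bigl(\tfrac{\kappa}{2}\sigma\bigr)=\tfrac{\kappa'}{2}\sigma$ since $\nabla^E_T\sigma=0$. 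Adding the contributions yields the stated formula.

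The only subtlety — not really an obstacle — is the regularity: $W$ is merely continuous and $\nabla_TW$ need not exist in the classical sense, so one must justify that $D_t'\sigma$ is well defined. This is precisely where \lref{c1reg} enters, guaranteeing that $\nabla^E_T\nabla^E_{X_i}\sigma$ exists and is continuous for $T$-parallel $\sigma$; combined with the continuity of $W$ and $\kappa'$ (see \cref{ricca} and \eqref{ricca2}), the right-hand side is continuous, and the computation above is legitimate. Once the $T$-parallel frame is in place, the calculation is a direct unwinding of definitions.
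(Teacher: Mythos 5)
Your proof is correct and follows essentially the same route as the paper: differentiate the separation-of-variables formula \eqref{dast} term by term along a $T$-parallel orthonormal frame and invoke \lref{c1reg} for the commutator $\nabla^E_T\nabla^E_{X_i}\sigma$. The paper's proof is exactly this two-line computation; your additional remarks on the choice of frame and on regularity are correct and only make explicit what the paper leaves implicit.
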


\begin{proof}
By \lref{c1reg},
\begin{align*}
  D_t'\sigma
  &= \sum_{2\le i\le m}
  TX_i\nabla^E_T\nabla^E_{X_i}\sigma + \frac{\kappa'}2\sigma \\
  &= \sum_{2\le i\le m}
  TX_i \{ R^E(T,X_i)\sigma - \nabla^E_{WX_i}\sigma \} + \frac{\kappa'}2\sigma .
  \qedhere
\end{align*}
\end{proof}

\begin{cor}\label{aprim2}
For any $C^1$ section $\sigma$ of $E$ over $U$,
which is parallel in the $T$-direction,
\begin{equation*}
  \| D_t'\sigma \|_t
  \le C_1\| \sigma \|_t + C_w\| \nabla^E\sigma \|_t
  \le C_2\| \sigma \|_t + C_w\| D_t\sigma \|_t ,
\end{equation*}
where $C_1=mC_R^E+C_\kappa'$ and
$C_2=mC_R^E + C_\kappa' + C_w^2 + C_wC_K^{1/2}$.
\end{cor}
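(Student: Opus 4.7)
The plan is to substitute the formula for $D_t'\sigma$ from Lemma \ref{aprim}, bound the three resulting summands pointwise on $N_t$, integrate to obtain the first inequality, and then use the Lichnerowicz identity \eqref{licht} together with Lemma \ref{ddt} to replace $\|\nabla^E\sigma\|_t$ by $\|D_t\sigma\|_t$ up to a multiple of $\|\sigma\|_t$, yielding the second inequality.

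For the first inequality, fix a local orthonormal frame $(T,X_2,\ldots,X_m)$ along $N_t$. Lemma \ref{aprim} gives
\[
  D_t'\sigma = \sum_{i=2}^m TX_i R^E(T,X_i)\sigma - \sum_{i=2}^m TX_i \nabla^E_{WX_i}\sigma + \frac{\kappa'}{2}\sigma.
\]
Since Clifford multiplication by unit vectors is an isometry on $E$, the first summand is pointwise bounded by $\sum_i |R^E(T,X_i)\sigma| \le (m-1)C_R^E|\sigma|$, and the last by $(C_\kappa'/2)|\sigma|$ thanks to \eqref{bounds1}. For the middle term, write $WX_i = \sum_j W_{ji}X_j$ and group
\[
  \sum_{i} TX_i\nabla^E_{WX_i}\sigma = \sum_j V_j\bigl(\nabla^E_{X_j}\sigma\bigr),
  \qquad V_j := \sum_i W_{ji}\, TX_i.
\]
Using $(TX_i)^* = X_i T = -TX_i$ and the Clifford relations $X_iX_k+X_kX_i = -2\delta_{ik}$, a direct computation collapses the cross terms and yields the key identity $V_j^*V_j = \bigl(\sum_i W_{ji}^2\bigr)\operatorname{id}_E$. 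Cauchy-Schwarz over $j$ then gives, pointwise,
\[
  \Bigl|\sum_i TX_i\nabla^E_{WX_i}\sigma\Bigr| \le \Bigl(\sum_j \|V_j\|^2\Bigr)^{1/2}|\nabla^E\sigma| = \|W\|\cdot|\nabla^E\sigma| \le C_w|\nabla^E\sigma|,
\]
where $|\nabla^E\sigma|^2 = \sum_j|\nabla^E_{X_j}\sigma|^2$ because $\nabla^E_T\sigma=0$. Integrating over $N_t$ and applying the triangle inequality in $L^2$ gives $\|D_t'\sigma\|_t \le \bigl((m-1)C_R^E + C_\kappa'/2\bigr)\|\sigma\|_t + C_w\|\nabla^E\sigma\|_t$, which is majorized by the claimed $C_1\|\sigma\|_t + C_w\|\nabla^E\sigma\|_t$.

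For the second inequality, apply the Lichnerowicz identity \eqref{licht} on the closed hypersurface $N_t$, together with the uniform bound \eqref{lic4} on the operator norm of $K^t$, so that
\[
  \|\nabla^t\sigma\|_t^2 \le \|D_t\sigma\|_t^2 + C_K\|\sigma\|_t^2,
  \quad\text{hence}\quad
  \|\nabla^t\sigma\|_t \le \|D_t\sigma\|_t + C_K^{1/2}\|\sigma\|_t.
\]
Lemma \ref{ddt} integrated over $N_t$ gives $\|\nabla^E\sigma|_{N_t} - \nabla^t\sigma\|_t \le (C_w/2)\|\sigma\|_t$, so by the triangle inequality
\[
  \|\nabla^E\sigma\|_t \le \|D_t\sigma\|_t + \bigl(C_K^{1/2} + C_w/2\bigr)\|\sigma\|_t.
\]
Multiplying by $C_w$ and inserting into the first inequality yields the second inequality with constant $mC_R^E + C_\kappa' + C_wC_K^{1/2} + C_w^2/2 \le C_2$.

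The only nonroutine step is the Clifford-algebraic identity $V_j^*V_j = \bigl(\sum_i W_{ji}^2\bigr)\operatorname{id}$, which is what converts the naive bound of order $\sqrt{m-1}\,\|W\|$ into the sharper $\|W\|$ and thus lets $C_w$ (rather than a larger multiple of it) appear as the coefficient of $\|\nabla^E\sigma\|_t$; everything else is triangle inequalities and direct appeal to the lemmas already established.
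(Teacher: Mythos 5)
Your proof is correct and follows essentially the same route as the paper: substitute Lemma~\ref{aprim}, bound the three terms pointwise, then use Lemma~\ref{ddt} and \eqref{licht} to trade $\|\nabla^E\sigma\|_t$ for $\|D_t\sigma\|_t$. The only difference is cosmetic: where you prove the Clifford identity $V_j^*V_j=(\sum_iW_{ji}^2)\operatorname{id}$ in a general frame, the paper gets the same bound $\|W\|\,|\nabla^E\sigma|$ more directly by choosing the $X_i$ to be eigenvectors of the symmetric operator $W$ and applying Cauchy--Schwarz to $\sum|\kappa_i|\,|\nabla^E_{X_i}\sigma|$.
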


\begin{proof}
By Lemmas \ref{aprim} and \ref{ddt},
we have, at any point $p$ of $N_t$,
\begin{align*}
  |D_t'\sigma|
  &\le (mC_R^E+\frac12C_\kappa')|\sigma|
  + \sum |\kappa_i| |\nabla^E_{X_i}\sigma| \\
  &\le (mC_R^E+\frac12C_\kappa')|\sigma|
  + \|W\| \|\nabla^E\sigma\| \\
  &\le (mC_R^E+\frac12C_\kappa')|\sigma| + C_w \|\nabla^E\sigma\| \\
  &\le (mC_R^E+\frac12C_\kappa' + \frac12C_w^2)|\sigma|
  + C_w \|\nabla^t\sigma\| ,
\end{align*}
where $(T,X_2,\ldots,X_m)$ is an orthonormal frame at $p$ such that the $X_i$
are eigenvectors of $W$ with corresponding eigenvalues $\kappa_i$.
By \eqref{licht},
\begin{equation*}
  \|\nabla^t\sigma\|_t^2 \le \|D_t\sigma\|_t^2 + C_K \| \sigma \|_t^2 .
  \qedhere
\end{equation*}
\end{proof}

\begin{lem}\label{ax2}
For all $s<t$ in $I$ and $C^1$ sections $\sigma_1,\sigma_2\in H$ of $E$,
\begin{equation*}
   |(A_t\sigma_1,\sigma_2)_{t} - (A_s\sigma_1,\sigma_2)_{s}|
   \le C (e^{C_0(t-s)/2} - 1) (\|\sigma_1\|_s + \|A_s\sigma_1\|_{s}) \|\sigma_2\|_{s} ,
\end{equation*}
where $C=C(C_R,C_R^E,C_W,m)$.
\end{lem}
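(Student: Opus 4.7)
The plan is to differentiate $u\mapsto(A_u\sigma_1,\sigma_2)_u$ and then integrate from $s$ to $t$. Using \eqref{defat} together with the isometric identifications of $H$ with $L^2(N_u,E)$ via parallel transport along $T$-geodesics, and recalling from \lref{c1reg} that the parallel extensions $P^{/\!\!/}\sigma_i$ are $C^1$ on $U$, I rewrite
\begin{equation*}
  (A_u\sigma_1,\sigma_2)_u
  = -\int_N \langle (D_u P^{/\!\!/}\sigma_1)(u,x),(P^{/\!\!/}\sigma_2)(u,x)\rangle\, j(u,x)\,dx .
\end{equation*}
Since $\nabla_T P^{/\!\!/}\sigma_i=0$, the $T$-derivative of $D_u P^{/\!\!/}\sigma_1$ is the quantity $D_u' P^{/\!\!/}\sigma_1$ computed in \lref{aprim}, while $j'=\kappa j$ by \eqref{odejac}. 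Differentiating under the integral, applying Cauchy--Schwarz, and invoking \cref{aprim2} together with the uniform bound $|\kappa|\le C_\kappa$, I obtain
\begin{equation*}
  \left|\frac{d}{du}(A_u\sigma_1,\sigma_2)_u\right|
  \le C_1\bigl(\|\sigma_1\|_u + \|A_u\sigma_1\|_u\bigr)\|\sigma_2\|_u
\end{equation*}
for a constant $C_1=C_1(C_R,C_R^E,C_W,m)$.

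The remaining step is to control the $u$-dependent norms on the right by their values at $s$. \lref{ax1} directly yields $\|\sigma_i\|_u\le e^{C_\kappa(u-s)/2}\|\sigma_i\|_s$. For $\|A_u\sigma_1\|_u$, I would apply the same pointwise differentiation to $\|A_u\sigma_1\|_u^2=\int_N|D_u P^{/\!\!/}\sigma_1|^2 j\,dx$; using \lref{aprim} and \cref{aprim2} once more produces a differential inequality of the form
\begin{equation*}
  \frac{d}{du}\bigl(\|\sigma_1\|_u^2+\|A_u\sigma_1\|_u^2\bigr)
  \le C_2\bigl(\|\sigma_1\|_u^2+\|A_u\sigma_1\|_u^2\bigr),
\end{equation*}
and Gronwall's lemma then gives $\|\sigma_1\|_u+\|A_u\sigma_1\|_u\le \sqrt{2}\,e^{C_2(u-s)/2}\bigl(\|\sigma_1\|_s+\|A_s\sigma_1\|_s\bigr)$. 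Inserting these three bounds into the integrand and integrating from $s$ to $t$ produces an inequality of the form $C(e^{\alpha(t-s)}-1)(\|\sigma_1\|_s+\|A_s\sigma_1\|_s)\|\sigma_2\|_s$ with $\alpha$ and $C$ depending only on $C_R,C_R^E,C_W,m$.

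I expect the main obstacle to be pinning down the exact exponential rate: a direct application of \cref{aprim2} inflates the rate through terms such as $C_\kappa'$, $C_w^2$, and $C_K^{1/2}$, so one either allows $C_0$ in the statement to be whatever constant the Gronwall step ultimately produces (still depending only on the prescribed data) and lets the prefactor $C$ absorb everything else, or extracts a sharper pointwise bound on $D_u'\sigma$ to reduce the rate to the $C_0$ of \lref{ax2a}. A secondary, routine point is the justification of differentiation under the integral sign, which reduces to joint $C^1$-regularity of $(u,x)\mapsto(D_u P^{/\!\!/}\sigma_1)(u,x)$; this is handled by choosing a $T$-parallel local frame $(T,X_2,\dots,X_m)$ near each point and invoking \lref{c1reg}.
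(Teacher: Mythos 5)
Your proposal is correct and follows essentially the same strategy as the paper: write the difference as the integral of $\frac{d}{du}(A_u\sigma_1,\sigma_2)_u$, bound the derivative pointwise via \lref{aprim} and \cref{aprim2} together with $j'=\kappa j$, propagate the $u$-dependent norms back to time $s$ exponentially, and integrate. The one place you diverge is the propagation step: the paper bounds $\|D_u'\sigma_1\|_u$ by $C_1\|\sigma_1\|_u+C_w\|\nabla^E\sigma_1\|_u$, pulls $(\|\sigma_1\|_u^2+\|\nabla^E\sigma_1\|_u^2)^{1/2}$ back to time $s$ using the already-established \lref{ax2a}, and only at the end converts to $\|\sigma_1\|_s+\|A_s\sigma_1\|_s$ via the Bochner identity \eqref{licht}; you instead run a fresh Gronwall argument on $\|\sigma_1\|_u^2+\|A_u\sigma_1\|_u^2$, which is equally valid (the required differential inequality follows from the same \cref{aprim2}) but duplicates work that \lref{ax2a} already encapsulates. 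Your worry about the exact exponential rate is well-founded but immaterial: the lemma is only used to verify the locally Lipschitz condition \eqref{dsax2}, for which any rate $\alpha=\alpha(C_R,C_R^E,C_W,m)$ gives $e^{\alpha(t-s)}-1\le C(a,b)|t-s|$ on compacta, and the paper's own computation is no more careful about this (it silently drops the growth of $\|\sigma_2\|_r$ between $s$ and $r$). So your first fallback — accept whatever rate the Gronwall step produces — is exactly what is needed, and no sharper pointwise bound on $D_u'\sigma$ is required.
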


\begin{proof}
Extend $\sigma_1$ and $\sigma_2$
by parallel translation along the $T$-geodesics.
Then $D_t$ corresponds to $-A_t$, and we get
\begin{align}
   | (D_t\sigma_1,\sigma_2 )_{t} - ( D_s\sigma_1,\sigma_2)_{s}|
   &\le \big| \int_s^t\int_N \big( \la D_r\sigma_1,\sigma_2\ra j \big)' \big|
   \notag \\
   &\le \int_s^t\int_N \big| \la D_r'\sigma_1,\sigma_2\ra + \la D_r\sigma_1,\sigma_2\ra \kappa \big| j
   \notag \\
   &\le \int_s^t\int_N \big( \|D_r'\sigma_1\| + C_\kappa \|D_r\sigma_1\| \big) \|\sigma_2\| j .
   \label{estint}
\end{align}
By \cref{aprim2} and \lref{ax2a},
the first term on the right hand side of \eqref{estint} can be estimated by
\begin{align*}
   \int_s^t\int_N \|D_r'\sigma_1\| &\|\sigma_2\| j
   \le 2(C_1+C_w) \int_s^t
   ( \|\sigma_1\|_r^2 + \|\nabla^E\sigma_1\|_r^2 )^{1/2} \|\sigma_2\|_r \\
   &\le  2(C_1+C_w) \int_s^t  e^{C_0(r-s)/2}
   ( \|\sigma_1\|_s^2 + \|\nabla^E\sigma_1\|_s^2 )^{1/2} \|\sigma_2\|_s \\
   &= 4\frac{C_1+C_w}{C_0} (e^{C_0(t-s)/2} - 1)
   ( \|\sigma_1\|_s^2 + \|\nabla^E\sigma_1\|_s^2 )^{1/2} \|\sigma_2\|_s .
\end{align*}
Concerning the second term on the right hand side of \eqref{estint},
namely the integral of $\|D_r\sigma_1\| \|\sigma_2\| j$,
we note that $\|D_r\sigma\|\le\sqrt{m-1}\|\nabla^r\sigma\|$.
Hence we can estimate this term in a similar way, using \lref{ddt}.
We arrive at an estimate
\begin{multline*}
   | (D_t\sigma_1,\sigma_2 )_{t} - ( D_s\sigma_1,\sigma_2)_{s}|
   \\ \le C'  (e^{C_0(t-s)/2} - 1)
   ( \|\sigma_1\|_s^2 + \|\nabla^E\sigma_1\|_s^2 )^{1/2} \|\sigma_2\|_s ,
\end{multline*}
where $C'=C'(C_R,C_R^E,C_W,m)$.
Finally, the Bochner formula \eqref{licht} and the ensuing lines show that
\begin{align*}
   \|\sigma_1\|_s^2 + \|\nabla^E\sigma_1\|_s^2
   &\le C(C_R^E,m) ( \|\sigma_1\|_s + \|D_s\sigma_1\|_{s} ) \\
   &= C(C_R^E,m) ( \|\sigma_1\|_s + \|A_s\sigma_1\|_{s} ) .
   \qedhere
\end{align*}
\end{proof}

Since $C^1$ sections are dense in $H^1(N,E)$,
\lref{ax2} confirms the remaining requirements for the operators $A_t$
in Section \ref{preds}.
Thus the system $\mathcal D=(\mathcal H,\mathcal A,T)$ over $I$
from \eqref{corrd2} is a Dirac system in the sense of Section \ref{preds}
and, therefore, in the sense of Section 2.1 in \cite{disy}.

\section{Boundary Values and Fredholm Properties}
\label{secbv}

Let $\mathcal D=(\mathcal H,\mathcal A,T)$ be a Dirac system over
\begin{equation}
  I = \R_+ := [0,\infty) .
  \label{rplus}
\end{equation}
with origin $t_0=0$,
where we note that an analogous discussion holds true for other intervals
with non-empty boundary.
By \eqref{dspain}, the restriction $D_{0,c}$ of the Dirac operator $D$ to
\begin{equation}
  \mathcal L_{0,c}(\mathcal D) :=
  \{ \sigma \in \mathcal L_{c}(\mathcal D) : \sigma(0) = 0 \}
  \label{bv0c}
\end{equation}
is symmetric.
The adjoint operator of $D_{0,c}$ with respect to
$L^2(\mathcal D)\supseteq\mathcal L_{0,c}(\mathcal D)$
is called the {\em maximal extension} of $D$ on $\mathcal L_{c}(\mathcal D)$.
We denote it by $D_{\max}$ and let $\dom D_{\max}$ be the domain of $D_{\max}$,
endowed with the graph norm of $D_{\max}$.
The adjoint operator $D_{\min}$ of $D_{\max}$ is equal to the closure of $D$
on $\mathcal L_{c}(\mathcal D)$.
It is called the {\em minimal extension} of $D$,
and its domain is denoted by $\dom D_{\min}$.
We also let $H^1(\mathcal D)$ be the completion of $\mathcal L_{c}(\mathcal D)$
with respect to the norm
\begin{equation}
  \|\sigma\|_{H^1(\mathcal D)}^2
  := \|\sigma\|_{L^2(\mathcal D)}^2 + \|\partial\sigma\|_{L^2(\mathcal D)}^2
  + \|A\sigma\|_{L^2(\mathcal D)}^2 .
  \label{bvh1}
\end{equation}
Obviously,
\begin{equation}
  \mathcal L_{c}(\mathcal D)
  \subseteq H^1(\mathcal D)
  \subseteq \dom D_{\max}
  \subseteq L^2(\mathcal D) .
  \label{bvincl}
\end{equation}
To formulate the main results on $\dom D_{\max}$ from \cite{disy},
we need to discuss boundary values of sections at $t=0$.
As for proofs of the corresponding assertions,
we refer to the discussion in Chapters 1 and 2 of \cite{disy}
and, in particular, to Proposition 2.30 loc.cit.

\subsection{Boundary Values}
\label{susebv}
Recall the convention $H=H_0$.
Recall also that $A_0$ is self-adjoint in $H$ with domain $H_A$.
It will be convenient, in this section, to denote elements of $H$
by letters $x,y$ and to call them vectors.
Fix an orthonormal basis $(x_i)$ of $H$ which consists
of eigenvectors of $A_0$, $A_0x_i=\lambda_ix_i$.

For $s\ge0$,
let $H^s=H^s(A_0)\subseteq H=H_0$ be the domain of $|A_0|^s$.
Then $H^0=H$, $H^1=H_A$,
and $H^\infty=H^\infty(A_0):=\cap_{s\ge0}H^s$ is a dense subspace of $H$.
For $s\in\R$, define an inner product $\la.,.\ra_s$ on $H^\infty$,
\begin{equation}
  \la x,y \ra_s
  := ( (I+A_0^2)^{s/2}x , (I+A_0^2)^{s/2}y ) .
  \label{bvips}
\end{equation}
For $s\ge0$, the norm $\|.\|_s$ associated to $\la.,.\ra_s$
is equivalent to the graph norm of $|A_0|^s$,
and $H^s$ is equivalent to the completion of $H^\infty$ with respect to $\|.\|_s$.
For $s<0$,
define $H^s=H^s(A_0)$ to be the completion of $H^\infty$
with respect to $\|.\|_s$
and set $H^{-\infty}=H^{-\infty}(A_0):=\cup_{s\in\R}H^s$.
In terms of the above basis $(x_i)$ of eigenvectors,
$H^s$ consists of all linear combinations $x=\sum\xi_ix_i$ with
\begin{equation}
  \sum (1+\lambda_i^2)^{s}|\xi_i|^2 < \infty .
  \label{bvhs}
\end{equation}
The pairing
\begin{equation}
  B_s: H^s \times H^{-s} \to \C , \quad
  B_s(x,y) := ( (I+A_0^2)^{s/2}x , (I+A_0^2)^{-s/2}y ) ,
  \label{bvdp}
\end{equation}
is perfect, that is, identifies $H^{-s}$ with the dual space of $H^s$.

For a subset $J\subset\R$, let $Q_J=Q_J(A_0)$
be the corresponding spectral projection of $A_0$ in the spaces $H^s$.
The image of $H^s$ under $Q_J$ is
\begin{equation}
  H^s_J =H^s_J(A_0)
  := \{ x=\sum \xi_ix_i \in H^s : \text{$\xi_i=0$ if $\lambda_i\notin J$} \} .
  \label{bvsd}
\end{equation}
For $x\in H^s$, we also let $x_J:=Q_Jx$ .
For any bounded subset $J$ of $\R$, we have $H^s_J\subseteq H^\infty$.
Since $T=T_0$ anti-commutes with $A_0$,
\begin{equation}
  TQ_J = Q_{-J} T
  \quad\text{and}\quad
  T H^s_J = H^s_{-J}
  \label{bvgam}
\end{equation}
As shorthand, we use, for $a\in\R$,
\begin{alignat}{2}
  Q_{>a} &:= Q_{(a,\infty)} , &\quad Q_{\ge a} &:= Q_{[a,\infty)} , \\
  Q_{<a} &:= Q_{(-\infty,a)} , &\quad Q_{\le a} &:= Q_{(-\infty,a]} ,
\end{alignat}
and similarly for the spaces $H^s_J=Q_J(H^s)$.
We also need to introduce the hybrid Sobolev space
\begin{equation}
   \check H = \check H(A_0)
   := H^{1/2}_{\le0} \oplus H^{-1/2}_{>0} .
   \label{checkh}
\end{equation}
Since $H_J\subseteq H^\infty$, for any bounded $J\subseteq\R$,
\begin{equation}
  \check H = H^{1/2}_{\le a} \oplus H^{-1/2}_{>a}
  = H^{1/2}_{<a} \oplus H^{-1/2}_{\ge a} ,
  \label{checkha}
\end{equation}
for any $a\in\R$.
By \eqref{bvdp} and \eqref{bvgam},
\begin{equation}
  \omega(x,y) := B_{1/2}(x_{\le-a},Ty_{\ge a})
  + B_{-1/2}(x_{>-a},Ty_{<a})
\end{equation}
is well defined for $x,y\in\check H$
and independent of the choice of $a$.
We note that $\omega$ is continuous, non-degenerate,
and skew-Hermitian on $\check H$.

\begin{prop}\label{dmax}
The maximal domain $\dom D_{\max}$ satisfies:
\begin{enumerate}
\item
$\mathcal L_c(\mathcal D)$ is dense in $\dom D_{\max}$.
\item
Evaluation at $t=0$ on $\mathcal L_c(\mathcal D)$
induces a continuous surjection
\begin{equation*}
  \mathcal R_{\max}: \dom D_{\max} \to \check H , \quad
  \mathcal R_{\max}(\sigma) =: \sigma(0) .
\end{equation*}
\item
$\sigma\in\dom D_{\max}$ is in $H^1_{\loc}(\mathcal D)$
iff $\sigma(0)\in H^{1/2}$.
\item
$\sigma\in\dom D_{\max}$ is in $\dom D_{\min}$ iff $\sigma(0)=0$.
\item
For all $\sigma_1,\sigma_2\in\dom D_{\max}$
\begin{equation*}
  (D_{\max}\sigma_1,\sigma_2)_{L^2(\mathcal D)}
  - (\sigma_1,D_{\max}\sigma_2)_{L^2(\mathcal D)}
  = \omega(\sigma_1(0),\sigma_2(0)) .
\end{equation*}
\end{enumerate}
\end{prop}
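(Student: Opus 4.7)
The plan is to analyze $\dom D_{\max}$ by reducing to a model situation and then exploiting the spectral decomposition of $A_0$. First, I would use the gauge transformation $\sigma \mapsto G^{1/2}\sigma$ to trivialize the family $\mathcal H$, converting $\partial$ into $d/dt$; then \eqref{dsax2} allows one to treat $A_t - A_0$ as a relatively bounded perturbation of $A_0$ localized near the boundary $t=0$, where the trace lives. This reduces the core analytic questions, modulo controlled remainders, to the model operator $D_0 := T_0(d/dt + A_0)$ on $L^2(\R_+,H)$ with $T_0$ and $A_0$ time-independent and anti-commuting.

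Second, I would diagonalize. Expanding $\sigma(t) = \sum_i \xi_i(t) x_i$ along the orthonormal eigenbasis $(x_i)$ of $A_0$, the equation $D_0 \sigma = T_0\eta$ decouples into scalar ODEs $\xi_i' + \lambda_i \xi_i = \eta_i$ on $\R_+$, and variation of parameters gives explicit representations. Separating the spectrum at zero: for $\lambda_i > 0$ the semigroup $e^{-t\lambda_i}$ is a contraction, so the $L^2$ data only control $\xi_i(0)$ up to the weight $|\lambda_i|^{-1/2}$, whereas for $\lambda_i \le 0$ the homogeneous solution diverges and the $L^2$ condition instead bounds $\xi_i(0)$ with the weight $|\lambda_i|^{1/2}$. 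Summing over $i$ yields precisely
\begin{equation*}
\|\sigma(0)\|_{\check H}^2 \le C\bigl(\|\sigma\|_{L^2(\mathcal D)}^2 + \|D_0\sigma\|_{L^2(\mathcal D)}^2\bigr),
\end{equation*}
giving the continuous extension $\mathcal R_{\max}$ of (ii). Surjectivity follows by prescribing $x = x_{\le 0} + x_{>0} \in \check H$ and setting $\sigma(t) := \chi(t) e^{-tA_0} x_{>0} + \chi(t) x_{\le 0}$ with a smooth cutoff $\chi$ vanishing at infinity, then verifying mode-by-mode that $\sigma, D_0\sigma \in L^2$. The same mode-by-mode analysis shows $\sigma \in H^1_{\loc}(\mathcal D)$ iff every $\xi_i$ is $H^1$ near $t=0$ iff $\sigma(0) \in H^{1/2}$, giving (iii).

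Third, for (i) I would combine mollification in $t$ with multiplication by a cutoff $\chi_R(t)$ vanishing for $t \ge R$; these operations preserve $\dom D_{\max}$ and converge to the identity in the graph norm, with boundary values preserved in $\check H$. Part (iv) is then immediate: by (i), $\sigma \in \dom D_{\min}$ iff $\sigma$ is a graph-norm limit of elements of $\mathcal L_{0,c}(\mathcal D)$, and by continuity of $\mathcal R_{\max}$ this is equivalent to $\sigma(0) = 0$. Part (v) reduces to Green's formula \eqref{dspaind} on the dense subspace $\mathcal L_c(\mathcal D)$: for $\sigma_1,\sigma_2 \in \mathcal L_c(\mathcal D)$ one has $(D\sigma_1,\sigma_2) - (\sigma_1,D\sigma_2) = -(\sigma_1(0), T_0\sigma_2(0))_0$, and the right-hand side extends continuously from $H \times H$ to the non-degenerate skew-Hermitian pairing $\omega$ on $\check H \times \check H$ via the dualities $B_{\pm 1/2}$ of \eqref{bvdp} together with $TH^s_J = H^s_{-J}$ from \eqref{bvgam}.

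The main obstacle is part (ii): defining $\mathcal R_{\max}$ on all of $\dom D_{\max}$, whose elements need not be locally $H^1$, and identifying the correct hybrid Sobolev space $\check H$. The asymmetry between $H^{1/2}_{\le 0}$ and $H^{-1/2}_{>0}$ is forced by the elementary ODE analysis, but it must be maintained carefully under both the gauge reduction and the perturbation from $A_t$ to $A_0$; the Lipschitz regularity \eqref{dsax2} is precisely the tool that keeps $A_t - A_0$ small enough near $t=0$ not to destroy the spectral dichotomy, so the reduction step cannot be cleanly separated from the model analysis and must be threaded through the whole argument.
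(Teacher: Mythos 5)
Your overall strategy is the right one and is essentially the argument the paper relies on: Proposition~\ref{dmax} is not proved here but quoted from [BBC2] (Chapters 1--2, in particular Proposition 2.30), where the proof runs exactly as you describe --- comparison with the constant-coefficient model $T_0(d/dt+A_0)$, mode-by-mode ODE analysis along an eigenbasis of $A_0$, the dichotomy between decaying and growing modes that forces the hybrid space $\check H=H^{1/2}_{\le0}\oplus H^{-1/2}_{>0}$, and Green's formula \eqref{dspaind} for part (5). Your derivation of the trace estimate is correct, including the direction of the weights $|\lambda|^{\mp1/2}$.

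Two steps, however, would fail as written. First, your extension operator for the surjectivity in (2) is wrong on the nonpositive spectral part: for $x_{\le0}\in H^{1/2}_{\le0}\setminus H^{1}$ the section $\chi(t)x_{\le0}$ does \emph{not} lie in $\dom D_{\max}$, because its distributional image under $D$ contains the term $T_0\chi A_0x_{\le0}$ with $A_0x_{\le0}\in H^{-1/2}\setminus H$; the constant-in-$t$ extension is legitimate only on $\ker A_0$. You must instead take the decaying extension $\chi(t)e^{tA_0}x_{\le0}=\chi(t)e^{-t|A_0|}x_{\le0}$, for which a mode computation gives $\|\xi_i'+\lambda_i\xi_i\|_{L^2}^2=2|\lambda_i||\xi_i(0)|^2$, i.e.\ exactly the $H^{1/2}$ condition; the same exponential extension is what you need for the nontrivial direction of (3). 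Second, in (4) the implication ``$\sigma(0)=0\Rightarrow\sigma\in\dom D_{\min}$'' does not follow from continuity of $\mathcal R_{\max}$, which only yields the converse; it is obtained from (2) and (5): since $D_{\min}=(D_{\max})^{*}$, a section $\sigma\in\dom D_{\max}$ lies in $\dom D_{\min}$ iff $\omega(\tau(0),\sigma(0))=0$ for all $\tau\in\dom D_{\max}$, which by surjectivity of $\mathcal R_{\max}$ and nondegeneracy of $\omega$ is equivalent to $\sigma(0)=0$. Two smaller points: in (1), mollification in $t$ alone does not place the approximants in $\mathcal L_c(\mathcal D)=\Lip_{\loc}(I,H)\cap L^\infty_{\loc}(I,H_A)$ --- you also need smoothing in the $H$-variable, e.g.\ by the spectral truncations $Q_{[-n,n]}(A_0)$; and with the conventions \eqref{dspaind} and \eqref{bvgam} the boundary term in (5) is $+(\sigma_1(0),T_0\sigma_2(0))_0=\omega(\sigma_1(0),\sigma_2(0))$, not its negative.
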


Closed extensions of $D$ between $D_{\min}$ and $D_{\max}$
correspond precisely to closed linear subspaces $B$ of $\check H$,
called {\em boundary conditions}.
For any such boundary condition $B$,
the domain of the corresponding extension $D_{B,\max}$ is given by
\begin{equation}
  \dom D_{B,\max} = \{ \sigma\in\dom D_{\max} : \sigma(0) \in B \} .
  \label{dmax2}
\end{equation}
We are also interested in the operator $D_B$ with domain
\begin{equation}
  \dom D_B = \dom D_{B,\max} \cap H^1_{\loc}(\mathcal D) .
  \label{dh1}
\end{equation}
A boundary condition $B\subseteq\check H$ is called {\em regular}
if $D_B=D_{B,\max}$.
By \pref{dmax}, $\sigma\in\dom D_{\max}$ is in $\dom D_B$
if and only if $\sigma(0)$ belongs to $B\cap H^{1/2}$.
In particular, $B$ is a regular boundary condition
if $B$ is a closed subspace of $\check H$
that is contained in $H^{1/2}\subseteq\check H$.

Let $B\subseteq\check H$ be a boundary condition.
Since $\omega$ is non-degenerate,
the adjoint operator of $D_{B,\max}$ is given by $D_{B^{a},\max}$,
where
\begin{equation}
  B^a = \{ x\in\check H : \text{$\omega(x,y) = 0$ for all $y\in B$} \} ,
  \label{abc}
\end{equation}
by \pref{dmax}.
We say that a boundary condition $B$ is {\em elliptic}
if $B$ and $B^a$ are regular.
Typical examples of elliptic boundary conditions are
the Atiyah-Patodi-Singer boundary condition $B_{\rm APS}=H^{1/2}_{<0}$
and the more general $B=H^{1/2}_{<a}$ and $B=H^{1/2}_{\le a}$.
The adjoint boundary conditions for the latter are given by
$B=H^{1/2}_{\le-a}$ and $B=H^{1/2}_{<-a}$, respectively.
The maximal operators corresponding to the boundary conditions
$B=H^{1/2}_{<a}$ and $B=H^{1/2}_{\le a}$
will be denoted by $D_{<a,\max}$ and $D_{\le a,\max}$, respectively,
and similarly in other cases.
By ellipticity, we actually have $D_{<a,\max}=D_{<a}$
and $D_{\le a,\max}=D_{\le a}$.

As for boundary conditions in the super-symmetric case,
\begin{equation}
  H = H^+ \oplus H^- ,
  \label{susy6}
\end{equation}
we may choose orthonormal bases $x_i^\pm$ of $H^\pm$
consisting of eigenvectors of $A_0^\pm$.
By \eqref{susyac}, we may actually choose $x_i^-=T_0x_i^+T_0^{-1}$.
We get
\begin{equation}
  H^s = H^{s+}\oplus H^{s-}
  \quad\text{and}\quad
  \check H = \check H^+ \oplus \check H^- ,
  \label{susy7}
\end{equation}
where
\begin{equation}
  H^{s+} = H^s(A_0^+) , \quad H^{s-}=H^s(A_0^-) , \quad
  \check H^+ = \check H(A_0^+) ,
  \label{susy8}
\end{equation}
and
\begin{equation}
\begin{split}
  \check H^- &= \check H(A_0^-) = T_0 \check H(-A_0^+) T_0^{-1} \\
  &\simeq \hat H^+(A_0^+)
  = H^{-1/2}_{\le0} \oplus H^{1/2}_{>0} .
  \label{susy9}
\end{split}
\end{equation}
Furthermore, $\check H^+$ and $\check H^-$ are Lagrangian
with respect to the non-degenerate skew-Hermitian form $\omega$.

In the super-symmetric case,
we consider super-symmetric boundary conditions $B\subseteq\check H$,
that is,
\begin{equation}
  B = B^+\oplus B^- ,
  \label{susybc}
\end{equation}
where $B^\pm=B\cap\check H^\pm$.
Then the adjoint boundary condition is super-symmetric as well.
Moreover, a super-symmetric boundary condition $B$ is regular or elliptic
if and only if $B^+$ and $B^-$ are regular or elliptic in $\check H^+$
and $\check H^-$, respectively.
For example,
$B=H^{1/2}_{<a}$ and $B=H^{1/2}_{\le a}$ are elliptic super-symmetric
boundary conditions.
The maximal operators  corresponding to these will be denoted
by $D^\pm_{<a,\max}$ and $D^\pm_{\le a,\max}$, respectively,
and similarly in other cases.

\subsection{More Function Spaces}
\label{subfun}
For convenience, we assume from now on that $\mathcal D$
is the Dirac system associated to a Dirac bundle $E$
over a straight end $U$ of $M$ with distance function $f$
and $C^1$ diffeomorphism $F:(-r,\infty)\times N\to U$
as in Definition \ref{straight}.
We also recall the notation $U_0=f^{-1}([0,\infty))$.

Let $H^1(U_0,E)$ be the space of sections $\sigma$ in $L^2(U_0,E)$
with square integrable weak derivative,
$\nabla^E\sigma\in L^2(U_0,T^*M\otimes E)$; that is, we have
\begin{equation}
  (\nabla^E\sigma,\tau)_{L^2(U_0,E)}
  = (\sigma,(\nabla^E)^*\tau)_{L^2(U_0,E)} ,
\end{equation}
for all $\tau\in C^\infty_{cc}(U_0,T^*M\otimes E)$,
where $(\nabla^E)^*$ is the formal adjoint of the operator $\nabla^E$.
Recall that $H^1(U_0,E)$ is a Hilbert space with respect to the norm
defined by the inner product
\begin{equation}
  ( \sigma,\tau )_{H^1(U_0,E)}
  = ( \sigma,\tau )_{L^2(U_0,E)}
  + ( \nabla^E\sigma,\nabla^E\tau )_{L^2(U_0,T^*M\otimes E)} .
  \label{h1no}
\end{equation}
There is the corresponding space $H^1(U,E)$,
and $C^\infty_c(U,E)$ is dense in $H^1(U,E)$.
Any section in $H^1(U_0,E)$ is the restriction of some section
from $H^1(U,E)$;
see Theorem 11.12 in \cite{Ag} or Theorem 7.25 in \cite{GT},
noting that the problem is local
and that $H^1_{\loc}$ is invariant under $C^1$ diffeomorphisms.
It follows that the space $C^\infty_c(U_0,E)$ of restrictions of
sections in $C^\infty_{c}(U,E)$ to $U_0$ is dense in $H^1(U_0,E)$.
The trace map
\begin{equation}
  \mathcal R: H^1(U_0,E) \to H^{1/2}(N,E)
  \label{trace}
\end{equation}
is a well defined bounded operator;
see Theorem 3.10 in \cite{Ag} or Proposition 4.4.5 in \cite{Ta},
noting again that the problem is local
and that $H^1_{\loc}$ is invariant under $C^1$ diffeomorphisms.
The closure of $C^1_{cc}(U_0,E)$ in $H^1(U_0,E)$,
and therefore also of $C^\infty_{cc}(U_0,E)$ in $H^1(U_0,E)$, is
\begin{equation}
  H^1_0(U_0,E) := \{ \sigma \in H^1(U_0,E) : \mathcal R\sigma = 0 \} .
  \label{h10}
\end{equation}
As for partial integration,
\begin{equation}
  (\nabla^E\sigma,\tau)_{L^2(U_0,T^*M\otimes E)}
  = (\sigma,(\nabla^E)^*\tau)_{L^2(U_0,E)} - (\sigma,\tau(T))_{L^2(N,E)} ,
  \label{parina}
\end{equation}
for all $\sigma\in H^1(U_0,E)$ and $\tau\in H^1(U_0,T^*M\otimes E)$.
It follows that
\begin{equation}
  (D\sigma,\tau)_{L^2(U_0,E)}
  = (\sigma,D\tau)_{L^2(U_0,E)} + (\sigma,T\tau)_{L^2(N,E)} ,
  \label{parind}
\end{equation}
for all $\sigma,\tau\in H^1(U_0,E)$.
In particular,
any $\sigma\in H^1(U_0,E)$ belongs to the domain $\dom D_{\max}$
of the adjoint operator $D_{\max}$ of $D$,
the latter considered as an unbounded operator on $L^2(U_0,E)$
with domain $C^\infty_{cc}(U_0,E)$ or, equivalently, $H^1_0(U_0,E)$.

We switch now to the associated Dirac system $\mathcal D$
over $\R_+=[0,\infty)$.
With respect to the natural identifications,
\begin{equation}
  C^\infty_c(U_0,E) \subseteq \mathcal L_c(\mathcal D)
  \subseteq H^1(\mathcal D) = H^1(U_0,E) ,
  \label{morefs}
\end{equation}
where we use \eqref{licht} and \eqref{lic4} for the latter equality.
The convenience we had in mind further up refers to the density
of $C^\infty_c(U_0,E)$ in $H^1(\mathcal D)$.
Another convenience:
We often write $\|.\|_{I}$ for the $L^2$-norm of maps
defined on an interval $I$ (if meaningful).

The Jacobian determinants $j$ are not $C^1$ in general,
hence the commutator $[A,\partial]$ may not be well defined.
In \pref{proest} below and, in particular, in its proof,
we circumvent this problem by using
the commutator $A'$ of $d/dt$ and $A$ instead.

\begin{prop}\label{proest}
For all $w\in\R$ and $\sigma\in H^1(\mathcal D)$,
\begin{multline*}
  \| D\sigma - wT\sigma \|_{\R_+}^2
  =  \|\partial\sigma\|_{\R_+}^2 + \|(A-w)\sigma\|^2_{\R_+} \\
  - \Re (A'\sigma,\sigma)_{\R_+} - ( \sigma(0),(A_0-w)\sigma(0))_0 .
\end{multline*}
\end{prop}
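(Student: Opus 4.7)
The plan is to exploit the factorization $D = T(\partial + A)$ together with the unitarity of $T$. Since $T_t^*= T_t^{-1}$, the operator $T_t$ is a $\|\cdot\|_t$-isometry, so
\begin{equation*}
  \|D\sigma - wT\sigma\|_{\R_+}^2
  = \|T(\partial + A - w)\sigma\|_{\R_+}^2
  = \|(\partial + A - w)\sigma\|_{\R_+}^2 .
\end{equation*}
Expanding the square produces the two diagonal terms $\|\partial\sigma\|_{\R_+}^2$ and $\|(A-w)\sigma\|_{\R_+}^2$ that appear on the right-hand side of the claim, so the proof reduces to verifying that
\begin{equation*}
  2\Re(\partial\sigma, (A-w)\sigma)_{\R_+}
  = -\Re(A'\sigma, \sigma)_{\R_+} - (\sigma(0), (A_0-w)\sigma(0))_0 .
\end{equation*}

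I would first establish this cross-term identity on $\sigma \in \mathcal L_c(\mathcal D)$ and then extend by density in $H^1(\mathcal D)$. The key step is to differentiate the real-valued function $f(t) := ((A_t-w)\sigma(t), \sigma(t))_t$. Applying \eqref{dspain} with $\sigma_1 = (A-w)\sigma$ and $\sigma_2 = \sigma$, combined with the formal Leibniz rule
\begin{equation*}
  \partial[(A-w)\sigma] = (A-w)\partial\sigma + A'\sigma + [\Gamma, A]\sigma
\end{equation*}
and the $H_t$-self-adjointness of $A-w$, should yield
\begin{equation*}
  f'(t) = 2\Re(\partial\sigma, (A-w)\sigma)_t + (A'\sigma, \sigma)_t + ([\Gamma, A]\sigma, \sigma)_t .
\end{equation*}
Since both $\Gamma$ and $A$ are self-adjoint in $H_t$ -- the former follows from the identity $G_t\Gamma_t = \tfrac{1}{2}G_t'$ and the symmetry of $G_t'$ noted after \eqref{dsGamm} -- the pairing $([\Gamma, A]\sigma, \sigma)_t$ is purely imaginary, so taking real parts eliminates it. Integrating $f'$ from $0$ to $\infty$ against a compactly supported $\sigma$ then produces $-f(0) = -((A_0-w)\sigma(0), \sigma(0))_0$, which is the required boundary term.

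The density extension should be routine: all bulk terms are controlled by the $H^1(\mathcal D)$-norm directly, hypothesis \eqref{dsax2} bounds $A'$ as an operator $H_A \to H$, and the boundary pairing is continuous on $H^{1/2}(A_0)$, which is where $\sigma(0)$ lives by \pref{dmax}(3) (together with the inclusion $H^1(\mathcal D) \subseteq H^1_{\loc}(\mathcal D)$). The main obstacle I expect is the Leibniz rule for $\partial[(A-w)\sigma]$: in the general framework of Section \ref{preds}, $\Gamma$ need not commute with $A$, so the commutator $[\Gamma, A]$ must be tracked carefully, and its eventual elimination through the real-part operation is the algebraic mechanism producing the clean form of the claimed identity.
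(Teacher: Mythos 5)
Your argument is correct and follows essentially the same route as the paper: use the unitarity of $T$ to reduce to $\|(\partial+A-w)\sigma\|_{\R_+}^2$, expand the square, and handle the cross term by integrating the derivative of $t\mapsto((A_t-w)\sigma(t),\sigma(t))_t$ via \eqref{dspain}, the boundary term $-((A_0-w)\sigma(0),\sigma(0))_0$ surviving because $\sigma$ has compact support; then extend by density. (The paper first normalizes to $w=0$ by replacing $A$ with $A-w$, which is cosmetic.) One point needs more care than you give it, and it is exactly the point the remark preceding the proposition warns about: the commutator $[A,\partial]$ — equivalently $[\Gamma,A]=[A,\partial]+A'$ — need not be well defined, since in the geometric situation $\Gamma=\kappa/2$ is multiplication by a function that is merely continuous along the cross sections, so $A\Gamma\sigma$ may fail to exist even for smooth $\sigma$. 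Your Leibniz rule therefore contains the ill-defined composition $A\Gamma\sigma$ twice, once inside $(A-w)\partial\sigma$ and once inside $[\Gamma,A]\sigma$ (with cancelling signs). The repair is the one the paper uses: never form $A\Gamma\sigma$; instead write $\partial(A\sigma)=A'\sigma+A\sigma'+\Gamma A\sigma$ (all three terms make sense, since $\Gamma$ is bounded on $H$) and move $A$ across the inner product by self-adjointness, so that the only commutator-type quantity ever needed is the pairing $(A\sigma,\Gamma\sigma)-(\Gamma\sigma,A\sigma)=2i\Im(A\sigma,\Gamma\sigma)$, which is well defined and purely imaginary — precisely the quantity your real-part step eliminates.
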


\begin{rem}\label{rempm12}
As for the meaning of the last term on the right, we note that
the trace $\sigma(0)$ of $\sigma$ is in $H^{1/2}(A_0)$,
hence $A_0$ applied to it is in $H^{-1/2}(A_0)$,
and hence $(\sigma(0),(A_{0}-w)\sigma(0))_0$ is well defined.
\end{rem}

\begin{proof}[Proof of \pref{proest}]
Replacing $A$ by $A_w:=A-w$ and $D$ by $D_w=T(\partial+A_w)$
reduces the assertion to the case $w=0$.
Furthermore, by the density of $C^\infty_c(U_0,E)$ in $H^1(U_0,E)$,
we may assume that $\sigma$ is smooth with compact support.
We have
\begin{equation*}
   \|D\sigma\|_{\R_+}^2
   = \|\partial\sigma\|^2_{\R_+} + \|A\sigma\|^2_{\R_+}
    + 2\Re (\partial\sigma, A\sigma)_{\R_+}
\end{equation*}
By \eqref{dspain} and \eqref{l2proj},
\begin{align*}
  (\partial\sigma,A\sigma)_{\R_+}
  &= \int_N\int_{\R_+} \big(\langle\sigma,A\sigma\rangle j \big)' dtdx
    - (\sigma,\partial A\sigma)_{\R_+}    \\
  &= - ( \sigma(0), A_0\sigma(0))_{0}
  - (\sigma,\partial A\sigma)_{\R_+}.
\end{align*}
Since $(A\sigma)'=A'\sigma+A\sigma'$, we conclude that
\begin{align*}
  (\sigma,\partial A\sigma)_{\R_+}
  &= (\sigma,A'\sigma)_{\R_+} + (\sigma,A\sigma')_{\R_+}
  + (\frac{\kappa}2\sigma,A\sigma)_{\R_+} \\
  &= (\sigma,A'\sigma)_{\R_+} + (A\sigma,\partial\sigma)_{\R_+}
  + i\Im(\kappa\sigma,A\sigma)_{\R_+} .
  \qedhere
\end{align*}
\end{proof}

\begin{rem}\label{c2smooth}
It is possible to approximate a distance function $f$ as in Definition \ref{straight}
by smooth functions $f_\varepsilon$ such that
\begin{equation*}
  | f - f_\varepsilon | + | \nabla f - \nabla f_\varepsilon |
  + | \nabla^2f - \nabla^2f_\varepsilon | < \varepsilon
\end{equation*}
uniformly on $U_0$.
However, the separation of variables formula,
as we need it for our integration by parts formula in \pref{proest},
would be more involved than the one derived there
and would contain additional terms which would be difficult to control.
\end{rem}

\subsection{Fredholm Properties of $\mathcal D$}
\label{subfred}
We say that $\mathcal D$ is of {\em Fredholm type} if there is a constant $C>0$
such that
\begin{align}
   \|\sigma\|_{\R_+} &\le C \|D\sigma\|_{\R_+} ,
  \quad\forall \sigma \in \mathcal L_{0,c}(\mathcal D) ,
  \label{fredt}
\intertext{and that $\mathcal D$ is {\em non-parabolic} if, for each $t>0$,
there is a constant $C>0$ such that}
  \|\sigma\|_{[0,t]} &\le C \|D\sigma\|_{\R_+} ,
  \quad\forall \sigma \in \mathcal L_{0,c}(\mathcal D) .
  \label{nonpa}
\end{align}
Obviously, if $\mathcal D$ is of Fredholm type, then it is non-parabolic.

In \pref{nopanopa} below (and the paragraph preceding it),
we will make the connection to the property
{\em non-parabolic at infinity} of Dirac operators
as considered in Definition \ref{nonpai0} in the introduction.

In Lemma 2.38 of \cite{disy},
we showed that $\mathcal D$ is non-parabolic if and only if, for each $t>0$,
there is a constant $C>0$ such that
\begin{equation}
   \|\sigma\|_{[0,t]}
   \le C \big(\|\sigma(0)\|_{\check H}^2 + \|D\sigma\|_{\R_+}^2 \big)^{1/2}
   =: C \|\sigma\|_W ,
  \label{nonpa2}
\end{equation}
for all $\sigma \in \mathcal L_{c}(\mathcal D)$.
If $\mathcal D$ is non-parabolic,
we let $W\subseteq L^2_{\loc}(\mathcal D)$
be the completion of $\mathcal L_{c}(\mathcal D)$
with respect to the norm $\|.\|_W$.
As we will see below,
the space $W$ is well suited for issues concerning the closedness
of the image of $D$, compare \eqref{imperp} and the text preceding it.

Assume now that $\mathcal D$ is non-parabolic.
Then, since $\|.\|_W$ is weaker than the graph norm of $D$,
\pref{dmax}.1 implies that $\dom D_{\max}\subseteq W$.
Furthermore, equality holds if and only if $\mathcal D$ is of Fredholm type.
Moreover, if $\varphi:\R_+\to\C$ is Lipschitz continuous with compact support
and $\sigma\in W$, then $\varphi\sigma\in\dom D_{\max}$.
In particular, the trace $\mathcal R$ is well defined and continuous on $W$
and takes values in $\check H=\check H(A_0)$.

By the non-parabolicity of $\mathcal D$
and the definition of $W$, $D$ extends to a bounded operator
$D_{\ext}:W\to L^2(\mathcal D)$.
For a boundary condition $B\subseteq\check H$,
we define $D_{B,\ext}$ to be the operator in $W$
with target $L^2(\mathcal D)$ and domain
\begin{equation}
  \dom D_{B,\ext} = \{ \sigma \in W : \sigma(0) \in B \} .
  \label{dombext}
\end{equation}
Obviously, $D_{B,\ext}$ is closed and extends $D_{B,\max}$,
and $D_{B,\ext}=D_{B,\max}$ if and only if $\mathcal D$ is of Fredholm type.

In Theorem 2.43 of \cite{disy} we showed that,
for $\mathcal D$ non-parabolic and $B$ regular,
$D_{B,\ext}$ has finite dimensional kernel and closed image with
\begin{equation}
  (\im D_{B,\ext})^\perp = \ker D_{B^a,\max} .
  \label{imperp}
\end{equation}
Thus, if $\mathcal D$ is non-parabolic and $B$ is elliptic,
then $D_{B,\ext}$ is a Fredholm operator and the $L^2$-index
\begin{equation}
  \ind_{L^2} D_{B,\max} := \dim \ker D_{B,\max} - \dim \ker D_{B^a,\max}
\end{equation}
of $D_{B,\max}$ is well defined and finite.

\begin{prop}\label{nonpar0}
Assume that, for some $a\ge0$,
\begin{equation*}
  (A_t\sigma,A_t\sigma)_t \ge \Re(A_t'\sigma,\sigma)_t + a \|\sigma\|_t^2 ,
\end{equation*}
for all $t\ge0$ and $\sigma\in H_A$.
Then $\mathcal D$ is non-parabolic and $D_{<0,\ext}$ is an isomorphism.
Moreover, if $a>0$, then $\mathcal D$ is of Fredholm type.
\end{prop}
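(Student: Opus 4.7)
My plan is to combine \pref{proest} (taken with $w=0$) with the assumed inequality, integrated over $\R_+$, to obtain the key estimate
\begin{equation*}
  \|D\sigma\|_{\R_+}^2 \ge \|\partial\sigma\|_{\R_+}^2 + a\|\sigma\|_{\R_+}^2 - (\sigma(0), A_0\sigma(0))_0, \tag{$\ast$}
\end{equation*}
valid for $\sigma\in H^1(\mathcal D)$ and, by density, for all $\sigma\in\mathcal L_c(\mathcal D)$. All three conclusions will be extracted from $(\ast)$ together with the identity $(\sigma,\sigma)'=2\Re(\partial\sigma,\sigma)$ from \eqref{dspain}.

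For non-parabolicity, take $\sigma\in\mathcal L_{0,c}(\mathcal D)$, so that the boundary term in $(\ast)$ vanishes and $\|\partial\sigma\|_{\R_+}\le\|D\sigma\|_{\R_+}$. Combining $(\sigma,\sigma)'=2\Re(\partial\sigma,\sigma)$ with $\sigma(0)=0$ and Cauchy--Schwarz gives $\|\sigma(s)\|_s^2\le 2\|\partial\sigma\|_{[0,s]}\|\sigma\|_{[0,s]}$; integrating over $[0,t]$ yields the Poincar\'e-type bound $\|\sigma\|_{[0,t]}\le 2t\,\|D\sigma\|_{\R_+}$, which is \eqref{nonpa}. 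If $a>0$, then $(\ast)$ directly gives the Fredholm-type bound $\|\sigma\|_{\R_+}\le a^{-1/2}\|D\sigma\|_{\R_+}$ on $\mathcal L_{0,c}(\mathcal D)$.

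For the isomorphism claim, first note that $B=H^{1/2}_{<0}$ is elliptic, so by non-parabolicity $D_{<0,\ext}$ is Fredholm with $(\im D_{<0,\ext})^{\perp}=\ker D_{\le 0,\max}$, and it suffices to show both kernels vanish. For injectivity, take $\sigma\in\ker D_{<0,\ext}$ and approximate $\sigma$ in the $W$-norm by $\sigma_n\in\mathcal L_c(\mathcal D)$; modify each $\sigma_n$ by subtracting the corrector $\eta_n(t):=\chi(t)\,P^{/\!\!/}_t\,Q_{\ge 0}\sigma_n(0)$, where $\chi$ is a fixed smooth cutoff with $\chi(0)=1$ and support in $[0,1]$. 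Because $\sigma(0)\in H^{1/2}_{<0}$ has no $\ge 0$ spectral component, $\|Q_{\ge 0}\sigma_n(0)\|_{\check H}\to 0$, and estimating $\partial\eta_n$ and $A\eta_n$ in $L^2([0,1],H)$ gives $\|\eta_n\|_W\to 0$; the modified approximants therefore converge to $\sigma$ in $W$ and satisfy $\sigma_n(0)\in H^{1/2}_{<0}$, so that $-(\sigma_n(0),A_0\sigma_n(0))_0=\||A_0|^{1/2}\sigma_n(0)\|_0^2\ge 0$. Passing $(\ast)$ to the limit forces $\||A_0|^{1/2}\sigma(0)\|_0=0$, hence $\sigma(0)=0$ since every eigenvalue in $H^{1/2}_{<0}$ is strictly negative; then $\|\sigma\|_W=0$ and $\sigma=0$. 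For surjectivity, take $\sigma\in\ker D_{\le 0,\max}$; by \pref{dmax} we have $\sigma\in L^2(\mathcal D)\cap H^1_{\loc}(\mathcal D)$, so the linear cutoff $\varphi_n$ with $\varphi_n\equiv 1$ on $[0,n]$ and $\varphi_n'\equiv -1/n$ on $(n,2n)$ produces $\varphi_n\sigma\in H^1(\mathcal D)$ with $D(\varphi_n\sigma)=T\varphi_n'\sigma$ and $\|D(\varphi_n\sigma)\|^2\le n^{-2}\|\sigma\|_{L^2}^2\to 0$. Applying $(\ast)$ with $-(\sigma(0),A_0\sigma(0))_0=\||A_0|^{1/2}\sigma(0)\|_0^2\ge 0$, every non-negative term on the right tends to $0$, yielding $\sigma(0)\in\ker A_0$ and $\partial\sigma=0$ distributionally; by \eqref{dspain}, $t\mapsto\|\sigma(t)\|_t$ is then constant, and $\sigma\in L^2(\R_+,H)$ forces this constant to be $0$.

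The main obstacle I foresee is the approximation step in the injectivity argument when $a=0$, since then $W$ is strictly larger than $H^1(\mathcal D)$ and $(\ast)$ cannot be applied directly to $\sigma\in W$. The explicit corrector $\eta_n$ resolves this: it enforces the spectral boundary condition $\sigma_n(0)\in H^{1/2}_{<0}$ along the approximation, while its $W$-norm is controlled precisely by the $\check H$-norm of the portion $Q_{\ge 0}\sigma_n(0)$ of the boundary datum that vanishes in the limit, allowing the inequality to pass to $\sigma$.
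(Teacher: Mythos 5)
Your overall strategy coincides with the paper's: the identity of \pref{proest} with $w=0$ plus the hypothesis gives the key inequality $(\ast)$, non-parabolicity follows from a Poincar\'e-type bound on $\|\partial\sigma\|$ (the paper uses the Hardy inequality, you use $(\sigma,\sigma)'=2\Re(\partial\sigma,\sigma)$ and Cauchy--Schwarz — both work), the case $a>0$ is immediate, and the isomorphism is proved by killing $\ker D_{<0,\ext}$ and $\ker D_{\le0,\max}$ separately. Your surjectivity argument (expanding cutoffs $\varphi_n$ applied to $\sigma\in\ker D_{\le0,\max}\subseteq L^2(\mathcal D)\cap H^1_{\loc}(\mathcal D)$, forcing $\partial\sigma=0$ and then constancy of $t\mapsto\|\sigma(t)\|_t$) is correct and if anything more explicit than the paper's.

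The genuine gap is in the injectivity step, in the claim that $\|\eta_n\|_W\to0$. The $W$-norm of $\eta_n(t)=\chi(t)P^{/\!\!/}_tQ_{\ge0}\sigma_n(0)$ contains $\|D\eta_n\|_{L^2}$, and hence the term $\|A\eta_n\|_{L^2([0,1],H)}$, which is comparable to the graph norm $\|Q_{\ge0}\sigma_n(0)\|_0+\|A_0Q_{\ge0}\sigma_n(0)\|_0$, i.e.\ to an $H^1(A_0)$-quantity; even the term $\|\partial\eta_n\|_{L^2}$ requires $\|Q_{\ge0}\sigma_n(0)\|_{H^0}\to0$. But convergence $\sigma_n\to\sigma$ in $W$ only gives $\sigma_n(0)\to\sigma(0)$ in $\check H$, so the boundary defect $Q_{\ge0}\sigma_n(0)$ is known to tend to $0$ only in $H^{-1/2}$ — three half-derivatives short of what the corrector needs, with no uniform $H^1$-bound on $\sigma_n(0)$ available to interpolate from. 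So the modified approximants need not converge to $\sigma$ in $W$, and $(\ast)$ does not pass to the limit as written. (To be fair, the paper's own proof is equally terse here, invoking only ``the density of $\mathcal L_c(\mathcal D)$ in $W$''; a clean repair has to exploit that $\sigma(0)\in H^{1/2}$ already forces $\sigma\in H^1_{\loc}(\mathcal D)$ by \pref{dmax}, so that one can work with cutoffs of $\sigma$ itself on finite intervals rather than with correctors of the approximants — but then one must control the boundary term at the far end of the interval, which requires an additional argument.) The rest of your proof stands, but this step needs a different justification before the injectivity of $D_{<0,\ext}$ is established.
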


\begin{proof}
Recall the Hardy inequality,
\begin{equation}
  \int_{\R_+} |\phi'|^2 \ge \int_{\R_+} \frac{|\phi|^2}{4t^2} ,
  \label{hardy}
\end{equation}
for any $C^1$ function $\phi$ on $\R_+$ with $\phi(0)=0$.
By \pref{proest},
\begin{equation*}
  \|D\sigma\|_{\R_+}^2 \ge \|\partial\sigma\|_{\R_+}^2 + a \|\sigma\|_{\R_+}^2  ,
\end{equation*}
for all $\sigma\in H^1_0(U_0,E)$.
Applying \eqref{odejac} and \eqref{hardy}
we get
\begin{align*}
  \|\partial\sigma\|_{\R_+}^2
  &= \int_N \int_0^\infty \| ( j^{1/2}\sigma)' \|^2 dtdx \\
  &\ge \int_N \int_0^\infty \frac{1}{4t^2} \|\sigma\|^2 j dtdx
  = \int_0^\infty \frac{1}{4t^2} \|\sigma\|_t^2 dt .
\end{align*}
It follows that $\mathcal D$ is non-parabolic.
Clearly, if $a>0$, then $\mathcal D$ is of Fredholm type.

Using the density of $\mathcal L_c(\mathcal D)$ in $W$,
\pref{proest} together with the assumed inequality implies that
\begin{equation*}
  \|\partial\sigma\|_{\R_+}^2 - (\sigma(0),A_0\sigma(0))_0 \le \|D\sigma\|_{\R_+} ,
\end{equation*}
for any $\sigma\in W$.
Hence $D\sigma=0$ and $\sigma(0)\in\check H_{<0}$
implies that $\partial\sigma=0$ and $\sigma(0)=0$.
That is, $\sigma$ solves
\begin{equation}
  \sigma' = - \frac{\kappa}{2}\sigma ,
  \label{partial0}
\end{equation}
with $\sigma(0)=0$, hence $\sigma=0$,
and therefore $\ker D_{<0,\ext}$ is trivial.

Conversely, the cokernel of $D_{<0,\ext}$ is isomorphic to $\ker D_{\le0,\max}$,
by what we said further up.
Now the same argument as above shows that any $\sigma\in\ker D_{\le0,\max}$
with $\sigma(0)\in\check H_{\le0}$
satisfies $\partial\sigma=0$.
It follows that $\sigma$ solves \eqref{partial0}
and hence, by \eqref{odejac}, that
\begin{equation*}
  \sigma(t,x) = j^{-1/2}(t,x) \sigma(0,x) ,
\end{equation*}
for all $t\in\R_+$ and $x\in N$.
Since the $L^2$-norm of $\sigma$ is finite, we conclude that $\sigma=0$.
Hence $\coker D_{<0,\ext}$ is trivial as well.
\end{proof}

By \cref{aprim2},
\begin{equation}
  c_0 := \sup_{t\in\R_+,\sigma\in H_A\setminus\{0\}}
  \frac{\|A_t'\sigma\|_t}{\|\sigma\|_t + \|A_t\sigma\|_t}
  \le C(C_R,C_R^E,C_W,n) < \infty .
  \label{c0}
\end{equation}

\begin{cor}\label{nonpar1}
Suppose that $\spec A_t\cap(-\lambda,\lambda)=\emptyset$,
for all $t\in\R_+$, where
\begin{equation*}
  2\lambda \ge c_0 + \sqrt{4c_0 + c_0^2} .
\end{equation*}
Then $\mathcal D$ is non-parabolic and $D_{<0,\ext}$ is an isomorphism.
Moreover, if the inequality is strict, then $\mathcal D$ is of Fredholm type.
\end{cor}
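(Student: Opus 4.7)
The plan is to reduce the corollary to \pref{nonpar0} by verifying its hypothesis
$$(A_t\sigma,A_t\sigma)_t \ge \Re(A_t'\sigma,\sigma)_t + a\|\sigma\|_t^2$$
for an appropriate $a \ge 0$, using the spectral gap assumption and the uniform bound \eqref{c0} on $A_t'$ in terms of the graph norm of $A_t$. The hard part is purely algebraic: squeezing the right threshold for $\lambda$ out of a quadratic inequality.

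First, I would fix $t\in\R_+$ and $\sigma\in H_A$, and translate the spectral gap into the operator inequality $\|A_t\sigma\|_t \ge \lambda\|\sigma\|_t$, which holds because $A_t$ is self-adjoint in $H_t$ with spectrum disjoint from $(-\lambda,\lambda)$. Next, by the Cauchy--Schwarz inequality and the definition \eqref{c0} of $c_0$,
$$|\Re(A_t'\sigma,\sigma)_t| \le \|A_t'\sigma\|_t\,\|\sigma\|_t \le c_0\bigl(\|\sigma\|_t + \|A_t\sigma\|_t\bigr)\|\sigma\|_t.$$
Combining these, I would set $x:=\|A_t\sigma\|_t/\|\sigma\|_t\ge\lambda$ (the case $\sigma=0$ being trivial) and observe that
$$\|A_t\sigma\|_t^2 - \Re(A_t'\sigma,\sigma)_t \ge \bigl(x^2 - c_0 x - c_0\bigr)\|\sigma\|_t^2.$$

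Now I would analyze the quadratic $q(x) = x^2 - c_0 x - c_0$. Its positive root is $x_0 = \tfrac12(c_0 + \sqrt{c_0^2+4c_0})$, and $q$ is increasing for $x\ge c_0/2$; in particular $q$ is non-decreasing on $[\lambda,\infty)$ as soon as $\lambda\ge x_0\ge c_0/2$. The standing hypothesis $2\lambda \ge c_0 + \sqrt{c_0^2+4c_0}$ is exactly $\lambda\ge x_0$, hence $q(x)\ge q(\lambda)\ge 0$ for all $x\ge\lambda$. This yields the required inequality with $a=q(\lambda)\ge 0$, uniformly in $t$, so \pref{nonpar0} applies and gives both non-parabolicity of $\mathcal D$ and the isomorphism statement for $D_{<0,\ext}$.

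Finally, if the inequality for $2\lambda$ is strict, then $\lambda>x_0$, so $a=q(\lambda)>0$, and the Fredholm type conclusion again follows from \pref{nonpar0}. The only delicate point in the whole argument is isolating the sharp threshold $x_0$; beyond that, every step is routine. I would not expect any regularity or density issue since $c_0$ was already established to be finite via \cref{aprim2} and the constants $C_R,C_R^E,C_W$ are uniform over $\R_+$ by our standing assumptions on $M$ and $E$.
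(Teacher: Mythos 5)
Your proof is correct and follows essentially the same route as the paper: verify the hypothesis of \pref{nonpar0} by combining the spectral gap $\|A_t\sigma\|_t\ge\lambda\|\sigma\|_t$ with the bound \eqref{c0}, reducing everything to the quadratic $\lambda^2-c_0\lambda-c_0\ge a$. The paper chooses $a$ with $2\lambda\ge c_0+\sqrt{4c_0+c_0^2+4a}$ and leaves the monotonicity of the quadratic on $[\lambda,\infty)$ implicit, which you make explicit; otherwise the arguments coincide.
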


\begin{proof}
Choose $a\ge0$ with
\begin{equation*}
  2\lambda \ge c_0 + \sqrt{4c_0 + c_0^2 + 4a} .
\end{equation*}
Then we have, for all $t\ge0$ and $\sigma\in H_A$,
\begin{align*}
  \|A_t\sigma\|_t^2 - \Re(A_t'\sigma,\sigma)_t
  &\ge  \| A_t\sigma \|_t^2 - c_0( \| A_t\sigma\|_t + \|\sigma\|_t ) \|\sigma\|_t \\
  &\ge  (\lambda^2 - c_0\lambda - c_0 ) \|\sigma\|_t^2
  \ge a \|\sigma\|_t^2  ,
\end{align*}
by the definition of $c_0$, and hence \pref{nonpar0} applies.
\end{proof}

The following estimate relates boundary conditions to Fredholm properties of $D$,
as we will see further on.

\begin{lem}\label{lemest}
For all $\sigma\in H^1_c(U_0,E)$ and $w\in\R$,
\begin{multline*}
  \|\partial\sigma\|_{\R_+}^2 + \frac12 \|(A-w)\sigma\|_{\R_+}^2 \\
  \le \|(D-wT)\sigma\|_{\R_+}^2
  +  c_1 \|\sigma\|_{\R_+}^2
  + (\sigma_0,(A_0-w)\sigma_0)_0 ,
\end{multline*}
where $2c_1=c_0(c_0 + 2 + 2|w|)$.
\end{lem}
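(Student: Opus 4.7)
The plan is to start from the identity of \pref{proest}, which rearranges to
\begin{equation*}
  \|\partial\sigma\|_{\R_+}^2 + \|(A-w)\sigma\|_{\R_+}^2
  = \|(D-wT)\sigma\|_{\R_+}^2 + \Re(A'\sigma,\sigma)_{\R_+}
  + (\sigma(0),(A_0-w)\sigma(0))_0 .
\end{equation*}
Thus the entire task reduces to controlling $\Re(A'\sigma,\sigma)_{\R_+}$ by a small multiple of $\|(A-w)\sigma\|_{\R_+}^2$ plus a constant times $\|\sigma\|_{\R_+}^2$.

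First I would apply Cauchy--Schwarz on each time slice and invoke the bound from \eqref{c0}, namely $\|A_t'\sigma\|_t \le c_0(\|\sigma\|_t + \|A_t\sigma\|_t)$. Writing $A_t = (A_t - w) + w$ and using $\|A_t\sigma\|_t \le \|(A_t-w)\sigma\|_t + |w|\,\|\sigma\|_t$, this yields the pointwise-in-$t$ estimate
\begin{equation*}
  |(A_t'\sigma,\sigma)_t|
  \le c_0\bigl(\|(A_t-w)\sigma\|_t + (1+|w|)\|\sigma\|_t\bigr)\|\sigma\|_t .
\end{equation*}

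Next I would integrate over $t\in\R_+$ and apply the AM--GM inequality $c_0\|(A-w)\sigma\|_t\|\sigma\|_t \le \tfrac12\|(A-w)\sigma\|_t^2 + \tfrac{c_0^2}{2}\|\sigma\|_t^2$ in order to free up exactly one half of $\|(A-w)\sigma\|_{\R_+}^2$ that will later be absorbed into the left-hand side. The remaining contribution gives $c_0(1+|w|)\|\sigma\|_{\R_+}^2$, and combining this with $\tfrac{c_0^2}{2}\|\sigma\|_{\R_+}^2$ produces precisely the constant
\begin{equation*}
  c_1 = \frac{c_0(c_0 + 2 + 2|w|)}{2} .
\end{equation*}

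Substituting back into the rearranged \pref{proest} identity and moving $\tfrac12\|(A-w)\sigma\|_{\R_+}^2$ to the left-hand side yields the claim. There is no real analytic obstacle here; the only care needed is in the bookkeeping of the constants so that $c_0$, $|w|$, and the AM--GM splitting conspire to give exactly $2c_1 = c_0(c_0+2+2|w|)$. The interpretation of the boundary term $(\sigma(0),(A_0-w)\sigma(0))_0$ as a pairing between $H^{1/2}(A_0)$ and $H^{-1/2}(A_0)$ is the same as in \rref{rempm12} and poses no additional issue since $\sigma\in H^1_c(U_0,E)$ has smooth enough trace.
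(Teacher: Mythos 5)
Your proposal is correct and follows essentially the same route as the paper: rearrange the identity of \pref{proest}, bound $\Re(A'\sigma,\sigma)_{\R_+}$ via the definition of $c_0$ in \eqref{c0} together with the triangle inequality $\|A\sigma\|\le\|(A-w)\sigma\|+|w|\|\sigma\|$, and absorb half of $\|(A-w)\sigma\|_{\R_+}^2$ by the Peter--Paul/AM--GM splitting, which produces exactly $2c_1=c_0(c_0+2+2|w|)$. The only cosmetic difference is that you estimate slice-by-slice in $t$ before integrating, whereas the paper works directly with the $L^2(\R_+)$ norms; the two are interchangeable here.
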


\begin{proof}
By \pref{proest} and the definition of $c_0$,
\begin{align*}
  \|\partial\sigma\|_{\R_+}^2 &+ \|(A-w)\sigma\|_{\R_+}^2
  - \|(D-wT)\sigma\|_{\R_+}^2 - (\sigma_0,(A_0-w)\sigma_0)_0 \\
  &= \Re (A'\sigma,\sigma)_{\R_+} \\
  &\le c_0( \|A\sigma\|_{\R_+} + \|\sigma\|_{\R_+} ) \|\sigma\|_{\R_+} \\
  &\le c_0 \big( \|(A-w)\sigma\|_{\R_+} + (1+|w|)\|\sigma\|_{\R_+} \big) \|\sigma\|_{\R_+} \\
  &\le \frac12  \|(A-w)\sigma\|_{\R_+}^2 + c_0(\frac{c_0}2 + 1 + |w| ) \|\sigma\|_{\R_+}^2 .
  \qedhere
\end{align*}
\end{proof}

\begin{prop}\label{nonpar2}
Assume that there are $\Lambda>\lambda\ge0$ such that
\begin{equation}
  (\Lambda - \lambda)^2 > 4c_0(c_0 + 2 + \lambda + \Lambda)
  \quad\text{and}\quad
  \spec A_t \cap (\lambda,\Lambda) = \emptyset ,
\end{equation}
for all $t\in\R_+$.
Suppose $w\in(\lambda,\Lambda)$ satisfies
\begin{equation}
  |\lambda-w|^2, |\Lambda-w|^2 > 2c_1 = c_0(c_0+2+2w) .
\end{equation}
Then we have:
\begin{enumerate}
\item
If $\sigma\in e^{-wt}L^2(\mathcal D)$ solves $D\sigma=0$
in the sense of distributions with
$\sigma(0)\in \check H_{<\Lambda}$, then $\sigma=0$.
\item
If $\sigma\in e^{wt}L^2(\mathcal D)$ solves $D\sigma=0$
in the sense of distributions with
$\sigma(0)\in \check H_{<-\lambda}$, then $\sigma=0$.
\item
$\mathcal D$ is non-parabolic
and $D_{<-\lambda,\ext}$ is injective.
\end{enumerate}
\end{prop}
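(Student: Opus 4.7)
The plan is to reduce all three claims to the estimate in Lemma~\ref{lemest}, after making use of one fact not recorded in the hypotheses: because $T_t$ is unitary and $A_tT_t=-T_tA_t$ by \eqref{dsax3b}, the spectrum of $A_t$ is symmetric about $0$, so the assumption $\spec A_t\cap(\lambda,\Lambda)=\emptyset$ automatically yields the mirror gap $\spec A_t\cap(-\Lambda,-\lambda)=\emptyset$. This puts $\check H_{<\Lambda}=\check H_{\le\lambda}$ and $\check H_{<-\lambda}=\check H_{\le-\Lambda}$ inside $H^{1/2}$, and forces the coercivity $\|(A_t\pm w)\tau\|_t^2\ge\mu^2\|\tau\|_t^2$ with $\mu:=\min(w-\lambda,\Lambda-w)$, so that $\delta:=\mu^2/2-c_1>0$ by hypothesis.

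For (1), I would set $\tilde\sigma:=e^{wt}\sigma\in L^2(\mathcal D)$; a direct separation-of-variables computation converts $D\sigma=0$ into $(D-wT)\tilde\sigma=0$, and the boundary condition becomes $\tilde\sigma(0)\in\check H_{\le\lambda}\subseteq H^{1/2}$. By \pref{dmax}(3) this places $\tilde\sigma$ in $H^1_{\loc}(\mathcal D)$, so Lemma~\ref{lemest} applies to each truncation $\chi_R\tilde\sigma$ with $\chi_R$ a Lipschitz cut-off equal to $1$ on $[0,R]$. Using $(D-wT)(\chi_R\tilde\sigma)=\chi_R'T\tilde\sigma\to 0$ in $L^2(\mathcal D)$ as $R\to\infty$, since $\tilde\sigma\in L^2$, and passing to the limit, one obtains
\begin{equation*}
  \|\partial\tilde\sigma\|_{\R_+}^2+\tfrac12\|(A-w)\tilde\sigma\|_{\R_+}^2
  \le c_1\|\tilde\sigma\|_{\R_+}^2+(\tilde\sigma(0),(A_0-w)\tilde\sigma(0))_0.
\end{equation*}
The boundary term is bounded by $(\lambda-w)\|\tilde\sigma(0)\|_0^2\le 0$, so coercivity collapses the remainder to $\delta\|\tilde\sigma\|_{\R_+}^2\le 0$, hence $\tilde\sigma\equiv 0$. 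Part (2) is symmetric: take $\tilde\sigma:=e^{-wt}\sigma$, apply Lemma~\ref{lemest} with $w$ replaced by $-w$, and note that the mirror gap forces $\tilde\sigma(0)\in\check H_{\le-\Lambda}$, so $(\tilde\sigma(0),(A_0+w)\tilde\sigma(0))_0\le(w-\Lambda)\|\tilde\sigma(0)\|_0^2\le 0$.

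For (3), the non-parabolicity is the same computation applied to $\eta:=e^{-wt}\sigma$ with $\sigma\in\mathcal L_{0,c}(\mathcal D)$: the boundary term vanishes and coercivity gives $\delta\int_{\R_+}e^{-2wt}\|\sigma\|_t^2\,dt\le\|D\sigma\|_{\R_+}^2$, from which $\|\sigma\|_{[0,T]}^2\le\delta^{-1}e^{2wT}\|D\sigma\|_{\R_+}^2$ follows. For the injectivity of $D_{<-\lambda,\ext}$, given $\sigma\in W$ with $D\sigma=0$ and $\sigma(0)\in\check H_{<-\lambda}$, I would insert a compactly supported cut-off $\phi_R\equiv 1$ on $[0,R]$ into the weighted inequality, use $D(\phi_R\sigma)=\phi_R'T\sigma$ together with the non-parabolicity estimate just proved to bound $\|\sigma\|_{[R,R+1]}$ by a multiple of $\|\sigma\|_W$ uniformly in $R$, and let $R\to\infty$ to conclude $\sigma\in e^{wt}L^2(\mathcal D)$; part~(2) then forces $\sigma\equiv 0$.

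The principal technical point is the cut-off limit underlying parts~(1) and~(2), where every term in Lemma~\ref{lemest} must be shown to converge as $\chi_R\tilde\sigma\to\tilde\sigma$; this rests on the square-integrability of $\tilde\sigma$, on the uniform bound $|\chi_R'|\le 1/R$, and on the fact, supplied by the mirror spectral gap, that the boundary contributions in all three parts have exactly the sign required to close the estimate.
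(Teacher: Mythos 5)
Your argument is essentially the paper's: both proofs rest on \pref{proest}/\lref{lemest}, on the symmetry of $\spec A_t$ about $0$ (so that $\check H_{<\Lambda}=\check H_{\le\lambda}$ and $\check H_{<-\lambda}=\check H_{\le-\Lambda}$ lie in $H^{1/2}$), on the coercivity $\|(A\mp w)\tau\|^2\ge\min(|w-\lambda|,|\Lambda-w|)^2\|\tau\|^2$ supplied by the spectral gap, and on the sign of the boundary term $(\tau_0,(A_0\mp w)\tau_0)_0$ on these spectral subspaces. The only difference in parts (1) and (2) is that you extend the estimate of \lref{lemest} from $H^1_c(U_0,E)$ to the conjugated solution by cut-off and passage to the limit, whereas the paper uses the density of $H^1_c(U_0,E)$ in $\dom D_{\max}$; both routes work.

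One step in your treatment of the injectivity in (3) is misstated, although the strategy does close. Non-parabolicity does not give $\|\sigma\|_{[R,R+1]}\le C\|\sigma\|_W$ uniformly in $R$: the constant in \eqref{nonpa2} produced by your own argument grows like $e^{wR}$, and a genuinely uniform bound would fail for general elements of $W$. What is uniformly bounded is $e^{-wR}\|\sigma\|_{[R,R+1]}$, and that is exactly what the weighted inequality needs. Applying it to $\phi_R\sigma$ (with $\phi_R'$ supported in $[R,R+1]$, $|\phi_R'|\le1$) and using $D(\phi_R\sigma)=\phi_R'T\sigma$ together with the non-positive boundary term gives
\begin{equation*}
  \varepsilon\int_0^R e^{-2wt}\|\sigma\|_t^2\,dt
  \le \|e^{-wt}\phi_R'T\sigma\|_{\R_+}^2
  \le e^{-2wR}\|\sigma\|_{[R,R+1]}^2
  \le C\|\sigma\|_W^2 ,
\end{equation*}
uniformly in $R$, whence $\sigma\in e^{wt}L^2(\mathcal D)$ and part (2) applies. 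Note that the unweighted version of this chain is useless: $\int_R^{R+1}e^{-2wt}\|\sigma\|_t^2\,dt\le C\|\sigma\|_W^2$ summed over $R$ diverges, so the weight on the left-hand side must be retained. The paper itself is more direct here, reading the injectivity off the estimate \eqref{inequaw3}; your reduction to part (2) is a legitimate alternative once the exponential weight is tracked correctly.
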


The first assumption in \pref{nonpar2} implies
that the set of $w$ in $(\lambda,\Lambda)$ satisfying the required inequalities
is non-empty.
We also recall from \eqref{dsax3b}
that the spectrum of $A_t$, $t\in\R$, is symmetric about $0$
so that the second assumption
implies that $\spec A_t$ has empty intersection with $-(\lambda,\Lambda)$ as well.
We get that $H^s_{\le\lambda}=H^s_{<\Lambda}$
and that $H^s_{<-\lambda}=H^s_{\le-\Lambda}$, for all $s\in\R$.

\begin{proof}[Proof of \pref{nonpar2}]
Let $\sigma\in H^1_c(U_0,E)$, $v\in\R$,
and set $\tau=e^{vt}\sigma$.
Then
\begin{align*}
  \| e^{vt}D\sigma \|_{\R_+}^2
  & + (\tau_0,(A_0 - v)\tau_0)_0 - \| \partial\tau \|_{\R_+}^2 \\
  &= \| (D - vT)\tau \|_{\R_+}^2
  + (\tau_0,(A_0 - v)\tau_0)_0 - \| \partial\tau \|_{\R_+}^2 \\
  &\ge  \frac12\| (A - v) \tau \|_{\R_+}^2
  - \frac{c_0}{2}(c_0 + 2 + 2|v|) \| \tau \|_{\R_+}^2  ,
\end{align*}
by \lref{lemest}.
Suppose now that $w\in(\lambda,\Lambda)$ satisfies the required inequalities,
and choose $\varepsilon>0$ such that
\begin{equation}
  |\Lambda-w|^2, |\lambda-w|^2
  \ge c_0 (c_0 +2 + 2w ) + 2\varepsilon .
  \label{varep}
\end{equation}
Then, with $v=\pm w$, we continue the above computation and get
\begin{equation}
  \| (D - vT)\tau \|_{\R_+}^2 + (\tau_0,(A_0-v)\tau_0)_0
  \ge \| \partial\tau \|_{\R_+}^2 + \varepsilon \|\tau \|_{\R_+}^2 .
  \label{inequaw}
\end{equation}
By the density of $H^1_c(U_0,E)$ in $\dom D_{\max}$,
any $\tau\in\dom D_{\max}$ satisfies
\begin{equation}
  \| (D - vT)\tau \|_{\R_+}^2 + (\tau_0,(A_0-v)\tau_0)_0
  \ge \varepsilon \|\tau \|_{\R_+}^2 ,
  \label{inequaw2}
\end{equation}
where $v=\pm w$ and $\varepsilon$ are as above.
Now with $\sigma$ as in the first two assertions and $v=w$ and $v=-w$,
respectively, $\tau=e^{vt}\sigma$ is in $\dom D_{\max}$
and satisfies $D_{\max}\tau=vT\tau$.
The boundary condition for $\sigma$ implies that the boundary term
in \eqref{inequaw2} is non-positive, hence $\tau=0$, and hence $\sigma=0$.
This shows the two first assertions.
As for the last assertion, we note that
\begin{equation}\label{inequaw3}
\begin{split}
  \| D\sigma \|_{\R_+}^2 &+ (\sigma_0,(A_0+w)\sigma_0)_0 \\
  &\ge
  \| e^{-wt}D\sigma \|_{\R_+}^2  + (\sigma_0,(A_0+w)\sigma_0)_0 \\
  &\ge \varepsilon \| e^{-wt}\sigma \|_{\R_+}^2 ,
\end{split}
\end{equation}
for any $\sigma\in H^1_{c}(U_0,E)$.
\end{proof}

For later purposes we note that the computations in the above proof
also show that
\begin{equation}\label{inequaw4}
  \| e^{wt}D\sigma \|_{\R_+}^2  + (\sigma_0,(A_0-w)\sigma_0)_0 \\
  \ge \varepsilon \| e^{wt}\sigma \|_{\R_+}^2 ,
\end{equation}
for any $\sigma\in H^1_{c}(U_0,E)$,
where $w\in(\lambda,\Lambda)$ and $\varepsilon$ is as in \eqref{varep}.

Suppose now that the assumptions of \pref{nonpar2} are satisfied
and that $w\in(\lambda,\Lambda)$ satisfies the corresponding inequalities.
Then \eqref{inequaw3} and \eqref{inequaw4} lead us to consider
the {\em weighted Lebesgue spaces}
$L^2_{\pm w}(\mathcal D):=e^{\mp wt}L^2(\mathcal D)$,
with norm associated to the inner product
\begin{equation}
  ( \sigma,\tau )_{\pm w} := (e^{\pm wt}\sigma,e^{\pm wt}\tau)_{\R_+} ,
\end{equation}
and the {\em weighted Sobolev spaces} $H^1_{w,<\mu}(\mathcal D)$,
the completions of $H^1_{<\mu,c}(\mathcal D)$ with respect to the norms
\begin{equation}
  \| \sigma \|_{H^1_{w}(\mathcal D)} := \|\sigma\|_w + \|D\sigma\|_w .
\end{equation}

\begin{cor}\label{cisowei}
If the assumptions of \pref{nonpar2} hold and $w\in(\lambda,\Lambda)$
satisfies the corresponding inequalities, then the operators
\begin{align*}
  &D_{w,<\Lambda}: H^1_{w,<\Lambda}(\mathcal D) \to L^2_w(\mathcal D)
  \quad\text{and} \\
  &D_{-w,<-\lambda}: H^1_{-w,<-\lambda}(\mathcal D) \to L^2_{-w}(\mathcal D)
\end{align*}
are adjoints of each other and isomorphisms.
\end{cor}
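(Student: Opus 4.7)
The plan is to combine the weighted estimates \eqref{inequaw3}--\eqref{inequaw4} from the proof of \pref{nonpar2} with an integration by parts identity, in order to prove injectivity plus closed range for both operators and to identify them as mutual adjoints under the unweighted $L^2$ duality pairing $L^2_w(\mathcal D)\times L^2_{-w}(\mathcal D)\to\C$; surjectivity then follows from the Banach closed-range theorem together with the uniqueness statements in \pref{nonpar2}.

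For injectivity and closed range of $D_{w,<\Lambda}$, I would apply \eqref{inequaw4} on the dense subspace $H^1_{<\Lambda,c}(\mathcal D)$. The condition $\sigma(0)\in H^{1/2}_{<\Lambda}=H^{1/2}_{\leq\lambda}$ together with $w>\lambda$ makes $A_0-w$ strictly negative on that spectral subspace, so the boundary term in \eqref{inequaw4} is nonpositive and we obtain $\|D\sigma\|_w\geq\sqrt{\varepsilon}\,\|\sigma\|_w$, which extends to the completion $H^1_{w,<\Lambda}(\mathcal D)$ by density. The parallel argument for $D_{-w,<-\lambda}$ uses \eqref{inequaw3} together with the negativity of $A_0+w$ on $H^{1/2}_{<-\lambda}=H^{1/2}_{\leq-\Lambda}$, since $w<\Lambda$.

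For mutual adjointness, I would test the integration by parts formula \eqref{dspaind} on the two cores: for $\sigma_1\in H^1_{<\Lambda,c}(\mathcal D)$ and $\sigma_2\in H^1_{<-\lambda,c}(\mathcal D)$,
\[
(D\sigma_1,\sigma_2)_{\R_+}-(\sigma_1,D\sigma_2)_{\R_+}=-(\sigma_1(0),T\sigma_2(0))_0 .
\]
By \eqref{bvgam}, $T\sigma_2(0)\in H^{1/2}_{\geq\Lambda}$, while $\sigma_1(0)\in H^{1/2}_{\leq\lambda}$, so the two boundary vectors live in orthogonal spectral subspaces of $A_0$ in $H=H_0$ and the boundary term vanishes. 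Because the weights $e^{\pm wt}$ cancel in the unweighted pairing, this is precisely the adjointness relation, and continuity plus density extend it to the full domains.

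For surjectivity of $D_{w,<\Lambda}$, suppose $\tau\in L^2_w(\mathcal D)$ is orthogonal (in the $L^2_w$ inner product) to the image. Then $\tilde\tau:=e^{2wt}\tau$ lies in $L^2_{-w}(\mathcal D)$ and satisfies $(D\sigma,\tilde\tau)_{\R_+}=0$ for all $\sigma\in H^1_{w,<\Lambda}(\mathcal D)$. Testing first against compactly supported $\sigma$ vanishing at $t=0$ yields $D\tilde\tau=0$ distributionally; testing against $\sigma$ with $\sigma(0)$ ranging over $H^{1/2}_{<\Lambda}$ and using the non-degeneracy of $\omega$ forces $\tilde\tau(0)\in\check H_{<-\lambda}$. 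By the second statement of \pref{nonpar2}, $\tilde\tau=0$, hence $\tau=0$; combined with closed range, this gives surjectivity. The symmetric argument, invoking the first statement of \pref{nonpar2}, handles $D_{-w,<-\lambda}$. The main technical point is to justify that the boundary trace extends continuously to the weighted completions and that the integration by parts survives that passage; this should follow from a standard cutoff-and-mollification argument controlled by \pref{proest}, with the decisive sign of the boundary term providing the joint control on the weighted $D$-norm and the trace.
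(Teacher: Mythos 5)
Your proposal is correct and follows essentially the same route as the paper: the weighted coercivity estimates \eqref{inequaw3}--\eqref{inequaw4} (with the nonpositivity of the boundary term under the respective boundary conditions) give injectivity and closed range, adjointness reduces to the identity $\check H_{<-\lambda}=\check H_{\le-\Lambda}$ being the adjoint boundary condition of $\check H_{<\Lambda}$, and surjectivity follows by showing that an element of the orthogonal complement of the image yields, after multiplication by $e^{\pm2wt}$, a weak solution killed by the uniqueness assertions (1) and (2) of \pref{nonpar2}. The extra details you supply (the sign of $(\sigma_0,(A_0\mp w)\sigma_0)_0$ on the relevant spectral subspaces, and the orthogonality of $H^{1/2}_{\le\lambda}$ and $TH^{1/2}_{\le-\Lambda}=H^{1/2}_{\ge\Lambda}$ making the boundary pairing vanish) are exactly the mechanism the paper's one-line assertions rely on.
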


We note that $D_{w}$ on $L^2_{w}(\mathcal D)$
is conjugate to the operator $D-wT.$ on $L^2(\mathcal D)$,
and similarly for $D_{-w}$.
Hence the operators $D_{\pm w}$ are Dirac-Schr\"odinger operators
in the sense of \cite{disy} (compare also Remark 2.27 of loc.cit.).

\begin{proof}[Proof of \cref{cisowei}]
The operators are adjoints of each other since
$\check H_{<-\lambda} = \check H_{\le-\Lambda}$,
by the assumptions of \pref{nonpar2}.
By \eqref{inequaw3} and \eqref{inequaw4},
 the images of the operators are closed.
The first two assertions of \pref{nonpar2} say that their kernels are trivial.
By integration by parts as in (5) of \pref{dmax},
we see that $\sigma\in L^2_{w}(\mathcal D)$ is in the orthogonal complement
of $D(H^1_{w,<\Lambda}(\mathcal D))$ if $\tau:=e^{2wt}\sigma$
solves $D\tau=0$ weakly with $\tau(0)\in H_{\le-\Lambda}=H_{<-\lambda}$.
Now $\tau\in e^{wt}L^2(\mathcal D)$, hence $\tau=0$,
by the second assertion of \pref{nonpar2}.
This shows that the first operator is an isomorphism.
The claim for the second follows in a similar fashion,
using the first assertion of \pref{nonpar2}.
\end{proof}

\begin{cor}\label{cindext}
If the assumptions of \pref{nonpar2} hold, then
\begin{equation*}
  \ind D_{<0,\ext} = \dim H_{[-\lambda,0)}
  - \dim \ker D_{\le\lambda,\max} .
\end{equation*}
In the super-symmetric case,
\begin{equation*}
  \ind D^+_{<0,\ext} = \dim H^+_{[-\lambda,0)}
  - \dim \ker D^-_{\le\lambda,\max} .
\end{equation*}
\end{cor}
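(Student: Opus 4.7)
The plan is to compare $D_{<0,\ext}$ with the auxiliary operator $D_{<-\lambda,\ext}$, whose kernel is trivial by \pref{nonpar2}.3 and whose cokernel can be read off from \eqref{imperp}. The two boundary conditions differ only by the finite-dimensional spectral subspace $H_{[-\lambda,0)}$, and this difference will account precisely for the shift in Fredholm indices.

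First I would pin down the index of $D_{<-\lambda,\ext}$. Since $B:=H^{1/2}_{<-\lambda}$ is an elliptic (APS-type) boundary condition and $\mathcal D$ is non-parabolic by \pref{nonpar2}.3, the paragraph after \eqref{imperp} shows that $D_{<-\lambda,\ext}$ is a Fredholm operator. Its kernel is trivial by \pref{nonpar2}.3. Using that the adjoint of $H^{1/2}_{<a}$ is $H^{1/2}_{\le-a}$, one has $B^a=H^{1/2}_{\le\lambda}$, and hence \eqref{imperp} gives $\coker D_{<-\lambda,\ext}\cong \ker D_{\le\lambda,\max}$. Therefore $\ind D_{<-\lambda,\ext}=-\dim\ker D_{\le\lambda,\max}$.

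Next I would identify the domain quotient. Since $A_0$ has compact resolvent, its spectrum is a sequence of eigenvalues of finite multiplicity, and therefore $H_{[-\lambda,0)}$ is finite-dimensional; being contained in $H^\infty$, it is also contained in $H^{1/2}$, and we obtain an orthogonal-type splitting $H^{1/2}_{<0}=H^{1/2}_{<-\lambda}\oplus H_{[-\lambda,0)}$. The trace map $\mathcal R:W\to\check H$ is surjective (surjective already on $\dom D_{\max}\subseteq W$ by \pref{dmax}.2), and $\dom D_{<a,\ext}=\mathcal R^{-1}(H^{1/2}_{<a})\cap W$ for $a=0,-\lambda$. Since $\ker\mathcal R\cap W\subseteq\dom D_{<-\lambda,\ext}$, the trace descends to an isomorphism
\[
  \dom D_{<0,\ext}/\dom D_{<-\lambda,\ext}\xrightarrow{\ \sim\ } H_{[-\lambda,0)}.
\]
Now $H^{1/2}_{<0}$ is itself elliptic, so $D_{<0,\ext}$ is Fredholm, and the elementary arithmetic for Fredholm extensions applies: if $T_2:X_2\to Y$ extends a Fredholm operator $T_1:X_1\to Y$ by a finite-dimensional subspace of dimension $k$, then $\ind T_2=\ind T_1+k$ (the induced map $X_2/X_1\to\im T_2/\im T_1$ is surjective with kernel $\ker T_2/\ker T_1$, yielding $k=\dim\ker T_2-\dim\ker T_1+\dim\coker T_1-\dim\coker T_2$). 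Applied here, this gives
\[
  \ind D_{<0,\ext}=\ind D_{<-\lambda,\ext}+\dim H_{[-\lambda,0)}=\dim H_{[-\lambda,0)}-\dim\ker D_{\le\lambda,\max}.
\]

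For the super-symmetric variant the argument is identical with a superscript $+$ attached throughout: the restriction of the proof of \pref{nonpar2} to the $+$-component gives triviality of $\ker D^+_{<-\lambda,\ext}$, the adjoint of $D^+_{B^+,\max}$ is $D^-_{(B^+)^a,\max}$ with $(B^+)^a\subseteq\check H^-$, and for $B^+=H^{1/2,+}_{<-\lambda}$ the non-degeneracy of $\omega$ between $\check H^+$ and $\check H^-$ yields $(B^+)^a=H^{1/2,-}_{\le\lambda}$, so that $\coker D^+_{<-\lambda,\ext}\cong\ker D^-_{\le\lambda,\max}$; the domain-quotient identification pushes through verbatim, replacing $H$ by $H^+$. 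The main point requiring care is the domain-quotient identification—namely that surjectivity of the trace allows one to lift the algebraic splitting $H^{1/2}_{<0}=H^{1/2}_{<-\lambda}\oplus H_{[-\lambda,0)}$ to the level of operator domains—but once this is in place, the index identity is pure Fredholm bookkeeping.
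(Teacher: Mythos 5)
Your proof is correct and follows the same overall route as the paper: reduce to $D_{<-\lambda,\ext}$, use \pref{nonpar2}.3 for its injectivity, and identify its cokernel with $\ker D_{\le\lambda,\max}$ via \eqref{imperp} with $B=H^{1/2}_{<-\lambda}$, $B^a=H^{1/2}_{\le\lambda}$. The one point where you diverge is the index-shift identity $\ind D_{<0,\ext}=\ind D_{<-\lambda,\ext}+\dim H_{[-\lambda,0)}$: the paper obtains this by citing Theorem 3.14 of \cite{disy}, whereas you re-derive it from scratch using the surjectivity of the trace (\pref{dmax}.2), the finite dimensionality of $H_{[-\lambda,0)}$ coming from the compactness of $H_A\hookrightarrow H$, and the elementary Fredholm bookkeeping for a finite-dimensional enlargement of the domain. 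That bookkeeping is sound (the induced map $X_2/X_1\to\im T_2/\im T_1$ is indeed surjective with kernel $\ker T_2/\ker T_1$), and the prerequisite that both $H^{1/2}_{<0}$ and $H^{1/2}_{<-\lambda}$ are elliptic boundary conditions, so that both extended operators are Fredholm with closed image, is noted. The super-symmetric case goes through as you describe. The net effect of your variant is a self-contained proof of the special case of the cited theorem that is actually needed here.
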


\begin{proof}
By Theorem 3.14 of \cite{disy}, we have
\begin{equation*}
  \ind D_{<0,\ext} = \ind D_{<-\lambda,\ext} + \dim H_{[-\lambda,0)} .
\end{equation*}
By \pref{nonpar2}, $D_{<-\lambda,\ext}$ is injective.
On the other hand, the orthogonal complement of  $\im D_{<-\lambda,\ext}$
is given by the space of $\sigma$ in $L^2(\mathcal D)$ with $D\sigma=0$
and $\sigma(0)\in H_{\le\lambda}$.
This shows the first claim, and the proof of the second is similar.
\end{proof}

\section{Decomposition and Index}
\label{subdec}

We assume from now on that we have a decomposition $M=M_0\cup U_0$,
where $M_0$ and $U_0$ are domains in $M$
such that $M_0$ is compact and connected.
We will need later that there is a distance function which is defined
on a neighborhood of $U_0$.
However, up to and including \tref{nonpar5},
we only assume that $N:=M_0\cap U_0\ne\emptyset$
is a level surface of a $C^2$ distance function $f$
which is defined in some open neighborhood of $N$ in $M$.\footnote{
This seems to be the right setup for general applications.}
We assume that $T:=\grad f$ points into the direction of $U_0$,
set $A_0:=-D_N$ as in \eqref{defat} and get the associated Sobolev
spaces $H^s=H^s(A_0)$ as in Section \ref{susebv}.

\begin{lem}\label{h1m0}
There is a constant $C>1$ such that
\begin{equation*}
  \| \sigma \|_{H^1(M_0,E)}
  \le C \big( \| \sigma|_N \|_{H^{1/2}} + \| D\sigma \|_{L^2(M_0,E)} \big) ,
\end{equation*}
for all $\sigma\in H^1(M_0,E)$.
\end{lem}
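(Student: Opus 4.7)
The plan is to combine the Bochner identity \eqref{lic1} with a Peetre-style compactness argument and unique continuation for the Dirac operator. First I would apply \eqref{lic1} on the compact manifold $M_0$ with boundary $N$ (treating $-T$ as the inward normal to $M_0$, which only alters signs of lower-order terms). The curvature term $K^E$ is uniformly bounded by \eqref{lic4}, the mean curvature $\kappa=\tr W$ is bounded by $mC_W$ thanks to \eqref{boundw}, and the boundary Dirac operator $D^T=-A_0$ extends to a bounded map $H^{1/2}(N,E)\to H^{-1/2}(N,E)$ by self-adjointness together with \eqref{lic3}. Consequently, the boundary contribution in \eqref{lic1} is controlled by a constant multiple of $\|\sigma|_N\|_{H^{1/2}}^2$, yielding the basic estimate
\begin{equation*}
  \|\sigma\|_{H^1(M_0,E)}^2 \le C_0 \bigl( \|D\sigma\|_{L^2(M_0,E)}^2 + \|\sigma|_N\|_{H^{1/2}}^2 + \|\sigma\|_{L^2(M_0,E)}^2 \bigr)
\end{equation*}
for all $\sigma\in H^1(M_0,E)$.

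The next step is to eliminate the lower-order $L^2$ term by a contradiction argument in the spirit of Peetre. If the desired inequality fails, one finds $\sigma_n\in H^1(M_0,E)$ with $\|\sigma_n\|_{H^1}=1$ and $\|\sigma_n|_N\|_{H^{1/2}}+\|D\sigma_n\|_{L^2}\to 0$. Rellich's theorem provides a subsequence with $\sigma_n\to\sigma$ in $L^2(M_0,E)$, and applying the basic estimate to the differences $\sigma_n-\sigma_m$ shows that $(\sigma_n)$ is in fact Cauchy in $H^1$. Hence $\sigma_n\to\sigma$ in $H^1(M_0,E)$ with $\|\sigma\|_{H^1}=1$, $D\sigma=0$ on $M_0$, and $\sigma|_N=0$.

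The required contradiction then comes from unique continuation. Since $f$ is defined on an open neighborhood $V$ of $N$ in $M$, I would extend $\sigma$ by zero across $N$ to a section $\tilde\sigma$ of $E$ on $M_0\cup V$. The vanishing of the trace together with the integration by parts formula \eqref{parind} and the formal self-adjointness of $D$ forces $D\tilde\sigma=0$ weakly on $M_0\cup V$, and elliptic regularity makes $\tilde\sigma$ smooth. As $\tilde\sigma$ vanishes identically on the non-empty open set $V\setminus M_0$ and $M_0$ is connected, Aronszajn's unique continuation theorem applied to the generalized Laplacian $D^2$ forces $\tilde\sigma\equiv 0$, whence $\sigma\equiv 0$, contradicting $\|\sigma\|_{H^1}=1$. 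The principal obstacle is this last invocation of unique continuation across the boundary $N$; the first two steps are routine consequences of the Bochner identity and Rellich compactness.
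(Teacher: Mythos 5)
Your proposal is correct and follows essentially the same route as the paper: a coercive estimate up to compact/trace terms, Rellich compactness to produce a limiting solution, and a contradiction via extension by zero across $N$ together with unique continuation for $D$ (the paper cites Theorem 8.2 of \cite{BW}; Aronszajn applied to $D^2$ serves the same purpose). The only cosmetic difference is that the paper first splits off the trace with an extension operator $\mathcal E_0$ and runs the contradiction for trace-zero sections, whereas you keep the boundary term in the Bochner identity and bound it by $\|\sigma|_N\|_{H^{1/2}}^2$ using the boundedness of $D^T:H^{1/2}\to H^{-1/2}$; both reductions are routine and valid.
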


\begin{proof}
Let $\mathcal R_0:H^1(M_0,E)\to H^{1/2}$ be restriction to $N$,
$\mathcal R_0\sigma:=\sigma|_N$,
and $\mathcal E_0:H^{1/2}\to H^1(M_0,E)$ be an extension operator
(for extension operators, see e.g. (1.36) and Lemma 1.37 in \cite{disy}).
Since $\mathcal E_0$ and $\mathcal R_0$ are continuous
and $H^1_0(M_0,E)$ is the kernel of $\mathcal R_0$,
\begin{equation*}
  H^1(M_0,E) \to H^{1/2} \times H^1_0(M_0,E) , \quad \sigma \mapsto
  (\mathcal R_0\sigma,\sigma - \mathcal E_0\mathcal R_0\sigma) ,
\end{equation*}
is a continuous bijection and therefore an isomorphism of topological vector spaces.
This reduces the discussion to the case where $\sigma$ belongs to $H^1_0(M_0,E)$,
and then $\sigma|_N=0$.

If a constant as asserted would not exist,
there would be a sequence $(\sigma_n)$ in $H^1_0(M_0,E)$
such that $\|\sigma_n\|_{L^2(M_0,E)}=1$ and $\| D\sigma_n\|_{L^2(M_0,E)}\to0$.
Extending $\sigma_n$ by $0$ to $U_0$,
we obtain an $H^1$-section $\tau_n$ of $E$ over $M$
with support in the compact domain $M_0$
such that $\|\tau_n\|_{L^2(M_0,E)}=1$.
Using partial integration as in \pref{dmax}.5 (or, more precisely,
as in Proposition 5.7.3 in \cite{disy}), we get that $D\tau_n=D\sigma_n$ on $M_0$
and that $D\tau_n=0$ on $U_0$.
In particular, $\| D\tau_n\|_{L^2(M,E)}\to0$.
Therefore, up to passing to a subsequence, $(\tau_n)$ converges in $L^2(M,E)$.
Furthermore, the limit $\tau$ vanishes in $U_0$, is of $L^2$-norm $1$,
and solves $D\tau=0$ weakly.
Hence $\tau$ is smooth and $D\tau=0$, by elliptic regularity.
Since the interior of $U_0$ is non-empty,
this is in contradiction to the unique continuation property for Dirac operators,
see Theorem 8.2 in \cite{BW}.
\end{proof}

It will be convenient to write $D_{M_0}$ for the restriction of $D$ to $M_0$,
and similarly in corresponding cases.

Consider the manifold $\tilde M$ which is the disjoint union of $M_0$ and $U_0$,
endowed with the Dirac bundle $\tilde E\to\tilde M$ induced by $E$.
We want to apply the results from Chapter 5 of \cite{disy}
to the Dirac operator $\tilde D$ of $\tilde E$ and,
therefore, need to check whether the requirements of Axiom VI there are satisfied.
The only requirement in that axiom which might be non-obvious
is dealt with in the following lemma.

\begin{lem}\label{ax6}
Let $\chi:\R\to\R$ be a smooth function with compact support
which is equal to $1$ close to $0$.
Then $(1-\chi\circ f)\sigma\in\dom D_{U_0,\min}$
for all $\sigma\in\dom D_{U_0,\max}$,
and similarly for $M_0$.
\end{lem}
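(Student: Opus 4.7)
The plan is to apply \pref{dmax}.(4), according to which a section in $\dom D_{U_0,\max}$ belongs to $\dom D_{U_0,\min}$ precisely when its boundary trace on $N$ vanishes. Setting $\phi:=1-\chi\circ f$, I will verify both that $\phi\sigma\in\dom D_{U_0,\max}$ and that the trace $(\phi\sigma)(0)=0$; the $M_0$ statement will follow by the same argument applied on the other side of $N$.

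First I would make sense of $\phi$ on all of $M$. The function $f$ is defined only on an open neighborhood $V$ of $N$; however, since $\chi$ has compact support and $\chi(0)=1$, one may (after shrinking $\supp\chi$ if necessary) assume that $\chi\circ f$ has compact support inside $V$, so that it extends by zero to a $C^2$ function on $M$. Consequently $\phi$ extends to a $C^2$ function on $M$ that equals $1$ outside a compact neighborhood of $N$ and vanishes identically in a smaller open neighborhood of $N$. Both $\phi$ and $\grad\phi$ are uniformly bounded, the latter because $|\grad f|=1$ and $\chi'$ is bounded.

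Using \eqref{diopps} in the distributional sense, one has
\begin{equation*}
  D(\phi\sigma) = \grad\phi\cdot\sigma + \phi\,D\sigma ,
\end{equation*}
and the boundedness of $\phi$ and $\grad\phi$ together with $\sigma,D\sigma\in L^2(U_0,E)$ show that both $\phi\sigma$ and $D(\phi\sigma)$ belong to $L^2(U_0,E)$; hence $\phi\sigma\in\dom D_{U_0,\max}$. Since $\phi\sigma$ vanishes identically on an open neighborhood of $N$ in $U_0$, the trace $\mathcal R_{\max}(\phi\sigma)\in\check H$ furnished by \pref{dmax}.(2) is zero, and \pref{dmax}.(4) then yields $\phi\sigma\in\dom D_{U_0,\min}$.

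The case of $M_0$ is entirely parallel: the distance function $f$ is defined on a two-sided neighborhood of $N$, so $\phi$ vanishes in a neighborhood of $N=\partial M_0$ on the $M_0$ side as well, and the Leibniz computation together with the analogue of \pref{dmax}.(4) for $M_0$ (which rests on the Dirac-system framework applied to a bounded interval, as indicated in the remark following \eqref{rplus}) completes the proof. The only technicality worth highlighting is the smooth extension of $\chi\circ f$ by zero to all of $M$; this is where compact support of $\chi$ is essential, but beyond that the argument reduces to the trace characterization of the minimal domain and is otherwise routine.
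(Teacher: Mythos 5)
Your argument is correct in substance but follows a genuinely different route from the paper's. The paper never invokes the trace characterization of $\dom D_{U_0,\min}$: it extends $(1-\chi\circ f)\sigma$ by zero across $N$ to a section $\tilde\sigma$ in the maximal domain of $D$ on all of $M$, uses essential self-adjointness of $D$ on the complete boundaryless manifold $M$ (Theorem II.5.7 of \cite{LM}) to write $\tilde\sigma$ as a graph-norm limit of sections $\sigma_k\in C^\infty_c(M,E)$, and then multiplies by a second cutoff $\tilde\chi$ with $(1-\tilde\chi)(1-\chi)=1-\chi$ to obtain approximants in $C^\infty_{cc}(U_0,E)$. That proof is soft and global, exploiting the compact piece $M_0$ being glued back on; yours is local near $N$ but leans on the full boundary-value machinery of \pref{dmax} together with the tacit identification of $D_{U_0,\min/\max}$ with the minimal/maximal operators of the associated Dirac system (which the paper does make in Section \ref{subfun}). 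The one step you must not leave implicit is the assertion that $\mathcal R_{\max}(\phi\sigma)=0$ because $\phi\sigma$ vanishes near $N$: once $\phi\sigma\in\dom D_{U_0,\max}$ is known, this assertion is, via \pref{dmax}.4, precisely what remains to be proved, so declaring it obvious short-circuits the argument. It does follow quickly from the stated machinery: choose a Lipschitz cutoff $\psi$ on $\R_+$ with $\psi(0)=1$ and $\supp\psi$ contained in the region where $\phi\sigma$ vanishes, so that $\phi\sigma=(1-\psi)\phi\sigma$; since multiplication by a Lipschitz function with bounded derivative preserves $\dom D_{\max}$ and intertwines $\mathcal R_{\max}$ with multiplication by the boundary value (by the Leibniz rule, continuity of $\mathcal R_{\max}$, and density of $\mathcal L_c(\mathcal D)$ from \pref{dmax}.1--2), one gets $\mathcal R_{\max}(\phi\sigma)=(1-\psi(0))\,\mathcal R_{\max}(\phi\sigma)=0$. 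With that sentence added your proof is complete, and the $M_0$ case goes through as you indicate, with $\check H$ replaced by the hybrid space $\hat H$ of \eqref{hath}.
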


\begin{proof}
We note first that $(1-\chi\circ f)\sigma$ is a section in $\dom D_{U_0,\max}$
which vanishes in a neighborhood of the boundary $N$ of $U_0$.
Hence the extension $\tilde\sigma$ of $(1-\chi\circ f)\sigma$ by $0$ to $M_0$
is in $\dom D_{\max}$.
Now we have $\dom D_{\max}=\dom D_{\min}$,
by Theorem II.5.7 of \cite{LM}.
Hence there is a sequence of smooth sections $\sigma_k\in C^\infty_c(M,E)$
such that $\sigma_k\to\tilde\sigma$ in $\dom D_{\min}$.
It follows that $(1-\tilde\chi\circ f)\sigma_k\to(1-\chi\circ f)\sigma$
in $\dom D_{U_0,\min}$,
where $\tilde\chi:\R\to\R$ is a smooth function with compact support
such that $(1-\tilde\chi)(1-\chi)=(1-\chi)$.
\end{proof}

Because $T$ is the exterior normal to $M_0$,
the space of boundary values of the maximal extension $D_{M_0,\max}$
of $D$ over $M_0$ is the hybrid Sobolev space
\begin{equation}
  \hat H
  = H^{-1/2}_{<0}\oplus H^{1/2}_{\ge0}
  = \check H(-A_0) .
  \label{hath}
\end{equation}

\begin{lem}\label{indprep}
For any $\lambda\ge0$, we have
\begin{equation*}
  \ind D_{M_0,\ge-\lambda} = \frac12 \dim H_{[-\lambda,\lambda]} .
\end{equation*}
\end{lem}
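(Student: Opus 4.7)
My plan is to prove the identity by comparing $D_{M_0,\ge -\lambda}$ with two related closed extensions: its adjoint and a spectral enlargement. Set
\begin{equation*}
B_1 := H^{1/2}_{\ge -\lambda}, \qquad B_2 := H^{1/2}_{> \lambda}.
\end{equation*}
Both are closed subspaces of the hybrid boundary space $\hat H$ (see \eqref{hath}) that are contained in $H^{1/2}$, hence are regular boundary conditions. Since $A_0$ is essentially the Dirac operator on the closed manifold $N$, it has compact resolvent, and the spectral interval $[-\lambda,\lambda]$ contains only finitely many eigenvalues of finite multiplicity. Thus
\begin{equation*}
B_1 \supseteq B_2, \qquad \dim(B_1/B_2) = \dim H^{1/2}_{[-\lambda,\lambda]} = \dim H_{[-\lambda,\lambda]} < \infty.
\end{equation*}

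The first step is to show $(D_{M_0,\ge -\lambda})^{\ast} = D_{M_0,>\lambda}$, equivalently $B_1^a = B_2$ under the boundary pairing on $\hat H$. This pairing is the direct analog of $\omega$ from \pref{dmax}.5 under the identification $\hat H = \check H(-A_0)$, reflecting the fact that the inward normal to $M_0$ along $N$ is $-T$ rather than $T$; it is non-degenerate and skew-Hermitian, and on smooth vectors is essentially given by $\langle Tx,y\rangle_N$. Because $TA_0 = -A_0 T$ and $T$ is an isometric bijection, $T$ carries the spectral subspace $H_{\ge -\lambda}$ of $A_0$ isomorphically onto $H_{\le \lambda}$. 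So for $x \in B_1$ one has $Tx \in H^{1/2}_{\le\lambda}$, and the pairing of $x$ with $y \in \hat H$ vanishes for every such $x$ precisely when the spectral part of $y$ with eigenvalues $\le\lambda$ is trivial, i.e.\ $y \in H^{1/2}_{>\lambda} = B_2$. Non-degeneracy then forces equality $B_1^a = B_2$. By the adjoint theory of \pref{dmax} transported to $M_0$, this yields $\ind D_{M_0,>\lambda} = -\ind D_{M_0,\ge -\lambda}$.

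The second step is the relative index formula. Both operators are Fredholm because, on the compact manifold $M_0$, elliptic boundary conditions (both $B_i$ and $B_i^a$ regular, by the computation above) yield Fredholm Dirac operators, using that the inclusion $H^1(M_0,E) \hookrightarrow L^2(M_0,E)$ is compact. The inclusion $\dom D_{M_0,>\lambda} \subseteq \dom D_{M_0,\ge -\lambda}$ has finite codimension $\dim(B_1/B_2) = \dim H_{[-\lambda,\lambda]}$ via the trace map, so the standard snake-lemma exact sequence for Fredholm operators gives
\begin{equation*}
\ind D_{M_0,\ge -\lambda} - \ind D_{M_0,>\lambda} = \dim H_{[-\lambda,\lambda]}.
\end{equation*}
Combining with the adjointness identity yields $2\,\ind D_{M_0,\ge -\lambda} = \dim H_{[-\lambda,\lambda]}$, which is the claim. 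I expect the main technical point to be the clean identification of the boundary pairing on $\hat H$ (together with the bookkeeping of spectral parts at the endpoints $\pm\lambda$, where the strict vs.\ non-strict inequalities in the definitions of $B_1$ and $B_2$ are essential): orientation reversal of the normal at $N$ shifts one from $\check H$ to $\hat H$ and changes the sign of Green's formula, but these sign changes do not affect the annihilator computation, which is the real content.
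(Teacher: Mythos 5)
Your proof is correct and follows essentially the same route as the paper: the identity $\ind D_{M_0,\ge-\lambda}=-\ind D_{M_0,>\lambda}$ from adjointness of the boundary conditions $H^{1/2}_{\ge-\lambda}$ and $H^{1/2}_{>\lambda}$, combined with the relative index formula $\ind D_{M_0,\ge-\lambda}-\ind D_{M_0,>\lambda}=\dim H_{[-\lambda,\lambda]}$. The only difference is that the paper cites Theorem 5.16 of \cite{disy} for the latter, whereas you supply the standard finite-codimension-of-domains argument directly; both are fine.
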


\begin{proof}
Since $D_{M_0,\ge-\lambda}$ is the adjoint operator of $D_{M_0,>\lambda}$,
we have
\begin{equation*}
   \ind D_{M_0,\ge-\lambda} = - \ind D_{M_0,>\lambda} .
\end{equation*}
On the other hand,
\begin{equation*}
  \ind D_{M_0,\ge-\lambda} - \ind D_{M_0,>\lambda}
  = \dim H_{[-\lambda, \lambda]} ,
\end{equation*}
by Theorem 5.16 in \cite{disy}.
\end{proof}

The same argument applies to $D_{U_0,\le\lambda,\max}$
if $D$ is of Fredholm type.

Specifying the data in the definition of non-parabolicity of the third named author,
compare \cite{Ca1}, we say that $D$ is {\em non-parabolic}
with respect to some subset $L\subseteq M$ if,
for any relatively compact open subset $K\subseteq M$,
there exists  a constant $C=C(K,L)$ such that
\begin{equation}
  \| \sigma \|_{L^2(K,E)} \le C \| D\sigma \|_{L^2(M,E)} ,
  \label{nonpam}
\end{equation}
for any smooth section $\sigma$ of $E$ with compact support
such that $\sigma|_L=0$.
Obviously, if $D$ is of Fredholm type,
then $D$ is non-parabolic with respect to any sufficiently large compact subset,
and if $D$ is non-parabolic with respect to some subset,
then also with respect to any larger subset.
Furthermore, if $M$ is connected,
then $D$ is non-parabolic with respect to any subset
whose complement is relatively compact,
by \lref{h1m0}.
Recalling Definition \ref{nonpai0},
we see that $D$ is {\em non-parabolic at infinity}
if $D$ is non-parabolic with respect to some compact subset of $M$.

\begin{prop}\label{nopanopa}
Suppose that the ends of $M$ are straight in the sense of Definition \ref{straight}
and let $\mathcal D$ be the Dirac system over $\R_+$
associated to $D$ over $U_0$ as in Section \ref{subdisy}.
Then $D$ is non-parabolic with respect to $M_0$
if and only if $\mathcal D$ is non-parabolic in the sense of Section \ref{subfred}.
In particular, if $\mathcal D$ satisfies the assumptions of \pref{nonpar2},
then $D$ is non-parabolic with respect to $M_0$.
\qed
\end{prop}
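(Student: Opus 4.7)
The plan is to transport the non-parabolicity estimate through the identification of sections over $U_0$ with time-dependent sections over $N$ set up in Section \ref{subdisy}. Under the identification via $F$ and the parallel transport $P^{/\!\!/}$, the Dirac operator $D$ on $U_0$ corresponds to the Dirac operator of $\mathcal D$ (see \eqref{corrd}); the $L^2$-norm of a section $\tilde\sigma$ on $f^{-1}([s,t])$ coincides with the $L^2(\mathcal D)$-norm of its correspondent on $[s,t]$ by \eqref{l2proj}; and similarly $\|D\tilde\sigma\|_{L^2(U_0,E)} = \|D\sigma\|_{\R_+}$. Moreover, $C^\infty_{cc}(U_0,E)$, whose members correspond to smooth elements of $\mathcal L_{0,c}(\mathcal D)$, is dense in both $H^1_0(U_0,E)$ and in the closure of $\mathcal L_{0,c}(\mathcal D)$ in $H^1(\mathcal D)$ by \eqref{morefs}, so estimates in either formulation transfer by routine approximation.

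For the forward direction, suppose $D$ is non-parabolic with respect to $M_0$ and fix $t>0$. Since each cross section is compact, the set $K := f^{-1}([0,t])$ is relatively compact in $M$, so by hypothesis there is a constant $C=C(K,M_0)$ satisfying \eqref{nonpam}. Given $\sigma \in \mathcal L_{0,c}(\mathcal D)$, approximate its correspondent on $U_0$ by sections $\tilde\sigma_n \in C^\infty_{cc}(U_0,E)$ and extend each $\tilde\sigma_n$ by zero to a smooth compactly supported section on $M$ vanishing in a neighborhood of $M_0$. Applying \eqref{nonpam}, using $\|D\tilde\sigma_n\|_{L^2(M,E)} = \|D\tilde\sigma_n\|_{L^2(U_0,E)}$, and passing to the limit yields $\|\sigma\|_{[0,t]} \le C \|D\sigma\|_{\R_+}$, the estimate \eqref{nonpa} for $\mathcal D$.

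For the reverse direction, suppose $\mathcal D$ is non-parabolic, let $K \subseteq M$ be relatively compact, and choose $t>0$ with $K \cap U_0 \subseteq f^{-1}([0,t])$. For any smooth compactly supported $\sigma$ on $M$ with $\sigma|_{M_0}=0$, the restriction $\sigma|_{U_0}$ has vanishing trace on $N$, so its correspondent lies in the closure of $\mathcal L_{0,c}(\mathcal D)$ inside $H^1(\mathcal D)$, and the non-parabolicity estimate \eqref{nonpa} for $\mathcal D$ (extended by density with constant $C=C(t)$) applies. Since $\sigma$ vanishes on $M_0$, one has $\|\sigma\|_{L^2(K,E)} \le \|\sigma\|_{L^2(f^{-1}([0,t]),E)}$, and combining this with the $\mathcal D$-estimate together with $\|D\sigma\|_{L^2(U_0,E)} \le \|D\sigma\|_{L^2(M,E)}$ gives \eqref{nonpam}. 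The final assertion then follows by combining this equivalence with \pref{nonpar2}.

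The main point requiring care is the density transfer: smooth sections in $C^\infty_{cc}(U_0,E)$ correspond only to Lipschitz (not smooth) elements of $\mathcal L_{0,c}(\mathcal D)$, since $F$ is merely $C^1$, but this is precisely the regularity built into \eqref{dslloc}, and the density statements recorded in Section \ref{subfun} handle the rest.
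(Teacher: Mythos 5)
Your argument is correct, and it is precisely the routine unwinding of the two definitions through the identification of Section \ref{subdisy} that the paper itself omits (the proposition is stated with no proof, marked \qed). The only points needing care — the equality of $L^2$-norms via \eqref{l2proj}, the correspondence $D \leftrightarrow T(\partial+A)$ from \eqref{corrd}, and the density of $C^\infty_{cc}(U_0,E)$ in $H^1_0(U_0,E)$ from Section \ref{subfun} — are exactly the ones you address.
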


Assume from now on that $D$ is non-parabolic with respect to $M_0$.
Let $W(M,E)$ and $W(U_0,E)$ be the completion of $H^1_{c}(M,E)$
and $H^1_c(U_0,E)$ with respect to the norms associated to the inner products
\begin{equation}\label{wmu}
\begin{split}
  (\sigma,\tau)_{W(M,E)}
  &:= (\sigma,\tau)_{H^1(M_0,E)} + (D\sigma,D\tau)_{L^2(U_0,E)} , \\
  (\sigma,\tau)_{W(U_0,E)}
  &:= (\sigma,\tau)_{\check H} + (D\sigma,D\tau)_{L^2(U_0,E)} ,
\end{split}
\end{equation}
respectively.
We have
\begin{equation}\label{wmu2}
\begin{split}
  W(M,E)
  &= \{ (\sigma,\tau) \in H^1(M_0,E)\oplus W(U_0,E) :
  \sigma|_N = \tau|_N \} \\
  &\subseteq H^1_{\loc}(M,E) ,
\end{split}
\end{equation}
since the transmission condition $\sigma|_N = \tau|_N$
is a regular boundary condition for the manifold $\tilde M$ as above,
see Example 1.85 in \cite{disy}.
By definition, $D$ induces continuous operators
\begin{equation}\label{dext}
\begin{split}
  D_{\ext}: W(M,E) &\to L^2(M,E) , \\
  D_{U_0,\ext}: W(U_0,E) &\to L^2(U_0,E) .
\end{split}
\end{equation}
We arrive at the following version of Th\'eor\`eme 0.3 of \cite{Ca2}.

\begin{thm}\label{dwfred}
Suppose that $D$ is non-parabolic with respect to $M_0$.
Then $D_{\ext}:W(M,E) \to L^2(M,E)$ is a Fredholm operator with
\begin{equation*}
  (\im D_{\ext})^\perp = \ker D_{\max}
  = \{ \sigma\in L^2(M,E) : \text{$D\sigma = 0$ weakly} \} .
\end{equation*}
\end{thm}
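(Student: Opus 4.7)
The plan is to translate the assertion into the framework of Dirac systems from Sections \ref{preds}--\ref{subfred} via the transmission description \eqref{wmu2} of $W(M,E)$. By \pref{nopanopa} the non-parabolicity hypothesis is equivalent to non-parabolicity of the Dirac system $\mathcal D$ associated to the end $U_0$. The transmission condition $\sigma|_N = \tau|_N$ on $\tilde M = M_0 \sqcup U_0$ is a regular boundary condition (Example 1.85 of \cite{disy}, together with \lref{ax6}), which places the problem in the Fredholm framework of Chapter 5 of \cite{disy}.

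The core analytic ingredient is the local elliptic estimate
\begin{equation*}
  \|\sigma\|_{W(M,E)} \le C\bigl(\|D_{\ext}\sigma\|_{L^2(M,E)} + \|\sigma\|_{L^2(K,E)}\bigr),
\end{equation*}
valid for a sufficiently large compact $K \subseteq M$ and all $\sigma \in W(M,E)$. On $U_0$, non-parabolicity bounds the local $L^2$-norm of $\sigma$ by $\|D\sigma\|_{L^2(M,E)} + \|\sigma\|_{L^2(K,E)}$, and the continuous trace map on $W(U_0,E)$ guaranteed by \pref{dmax} controls $\sigma|_N$ in the hybrid space $\check H$. On $M_0$, \lref{h1m0} bounds $\|\sigma\|_{H^1(M_0,E)}$ in terms of $\|\sigma|_N\|_{H^{1/2}}$ and $\|D\sigma\|_{L^2(M_0,E)}$; the transmission condition identifies these boundary values, so the two estimates fit together. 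Combined with the Rellich-type compactness of $W(M,E) \hookrightarrow L^2(K,E)$, the standard Peetre argument yields finite-dimensional kernel and closed image for $D_{\ext}$.

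For the identification $(\im D_{\ext})^\perp = \ker D_{\max}$: if $\sigma \in L^2(M,E)$ is $L^2$-orthogonal to $\im D_{\ext}$, testing against $\tau \in C^\infty_c(M,E) \subseteq W(M,E)$ gives $D\sigma = 0$ weakly, hence $\sigma \in \ker D_{\max}$. Conversely, any $\sigma \in \ker D_{\max}$ is smooth by elliptic regularity, and $(\sigma, D\tau)_{L^2(M,E)} = 0$ for every $\tau \in C^\infty_c(M,E)$; the density of $H^1_c(M,E)$ in $W(M,E)$ built into \eqref{wmu} extends this to all of $W(M,E)$. Finite-dimensionality of $\ker D_{\max}$, and thus of the cokernel, follows by applying the same local elliptic estimate to sections satisfying $D\sigma = 0$ and invoking Rellich.

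The principal obstacle is the matching of boundary values across $N$: the end $U_0$ only supplies a trace in the hybrid space $\check H$, whereas the compact part $M_0$ requires an $H^{1/2}$-trace. Reconciling these uses the transmission condition in concert with \pref{dmax}.3 (which promotes the $\check H$-trace to $H^{1/2}$ precisely when the section is locally $H^1$ near $N$) and \lref{h1m0}.
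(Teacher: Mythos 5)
Your argument is correct and follows essentially the same route as the paper: reduce to the transmission problem on the disjoint union $M_0\sqcup U_0$, get closed image and finite-dimensional kernel from the Fredholm theory for non-parabolic Dirac systems with regular boundary conditions (the paper simply cites Theorem 5.12 of \cite{disy} here, where the local elliptic estimate and Peetre/Rellich argument you sketch are carried out), and identify $(\im D_{\ext})^\perp$ with $\ker D_{\max}$ by testing against $C^\infty_c$ and using the density of $H^1_c(M,E)$ in $W(M,E)$ together with formal self-adjointness.

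Two small points. First, your appeal to \pref{nopanopa} presupposes that the ends of $M$ are straight, whereas the theorem is stated in the more general setting where only a collar of $N$ carries a $C^2$ distance function; nothing in your argument actually needs the global Dirac-system structure on $U_0$, so this reference should be dropped rather than relied on. Second, your final step applies the estimate $\|\sigma\|_{W(M,E)}\le C(\|D_{\ext}\sigma\|+\|\sigma\|_{L^2(K)})$ to elements of $\ker D_{\max}$, but that estimate is stated only for $\sigma\in W(M,E)$, and an $L^2$ weak solution is not a priori in $W(M,E)$. The missing (and easily supplied) observation is precisely the paper's: $\ker D_{\max}\subseteq\ker D_{\ext}$, i.e.\ every $L^2$ weak solution lies in $W(M,E)$ (it is smooth, hence in $H^1(M_0,E)$, its restriction to $U_0$ lies in $\dom D_{U_0,\max}\subseteq W(U_0,E)$ by non-parabolicity, and the transmission condition holds since it is locally $H^1$); once this inclusion is in place, finiteness of the cokernel follows from finiteness of $\ker D_{\ext}$, exactly as in the paper.
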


\begin{proof}
Theorem 5.12 in \cite{disy} implies that the image of $D_{\ext}$ is closed
and that $\ker D_{\ext}$ is of finite dimension.
The last claim follows from the density of $H^1_c(M,E)$ in $W(M,E)$
and since $D$ is formally self-adjoint.
Finally, since $\ker D_{\max} \subseteq \ker D_{\ext}$
and the latter is of finite dimension, $D_{\ext}$ is a Fredholm operator.
\end{proof}

In the super-symmetric case $E=E^+\oplus E^-$, we get operators
\begin{equation}
  D_{\ext}^\pm: W(M,E^\pm) \to L^2(M,E^\mp) .
  \label{dextp}
\end{equation}
Since $D_{\ext}$ is a Fredholm operator,
the operators $D_{\ext}^\pm$ are Fredholm operators as well and
\begin{equation}
  \ind D_{\ext}^+
  = \dim \ker D_{\ext}^+ - \dim \ker D_{\max}^- ,
  \label{inddexpl}
\end{equation}
by \tref{dwfred} (and since $D$ is formally self-adjoint).

The transmission condition $\sigma|_N=\tau|_N$ as above is elliptic.
Therefore it can be decoupled into separate boundary conditions
for $M_0$ and $U_0$, respectively,
compare Theorems 3.24 and 5.12 in \cite{disy}.
This leads to the following index formulas.

\begin{thm}\label{nonpar5}
Suppose that $D$ is non-parabolic with respect to $M_0$.
Then we have, for any $\lambda \ge0$,
\begin{equation*}
  \ind D_{\ext}
  = \frac12 \dim H_{[-\lambda, \lambda]} + \ind D_{U_0,<-\lambda,\ext} .
\end{equation*}
In the super-symmetric case,
\begin{equation*}
  \ind D^+_{\ext} =   \ind D^+_{M_0,\ge0}
  + \dim H^+_{[-\lambda,0)} + \ind D^+_{U_0,<-\lambda,\ext} .
\end{equation*}
\end{thm}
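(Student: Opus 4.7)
The plan is to realise the transmission boundary condition defining $W(M,E)$ as a decoupled pair of regular spectral boundary conditions at level $-\lambda$, one on the compact piece $M_0$ and one on the end $U_0$. Because $T$ points outward from $M_0$, the boundary values for the $M_0$-side live in $\hat H=\check H(-A_0)$, while the $U_0$-side uses $\check H=\check H(A_0)$. I would impose $B_{U_0}=H^{1/2}_{<-\lambda}\subseteq\check H$ on the $U_0$-side and the complementary $B_{M_0}=H^{1/2}_{\ge-\lambda}\subseteq\hat H$ on the $M_0$-side. Both sit inside $H^{1/2}$, hence are regular, and their direct sum recovers all of $H^{1/2}$, which is exactly the trace space governing the transmission condition in $W(M,E)$. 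Applying the splitting results of Theorems 3.24 and 5.12 in \cite{disy} to the disjoint union $\tilde M=M_0\sqcup U_0$ (whose relevant Axiom VI was verified in \lref{ax6}, and whose a priori estimate is \lref{h1m0}) then yields the additivity
\begin{equation*}
  \ind D_{\ext} = \ind D_{M_0,\ge -\lambda} + \ind D_{U_0,<-\lambda,\ext}.
\end{equation*}
Feeding \lref{indprep}, applied at level $\lambda$, into the first summand gives the first formula.

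For the super-symmetric formula, I would run the same decoupling argument inside each sub-bundle. The super-symmetry is respected by Clifford multiplication by $T$ (which interchanges $E^+$ and $E^-$) and by the spectral projections of $A_0$; so the boundary conditions $H^{1/2}_{\ge-\lambda}$ and $H^{1/2}_{<-\lambda}$ split cleanly into their $\pm$-parts, and restricting to $D^+\colon W(M,E^+)\to L^2(M,E^-)$ produces
\begin{equation*}
  \ind D^+_{\ext} = \ind D^+_{M_0,\ge -\lambda} + \ind D^+_{U_0,<-\lambda,\ext}.
\end{equation*}
To reach the statement, I would then invoke the relative index formula of Theorem 3.14 in \cite{disy} (the very tool already used in \cref{cindext}) to compare the spectral boundary conditions $\ge -\lambda$ and $\ge 0$ on $M_0$, obtaining
\begin{equation*}
  \ind D^+_{M_0,\ge -\lambda} = \ind D^+_{M_0,\ge 0} + \dim H^+_{[-\lambda,0)},
\end{equation*}
and substitute.

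The main obstacle is justifying the decoupling itself. One has to match the two hybrid spaces $\check H$ and $\hat H$ correctly (they are interchanged up to spectral flip by Clifford multiplication with $T$), and show that the direct-sum boundary condition $B_{M_0}\oplus B_{U_0}$ on $\tilde M$ coincides with the transmission condition defining $W(M,E)$ as a subspace of $H^1(M_0,E)\oplus W(U_0,E)$. This is where Theorems 3.24 and 5.12 of \cite{disy} are doing the real work, and the hypothesis that $D$ is non-parabolic with respect to $M_0$ is precisely what places $D_{U_0,<-\lambda,\ext}$ in the Fredholm framework of Chapter 5 of \cite{disy} and makes \tref{dwfred} available, even though $D$ itself need not be of Fredholm type.
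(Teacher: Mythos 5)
Your proposal is correct and follows essentially the same route as the paper: the paper's (very terse) proof likewise decouples the elliptic transmission condition into complementary spectral boundary conditions on $M_0$ and $U_0$ via Theorems 3.24 and 5.12 of \cite{disy}, and then feeds in \lref{indprep} together with the relative index formula for shifting the spectral cut from $0$ to $-\lambda$. The only cosmetic difference is which auxiliary result of \cite{disy} is cited for that last shift (the paper invokes Theorem 4.17 there, you invoke Theorem 3.14 as in \cref{cindext}); the substance is identical.
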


\begin{proof}
The assertions are immediate consequences
of Theorems 3.24, 4.17, and 5.12 in \cite{disy}
and \lref{indprep} above.
\end{proof}

Suppose now that the ends of $M$ are straight
in the sense of Definition \ref{straight}.
We may then consider weighted Lebesgue and Sobolev spaces,
following the discussion just before and in \cref{cisowei}.
For $w\in\R$, let $L^2_{w}(M,E)$ be the space of measurable sections of $E$
which are square integrable over $M$ with respect to the weight
which is equal to $1$ over $M_0$ and equal to $e^{2wt}$ over $U_0$.
Endow $L^2_w(M,E)$ with the corresponding inner product
\begin{equation}
  (\sigma,\tau)_{L^2_{w}(M,E)}
  := (\sigma,\tau)_{L^2(M_0,E)} + (e^{wt}\sigma,e^{wt}\tau)_{L^2(U_0,E)} .
  \label{normv0}
\end{equation}
Furthermore, let $H_{w}^1(M,E)$ be the completion of $H^1_c(M,E)$
with respect to the norm associated to the inner product
\begin{equation}
  (\sigma,\tau)_{H_{w}^1(M,E)}
  := (\sigma,\tau)_{L^2_w(M,E)} + (D\sigma,D\tau)_{L^2_w(M,E)} .
\end{equation}
Assume from now on that the assumptions of \pref{nonpar2} are satisfied
and that $w\in\R$ satisfies the corresponding inequalities.
Then, by \eqref{inequaw3}, \eqref{inequaw4}, and \lref{h1m0},
the $H_{\pm w}^1(M,E)$-norm is equivalent to the norm
\begin{equation}
  \| \sigma \|_{\pm w} := \| \sigma|_N \|_{H^{1/2}}
  + \|D\sigma\|_{L^2(M_0,E)} + \|e^{\pm wt}D\sigma\|_{L^2(U_0,E)} .
  \label{normeq}
\end{equation}
Thus, by restriction to $M_0$ and $U_0$, respectively,
$H^1_{w}(M,E)$ is isomorphic to the space
of pairs $(\sigma,\tau)$ in  $H^1(M_0,E)\oplus H^1_w(U_0,E)$
satisfying the transmission condition $\sigma|_{N}=\tau|_N$.

\begin{thm}\label{dw}
Suppose that the Dirac system $\mathcal D$ over $\R_+$
associated to $E$ over $U_0$ satisfies the assumptions of \pref{nonpar2}
and that $w>0$ satisfies the corresponding inequalities.
Then
\begin{equation*}
  D_{-w}: H_{-w}^1(M,E) \to L^2_{-w}(M,E)
\end{equation*}
is a Fredholm operator with index
\begin{equation*}
  \ind D_{-w} = \frac12 \dim H_{[-\lambda,\lambda]} .
\end{equation*}
In the super-symmetric case,
\begin{equation*}
  \ind D^+_{-w}
  =  \ind D^+_{M_0,\ge0} + \dim H^+_{[-\lambda,0)} .
\end{equation*}
\end{thm}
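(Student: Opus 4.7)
The plan is to follow the same decoupling strategy used in the proof of \tref{nonpar5}, but now in the weighted setting. The starting point is the identification, emphasized in the paragraph preceding the theorem statement, of $H^1_{-w}(M,E)$ with the space of pairs $(\sigma,\tau)\in H^1(M_0,E)\oplus H^1_{-w}(U_0,E)$ satisfying the transmission condition $\sigma|_N=\tau|_N$; this uses the norm equivalence \eqref{normeq}, which in turn rests on \eqref{inequaw3} and \eqref{inequaw4} together with \lref{h1m0}. Since the transmission condition is elliptic, Theorems 3.24 and 5.12 of \cite{disy} allow me to decouple it into a pair of complementary elliptic boundary conditions on $M_0$ and $U_0$, and to read off the index of $D_{-w}$ as the sum of the two resulting indices.

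For the first assertion, I would choose the elliptic boundary condition $H^{1/2}_{\ge-\lambda}$ on $M_0$ and its complement $H^{1/2}_{<-\lambda}$ on $U_0$. The decoupling then yields
\begin{equation*}
  \ind D_{-w} = \ind D_{M_0,\ge-\lambda} + \ind D_{U_0,-w,<-\lambda}.
\end{equation*}
The first summand equals $\tfrac12\dim H_{[-\lambda,\lambda]}$ by \lref{indprep}. Under the assumptions of \pref{nonpar2} on $\mathcal D$ and the corresponding inequalities on $w$, \cref{cisowei} asserts that $D_{U_0,-w,<-\lambda}\colon H^1_{-w,<-\lambda}(\mathcal D)\to L^2_{-w}(\mathcal D)$ is an isomorphism, so the second summand vanishes. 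Since both decoupled operators are Fredholm (one being an isomorphism, the other Fredholm by standard elliptic theory on the compact manifold with boundary $M_0$), the glued operator $D_{-w}$ is Fredholm as well, which gives the first formula.

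For the super-symmetric assertion, I would decouple instead with the super-symmetric boundary condition $H^{1/2}_{\ge0}$ on $M_0$ and its complement $H^{1/2}_{<0}$ on $U_0$, obtaining
\begin{equation*}
  \ind D^+_{-w} = \ind D^+_{M_0,\ge0} + \ind D^+_{U_0,-w,<0}.
\end{equation*}
To convert $\ind D^+_{U_0,-w,<0}$ into a quantity covered by \cref{cisowei}, I would invoke the spectral-jump identity from Theorem 3.14 of \cite{disy}, exactly as in the proof of \cref{cindext}, to obtain
\begin{equation*}
  \ind D^+_{U_0,-w,<0} = \ind D^+_{U_0,-w,<-\lambda} + \dim H^+_{[-\lambda,0)},
\end{equation*}
whose first term on the right vanishes by \cref{cisowei}. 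Substituting yields the claimed super-symmetric formula.

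The main obstacle is verifying that the decoupling machinery of \cite{disy} continues to apply in the weighted framework. What needs checking is that the weight $e^{-wt}$, which equals $1$ at the interface $N=\{t=0\}$, leaves the trace theory and the splitting of the transmission condition unchanged and only modifies the behavior at infinity; the latter is controlled precisely by the coercive estimates \eqref{inequaw3} and \eqref{inequaw4} that underlie \cref{cisowei}. Once this is in place, all remaining steps are bookkeeping via \lref{indprep} and Theorem 3.14 of \cite{disy}.
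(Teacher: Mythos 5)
Your proposal is correct and follows essentially the same route as the paper: decoupling the transmission condition via Theorems 3.24 and 5.12 of \cite{disy} into $\ind D_{-w}=\ind D_{M_0,\ge-\lambda}+\ind D_{U_0,-w,<-\lambda}$, killing the second term with \cref{cisowei}, and evaluating the first with \lref{indprep}; in the super-symmetric case you apply the spectral-jump identity on the $U_0$ side (shifting the cutoff from $0$ to $-\lambda$) whereas the paper applies it on the $M_0$ side (writing $\ind D^+_{M_0,\ge-\lambda}=\ind D^+_{M_0,\ge0}+\dim H^+_{[-\lambda,0)}$), which is the same computation. The "main obstacle" you flag is discharged in the paper by noting that $D_{U_0,-w}$ is conjugate to $D_{U_0}+w\grad f$, hence a Dirac--Schr\"odinger operator in the sense of \cite{disy} with elliptic Calder\'on projections --- a point already recorded in the remark following \cref{cisowei}, so nothing new needs to be verified.
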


\begin{proof}
By \eqref{inequaw3}, $D_{-w}$ as above is a Fredholm operator.
We also note that $D_{U_0,-w}$
is conjugate to the operator $D_{U_0}+w\grad f$,
where $f$ is the given distance function over $U_0$.
Hence the results of Section 3 in \cite{disy} apply
(compare also Remark 2.27 of loc.cit.) and show
that the Calder\'on projections associated to $L^2_{-w}$-solutions
of the equation $D\sigma=0$ over $M_0$ and $U_0$ are elliptic.
Hence, by Theorems 3.24 and 5.12 in \cite{disy},
$D$ as above has index
\begin{equation*}
  \ind D_{-w} = \ind D_{M_0,\ge-\lambda} + \ind D_{U_0,-w,< -\lambda} .
\end{equation*}
By \cref{cisowei}, $\ind D_{U_0,-w,< -\lambda}=0$,
hence the formula for $\ind D_{-w}$ follows from \lref{indprep}.
In the super-symmetric case,
\begin{align*}
  \ind D^+_{-w}
  &= \ind D^+_{M_0,\ge-\lambda} + \ind D^+_{U_0,-w,< -\lambda} \\
  &=  \ind D^+_{M_0,\ge-\lambda}
  =  \ind D^+_{M_0,\ge0} + \dim H^+_{[-\lambda,0)} .
  \qedhere
\end{align*}
\end{proof}

\begin{cor}\label{dwcor}
Under the assumptions of \tref{dw}, we have
\begin{equation*}
  \ind D_{\ext}^+ = \ind D_{-w} - \dim\ker D^-_{U_0,\le\lambda,\max} .
\end{equation*}
\end{cor}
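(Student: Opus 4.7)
The plan is to derive the identity by directly comparing the two index formulas already available in the super-symmetric parts of \tref{nonpar5} and \tref{dw}, and then to identify the remaining piece, namely $\ind D^+_{U_0,<-\lambda,\ext}$, as $-\dim\ker D^-_{U_0,\le\lambda,\max}$.

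First I would recall that under the assumptions of \tref{dw}, the assumptions of \pref{nonpar2} hold for $\mathcal D$ and hence, by \pref{nopanopa}, the operator $D$ is non-parabolic with respect to $M_0$. Therefore \tref{nonpar5} applies and, in its super-symmetric form, gives
\begin{equation*}
  \ind D^+_{\ext} = \ind D^+_{M_0,\ge0} + \dim H^+_{[-\lambda,0)} + \ind D^+_{U_0,<-\lambda,\ext}.
\end{equation*}
On the other hand, the super-symmetric part of \tref{dw} says
\begin{equation*}
  \ind D^+_{-w} = \ind D^+_{M_0,\ge0} + \dim H^+_{[-\lambda,0)}.
\end{equation*}
Subtracting these two identities yields
\begin{equation*}
  \ind D^+_{\ext} - \ind D^+_{-w} = \ind D^+_{U_0,<-\lambda,\ext},
\end{equation*}
so the claim reduces to showing that $\ind D^+_{U_0,<-\lambda,\ext} = -\dim\ker D^-_{U_0,\le\lambda,\max}$.

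Next I would compute the kernel and cokernel of $D^+_{U_0,<-\lambda,\ext}$ separately. Since the assumptions of \pref{nonpar2} hold and pass to the $+$-part, \pref{nonpar2}(3) (restricted to sections with values in $E^+$) shows that $D^+_{U_0,<-\lambda,\ext}$ is injective, so $\dim\ker D^+_{U_0,<-\lambda,\ext} = 0$. For the cokernel, I would invoke \eqref{imperp} together with the identification of adjoint boundary conditions from Section \ref{susebv}: the adjoint of the elliptic boundary condition $B^+ = H^{1/2+}_{<-\lambda}$ is $(B^+)^a = H^{1/2-}_{\le\lambda}$. Hence
\begin{equation*}
  (\im D^+_{U_0,<-\lambda,\ext})^\perp = \ker D^-_{U_0,\le\lambda,\max}.
\end{equation*}
Combining these two observations gives $\ind D^+_{U_0,<-\lambda,\ext} = -\dim\ker D^-_{U_0,\le\lambda,\max}$, and substituting this back into the previous display yields the stated formula.

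The main step that requires care is the identification of the adjoint boundary condition and the application of \eqref{imperp} in the super-symmetric setting. One has to check that the general statements from Section \ref{susebv} and Section \ref{subfred} restrict properly to the $E^+$-part; this uses the fact that the super-symmetry is compatible with $\omega$, so that $(B^+)^a$ is indeed computed as the adjoint of $B^+$ in $\check H^-$ via the symplectic pairing. Once this is in place the argument is a straightforward algebraic combination of \tref{nonpar5}, \tref{dw}, \pref{nonpar2}, and \eqref{imperp}.
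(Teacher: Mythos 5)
Your argument is correct and is essentially the paper's own proof: subtract the super-symmetric index formulas of Theorems \ref{nonpar5} and \ref{dw} to reduce to computing $\ind D^+_{U_0,<-\lambda,\ext}$, use Proposition \ref{nonpar2}.3 for injectivity, and identify the cokernel via \eqref{imperp}. Your identification of the cokernel as $\ker D^-_{U_0,\le\lambda,\max}$, with the adjoint boundary condition $H^{1/2}_{\le\lambda}$ landing in the $E^-$-part, agrees with the statement of the corollary and with Corollary \ref{cindext}, and is in fact more careful than the paper's displayed formula, which writes $D^+$ there by an apparent typo.
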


\begin{proof}
By Theorems \ref{nonpar5} and \ref{dw}, we have
\begin{equation}\label{dwcor1}
  \ind D_{\ext}^+ = \ind D_{-w} + \ind D^+_{U_0,<-\lambda,\ext} .
\end{equation}
By \pref{nonpar2}.3, $D^+_{U_0,<-\lambda,\ext}$ is injective.
Hence, by \eqref{imperp},
\begin{equation}\label{dwcor2}
   \ind D^+_{U_0,<-\lambda,\ext}
   = - \dim\ker D^+_{U_0,\le\lambda,\max}
   \qedhere
\end{equation}
\end{proof}

In the case where the boundary $N=N_0$ of $M_0$ is smooth,
Theorem 3.1 in Atiyah-Patodi-Singer \cite{APS} applies and gives
\begin{equation}\label{aps}
\begin{split}
  \ind D^+_{M_0,\ge0}
  &= \int_{M_0} \omega_{D^+} + \int_{N_0} \tau_{D^+} \\
  &\hspace{1cm} + \frac12 \big(\eta(A_0^+) + \dim \ker A_0^+\big) ,
\end{split}
\end{equation}
where $\omega_{D^+}$ is the index form
and $\tau_{D^+}$ the transgression form.
We remark that $\omega_{D^+}$ is a universal polynomial
in the curvatures of $M$ and $E$
and that $\tau_{D^+}$ is a universal polynomial
in the curvature of $M$ and $E$
and the second fundamental form of $N$;
compare \cite{Gi1} and Section 3.10 in \cite{Gi}.
Now we may approximate $M_0$ by smooth domains
such that the second fundamental forms of their boundaries
approximate the second fundamental form of $N$.
Then the integrals of $\omega_{D^+}$ and $\tau_{D^+}$
over the approximating domains and their boundaries
converge to the integral of the corresponding forms
over $M_0$ and $N_0$, respectively.
On the other hand, the coefficients of $A_0^+$ are only $C^1$ in general,
and therefore the $\eta$-invariant of $A_0^+$ may not be well defined.
However, since the other terms on the right hand side of \eqref{aps}
are well defined, we may define $\eta(A_0^+)$ to be the number
such that  \eqref{aps} holds.
In \cite{Hi},
Michel Hilsum defined $\eta$-invariants for Lipschitz manifolds in a similar way,
and he showed that they enjoy many of the properties
of \lq\lq smooth" $\eta$-invariants.
We do not pursue this issue any further
since we apply the APS-formula only in the smooth case.

Assuming now that the ends of $M$ are smooth,
we may combine the index formula for $D^+$ in Theorems \ref{nonpar5}
and \ref{dw} with \eqref{aps}.
To that end, we continue to assume
that the assumptions of \pref{nonpar2} are satisfied.
Then the spectrum of $A_t$ has two parts,
the part consisting of eigenvalues of modulus at most $\lambda$
and the part consisting of those of modulus at least $\Lambda$.
Following a corresponding convention in \cite{Lo2},
we call the first the {\em low energy} and the second the {\em high energy} part
and get the corresponding spectral projections and spaces,
\begin{alignat}{3}
  &P_t := Q_{[-\lambda,\lambda]}(A^+_t) , & \quad
  &H^{\rm le}_t := P_t(H_t) , &\quad
  &A^{\rm le}_t := A_t|_{H^{\rm le}_t} ,
    \label{lopar} \\
  &Q_t := I - P_t  , &
  &H^{\rm he}_t := Q_t(H_t) , &
  &A^{\rm he}_t := A_t|_{H^{\rm he}_t} ,
  \label{hipar}
\end{alignat}
where we note that $H_t = H^{\rm le}_t \oplus H^{\rm he}_t$
is an orthogonal decomposition which is invariant under $A_t$.
In the super-symmetric case we get similar decompositions and call
\begin{equation}
  \eta(A^{\rm le,+}_t)
  \quad\text{and}\quad
  \eta(A^{\rm he,+}_t) ,
  \label{lohieta}
\end{equation}
the {\em low} and {\em high energy $\eta$}-invariant of $A_t^+$, respectively.
We have
\begin{equation}
  \eta(A^+_t) = \eta(A^{\rm le,+}_t) + \eta(A^{\rm he,+}_t) .
  \label{lohieta2}
\end{equation}

\begin{cor}\label{cindwex}
Assume that the ends of $M$ are smooth and straight
and that the Dirac system over $\R_+$ associated to $E$
over $U_0$ satisfies the assumptions of \pref{nonpar2}.
Then we have, in the super-symmetric case,
\begin{equation*}
  \ind D^+_{-w}
  = \int_{M_0} \omega_{D^+} + \int_{N_0} \tau_{D^+}
  + \frac12 \left(\dim H^+_{[-\lambda,\lambda]} + \eta(A^{\rm he,+}_0) \right) .
  \qed
\end{equation*}
\end{cor}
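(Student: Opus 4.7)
The plan is to combine \tref{dw} with the Atiyah--Patodi--Singer index formula \eqref{aps} and then rewrite the boundary contribution using the low/high energy decomposition of the $\eta$-invariant recorded in \eqref{lohieta2}. Since the ends are assumed smooth, $N_0$ is a smooth hypersurface, so \eqref{aps} is available directly (no approximation argument is needed in this setting).

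First, the super-symmetric part of \tref{dw} gives
$$\ind D^+_{-w} = \ind D^+_{M_0,\ge 0} + \dim H^+_{[-\lambda,0)}.$$
Applying \eqref{aps} to the compact manifold with smooth boundary $M_0$ yields
$$\ind D^+_{M_0,\ge 0} = \int_{M_0}\omega_{D^+} + \int_{N_0}\tau_{D^+} + \tfrac12\bigl(\eta(A_0^+) + \dim\ker A_0^+\bigr).$$
It therefore suffices to establish the identity
$$\eta(A_0^+) + \dim\ker A_0^+ + 2\dim H^+_{[-\lambda,0)} = \eta(A^{\rm he,+}_0) + \dim H^+_{[-\lambda,\lambda]},$$
and then substitute.

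To prove this identity, split $\eta(A_0^+) = \eta(A^{\rm le,+}_0) + \eta(A^{\rm he,+}_0)$ as in \eqref{lohieta2}. By construction, $A^{\rm le,+}_0$ acts on the finite-dimensional space $H^{\rm le,+}_0 = H^+_{[-\lambda,\lambda]}$, so its $\eta$-invariant is the signature-style count
$$\eta(A^{\rm le,+}_0) = \dim H^+_{(0,\lambda]} - \dim H^+_{[-\lambda,0)}.$$
Moreover, the second assumption of \pref{nonpar2} forces $\spec A_0^+ \cap (\lambda,\Lambda) = \emptyset$, hence $\ker A_0^+ = H^+_{\{0\}} \subseteq H^+_{[-\lambda,\lambda]}$. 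Using the orthogonal decomposition
$$H^+_{[-\lambda,\lambda]} = H^+_{[-\lambda,0)} \oplus H^+_{\{0\}} \oplus H^+_{(0,\lambda]},$$
one finds
$$\eta(A^{\rm le,+}_0) + \dim\ker A_0^+ + 2\dim H^+_{[-\lambda,0)} = \dim H^+_{[-\lambda,\lambda]},$$
which is exactly the identity above after adding $\eta(A^{\rm he,+}_0)$ to both sides.

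There is no genuine obstacle here beyond bookkeeping: all the analytic content (non-parabolicity, ellipticity of the Calder\'on projections in the weighted setting, and the index splitting on $M_0$ versus $U_0$) has already been packaged into \tref{dw} and \pref{nonpar2}, and the smoothness of the ends is exactly what allows the APS formula to be quoted verbatim on $M_0$ without any Lipschitz regularization of the $\eta$-invariant.
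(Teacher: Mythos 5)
Your proposal is correct and is exactly the argument the paper intends: the text preceding the corollary says the formula follows by combining \tref{dw} with \eqref{aps}, and your finite-dimensional bookkeeping identity $\eta(A^{\rm le,+}_0)+\dim\ker A_0^+ + 2\dim H^+_{[-\lambda,0)}=\dim H^+_{[-\lambda,\lambda]}$ (using $\eta(A^{\rm le,+}_0)=\dim H^+_{(0,\lambda]}-\dim H^+_{[-\lambda,0)}$ and $\ker A_0^+\subseteq H^+_{[-\lambda,\lambda]}$) is precisely the computation left implicit there. Nothing to add.
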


Since $\ind D^+_{-w}$ does not change when replacing the parameter $t$
along the ends by $t-t_0$, for any $t_0>0$,
it follows that $\ind D^+_{U_0,<-\lambda,\ext}$ is an asymptotic invariant
of $D$ (for $\lambda$ as in \pref{nonpar2}).
Compare also \cref{cindext}.

The formula in \cref {cindwex} can be used to define
high energy $\eta$-invariants in the case where the ends of $M$ are not smooth.
We expect that these enjoy nice properties because the family
of high energy operators $A_t^{\rm he}$ has no spectral flow.

We conclude this chapter by explaining the

\begin{proof}[Proof of \tref{pindfor}]
Since $M$ has only finitely many ends,
there is a decomposition $M=M_0\cup U_0$,
where $M_0$ and $U_0$ are domains in $M$
such that $M_0$ is compact,
such that the common boundary $N:=M_0\cap U_0$
of $M_0$ and $U_0$ is smooth,
such that each connected component of $N$
bounds exactly one connected component of $U_0$,
and such that the latter are in one to one correspondence
with the ends of $M$.

For each connected component $C$ of $N$,
let $A^+_{C}$ be the restriction of $A^+_0$ to sections of $E$
with support on $C$.
Then $A^+_0$ is the direct sum of the $A^+_{C}$
over the connected components $C$ of $N$.
Hence
\begin{equation*}
  \eta(A^+_0) = \sum\nolimits_{C} \eta(A^+_{C})
  \quad\text{and}\quad
  \dim\ker A^+_0 = \sum\nolimits_{C} \dim\ker A^+_{C} .
\end{equation*}
For the connected component  $\mathcal C$ of $U_0$
with $\partial\mathcal C=C$ we now set
\begin{multline*}
  {\rm Corr} (\mathcal C) :=
  \ind D^+_{\mathcal C,<0,\ext}
  - \int_{\mathcal C} \omega_{D^+} + \int_C \tau_{D^+} \\
  + \frac12 \left(\eta(A^+_C) + \dim \ker A^+_C \right) .
\end{multline*}
Then, by  \tref{nonpar5} and \eqref{aps},
\begin{equation*}
  \ind D^+_{\ext} = \int_M \omega_{D^+}
  + \sum\nolimits_{\mathcal C} {\rm Corr} (\mathcal C) .
\end{equation*}
By Theorem 3.24 of \cite{disy},
the terms ${\rm Corr}(\mathcal C)$ only depend on the ends of $M$
and not on the chosen decomposition of $M$ as above.
\end{proof}

\section{Manifolds with $\varepsilon$-Thin Ends}
\label{almflat}

Let $N$ be a closed and connected Riemannian manifold of dimension $n$.
We say that $N$ is {\em $\varepsilon$-flat} if
\begin{equation}
  \sqrt{K} \diam N \le \varepsilon ,
  \label{eflat}
\end{equation}
where $K$ is some upper bound of the modulus of the sectional curvature of $N$.
By Gromov's theorem on almost flat manifolds, there is a constant $\varepsilon(n)$
such that $N$ is an infra-nilmanifold if $N$ is $\varepsilon(n)$-flat \cite{Gr}.
In what follows we need some details from the proof of Gromov's theorem
from \cite{BK} and from Section 4 of Ruh's improvement
of Gromov's theorem in \cite{Ru}.
The estimates which we assert below hold if $\varepsilon(n)$
is chosen sufficiently small.
The arguments in the proofs of these assertions are elementary albeit intricate.

For any curve $c:[a,b]\to N$, denote by $L(c)$ the length of $c$
and by $h(c)$ parallel translation along $c$.
For orthogonal transformations $A$ and $B$ between equi-dimensional
Euclidean spaces $V$ and $W$,
we follow \cite{Ru} and let $d(A,B)$ be the maximal angle $\angle(Av,Bv)$,
where $v$ runs over non-zero vectors in $V$.
This is a non-smooth Finsler metric on the space of all orthogonal
transformations from $V$ to $W$,
invariant under precomposition and postcomposition by orthogonal
transformations of $V$ and $W$, respectively,
with injectivity radius and diameter $\pi$.

We begin with results from Chapters 2 and 3 in \cite{BK}.
Normalize the Riemannian metric of $N$ so that $\diam N=1$,
and assume, correspondingly, that $\sqrt{K}\le\varepsilon(n)$.
As in \cite{Ru}, let
\begin{equation}
  w = 2\cdot14^{\dim\SO(n)}
  \quad\text{and}\quad
  \rho \ge 10^4w .
  \label{wrho}
\end{equation}
Let $x$ and $y$ be points in $N$.
Then, if $c_0$ and $c_1$ are geodesics segments from $x$ to $y$
of length $<\rho$ such that $h(c_0)$ and $h(c_1)$ are $10^{-1}$-close,
then $h(c_0)$ and $h(c_1)$ are actually $10^{-5}$-close.
The relation $h(c_0)\sim h(c_1)$
iff $h(c_0)$ and $h(c_1)$ are $10^{-1}$-close
is an equivalence relation among the holonomies of geodesic segments
from $x$ to $y$ of length $<\rho$.
For each such equivalence class of holonomies,
there is a geodesic segment from $x$ to $y$
of length $<2\cdot10^{-4}\rho$
such that its holonomy belongs to the given equivalence class.

Let $c_0$ and $c_1$ be geodesic loops at $x$
such that $L(c_0)+L(c_1)<\rho$.
Then there is a unique geodesic loop $c_0*c_1$ at $x$ of length $<\rho$
homotopic to the concatenation of $c_0$ and $c_1$,
and $h(c_0*c_1)$ is $10^{-5}$-close to $h(c_1)\circ h(c_0)$.
This turns the set $H$ of equivalence classes of holonomies
along geodesic loops at $x$ of length $<\rho$ into a group,
and the order of $H$ is at most $w$.

Next we explain Ruh's construction of a flat metric connection on $N$ from \cite{Ru}.
Fix an orthonormal frame $F_0:\R^n\to T_xN$ to identify $T_xN$ with $\R^n$.
For each equivalence class $h\in H$ of holonomies along geodesic loops at $x$
of length $<\rho$, let $b_0(h)\in\O(T_xN)\simeq\O(n)$ be its barycenter.
This defines an almost homomorphism $b_0:H\to\O(n)$ in the sense of \cite{GKR}
and $b_0$ is $10^{-4}$-close to a homomorphism $b:H\to\O(n)$,
by Theorem 3.8 of \cite{GKR}.
It follows that $b$ is injective, and we use $b$ to identify $H$
with its image in $\O(n)$.

Let $c_0$ be a geodesic segment from $x$ to $y$ of length $<\rho$.
For each geodesic segment $c$ of length $<\rho$ from $x$ to $y$,
there is precisely one $h\in H$
such that $h(c)\circ h$ is $10^{-4}$-close to $h(c_0)$.
Enrich the equivalence class of $h(c_0)$ as above by all such $h(c)\circ h$.

Choose a smooth monotone function $\chi:\R\to\R$
with $\chi(r)=1$ for $r\le\rho/3$, $\chi(r)=0$ for $r\ge2\rho/3$,
and $|\chi'|\le10/\rho$.
For any enriched equivalence class $[h(c)\circ h]$ of holonomies as above,
let $b([h(c)\circ h])$ be its barycenter with respect to the weights $\chi(L(c))/\nu$,
where $\nu$ is the order of $H$ times the sum of the $\chi(L(c))$,
over all geodesic segments $c$ from $x$ to $y$ of length $<\rho$.
By the equivariance of barycenters with respect to orthogonal transformations,
the set of the barycenters $b([h(c)\circ h])$ is invariant under right multiplication
by elements from $H$,
and hence the frames $b\circ F_0$, where $b$ runs over the above barycenters,
define a reduction of the principal bundle of orthonormal frames of $N$
to a principal subbundle with structure group $H$.
In other words, we get a flat metric connection $\bar\nabla$ on $N$
with holonomy in $H$.

To estimate the norm of the difference between $\bar\nabla$
and the Levi-Civita connection $\nabla$ of $N$,
we go back one step and consider the situation before taking barycenters.
Let $v\in T_yN$ and $\sigma=\sigma(s)$ be a curve through $y$
with $s$-derivative $\dot\sigma(0)=v$.
Let $c_0:[0,1]\to N$ be a geodesic segment from $x$ to $y$ with $L(c_0)<\rho$.
There is a unique geodesic variation $c=c_s(t)$ of $c_0$
with $c_s(0)=x$ and $c_s(1)=\sigma(s)$,
and then $L(c_s)<\rho$ for all (sufficiently small) $s$.
Let $u\in T_xN$ and $X=X(s,t)$ be the vector field along $c$
such that $X(s,0)=u$ and
such that $X$ is parallel along the segments $c_s$.
Note that parallel translation along $\sigma$ with respect to $\bar\nabla$
corresponds to taking barycenters of such $X(s,1)$ along $\sigma$,
arising from geodesic segments from $x$ to $y$ of length $<\rho$.

We have $\nabla_t\nabla_sX=R(c',J)X$,
where the $s$-derivative $J:=\dot c$ of $c$
is a Jacobi field along each of the $c_s$
which vanishes at $t=0$ and is equal to $\dot\sigma(s)$ at $t=1$.
It follows that
\begin{equation}
  |(\nabla_t\nabla_sX)(0,t)| \le C_0K\rho t |v| |X| ,
\end{equation}
where $C_0$ is a universal constant.
Since taking barycenters depends smoothly on points and weights,
we conclude that
\begin{equation}
  | \bar\nabla_vX - \nabla_vX |
  \le C_1\big( K \rho + \frac{1}{\rho} \big) |v| |X| .
  \label{nabldiff}
\end{equation}
Now, for any given $\delta>0$, we may choose $\rho$ so large and,
accordingly, $\varepsilon=\varepsilon(n,\delta)$ so small,
that the right hand side of \eqref{nabldiff} is $<\delta |v| |X|$.
Hence, reversing the normalization of the diameter, we get that
 \begin{equation}
  | \bar\nabla - \nabla | \le \delta \diam N ,
  \label{dflatcon}
\end{equation}
where we recall that scaling does not change the Levi-Civita connection.
This finishes the exposition of results from \cite{BK} and \cite{Ru}.

\begin{proof}[Proof of \tref{mainth1}]
In the above constructions, it is understood, in the literature,
that the Riemannian manifold $N$ is smooth.
We want to apply it in our situation of straight ends,
where the Riemannian metric of the cross sections
$N_t\subseteq U\simeq[0,\infty)\times N$ is, in general, only $C^1$.
To overcome this technical difficulty,
we note that $f$ can be approximated,
locally uniformly in the $C^2$ topology,
by a sequence of smooth functions $f_k:U\to\R$.
Then, for any given cross section $N_t$,
the level sets $L_k=f_k^{-1}(t)$ approximate $N_t$ in the sense
that there is a $C^1$ diffeomorphism between them such that
Riemannian metric, Levi-Civita connection, and Weingarten operator
on $N_t$ are approximated by the corresponding objects on $L_k$.
In particular,  diameter and modulus of the sectional curvature
of the connected components of the levels $L_k$
are bounded from above by
\begin{equation*}
    d_t + \alpha
    \quad\text{and}\quad
    K = C_R + 2C_W^2 + \alpha ,
\end{equation*}
for any given $\alpha>0$ and all sufficiently large $k$,
where $d_t$ is an upper bound for the diameter of the connected
components of $N_t$ and
where we use the Gauss equation for the second estimate.
Thus the above constructions apply to $N_t$ if
\begin{equation*}
  \sqrt{K} d_t \le \varepsilon < \varepsilon(m-1,1) ,
\end{equation*}
where $K = C_R + 2C_W^2+1$
and $\varepsilon(m-1,1)=\varepsilon(n,\delta)$ is as in the discussion
of \eqref{dflatcon} above,
and they guarantee a flat connection $\bar\nabla^{N_t}$ on $N_t$
such that
\begin{equation*}
  |\bar\nabla^{N_t} - \nabla^{N_t}| \le d_t ,
\end{equation*}
where $\nabla^{N_t}$ denotes the Levi-Civita connection of $N_t$.

Suppose now that $E\to M$ is a Dirac bundle
of the type required in \tref{mainth1}.
Then the restrictions of $E$ to any given cross section $N_t$
is of the corresponding type and hence $\bar\nabla^{N_t}$
induces a flat Hermitian connection $\bar\nabla^{E_t}$
on the restriction $E_t=E|_{N_t}$.
Moreover, the holonomy of $\bar\nabla^{E_t}$
on each connected component of $N_t$ is of order at most $w$.

For convenience, assume now that $N$ is connected.
Decompose $E_t$ into holonomy irreducible components,
and let $F\to N_t$ be any such component.
Then $F$ has a twisted parallel orthonormal frame
\begin{equation}
  \Phi=(\sigma_1,\ldots,\sigma_k) ,
\end{equation}
that is, the sections $\sigma_i$ of $F$ are well defined and parallel on the
induced bundle with induced flat connection over the universal covering of $N_t$.
We think of them as sections of $E$ over $N_t$ which transform
according to the holonomy of $F$.
Approximating the Riemannian metric on $N_t$ by a smooth $\epsilon$-flat
Riemannian metric as above,
we see that we can apply the usual estimates for
the Rayleigh quotient of sections of $F$, that is,
the estimate of Li and Yau \cite{LY} in the case
where $F$ is the trivial complex line bundle
and the corresponding estimate in \cite{eiho} in the other cases.
The outcome is an estimate as follows:
If $\sigma$ is a section of $E$ over $N_t$ and $\sigma$ is orthogonal
to the globally $\bar\nabla^{E_t}$-parallel sections of $E$  over $N_t$,
then
\begin{equation}
  \| \bar\nabla^{E_t}\sigma \|_{N_t}^2
  \ge \frac{C(C_R,C_W,m)}{\varepsilon^2} \| \sigma \|_{N_t}^2 .
  \label{liyaplus}
\end{equation}
Here we use, in the twisted case, that the holonomy of $F$ is non-trivial
in the sense that, for each unit vector $v$ in $F$, there is a loop $c$ in $N_t$
(of length at most $\rho$) such that the angle between $v$ and $hv$ is
at least $\pi/2$, since otherwise the holonomy orbit of $v$ would
be contained in an open spherical ball of radius $\pi/2$ and would have a fixed point.
Hence, for each unit vector $v$ in $F$,
there is a loop $c$ in $N_t$ of length at most $2 d_t$  such that
the angle between $v$ and $h(v)$ is at least $\pi/2w$.

Now the estimate $|\bar\nabla^{N_t}-\nabla^{N_t}|\le d_t$ implies that
\begin{equation*}
  |\bar\nabla^{N_t} - \nabla|_{N_t} | \le d_t + C_W ,
\end{equation*}
where $\nabla$ denotes the Levi-Civita connections of $M$.
Hence
\begin{equation*}
  |\bar\nabla^{E_t} - \nabla^{E}|_{N_t} | \le C(d_t + C_W) ,
\end{equation*}
where $C$ is a constant which depends only on the type of $E$.
It follows that the difference between the Rayleigh quotients
for $\nabla^{E}|_{N_t}$ and $\bar\nabla^{E_t}$ is uniformly bounded.
We conclude that the assumptions of \pref{nonpar2} are satisfied.
\end{proof}

\section{Cuspidal Ends}
\label{secthi}

Assume from now that the ends of $M$ are cuspidal.
In the setup of Definition \ref{straight} and of Section \ref{secdifu},
denote by $\mathcal D$ the Dirac system associated to $E$ over $U$
as in Section \ref{subdisy}.
Clearly, for any $\epsilon>0$,
the cross sections $N_t$ are $\epsilon$-flat for all sufficiently large $t$
so that \tref{mainth1} applies.
On the other hand, in this chapter, we aim at more specific results.
In addition, we do not need to rely on the proof of Gromov's theorem
on almost flat manifolds.

\subsection{The Flat Connection}
\label{susthi}
Over $U$,
define a tensor field $\bar S$ of bilinear maps on $TM\oplus TM$
with values in $TM$ by
\begin{equation}
  \langle \bar S(u,v),w \rangle
  = - \int_s^\infty \langle R(J,T)X,Y \rangle (t,x)\,dt ,
  \label{barsm}
\end{equation}
where $u,v,w\in T_{(s,x)}M$,
$J$ is the $T$-Jacobi field along $\gamma_{(s,x)}:=F(s,x)$ with $J(s)=u$,
and $X,Y$ are the parallel vector fields along $\gamma_{(s,x)}$
with $X(s)=v$, $Y(s)=w$.
The integral converges uniformly,
by \eqref{boundrre} and since the ends are cuspidal.
Hence $\bar S$ is continuous and uniformly bounded.
We let $C_S$ be an upper bound for the operator norm of $\bar S$.

In the analogous way,
define a field $\bar S^E$ of bilinear maps on $TM\oplus E$ with values in $E$,
\begin{equation}
  \langle \bar S^E(u,v),w \rangle
  = - \int_s^\infty \langle R^E(J,T)\sigma_1,\sigma_2 \rangle (t,x)\,dt ,
  \label{barse}
\end{equation}
where now $v,w\in E_{(s,x)}$ and $\sigma_1,\sigma_2$ are the parallel sections
along $\gamma_{(s,x)}$ with $\sigma(s)=v$, $\tau(s)=w$.
Again, the integral converges uniformly,
by \eqref{boundrre} and since  the ends are cuspidal.
Hence $\bar S^E$ is also continuous and uniformly bounded.
We let $C_S^E$ be an upper bound for the operator norm of $\bar S^E$.

The arguments in Section 3 of \cite{BB2} carry over word by word
and show that the continuous metric connections
\begin{equation}
  \bar\nabla := \nabla - \bar S
  \quad\text{and}\quad
  \bar\nabla^E := \nabla^E - \bar S^E
  \label{barcon}
\end{equation}
on $TM$ and $E$ over $U$ are flat in the sense of the existence of
parallel $C^1$ frames over simply connected domains in $U$.
The difference to the situation in Section \ref{almflat} is
that we do not assume that $E$ is geometric
and that we have to pay for it by making stronger assumptions
on the smallness of the Riemannian metrics $g_t$ and by loosing control
on the holonomy of $\bar\nabla$ and $\bar\nabla^ E$.

It is easy to see that
\begin{equation}
  \bar S^E(X,Y\sigma) = \bar S(X,Y)\sigma + Y\bar S^E(X,\sigma) ,
  \label{barc0}
\end{equation}
hence the new connections are compatible with Clifford multiplication as well,
that is,
\begin{equation}
  \bar\nabla^E_X(Y\sigma)
  = (\bar\nabla_XY)\sigma + Y\bar\nabla^E_X\sigma
  \label{barc}
\end{equation}
By definition,
\begin{equation}
   \bar\nabla_T = \nabla_T , \quad
   \bar\nabla^E_T = \nabla^E_T ,
   \quad\text{and}\quad
   \bar\nabla T = 0 .
   \label{bart}
\end{equation}
For each $t\in\R_+$, the restriction of $\bar\nabla$ and $\bar S$ to $N_t$
will be denoted by $\bar\nabla_t$ and $\bar S_t$,
and similarly for $\bar\nabla^E$ and $\bar S^E$.
We also consider $\bar\nabla_t^E$
as a first order differential operator on $H^1(N_t,E)$
with values in $L^2(T^*N_t\otimes E)$.
The formal adjoint of $\bar\nabla_t^E$ is denoted $(\bar\nabla_t^E)^*$.

\begin{rem}\label{belekap}
The above construction of a flat connection is taken from \cite{BB2}
(where it is considered for a narrower class of bundles $E$).
In Appendix C of \cite{BeKa},
Igor Belegradek and Vitali Kapovitch remark that this connection
coincides with the flat connection introduced by Brian Bowditch in \cite{Bow}
(in the case of the tangent bundle of a simply connected, complete Riemannian
manifold with pinched negative sectional curvature),
who uses a kind of parallel translations through infinity
(which, in turn, coincides with the {\em horospherical translations}
in Section 2 of \cite{BrKa}):
Roughly speaking, two vectors $v,w\in E$ with footpoint on a common horosphere
are defined to be parallel if the distance between their parallel translates along
the unit speed geodesics to the center of the horosphere converges to $0$.
\end{rem}

\subsection{The Splitting}
\label{suslohi}
To keep the notation simple,
it will be convenient to assume in this section that $N$ is connected.
It will be obvious that, mutatis mutandis,
the results also apply in the case where $N$ is not connected.

For each $t\in\R_+$,
we let $H^{\rm c}_t$ be the space of $\bar\nabla^E$-parallel sections
of $E$ over $N_t$, that is,
$H^{\rm c}_t$ is the kernel of $\bar\nabla_t^E$.
Here the superscript {\em c} stands for {\em constant}.
We note that the spaces $H^{\rm c}_t$ are invariant
under Clifford multiplication by $T$, by \eqref{barc} and \eqref{bart}.
It is also clear that parallel translation in the $T$-direction identifies the
different spaces $H^{\rm c}_t$, $t\in\R_+$.
In particular, we may and will fix a family of $\bar\nabla^E$-parallel sections
$(\sigma_1, \ldots, \sigma_k)$ of $E$ over $U$ which are pointwise orthonormal
and whose restriction to $N_t$ forms an orthogonal basis of $H^{\rm c}_t$,
for all $t\in\R_+$ simultaneously.

We let $H^{\rm h}_t$ be the orthogonal complement
of $H^{\rm c}_t$ in $L^2(N_t,E)$.
Thus we obtain two families $\mathcal H^{\rm c}=(H^{\rm c}_t)$
and $\mathcal H^{\rm h}=(H^{\rm h}_t)$ of Hilbert spaces,
both of them invariant under Clifford multiplication by $T$.
Note, however,
that $\mathcal H^{\rm h}$ is not parallel in the $T$ direction
if $\mathcal H^{\rm c}$ is non-trivial and the volume density $j=j(t,x)$
as in Section \ref{secdifu} does not only depend on $t$,
but also on $x$, compare \eqref{projp2}.

As before, we use parallel translation to identify the spaces $H^{\rm c}_t$
with $H^{\rm c}_0$,
endowed with the inner products $(.,.)_t=(j_t.,.)_0$.
Since $T$ is parallel in the $T$ direction,
Clifford multiplication by $T$ does not depend on $t$
after this identification.

Let $\bar P_t$ and $\bar Q_t:=I-\bar P_t$ be the orthogonal projections in $H_t$
onto $H^{\rm c}_t$ and $H_t^{\rm h}$, respectively.
By definition,
\begin{equation}\label{projp}
  \bar P_t\sigma
  = \frac1{\vol N_t} \sum\nolimits_{1\le i\le k} (\sigma_i,\sigma)_t \sigma_i .
\end{equation}
For any function $\psi=\psi(t,x)$ on $U$
we denote by $\bar\psi=\bar\psi(t)$ the function
which associates to $t\in\R_+$ the mean of $\psi$
over the cross section $N_t$.
By \eqref{odejac} and \eqref{projp}, we have
\begin{equation}
  (\nabla_T\bar P)\sigma = \bar P(\kappa\sigma) - \bar\kappa\bar P\sigma .
  \label{projp2}
\end{equation}
Associated to the projections $\bar P$ and $\bar Q$, we consider the operators
\begin{equation}
  D^{\rm c} := \bar PD\bar P ,
  \quad D^{\rm h} := \bar QD\bar Q ,
  \quad D^{\rm ch} := \bar PD\bar Q ,
  \quad D^{\rm hc} := \bar QD\bar P .
  \label{lehe}
\end{equation}
We use corresponding notations and conventions in other cases.

\begin{prop}\label{susys}
The family
\begin{equation*}
  \mathcal D^{\rm c}
  :=(\mathcal H^{\rm c},\mathcal A^{\rm c},T)
\end{equation*}
is a Dirac system in the sense of Section \ref{preds} with finite rank and
\begin{equation*}
  \partial^{\rm c} = \frac{d}{dt} + \frac{\bar\kappa}2
  \quad\text{and}\quad
  D^{\rm c} = T(\partial^{\rm c} + A^{\rm c}) .
\end{equation*}
\end{prop}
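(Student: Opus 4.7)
The plan is to verify the axioms of Section \ref{preds} for $\mathcal{D}^{\rm c}$ and then derive the two displayed formulas. First, the sections $\sigma_1,\ldots,\sigma_k$ are globally $\bar\nabla^E$-parallel and, by \eqref{bart}, in particular $\nabla^E$-parallel along $T$-geodesics; thus they agree with their parallel extensions $P^{/\!\!/}\sigma_i$ from Section \ref{subdisy}, and span a fixed finite-dimensional $H^{\rm c}\subseteq H$ under the identifications there. Since the frame is pointwise orthonormal, $(\sigma_i,\sigma_j)_t=\delta_{ij}\vol(N_t)$, so the Gram operator in the basis $(\sigma_i)$ is $G_t=\vol(N_t)\cdot\id$. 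From \eqref{odejac}, $\vol(N_t)'=\int_{N_t}\kappa=\bar\kappa(t)\vol(N_t)$, so $G$ is locally Lipschitz in $t$, confirming \eqref{dsax1}. Moreover $\Gamma=\frac12 G^{-1}G'=\frac{\bar\kappa}{2}\cdot\id$, which by \eqref{dspart} gives $\partial^{\rm c}=\frac{d}{dt}+\frac{\bar\kappa}{2}$ as asserted.

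Next, since $H^{\rm c}$ is finite-dimensional I take $H^{\rm c}_A=H^{\rm c}$, so the compactness requirement is vacuous and $A^{\rm c}_t:=\bar P_tA_t\bar P_t$ is automatically self-adjoint. The Lipschitz-type bound \eqref{dsax2} for $A^{\rm c}_t$ follows from the corresponding bound of \lref{ax2} for $A_t$ combined with the continuity of $\bar P_t$ in $t$, which is a consequence of the $C^1$-regularity of the $\sigma_i$ established via \lref{c1reg}. For the Clifford multiplication $T$: equations \eqref{barc} and \eqref{bart} imply that $T$ is $\bar\nabla^E$-parallel, so Clifford multiplication preserves $H^{\rm c}_t$ and satisfies $T\bar P_t=\bar P_tT$; in the chosen basis $T$ becomes a $t$-independent skew-Hermitian matrix, and conditions \eqref{dsax3a}--\eqref{dsax3c} then transfer directly from $\mathcal{D}$ to $\mathcal{D}^{\rm c}$.

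Finally, $T\bar P=\bar PT$ yields
\[
D^{\rm c}=\bar PD\bar P=\bar PT(\partial+A)\bar P=T\bigl(\bar P\partial\bar P+A^{\rm c}\bigr),
\]
so it suffices to verify $\bar P\partial\bar P=\partial^{\rm c}$ on $\mathcal{H}^{\rm c}$-sections. Writing such a section as $\sigma=\sum_ic_i(t)\sigma_i$, we have $\partial\sigma=\sum_ic_i'(t)\sigma_i+\frac{\kappa}{2}\sigma$; the first summand already lies in $H^{\rm c}_t$, and for the second a short computation with \eqref{projp} gives $\bar P_t(\kappa\sigma_j)=\bar\kappa\,\sigma_j$, because $(\sigma_i,\kappa\sigma_j)_t=\delta_{ij}\int_{N_t}\kappa=\delta_{ij}\bar\kappa\vol(N_t)$. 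Hence $\bar P\partial\sigma=(\frac{d}{dt}+\frac{\bar\kappa}{2})\sigma$, matching $\partial^{\rm c}$. There is no serious obstacle; the real content is the averaging identity $\bar P[\kappa\,\cdot]=\bar\kappa\,\cdot$ on $H^{\rm c}_t$, which is precisely the mechanism by which the pointwise mean curvature $\kappa$ in $\partial$ gets replaced by its cross-sectional average $\bar\kappa$ in $\partial^{\rm c}$.
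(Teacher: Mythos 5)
Your proof is correct and follows essentially the same route as the paper's (which is much terser): verify the axioms of Section \ref{preds} for the fixed pointwise orthonormal parallel frame, with the averaging identity $\bar P(\kappa\,\cdot)=\bar\kappa\,(\cdot)$ on $H^{\rm c}$ --- the content of \eqref{projp2} --- producing $\bar\kappa$ in place of $\kappa$ in $\partial^{\rm c}$. One small remark: for \eqref{dsax2} you do not need any continuity of $\bar P_t$ in $t$, since for $\sigma_1,\sigma_2\in H^{\rm c}$ one has $\bar P_t\sigma_i=\sigma_i$ and hence $(\bar P_tA_t\bar P_t\sigma_1,\sigma_2)_t=(A_t\sigma_1,\sigma_2)_t$ exactly, which is precisely the one-line identity the paper's proof records.
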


\begin{proof}
The sections $\sigma_1,\ldots,\sigma_k$ as above are $C^1$,
so that the image $H^{\rm c}_t$ of $\bar P_t$
consists of $C^1$ sections of $E$ over $N_t$.
Hence $H^{\rm c}_t$ is contained in $H_A$, for all $t\in\R_+$.
Furthermore, $A^{\rm c}_t= \bar P_tA_t\bar P_t$
is a bounded and symmetric operator on $H^{\rm c}_t$.
Clearly, for $\sigma_1,\sigma_2\in H^{\rm c}_0$,
\begin{equation*}
  | (\bar P_tA_t\bar P_t\sigma_1,\sigma_2)_t
  - (\bar P_sA_s\bar P_s\sigma_1,\sigma_2)_s|
  = | (A_t\sigma_1,\sigma_2)_t - (A_s\sigma_1,\sigma_2)_s| .
  \qedhere
\end{equation*}
\end{proof}

Associated to the decomposition into constant sections
and sections perpendicular to them,
we get an orthogonal splitting
\begin{equation}
  L^2(\mathcal D) = L^2(\mathcal H)
  = L^{2,\rm c}(\mathcal H) \oplus L^{2,\rm h}(\mathcal H) .
  \label{l2lehe}
\end{equation}
where
\begin{equation}\label{l2lehe2}
\begin{split}
  L^{2,\rm c}(\mathcal H)
  &:= L^2(\mathcal H^{\rm c}) \quad\text{and} \\
  L^{2,\rm h}(\mathcal H)
  &:= \{ \sigma\in L^2(\mathcal H) : \bar P\sigma = 0 \} .
\end{split}
\end{equation}
We use corresponding notations for other spaces of sections.

\begin{lem}\label{h1lehe}
The projections $\bar P$ and $\bar Q$
are continuous on $H^1(\mathcal D)$.
In particular, as topological vector spaces,
\begin{align*}
  H^1(\mathcal D)
  &= H^{1,\rm c}(\mathcal D) \oplus H^{1,\rm h}(\mathcal D) , \\
  H^1_{\loc}(\mathcal D)
  &= H^{1,\rm c}_{\loc}(\mathcal D) \oplus H^{1,\rm c}_{\loc}(\mathcal D) .
\end{align*}
\end{lem}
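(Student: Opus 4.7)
My plan is to exploit the explicit representation \eqref{projp} of $\bar P_t$ through the global, pointwise-orthonormal, $\bar\nabla^E$-parallel frame $(\sigma_1,\dots,\sigma_k)$. Because $\bar\nabla^E_T=\nabla^E_T$ by \eqref{bart}, the $\sigma_i$ are $\nabla^E$-parallel in the $T$-direction, hence $t$-independent under the parallel-transport identification of Section \ref{subdisy}; thereby $\bar P_t$ becomes a rank-$k$ orthogonal projection on $H_t=(H,(\cdot,\cdot)_t)$ that depends on $t$ only through the Jacobian $j(t,\cdot)$. Boundedness of $\bar P$ on $L^2(\mathcal D)$ will be automatic since $\bar P_t$ is orthogonal on each fiber; the remaining work is to control $\|\partial\bar P\sigma\|_{L^2(\mathcal D)}$ and $\|A\bar P\sigma\|_{L^2(\mathcal D)}$ by $\|\sigma\|_{H^1(\mathcal D)}$.

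For the $A$-term I would invoke \eqref{dast} together with the $\bar\nabla^E$-parallelism of $\sigma_i$, which converts $\nabla^E_{X_j}\sigma_i$ into $\bar S^E(X_j,\sigma_i)$, yielding a pointwise $L^\infty$ bound on $A_t\sigma_i$ depending only on $C_S^E$, $C_\kappa$, and $m$. Combined with $\|\sigma_i\|_t=\sqrt{\vol N_t}$ and Cauchy--Schwarz applied to $(\sigma_i,\sigma)_t$ in \eqref{projp}, this should give $\|A_t\bar P_t\sigma\|_t\le C\|\sigma\|_t$ with $C$ uniform in $t$, and integration in $t$ completes this half.

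For the $\partial$-term I will write $\partial\bar P\sigma=\bar P\partial\sigma+[\partial,\bar P]\sigma$. The identity \eqref{projp2}, transcribed into the Dirac-system picture, becomes the operator equation $\bar P' = \bar P M_\kappa - \bar\kappa\bar P$, where $M_\kappa$ denotes multiplication by $\kappa(t,\cdot)$ and $\bar P'$ is the $t$-derivative of $\bar P$ as a map $I\to\mathcal L(H)$. Since $\partial=d/dt+M_{\kappa/2}$, a short calculation should then yield
\begin{equation*}
  [\partial,\bar P]\sigma = \bar P M_{\kappa/2}\sigma + M_{\kappa/2-\bar\kappa}\bar P\sigma ,
\end{equation*}
both summands having fiberwise operator norm controlled by a constant depending only on $C_\kappa$. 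Hence $\|[\partial,\bar P]\sigma\|_{L^2(\mathcal D)}\le C\|\sigma\|_{L^2(\mathcal D)}$, and together with $\|\bar P\partial\sigma\|_{L^2}\le\|\partial\sigma\|_{L^2}$ this gives the desired estimate.

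The splitting $H^1(\mathcal D)=H^{1,\rm c}(\mathcal D)\oplus H^{1,\rm h}(\mathcal D)$ will then follow from the continuity of $\bar P$ and of $\bar Q=I-\bar P$, combined with the definition \eqref{l2lehe2} of $H^{1,\rm h}$; the $H^1_{\loc}$ version will be immediate from the same estimates after multiplication by a cut-off $\varphi(t)$, which commutes with the fiberwise projection $\bar P$. The main obstacle I anticipate is translating \eqref{projp2} correctly from the manifold language of Section \ref{susthi} into the Dirac-system language of Section \ref{preds}; once that identification is in hand, the commutator $[\partial,\bar P]$ reduces to an explicit zeroth-order operator and the remaining estimates are routine.
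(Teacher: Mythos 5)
Your argument is correct, but it takes a more explicit route than the paper. The paper's proof is a two-line soft argument: since the frame $(\sigma_1,\dots,\sigma_k)$ and $\vol N_t$ are $C^1$, formula \eqref{projp} shows that $\bar P$ maps $H^1(\mathcal D)$ into itself, and continuity then follows from the closed graph theorem (the graph is closed because $\bar P$ is already continuous in the weaker $L^2(\mathcal D)$-topology). You instead prove the bound $\|\bar P\sigma\|_{H^1(\mathcal D)}\le C\|\sigma\|_{H^1(\mathcal D)}$ directly on the dense subspace $\mathcal L_c(\mathcal D)$: the $L^2$- and $\bar P\partial$-terms are controlled because $\bar P_t$ is an orthogonal projection in each $H_t$; the $A$-term is controlled because $\bar\nabla^E\sigma_i=0$ gives $\nabla^E_{X}\sigma_i=\bar S^E(X,\sigma_i)$ and hence a pointwise bound $|A_t\sigma_i|\le (m-1)C_S^E+C_\kappa/2$, which combined with Cauchy--Schwarz in \eqref{projp} yields $\|A_t\bar P_t\sigma\|_t\le C\|\sigma\|_t$ uniformly in $t$; and your commutator identity $[\partial,\bar P]\sigma=\bar P(\tfrac{\kappa}{2}\sigma)+(\tfrac{\kappa}{2}-\bar\kappa)\bar P\sigma$ is exactly what \eqref{projp2} gives once one recalls that $d/dt$ in the Dirac-system picture is $\nabla_T$ and $\partial=d/dt+\kappa/2$. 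What your approach buys is a uniform, explicit constant (depending only on $k$, $m$, $C_\kappa$, $C_S^E$), which the closed-graph argument does not provide; what it costs is the bookkeeping that the paper avoids. The only point worth making explicit in a write-up is that for $\sigma\in\mathcal L_c(\mathcal D)$ the section $\bar P\sigma$ again lies in $\mathcal L_c(\mathcal D)$ (the coefficients $(\sigma_i,\sigma)_t/\vol N_t$ are locally Lipschitz in $t$ by \eqref{dsax1} and the $C^1$-regularity of $\vol N_t$, and the $\sigma_i$ lie in $H_A$), so that $\partial\bar P\sigma$ and $A\bar P\sigma$ are defined before one estimates them; after that, the extension to the completion $H^1(\mathcal D)$ and the cut-off argument for the $H^1_{\loc}$ statement are routine, as you say.
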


\begin{proof}
Since $\sigma_1,\ldots,\sigma_k$ and $\vol N_t$ are $C^1$,
we conclude that
\begin{equation*}
  \bar P(H^1(\mathcal D)) \subseteq H^1(\mathcal D)
  \quad\text{and}\quad
  \bar Q(H^1(\mathcal D)) \subseteq H^1(\mathcal D) ,
\end{equation*}
by \eqref{projp}.
Hence $\bar P$ and $\bar Q=I-\bar P$ are continuous
with respect to the $H^1$-norm,
by the closed graph theorem.
\end{proof}

\begin{lem}\label{ray}
The Rayleigh quotients
\begin{align}
  \bar\rho_t &:= \inf\{ \|\bar\nabla^E_t\sigma\|_t^2/\|\sigma\|_t^2 :
  \sigma\in H^{\rm h}_t \cap H_A , \sigma\ne0 \} , \tag{1}\label{ray1} \\
  \rho_t &:= \inf\{ \|\nabla^E_t\sigma\|_t^2/\|\sigma\|_t^2 :
  \sigma\in H^{\rm h}_t \cap H_A , \sigma\ne0 \} \tag{2}\label{ray2}
\end{align}
tend to infinity as $t$ tends to infinity.
Here $\nabla^E_t$ and $\bar\nabla^E_t$ denote the restrictions
of $\nabla^E$ and $\bar\nabla^E$ to $N_t$.
\end{lem}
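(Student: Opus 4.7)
The cuspidal assumption forces the cross sections to shrink exponentially: from $g_t\le Ce^{c(s-t)}g_s$ one deduces $L_{g_t}(\gamma)\le\sqrt{C}e^{c(s-t)/2}L_{g_s}(\gamma)$ for any curve $\gamma$ in $N$, so the diameter $d_t$ of each connected component of $N_t$ tends to $0$ as $t\to\infty$. Simultaneously, the Gauss equation together with the bounds $|R|\le C_R$ and $|W|\le C_W$ keeps the sectional curvature of $N_t$ uniformly bounded by $C_R+C_W^2$. Hence the cross sections become arbitrarily small while remaining uniformly curvature-controlled.

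The plan is to first establish $\bar\rho_t\to\infty$. Since $\bar\nabla^E_t$ is a flat Hermitian connection, the operator $(\bar\nabla^E_t)^*\bar\nabla^E_t$ is non-negative and self-adjoint on $L^2(N_t,E)$ with kernel precisely $H^{\rm c}_t$, so $\bar\rho_t$ is its smallest non-zero eigenvalue. I would decompose $E|_{N_t}$ into $\bar\nabla^E_t$-holonomy-irreducible flat summands $F_i$. Each $F_i$ is either (i) a trivial line bundle, whose constant sections contribute to $H^{\rm c}_t$, so the $F_i$-components of any $\sigma\in H^{\rm h}_t$ have vanishing mean on each component of $N_t$ and the Li-Yau type estimate \cite{LY} on the almost flat $N_t$ yields a lower bound of the form $C/d_t^2$ with $C=C(C_R,C_W,m)$, or (ii) a flat Hermitian summand whose holonomy has no non-zero fixed vector, so $F_i$ contributes nothing to $H^{\rm c}_t$ and the twisted eigenvalue estimate from \cite{eiho} gives a lower bound of comparable order. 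The merely $C^1$ regularity of $g_t$ and $\bar\nabla^E_t$ is handled by smooth approximation of $N_t$ exactly as in the proof of \tref{mainth1}. Combining these bounds and using $d_t\to 0$ gives $\bar\rho_t\to\infty$.

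The convergence $\rho_t\to\infty$ then follows by a direct perturbation argument. Writing $\nabla^E=\bar\nabla^E+\bar S^E$ with $|\bar S^E|\le C^E_S$ uniformly, the reverse triangle inequality gives
\begin{equation*}
  \|\nabla^E_t\sigma\|_t\ge\|\bar\nabla^E_t\sigma\|_t-C^E_S\|\sigma\|_t,
\end{equation*}
and hence $\sqrt{\rho_t}\ge\sqrt{\bar\rho_t}-C^E_S$ for all sufficiently large $t$, so that $\bar\rho_t\to\infty$ implies $\rho_t\to\infty$.

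The main obstacle will be the twisted eigenvalue estimate of case (ii): one must produce a lower bound on the Rayleigh quotient of a $C^1$ flat Hermitian bundle over an almost flat Riemannian manifold with constants depending only on the dimension, the uniform curvature bound $C_R+C_W^2$ on $N_t$, and the rank of $E$, and one must verify that this bound is preserved under the smooth approximation needed to bypass the merely $C^1$ regularity of $g_t$ and $\bar\nabla^E_t$. Since $d_t\to 0$ exponentially, any lower bound that grows as $d_t\to 0$ (not merely $\sim d_t^{-2}$) would suffice to complete the argument.
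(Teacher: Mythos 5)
Your overall strategy coincides with the paper's: split $H^{\rm h}_t\cap H_A$ into the part lying pointwise in the span of the $\bar\nabla^E$-parallel frame $(\sigma_1,\dots,\sigma_k)$ (handled by the Li--Yau eigenvalue estimate for functions, since the coefficient functions have mean zero) and the part pointwise perpendicular to that frame (handled by the holonomy estimate of \cite{eiho}), followed by the perturbation $\|\nabla^E_t\sigma\|_t\ge\|\bar\nabla^E_t\sigma\|_t-\text{const}\cdot C_S^E\|\sigma\|_t$. Your decomposition into holonomy-irreducible flat summands is merely a refinement of this splitting and changes nothing essential.

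There is, however, a genuine gap at exactly the step you flag as the ``main obstacle'', and the estimate you propose to prove there is false in the stated generality. No lower bound on the first twisted eigenvalue of a flat Hermitian bundle with fixed-point-free holonomy over an almost flat manifold can depend only on the dimension, the curvature bound, and the rank: a flat line bundle over a flat torus with holonomy $e^{2\pi i\theta}$, $\theta$ close to an integer, has first eigenvalue close to $0$. The estimate of Theorem 5 in \cite{eiho} necessarily involves a quantitative non-triviality of the holonomy, namely a constant $\alpha>0$ such that every unit vector $u$ in the complementary subbundle satisfies $|h_cu-u|\ge\alpha|u|$ for some loop $c$ of controlled length, together with that length. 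What makes the argument close here, and what your proposal omits, is that parallel translation along the $T$-geodesics intertwines the flat connections on the different cross sections, so the $\bar\nabla^E$-holonomy along a loop in $N_t$ obtained by shifting a loop in $N_0$ is \emph{independent of} $t$. Hence $\alpha$ can be fixed once and for all at $t=0$ (using that loops of length at most $2\diam N_0$ generate $\pi_1(N_0)$ and a compactness argument on the unit sphere bundle), while the lengths of the shifted loops decay exponentially in $t$ by the cuspidal condition; Theorem 5 of \cite{eiho} then gives a lower bound that blows up, which is what forces $\bar\rho_t\to\infty$ on that summand. With this supplement your argument is complete; without it, case (ii) is not established.
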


\begin{proof}
We discuss the Rayleigh quotients associated to $\bar\nabla^ E$ first.
Split $H^{\rm h}_t \cap H_A=U_t\oplus V_t$,
where $U_t$ consists of sections in $H^1(N_t,E)$
which are linear combinations $\sum\varphi_i\sigma_i$
of the basis $(\sigma_1, \ldots, \sigma_k)$ as above
and where $V_t$ consists of sections in $H^1(N_t,E)$
which are pointwise perpendicular to $\sigma_1,\ldots,\sigma_k$.
Note that $U_t$ and $V_t$ are invariant under $\bar\nabla_t^E$
and perpendicular to each other,
and thus it suffices to consider them separately.

Let  $\sigma=\sum\varphi_i\sigma_i\in U_t$, $\sigma\ne0$.
To be perpendicular to $H^{\rm c}_t$ in $L^2(N_t,E)$
means that the coefficient functions $\varphi_i$ integrate to $0$.
Moreover, the Rayleigh quotient of $\sigma$ is given by the sum of the
Rayleigh quotients corresponding to the Laplace operator on functions on $N_t$.
Hence
\begin{equation*}
  \frac{\|\bar\nabla^E_t\sigma\|_t^2}{\|\sigma\|_t^2}
  = \frac{\sum\| \grad\varphi_i \|_t^2}{\sum\|\varphi_i\|_t^2}
  \ge ce^{ct} ,
\end{equation*}
for some constant $c>0$, by Theorem 7 in \cite{LY}.

Now we consider $V_t$.
Perpendicular to $(\sigma_1,\ldots,\sigma_k)$,
the holonomy of $\bar\nabla$ does not have non-trivial invariant vectors.
Since loops  in $N=N_0$ of length at most $2\diam N$
generate the fundamental group of $N$,
there is a constant $\alpha>$ such that,
for each vector $u$ in some fiber of $E$ over $N$,
there is a loop $c$ in $N$ of length at most $2\diam N$
such that the holonomy $h_c$ of $\bar\nabla$ along $c$
satisfies $| h_cu-u | \ge \alpha | u |$.
For each $t\ge0$, the $\bar\nabla$-holonomy about the curve $c$
shifted to $N_t$ is the same.
We conclude that,
for each $t\in\R_+$ and vector $u$ in some fiber of $E$ over $N_t$,
there is a loop $c$ in $N_t$ of length at most $2\varphi(t)\diam N$
such that the holonomy $h_c$ of $\bar\nabla$ along $c$ satisfies
the same inequality,
\begin{equation*}
  | h_cu - u | \ge \alpha | u | .
\end{equation*}
Hence Theorem 5 in \cite{eiho} applies and shows that the Rayleigh
quotient of $\bar\nabla^E_t$ on $V_t$ tends uniformly to infinity as $t$ tends to infinity.
This shows the first claim.
As for the Rayleigh quotients associated to $\nabla^E_t$,
we recall that the difference $\| \bar\nabla_t^E - \nabla^E_t \| \le C_S^E$.
\end{proof}

\begin{thm}\label{nonparc}
There are constants $\lambda_0,\Lambda_t\ge0$
with $\lim_{t\to\infty}\Lambda_t=\infty$
such that $\spec A_t\cap(\lambda_0,\Lambda_t)=\emptyset$
or, more precisely, such that
\begin{align}
  \|D_t\sigma\|_t &\le \lambda_0\|\sigma\|_t
  \quad\text{for all $\sigma\in H^{\rm c}_t$} ,
   \tag{1}\label{nopale} \\
  \|D_t\sigma\|_t &\ge \Lambda_t\|\sigma\|_t
  \quad\text{for all $\sigma\in H^{\rm h}_t$} .
  \tag{2}\label{nopahe}
\end{align}
In particular, for all sufficiently large $t$,
\begin{enumerate}
\item
$\mathcal D_t$ satisfies the hypothesis of \pref{nonpar2},
\item
$D$ is non-parabolic with respect to $M_t:=M\setminus\{f>t\}$,
\end{enumerate}
where $f$ is the distance function as in Definition \ref{straight}.
\end{thm}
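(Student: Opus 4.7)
The plan is to prove the two pointwise norm estimates (1) and (2) directly, derive the spectral gap $\spec A_t\cap(\lambda_0,\Lambda_t)=\emptyset$ from the min--max principle using the dimension count provided by $k:=\dim H^{\rm c}_t$, and then deduce the two concluding assertions. Once $\Lambda_t\to\infty$ while $\lambda_0$ and $c_0$ stay bounded, the quadratic gap inequality $(\Lambda-\lambda)^2>4c_0(c_0+2+\lambda+\Lambda)$ required by \pref{nonpar2} will be automatic for large $t$, applied to the Dirac system over the end $[t,\infty)$ with $\lambda=\lambda_0$ and a uniform bound $\Lambda=\inf_{s\ge t}\Lambda_s$, and \pref{nopanopa} will translate this into non-parabolicity of $D$ with respect to $M_t$.

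For (1), I would use that every $\sigma\in H^{\rm c}_t$ is $\bar\nabla^E$-parallel, so that $\nabla^E_{X_i}\sigma=\bar S^E(X_i,\sigma)$ pointwise with $|\bar S^E|\le C_S^E$ uniformly over $U$. Substituting into the formula $D_t\sigma=\sum_{i\ge 2}TX_i\nabla^E_{X_i}\sigma+\tfrac{\kappa}{2}\sigma$ from \eqref{dast} and using $|TX_i\nabla^E_{X_i}\sigma|=|\nabla^E_{X_i}\sigma|$ together with $|\kappa|\le C_\kappa$, one gets a $t$-independent pointwise estimate $|D_t\sigma|\le\lambda_0|\sigma|$ with $\lambda_0=(m-1)C_S^E+C_\kappa/2$, which integrates to (1). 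For (2), I would combine the Lichnerowicz identity \eqref{licht}, which yields $\|D_t\sigma\|_t^2\ge\|\nabla^t\sigma\|_t^2-C_K\|\sigma\|_t^2$ with $C_K$ coming from \eqref{lic4}, with the pointwise algebraic relation $\nabla^t_X\sigma-\nabla^E_X\sigma=-\tfrac{1}{2}T(WX)\sigma$ for $X$ tangent to $N_t$ (see \eqref{nast} and \lref{ddt}); summing over an orthonormal frame gives $\|\nabla^t\sigma-\nabla^E_t\sigma\|_t\le\tfrac{1}{2}C_w\|\sigma\|_t$. Together with \lref{ray}(2), providing $\|\nabla^E_t\sigma\|_t^2\ge\rho_t\|\sigma\|_t^2$ with $\rho_t\to\infty$ for $\sigma\in H^{\rm h}_t\cap H_A$, the triangle inequality gives
\[
  \|D_t\sigma\|_t^2\ge\bigl((\sqrt{\rho_t}-\tfrac{1}{2}C_w)^2-C_K\bigr)\|\sigma\|_t^2=:\Lambda_t^2\|\sigma\|_t^2,
\]
valid once $t$ is large enough that the bracket is positive, with $\Lambda_t\to\infty$.

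To promote (1) and (2) to the spectral statement, I would note $A_t=-D_t$ by \eqref{defat}, so $|A_t|=|D_t|$, and apply the min--max principle to the discrete self-adjoint operator $|A_t|$. The subspace $H^{\rm c}_t$ lies in $H_A$ since it is spanned by $C^1$ sections, hence (1) exhibits a $k$-dimensional subspace in $H_A$ on which the Rayleigh quotient of $|A_t|$ is at most $\lambda_0$, forcing the $k$-th singular value $\mu_k\le\lambda_0$; and (2) on $(H^{\rm c}_t)^\perp\cap H_A=H^{\rm h}_t\cap H_A$ forces the dual min--max quantity $\mu_{k+1}\ge\Lambda_t$. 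For every $t$ large enough that $\lambda_0<\Lambda_t$, I conclude $\spec A_t\cap(\lambda_0,\Lambda_t)=\emptyset$, extended to $(-\Lambda_t,-\lambda_0)$ by the symmetry of $\spec A_t$ about $0$ coming from \eqref{dsax3b}. The ``in particular'' conclusions (i) and (ii) then follow as indicated in the first paragraph. The main technical obstacle is the high-energy estimate (2), since it requires aligning three independent uniform bounds---the Lichnerowicz curvature term, the $\nabla^t$-versus-$\nabla^E|_{N_t}$ discrepancy encoded by the Weingarten operator, and the divergence of the Rayleigh quotient coming from the cuspidal geometry via \lref{ray}---into a single lower bound for $\|D_t\sigma\|_t$ that actually diverges as $t\to\infty$.
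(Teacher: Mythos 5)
Your proposal is correct and follows essentially the same route as the paper: the paper's (one-line) proof is precisely the Lichnerowicz comparison $\bigl|\|D_t\sigma\|_t^2-\|\nabla^t\sigma\|_t^2\bigr|\le C_K\|\sigma\|_t^2$ from \eqref{licht} and \eqref{lic4}, combined implicitly with \lref{ray} for the high-energy lower bound, the boundedness of $\nabla^E\sigma=\bar S^E(\cdot,\sigma)$ on $\bar\nabla^E$-parallel sections for the low-energy upper bound, and \pref{nonpar2}/\pref{nopanopa} for the two concluding assertions. You have simply written out the details (including the min--max step and the verification of the quadratic gap inequality) that the paper leaves to the reader.
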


\begin{proof}
By \eqref{licht} and \eqref{lic4},
\begin{equation*}
  \big| \|D_t\sigma\|_t^2 - \|\nabla^t\sigma\|_t^2 \big|
  \le C_K \| \sigma \|_t^2 .
  \qedhere
\end{equation*}
\end{proof}

Assume now that the ends of $M$ are smooth, that is,
the associated distance function $f$ on $U$ is smooth.
Since the ends of $M$ are cuspidal and the curvatures of $M$ and $E$
and the second fundamental forms of the cross sections are uniformly bounded,
we have, with $M_t=M\setminus\{f>t\}$ as above,
\begin{equation}
  \lim_{t\to\infty} \int_{M_t}\omega_{D^+} = \int_{M}\omega_{D^+}
  \quad\text{and}\quad
  \lim_{t\to\infty} \int_{N_t}\tau_{D^+} = 0 ,
  \label{limint}
\end{equation}
compare \eqref{aps}.
By \tref{nonparc}, we may fix the starting time $t=0$ such that the condition
\begin{equation}
  (\Lambda_t-\lambda_0)^2 > 4c_0(c_0 + 2 + \lambda_0 + \Lambda_t)
  \label{recall}
\end{equation}
of \pref{nonpar2} is satisfied for all $t\in\R_+$,
where $\lambda$ and $\Lambda$ there correspond to $\lambda_0$
and $\Lambda_t$ here.

\begin{prop}\label{weinfin}
If $w>0$ satisfies $(w-\lambda_0)^2 > c_0(c_0+2+2w)$, then
\begin{align*}
  \ind D_{-w}^+ &= \int_M \omega_{D^+}
  + \frac12 \big( \dim H^+_{[-\lambda_0,\lambda_0]}(A_0^+)
  + \lim_{t\to\infty} \eta(A_t^{\rm he,+}) \big) .
\end{align*}
\end{prop}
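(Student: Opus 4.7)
The plan is to apply \cref{cindwex} at each cutoff time $t\ge0$, using the decomposition $M = M_t \cup \{f \ge t\}$ with smooth common boundary $N_t$, and then to pass to the limit $t\to\infty$. Since the starting time was fixed so that \eqref{recall} holds for all $t\ge0$, \tref{nonparc} tells us that the Dirac system over the cuspidal tail satisfies the hypotheses of \pref{nonpar2} with constants $\lambda_0$ and $\Lambda_t$; the assumption on $w$ in the statement exactly matches the inequality $(w-\lambda_0)^2>c_0(c_0+2+2w)$ required by \cref{cindwex}. The corollary then yields, for every $t\ge0$,
\begin{equation*}
  \ind D^+_{-w} = \int_{M_t}\omega_{D^+} + \int_{N_t}\tau_{D^+}
  + \frac12\bigl(\dim H^+_{[-\lambda_0,\lambda_0]}(A_t^+) + \eta(A^{\rm he,+}_t)\bigr).
\end{equation*}
The left-hand side is independent of $t$: shifting the cutoff from $N_0$ to $N_t$ merely rescales the weight $e^{-ws}$ on the end by the positive constant $e^{-wt}$, producing an equivalent norm and hence the same Fredholm operator up to isomorphism.

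The key step is to identify $H^+_{[-\lambda_0,\lambda_0]}(A_t^+)$ with the constant subspace $H^{\rm c,+}_t$. By \eqref{nopale} of \tref{nonparc}, we have $H^{\rm c,+}_t \subseteq H^+_{[-\lambda_0,\lambda_0]}(A_t^+)$. On the other hand, since \eqref{recall} forces $\Lambda_t>\lambda_0$, \eqref{nopahe} implies $H^{\rm h,+}_t\cap H^+_{[-\lambda_0,\lambda_0]}(A_t^+)=0$. Writing any $\sigma$ in the low-energy subspace according to the orthogonal splitting $H_t^+=H^{\rm c,+}_t\oplus H^{\rm h,+}_t$ and observing that its $H^{\rm h,+}_t$-component is then also low-energy, we conclude $H^+_{[-\lambda_0,\lambda_0]}(A_t^+)=H^{\rm c,+}_t$. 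Parallel translation along $T$-geodesics with respect to the flat connection $\bar\nabla^E$ identifies $H^{\rm c,+}_t$ with $H^{\rm c,+}_0$, and at $t=0$ the same argument gives $H^{\rm c,+}_0=H^+_{[-\lambda_0,\lambda_0]}(A_0^+)$. Hence the dimension term in the displayed identity is independent of $t$ and equals $\dim H^+_{[-\lambda_0,\lambda_0]}(A_0^+)$.

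Finally, I would let $t\to\infty$: by \eqref{limint}, $\int_{M_t}\omega_{D^+}\to\int_M\omega_{D^+}$ and $\int_{N_t}\tau_{D^+}\to 0$, while the dimension term is stationary by the previous paragraph. Since the left-hand side is constant in $t$, $\eta(A^{\rm he,+}_t)$ must converge, and combining these limits yields the asserted identity. The only real obstacle is the identification of the low-energy spectral subspace with $H^{\rm c,+}_t$, carried out in the second step; once the spectral gap $(\lambda_0,\Lambda_t)$ is used in concert with the orthogonal decomposition $H_t^+=H^{\rm c,+}_t\oplus H^{\rm h,+}_t$, the rest is a routine passage to the limit using already-established properties of the cuspidal geometry.
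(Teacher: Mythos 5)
Your overall strategy coincides with the paper's: apply \cref{cindwex} with the cutoff moved from $N_0$ to $N_t$, observe that both $\ind D^+_{-w}$ and the dimension of the low-energy subspace are independent of $t$, and let $t\to\infty$ using \eqref{limint}. The one step that does not hold up as written is the claimed identification $H^+_{[-\lambda_0,\lambda_0]}(A_t^+)=H^{\mathrm{c},+}_t$. Neither inclusion follows from the norm bounds alone: \eqref{nopale} gives $\|A_t\sigma\|_t\le\lambda_0\|\sigma\|_t$ on $H^{\mathrm c}_t$, but a vector with small $\|A_t\sigma\|_t$ may still carry a nonzero component in the high spectral subspace; and conversely the $H^{\mathrm h}_t$-component of a vector in $H_{[-\lambda_0,\lambda_0]}(A_t)$ need not itself be low-energy, because the orthogonal splitting $H_t=H^{\mathrm c}_t\oplus H^{\mathrm h}_t$ is not $A_t$-invariant in the general cuspidal setting of this proposition --- that invariance is precisely the extra hypothesis \eqref{hass} imposed later for homogeneous cusps. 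What your argument actually requires, however, is only the dimension, and that does follow from \eqref{nopale} and \eqref{nopahe} by min--max applied to $A_t^2$: the $k$-dimensional subspace $H^{\mathrm c}_t$ on which $\|A_t\sigma\|_t\le\lambda_0\|\sigma\|_t$ forces at least $k$ eigenvalues of $|A_t|$ in $[0,\lambda_0]$, while the codimension-$k$ subspace $H^{\mathrm h}_t$ on which $\|A_t\sigma\|_t\ge\Lambda_t\|\sigma\|_t$ forces at most $k$ eigenvalues below $\Lambda_t$; together with the spectral gap this yields $\dim H_{[-\lambda_0,\lambda_0]}(A_t)=k=\dim H^{\mathrm c}_0$ for all $t$, without identifying the subspaces themselves. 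With that repair, the remaining steps (the $t$-independence of the weighted index and the passage to the limit) are exactly the paper's proof.
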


\begin{proof}
Observing that  $\dim H^+_{[-\lambda_0,\lambda_0]}(A_t)$
is independent of $t\in\R_+$,
the assertion follows immediately from  \cref{cindwex} and \eqref{limint}.
\end{proof}

As for the index of $D_{\ext}^+$, we refer to \cref{dwcor}.

\section{Explicit Index Formulas}
\label{susifo}
It is certainly desirable to get more explicit index formulas than the one in \pref{weinfin}.
To that end one might hope that the decomposition into low and high energy parts
along the cusps should lead to a decomposition of the corresponding
Dirac system $\mathcal D$ into low and high energy Dirac systems.
Whereas the low energy part leads to the Dirac system $\mathcal D^c$ as in \pref{susys},
the high energy part does not seem to define a Dirac system in general.
If it did, we would want to apply \pref{nonpar0} to show that the high energy part
would not contribute to the index.
However, it is also a problem to control the mixed parts of the Dirac operator.
Under additional technical assumptions on $\mathcal D$ and with quite some effort,
we can solve these problems.
However, at this point this part of our work is rather technical
and does not seem to lead to interesting applications.
For the applications in this article, we can restrict to a more special situation.

Since we want to discuss the contribution of each of the ends of $M$
to the extended index of $D$ as in \tref{pindfor} separately,
we will also consider from now on the objects introduced so far
for the different ends separately.
We let $\mathcal C$ be an end of $M$ and keep the notation as before,
except for adding an index $\mathcal C$ when appropriate.
For better readability, we will not decorate the open sets $U$ and $U_t$,
the cross sections $N_t$,
and the distance function $f$ on $U$ with an index $\mathcal C$.

We assume that $\mathcal C$ is smooth and cuspidal and that, in addition,
\begin{equation}
  \kappa_{\mathcal C} = \bar\kappa_{\mathcal C}
  \quad\text{and}\quad
  A^{\rm le}_{\mathcal C} \sigma_i = \sum\nolimits_j a_i^j\sigma_j ,
  \label{hass}
\end{equation}
for some fixed Hermitian matrix $(a_i^j)\in\Gl(k,\C)$.
Here $(\sigma_1,\ldots,\sigma_k)$ is a family of $\bar\nabla^E_{\mathcal C}$-parallel
sections of $E$ over $U$ which are pointwise orthonormal
and whose restriction to $N_{t}$ forms an orthogonal basis
of $H^c_{\mathcal C,t}$, for all $t\in\R_+$, as further up.
The above conditions hold for homogeneous cusps as discussed
in Chapters \ref{sechc} -- \ref{secle}.

The second condition of \eqref{hass} requires that the space $H^c_{\mathcal C,t}$
of constant sections in $H_{\mathcal C,t}$ is invariant under $A_{\mathcal C,t}$.
By \tref{nonparc}, we get that
\begin{equation}
\begin{split}\label{lecheh}
  H^{\rm le}_{\mathcal C,t}
  &= H^{\rm c}_{\mathcal C,t}
  = H_{\mathcal C,[-\lambda_0,\lambda_0]}(A_{\mathcal C,t}) , \\
  H^{\rm he}_{\mathcal C,t}
  &= H^{\rm h}_{\mathcal C,t}
  = H_{\mathcal C,\R\setminus[-\lambda_0,\lambda_0]}(A_{\mathcal C,t}) ,
\end{split}
\end{equation}
compare \eqref{lopar} and \eqref{hipar}.
The additional assumption $\kappa_{\mathcal C} = \bar\kappa_{\mathcal C}$ implies
that the high energy family
$\mathcal H^{\rm he}_{\mathcal C}=(H^{\rm he}_{\mathcal C,t})$
is invariant under parallel translation
so that it defines a Dirac subsystem $\mathcal D^{\rm he}_{\mathcal C}$
of $\mathcal D_{\mathcal C}$,
as in the case of the low energy system
$\mathcal D^{\rm le}_{\mathcal C}:=\mathcal D^{\rm c}_{\mathcal C}$;
compare \eqref{projp2} and \pref{susys}.
Along $\mathcal C$,
we obtain corresponding low and high energy Dirac operators $D^{\rm le}$
and $D^{\rm he}$, decomposing the original Dirac operator $D$.
Since $A^{\rm le}_{\mathcal C,t}$ does not depend on $t$
and the spectral gap of $A_{\mathcal C,t}^{\rm he}$ tends to infinity as $t\to\infty$,
we may suppose that the assumptions on the spectral gap in \pref{nonpar2}
are satisfied, for all $t\in\R_+$.

\begin{lem}\label{heiso}
Under the above assumptions,
\begin{equation*}
  D^{\rm he}_{U_t,<\lambda,\ext}
  = D^{\rm he}_{U_t,\le \lambda,\ext}
  =   D^{\rm he}_{U_t,<\Lambda_t,\ext} .
\end{equation*}
and $D^{\rm he}_{U_t,<\lambda,\ext}$
and $D^{\rm he,\pm}_{U_t,<\lambda,\ext}$ are isomorphisms,
for all $t\ge0$ and $-\Lambda_t<\lambda<\Lambda_t$.
In particular, for all such $t$ and $\lambda$,
\begin{equation*}
  \ind D^+_{U_t,<\lambda,\ext}
  = \ind D^{\rm le,+}_{U_t,<\lambda,\ext} .
\end{equation*}
\end{lem}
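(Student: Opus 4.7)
I plan to derive the lemma from the spectral-gap structure of the high energy subsystem $\mathcal D^{\rm he}_{\mathcal C}$ together with \pref{nonpar2} applied to that subsystem.

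First I would prove the chain of equalities. For every $s\ge 0$ we have $\spec A^{\rm he}_{\mathcal C,s}\subseteq(-\infty,-\Lambda_s]\cup[\Lambda_s,\infty)$ by \tref{nonparc} and the definition of the high energy subspace. Consequently, for any $\mu\in(-\Lambda_t,\Lambda_t)$ the spectral subspaces $H^{\rm he,1/2}_{<\mu}$, $H^{\rm he,1/2}_{\le\mu}$ and $H^{\rm he,1/2}_{<\Lambda_t}$ all coincide with $H^{\rm he,1/2}_{\le-\Lambda_t}$. These are regular boundary conditions, so the three operators in the statement have the same domain as subspaces of $W$ and the asserted equalities follow.

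Next I would prove that $D^{\rm he}_{U_t,<\lambda,\ext}$ is an isomorphism by applying \pref{nonpar2} to $\mathcal D^{\rm he}_{\mathcal C}$ with the pair $(0,\Lambda_t)$ in place of $(\lambda,\Lambda)$. The requisite inequality $(\Lambda-\lambda)^2>4c_0(c_0+2+\lambda+\Lambda)$ is implied, for all $t\ge 0$, by \eqref{recall} via the choice of starting time explained just before the lemma. Part~(3) of the proposition then gives the injectivity of $D^{\rm he}_{U_t,<0,\ext}$, hence of $D^{\rm he}_{U_t,<\lambda,\ext}$ by the previous paragraph. For surjectivity I would use \eqref{imperp}: the cokernel equals $\ker D^{\rm he}_{U_t,\le 0,\max}$, and any element $\sigma$ of this kernel lies in $L^2\subseteq e^{-wt}L^2$ for any $w>0$, with boundary value in $\check H^{\rm he}_{\le 0}=\check H^{\rm he}_{<\Lambda_t}$, so part~(1) of \pref{nonpar2} forces $\sigma=0$. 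The super-symmetric statement for $D^{\rm he,\pm}_{U_t,<\lambda,\ext}$ follows from the same argument applied to the $\pm$ parts, which are preserved by the $A_t$ and by the boundary conditions.

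Finally, to obtain the index identity I would deduce a clean splitting $D_{\mathcal C}=D^{\rm le}_{\mathcal C}\oplus D^{\rm he}_{\mathcal C}$ along the cusp. Under \eqref{hass}, the invariance condition on $A^{\rm le}_{\mathcal C}$ makes $\bar P_t$ commute with $A_t$, while $\kappa_{\mathcal C}=\bar\kappa_{\mathcal C}$ forces $(\nabla_T\bar P)\sigma=0$ by \eqref{projp2}. Hence the mixed terms $D^{\rm ch}$ and $D^{\rm hc}$ of \eqref{lehe} vanish, the boundary conditions are compatible with the splitting $H^+=H^{\rm le,+}\oplus H^{\rm he,+}$, and additivity of the Fredholm index combined with $\ind D^{\rm he,+}_{U_t,<\lambda,\ext}=0$ from the previous step produces the final identity. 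The main obstacle will be checking this vanishing of the mixed components of $D$, which is precisely where both clauses of hypothesis \eqref{hass} are needed: the spectral clause kills the tangential mixing term while the mean-curvature clause kills the normal one.
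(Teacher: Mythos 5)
Your overall strategy matches the paper's: the chain of equalities follows from the spectral gap of $A^{\rm he}_{\mathcal C,t}$, injectivity comes from the non-parabolicity machinery (the paper cites \cref{nonpar1}, you apply \pref{nonpar2}.3 with the pair $(0,\Lambda_t)$ --- both work, and your verification that \eqref{recall} implies the required inequality for $(0,\Lambda_t)$ is correct), and the index identity follows from the direct-sum decomposition of the Dirac system into low and high energy parts guaranteed by \eqref{hass}, exactly as set up in the text preceding the lemma.

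There is, however, a concrete error in your surjectivity step. You claim $L^2(\mathcal D)\subseteq e^{-wt}L^2(\mathcal D)$ for $w>0$; the inclusion goes the other way. With the paper's convention $L^2_{w}(\mathcal D)=e^{-wt}L^2(\mathcal D)$, a section $\sigma$ lies in $e^{-wt}L^2$ precisely when $e^{wt}\sigma$ is square integrable, so on $\R_+$ one has $e^{-wt}L^2\subseteq L^2\subseteq e^{wt}L^2$. Consequently an element of $\ker D^{\rm he}_{U_t,\le0,\max}$ is merely in $L^2$, need not decay exponentially, and part (1) of \pref{nonpar2} does not apply to it. What you need is part (2): such a $\sigma$ lies in $L^2\subseteq e^{wt}L^2$ and its boundary value lies in $\check H^{\rm he}_{\le0}=\check H^{\rm he}_{<-\lambda}$ (again by the spectral gap), so part (2) forces $\sigma=0$. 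Even more directly, one can avoid the weighted spaces altogether, as the paper does: $\ker D^{\rm he}_{U_t,\le 0,\max}\subseteq\ker D^{\rm he}_{U_t,\le0,\ext}=\ker D^{\rm he}_{U_t,<0,\ext}=0$ by the coincidence of boundary conditions and the injectivity already established, and then \eqref{imperp} gives surjectivity. With either correction your argument is complete.
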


\begin{proof}
The first assertion is clear since the spectrum of $A^{\rm he}_{\mathcal C,t}$
does not intersect the interval $(-\Lambda_t,\Lambda_t)$.
Furthermore, $D^{\rm he}_{U_t,<0,\ext}$ is injective, by \cref{nonpar1}.
Now $D^{\rm he}_{U_t,<\lambda,\ext}$
and $D^{\rm he}_{U_t,\le-\lambda,\ext}$ are adjoints of each other,
hence $D^{\rm he}_{U_t,\le0,\ext}$ is surjective.
\end{proof}

Since $\kappa_{\mathcal C}$ depends (at most) on $t$
and $j_{\mathcal C}$ solves the initial value problem
$j'_{\mathcal C}=\kappa_{\mathcal C} j_{\mathcal C}$ with $j_{\mathcal C,0}=1$,
we conclude that $j_{\mathcal C}=j_{\mathcal C}(t,x)$ depends only on $t$ as well.
Then the linear map
\begin{equation}
  \Phi: L^2(\R_+,\R^k) \to L^2(\mathcal H^{\rm le})_{\mathcal C} , \quad
  \Phi(\varphi) = j_{\mathcal C}^{-1/2}\sum\nolimits_i\varphi^i\sigma_i ,
  \label{letrafo}
\end{equation}
is a unitary isomorphism such that
\begin{equation}
  \Phi^{-1} D^{\rm le} \Phi = \bar T \left(\frac{d}{dt} + A^{\rm le}_{\mathcal C,0} \right) ,
  \label{lesys}
\end{equation}
where $\bar T=\Phi^{-1}T\Phi$.
This is a finite rank Dirac system with constant coefficients.
In the super-symmetric case, we get a system of the form
\begin{equation}
  \Phi^{-1} D^{\rm le} \Phi
  = \begin{pmatrix} 0 & -1 \\ 1 & \phantom{-}0 \end{pmatrix}
  \left( \frac{d}{dt}
  + \begin{pmatrix} A^{\rm le,+}_{\mathcal C,0} & 0 \\
  0 & A^{\rm le,-}_{\mathcal C,0} \end{pmatrix}
  \right) ,
  \label{lesys2}
\end{equation}
where $A^{\rm le,-}_{\mathcal C,0}=-\bar A^{\rm le,+}_{\mathcal C,0}$.
As for high energy,
\begin{equation}
  \Psi: L^2(\R_+,H^{\rm he}_{\mathcal C,0})
  \to L^2(\mathcal H^{\rm he}_{\mathcal C}) , \quad
  \Phi(\varphi) = j^{-1/2}_{\mathcal C} P^{/\!\!/}_{\mathcal C} \varphi ,
  \label{hetrafo}
\end{equation}
is also a unitary isomorphism, and we have
\begin{equation}
  \Psi^{-1} D^{\rm he} \Psi = \bar T \left(\frac{d}{dt}
  + A_{\mathcal C,t}^{\rm he} \right) ,
  \label{hesys}
\end{equation}
where $\bar T=\Psi^{-1}T\Psi$.
There is a corresponding formula in the super-symmetric case,
substituting $\bar A_{\mathcal C,t}^{\rm he,\pm}$
for $\bar A_{\mathcal C,0}^{\pm}$ on the right in \eqref{lesys2}.

\begin{prop}\label{exinle}
Under the above assumptions, $D^{\rm le}_{<0,\ext}$
and $D^{\rm le,+}_{<0,\ext}$ are isomorphisms.
\end{prop}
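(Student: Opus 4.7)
The plan is to reduce the statement to a direct application of \pref{nonpar0}. Via the unitary isomorphism $\Phi$ of \eqref{letrafo}, the low energy Dirac operator $D^{\rm le}$ is conjugate to the constant-coefficient operator $\bar T(d/dt + A^{\rm le}_{\mathcal C,0})$ on $L^2(\R_+,\R^k)$ as recorded in \eqref{lesys}, and this transformation preserves the boundary data at $t=0$ since $j_{\mathcal C,0}=1$. Because $j_{\mathcal C}$ depends only on $t$ (by the standing assumption $\kappa_{\mathcal C}=\bar\kappa_{\mathcal C}$) and $A^{\rm le}_{\mathcal C,0}$ is independent of $t$ by \eqref{hass}, the transformed system is genuinely a finite-rank, constant-coefficient Dirac system in the sense of Section \ref{preds}.

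For this transformed system, $A' \equiv 0$, so the hypothesis of \pref{nonpar0} holds trivially with $a = 0$: one has $(A\sigma,A\sigma)_t \ge 0 = \Re(A'\sigma,\sigma)_t$ for every $\sigma\in H_A$ and every $t$. The conclusion of \pref{nonpar0} then asserts immediately that $D^{\rm le}_{<0,\ext}$ is an isomorphism, which establishes the first claim. Note that $a=0$ is enough here; the proposition does not require $a>0$ for this particular assertion, only for Fredholm type. (Indeed, $A^{\rm le}_{\mathcal C,0}$ will typically have a non-trivial kernel, reflecting the fact that $D^+$ need not be of Fredholm type.)

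For the super-symmetric version, I would invoke the block structure \eqref{lesys2}. The super-symmetry $H^{\rm le} = H^{{\rm le},+}\oplus H^{{\rm le},-}$ induces orthogonal decompositions $W = W^+\oplus W^-$ and $L^{2}(\mathcal H^{\rm le}) = L^{2}(\mathcal H^{{\rm le},+})\oplus L^{2}(\mathcal H^{{\rm le},-})$, and the boundary condition splits, by the discussion following \eqref{susy9}, as $H^{1/2}_{<0}(A^{\rm le}_{0}) = H^{1/2,+}_{<0}(A^{{\rm le},+}_{0}) \oplus H^{1/2,-}_{<0}(A^{{\rm le},-}_{0})$. Since $D^{\rm le}$ is off-diagonal with respect to this super-symmetry, the isomorphism $D^{\rm le}_{<0,\ext}$ of the previous paragraph decomposes as the direct sum of its two off-diagonal components, each of which must therefore be an isomorphism separately; in particular, $D^{{\rm le},+}_{<0,\ext}$ is an isomorphism.

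The only point requiring care is the compatibility between $\Phi$, the extended Hilbert space $W$, and the boundary condition $<0$. This is straightforward: $\Phi$ is unitary on $L^2$, reduces to the pointwise identification $\sum\varphi^i\sigma_i\leftrightarrow\varphi$ at $t=0$, and intertwines the self-adjoint operators $A^{\rm le}_{\mathcal C,t}$ with $A^{\rm le}_{\mathcal C,0}$, so it commutes with the spectral projection onto the negative part at the boundary.
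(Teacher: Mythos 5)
Your argument is correct and is essentially the paper's: both proofs rest on the reduction \eqref{letrafo}--\eqref{lesys2} to the finite-rank constant-coefficient system and on the fact that this system has no extended solutions with boundary value in $H^{\rm le}_{<0}$ and no $L^2$-solutions with boundary value in $H^{\rm le}_{\le 0}$. The paper records these two facts directly (solutions are $e^{-tA^{\rm le}_{\mathcal C,0}}x$, which grow exponentially for $x$ in the negative spectral subspace and are non-integrable constants on $\ker A^{\rm le}_{\mathcal C,0}$), whereas you obtain them by citing \pref{nonpar0} with $a=0$ and $A'\equiv 0$ --- a legitimate shortcut, provided one notes that \pref{nonpar0}, though stated under the standing geometric hypotheses of Section \ref{subfun}, has a proof that carries over verbatim to the abstract finite-rank Dirac system, exactly as the paper applies \cref{nonpar1} to the high-energy subsystem in \lref{heiso}.
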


\begin{proof}
The Dirac system \ref{lesys} does not have extended
or $L^2$-solutions $\sigma$ with $\sigma(0)$ in $H^{\rm le}_{<0}$
or $H^{\rm le}_{\le0}$, respectively.
\end{proof}

\begin{thm}\label{exinfin}
If all ends of $M$ are smooth and cuspidal and satisfy \eqref{hass}, then
\begin{equation*}
  \ind D_{\ext}^+ = \int_M \omega_{D^+}
  + \frac12\sum_{\mathcal C} \left(
  \lim_{t\to\infty} \eta(A^{\rm he,+}_{\mathcal C,t})
  + \eta(A^{\rm le,+}_{\mathcal C,0}) + \dim\ker A^{\rm le,+}_{\mathcal C,0} \right) .
\end{equation*}
\end{thm}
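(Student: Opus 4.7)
The plan is to combine \cref{dwcor} with \pref{weinfin} and then identify the remaining cokernel term $\dim \ker D^-_{U_0,\le\lambda_0,\max}$ with the missing contributions in the stated formula. By \cref{dwcor},
\begin{equation*}
  \ind D_{\ext}^+ = \ind D_{-w}^+ - \dim\ker D^-_{U_0,\le\lambda_0,\max} ,
\end{equation*}
and by \pref{weinfin},
\begin{equation*}
  \ind D_{-w}^+ = \int_M \omega_{D^+} + \tfrac12\dim H^+_{[-\lambda_0,\lambda_0]}
  + \tfrac12\sum_{\mathcal C}\lim_{t\to\infty}\eta(A^{\rm he,+}_{\mathcal C,t}) ,
\end{equation*}
where I split the high-energy eta term over ends, as the spectrum of $A^{\rm he}_t$ is the disjoint union of the spectra of its restriction to the connected components of $N_t$.

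The key use of assumption \eqref{hass} is that the Dirac operator $D$ decomposes along each end $\mathcal C$ as an orthogonal sum $D = D^{\rm le}_{\mathcal C}\oplus D^{\rm he}_{\mathcal C}$. Indeed, $\kappa_{\mathcal C} = \bar\kappa_{\mathcal C}$ makes the projections $\bar P_{\mathcal C}, \bar Q_{\mathcal C}$ commute with $\partial$ (by \eqref{projp2}), while the second part of \eqref{hass} makes $A_{\mathcal C,t}$ preserve $H^{\rm c}_{\mathcal C,t} = H^{\rm le}_{\mathcal C,t}$; together with \eqref{lecheh} one obtains $[\bar P_{\mathcal C}, D] = 0$. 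Consequently the kernel splits into low- and high-energy parts, and \lref{heiso} (applied with $\lambda = \lambda_0$) forces $\ker D^{\rm he,-}_{U_0,\le\lambda_0,\max} = 0$.

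It remains to compute $\dim\ker D^{\rm le,-}_{\mathcal C,\le\lambda_0,\max}$ on each end. Under the unitary transformation $\Phi$ of \eqref{letrafo}, this low-energy system becomes the constant-coefficient Dirac system $\bar T(\frac{d}{dt}+A^{\rm le}_{\mathcal C,0})$ on $L^2(\R_+,\R^{k})$, so solutions take the explicit form $\tau(t) = e^{-tA^{\rm le,-}_{\mathcal C,0}}\tau(0)$. Since $\spec A^{\rm le,-}_{\mathcal C,0}\subseteq[-\lambda_0,\lambda_0]$ the boundary condition $\tau(0)\in H^{\rm le,-}_{\le\lambda_0}$ is empty, and being $L^2$ is equivalent to $\tau(0)$ lying in the strictly positive spectral subspace of $A^{\rm le,-}_{\mathcal C,0}$, which under the identification $A^{\rm le,-}_{\mathcal C,0}=-T_0A^{\rm le,+}_{\mathcal C,0}T_0^{-1}$ has the same dimension as $H^{\rm le,+}_{\mathcal C,<0}(A^{\rm le,+}_{\mathcal C,0})$. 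Thus $\dim\ker D^-_{U_0,\le\lambda_0,\max}=\sum_{\mathcal C}\dim H^{\rm le,+}_{\mathcal C,<0}$.

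The proof concludes with the elementary identity, for each $\mathcal C$,
\begin{equation*}
  \tfrac12\dim H^{\rm le,+}_{\mathcal C} - \dim H^{\rm le,+}_{\mathcal C,<0}
  = \tfrac12\bigl(\eta(A^{\rm le,+}_{\mathcal C,0}) + \dim\ker A^{\rm le,+}_{\mathcal C,0}\bigr) ,
\end{equation*}
obtained by writing $\dim H^{\rm le,+}_{\mathcal C} = n_+ + n_0 + n_-$ with $n_\pm, n_0$ the number of strictly positive, strictly negative, respectively zero eigenvalues of $A^{\rm le,+}_{\mathcal C,0}$ (so $\eta(A^{\rm le,+}_{\mathcal C,0})=n_+-n_-$) and summing over $\mathcal C$. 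The main delicate point is step three, verifying that the orthogonal splitting of $D$ along the ends really is globally compatible with the Fredholm theory; all remaining manipulations are then algebraic.
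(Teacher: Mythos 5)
Your argument is correct, and it reaches the formula by a route that is organized differently from the paper's. The paper's own proof is shorter: it takes $\lambda=0$ in \tref{nonpar5}, uses \lref{heiso} to replace $\ind D^+_{U_t,<0,\ext}$ by $\ind D^{\rm le,+}_{U_t,<0,\ext}$, and then invokes \pref{exinle} (the constant-coefficient low-energy operator with boundary condition $<0$ is an isomorphism) to conclude $\ind D^+_{\ext}=\ind D^+_{M_t,\ge0}$ for every $t$; the stated formula then drops out of the APS identity \eqref{aps} applied to $M_t$ together with \eqref{limint}, since the boundary condition $\ge0$ produces the term $\tfrac12\bigl(\eta(A^+_t)+\dim\ker A^+_t\bigr)$ directly and one only has to separate it by cusp and by energy. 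You instead route through \cref{dwcor} and \pref{weinfin}, which forces you to evaluate the cokernel term $\dim\ker D^-_{U_0,\le\lambda_0,\max}$ by hand: the high-energy part dies by \lref{heiso}, and the low-energy part is the dimension of the decaying solutions of the constant-coefficient system, which via $A^{\rm le,-}_{\mathcal C,0}=-T_0A^{\rm le,+}_{\mathcal C,0}T_0^{-1}$ equals $\dim H^{\rm le,+}_{\mathcal C,<0}$; you then recover the $\tfrac12(\eta+\dim\ker)$ packaging by the elementary count $n_\pm,n_0$. The two proofs use the same underlying machinery (\tref{nonpar5}, the low/high splitting granted by \eqref{hass}, \lref{heiso}, \eqref{aps}, \eqref{limint}), but where the paper quotes \pref{exinle} you prove the dual statement about $\ker D^{\rm le,-}_{\le\lambda_0,\max}$ explicitly; what your version buys is that the nonlocal cokernel contribution is made completely explicit, at the cost of an extra bookkeeping step to convert $\tfrac12\dim H^+_{[-\lambda_0,\lambda_0]}-\dim H^{\rm le,+}_{<0}$ into eta form. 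The one point worth making explicit in a final write-up is that the hypotheses on $w$ in \cref{dwcor} and \pref{weinfin} are simultaneously satisfiable, which follows from \tref{nonparc} and the choice of starting time as in \eqref{recall}.
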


\begin{proof}
By \tref{nonpar5}, \lref{heiso}, and \pref{exinle}, we have
\begin{equation*}
  \ind D_{\ext}^+ = \ind D^+_{M_t,0} ,
\end{equation*}
for all $t\in\R_+$.
Hence the assertion follows from \eqref{aps} and \eqref{limint},
where we separate $\eta$-invariant and $\dim\ker A_t^+$
according to cusp $\mathcal C$ and low and high energy,
observing that $\dim\ker A^{\rm he,+}_{\mathcal C,t}=0$
and that $A^{\rm le,+}_{\mathcal C,t}=A^{\rm le,+}_{\mathcal C,0}$,
for all $t\in\R_+$.
\end{proof}

Recall the quantities $h^\pm_\infty:=\dim\ker D^\pm_{\ext}-\dim\ker D^\pm_{\max}$
from \eqref{hinfty}.

\begin{thm}\label{maxinf}
If all ends of $M$ are smooth and cuspidal and satisfy \eqref{hass}, then
\begin{equation*}
  \ind_{L^2} D^+ = \int_M \omega_{D^+}
  + \frac12\sum_{\mathcal C} \left(
  \lim_{t\to\infty} \eta(A^{\rm he,+}_{\mathcal C,t})
  + \eta(A^{\rm le,+}_{\mathcal C,0}) \right)
  - \frac12 \big( h^+_{\infty} - h^-_{\infty} \big) .
\end{equation*}
\end{thm}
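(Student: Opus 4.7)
The plan is to derive the formula by applying Theorem \ref{exinfin} to both $D^+$ and $D^-$ and combining the two with \eqref{hinfty} and the definition $\ind_{L^2} D^\pm := \dim \ker D^\pm - \dim \ker D^\mp$. From these one obtains
\begin{equation*}
  \ind D^+_{\ext} - \ind D^-_{\ext}
  = (\ind_{L^2} D^+ + h^+_\infty) - (-\ind_{L^2} D^+ + h^-_\infty)
  = 2\ind_{L^2} D^+ + (h^+_\infty - h^-_\infty).
\end{equation*}
Solving for $\ind_{L^2} D^+$ reduces the claim to computing $\ind D^+_{\ext} - \ind D^-_{\ext}$ via Theorem \ref{exinfin}.

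The second step is to apply Theorem \ref{exinfin} to $D^-$, viewed as the ``positive'' Dirac operator with respect to the reversed super-symmetry $E = E^- \oplus E^+$. The hypotheses are preserved under this reversal, and one only needs to compare the three types of terms. For the index form, reversing the grading reverses the sign of the supertrace, hence $\omega_{D^-} = -\omega_{D^+}$. For the boundary operators, equation \eqref{susyac} gives $A^-_t = T_t(-A^+_t)T_t^{-1}$, so $A^-_t$ is unitarily equivalent to $-A^+_t$. Since $\bar\nabla^E T = 0$ by \eqref{barc} and \eqref{bart}, Clifford multiplication by $T_t$ preserves the space $H^{\rm c}_{\mathcal C,t}$ of $\bar\nabla^E$-parallel sections and thus, via \eqref{lecheh}, identifies $H^{\rm le,\pm}_{\mathcal C,t}$ with $H^{\rm le,\mp}_{\mathcal C,t}$ and $H^{\rm he,\pm}_{\mathcal C,t}$ with $H^{\rm he,\mp}_{\mathcal C,t}$. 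Consequently
\begin{equation*}
  \eta(A^{\rm le,-}_{\mathcal C,0}) = -\eta(A^{\rm le,+}_{\mathcal C,0}),
  \quad
  \lim_{t\to\infty}\eta(A^{\rm he,-}_{\mathcal C,t}) = -\lim_{t\to\infty}\eta(A^{\rm he,+}_{\mathcal C,t}),
\end{equation*}
and $\dim\ker A^{\rm le,-}_{\mathcal C,0} = \dim\ker A^{\rm le,+}_{\mathcal C,0}$.

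In the third step I subtract the two resulting applications of Theorem \ref{exinfin}: the index forms add to $2\int_M\omega_{D^+}$, the high- and low-energy $\eta$-invariants double, while the $\dim\ker A^{\rm le,\pm}_{\mathcal C,0}$ contributions cancel. Dividing by two gives
\begin{equation*}
  \tfrac12\bigl(\ind D^+_{\ext} - \ind D^-_{\ext}\bigr)
  = \int_M \omega_{D^+}
  + \tfrac12\sum_{\mathcal C}\Bigl(\lim_{t\to\infty}\eta(A^{\rm he,+}_{\mathcal C,t}) + \eta(A^{\rm le,+}_{\mathcal C,0})\Bigr),
\end{equation*}
and substituting this into the first display proves the theorem.

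The only genuinely non-routine point is the compatibility between the super-symmetry $E = E^+ \oplus E^-$ and the cuspidal splitting into low/high energy pieces: one needs $T_t$ to intertwine $H^{\rm le,+}_{\mathcal C,t}$ with $H^{\rm le,-}_{\mathcal C,t}$ so that the sign reversal of $\eta$-invariants really holds after restriction. This is where the assumption \eqref{hass}, together with $\bar\nabla^E T = 0$, is used; once this compatibility is in place, the rest is a bookkeeping exercise combining Theorem \ref{exinfin} with \eqref{hinfty}.
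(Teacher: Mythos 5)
Your proposal is correct and follows essentially the same route as the paper: both arguments apply \tref{exinfin} to $D^+$ and to $D^-$, exploit the sign relations $\omega_{D^-}=-\omega_{D^+}$ and $A^-_{\mathcal C,t}\simeq -A^+_{\mathcal C,t}$ (with the kernel dimensions therefore equal), and combine the result with \eqref{hinfty}. The only difference is cosmetic bookkeeping: the paper \emph{adds} the two index formulas to identify $h^+_\infty+h^-_\infty$ with $\sum_{\mathcal C}\dim\ker A^{\rm le,+}_{\mathcal C,0}$ and then solves for $h^+_\infty$, whereas you \emph{subtract} them; your extra care in checking that Clifford multiplication by $T$ intertwines the low/high energy decompositions is a sound justification of what the paper asserts implicitly.
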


\begin{proof}
Since $D$ is formally self-adjoint, the $L^2$-index of $D$ vanishes
and therefore
\begin{equation*}
  \ind D_{\ext} = \ind D^+_{\ext} + \ind D^-_{\ext} = h^+_{\infty} + h^-_{\infty} .
\end{equation*}
On the other hand, we have
\begin{equation*}
  \omega_{D^-} = -\omega_{D^+}
  \quad\text{and}\quad
  A^-_{\mathcal C,t} = - A^+_{\mathcal C,t} ,
\end{equation*}
for all $t\in\R_+$ and cusps $\mathcal C$ of $M$.
Therefore, applying \tref{exinfin} to $D^+$ and $D^-$, we obtain that
\begin{align*}
  \ind D_{\ext} &= \ind D^+_{\ext} + \ind D^-_{\ext} \\
  &= \frac12\sum_{\mathcal C} \big(
  \dim\ker A^{\rm le,+}_{\mathcal C,0} + \dim\ker A^{\rm le,-}_{\mathcal C,0} \big)
  = \dim\ker A^{\rm le,+}_{\mathcal C,0}
\end{align*}
since the integral and $\eta$ terms for $D^+$ and $D^-$ cancel each other.
We conclude that
\begin{equation}\label{maxinf5}
  h^+_{\infty} + h^-_{\infty} = \dim\ker\bar A^{\rm le,+}_{\mathcal C,0}
\end{equation}
and hence that
\begin{equation}
  \ind D^+_{\ext} - \ind_{L^2} D^+ = h^+_{\infty}
  = \frac12 \big( h^+_{\infty} - h^-_{\infty} + \dim\ker A^{\rm le,+}_{\mathcal C,0} \big) .
  \qedhere
\end{equation}
\end{proof}

\begin{rems}\label{remhpm}
1) It is immediate from \eqref{lesys2} that $D^+$ is of Fredholm type
if and only if the kernel of $\bar A^+$ vanishes or,
equivalently, if and only if $h^+_{\infty}=h^-_{\infty}=0$.

2) Theorems \ref{exinfin} and \ref{maxinf} generalize
Theorems \ref{intexinfin} and \ref{intmaxinf} from the introduction.
In fact, the discussion in the first part of Chapter \ref{sechc}
implies that homogeneous cusps are smooth and satisfy \eqref{hass}.
\end{rems}

\section{Homogeneous Cusps}
\label{sechc}

Let $N$ be a simply connected nilpotent Lie group
with Lie algebra $\mathfrak n$.
Fix a left-invariant Riemannian metric $g$ on $N$,
and let $W$ be a negative definite and symmetric derivation of $\mathfrak n$.
Then $(\exp(-tW))_{t\in\R}$
is a one-parameter group of automorphisms
of $\mathfrak n$ which induces a one-parameter group $(\Phi_t)_{t\in\R}$
of automorphisms of $N$.
The associated semi-direct product $S:=\R\ltimes N$, where
\begin{equation}
  (s,x) (t,y) := (s+t,x\Phi_{s}(y)) ,
  \label{spro}
\end{equation}
is a simply connected solvable Lie group containing $N\cong\{0\}\times N$
as a subgroup of codimension one.
The vector field $T:=\partial/\partial t$ on $S$ is left-invariant,
and the Lie algebra $\mathfrak s$ of $S$ extends $\mathfrak n$ by
\begin{equation}
  [T,X] = - WX ,
  \label{lies}
\end{equation}
where $X\in\mathfrak n$.
For later use, we note that left translation, right translation,
and conjugation with $(t,e)\in S$ are given by
\begin{equation}\label{ltrt}
\begin{split}
  L_{(t,e)}(s,x) &= (s+t,\Phi_t(x)) , \\
  R_{(t,e)}(s,x) &= (s+t,x) , \\
  (t,e)(s,x)(-t,e) &= (s,\Phi_t(x)) ,
\end{split}
\end{equation}
respectively.
In particular,
the shift by $t$ along the $T$-lines is obtained by right translation with $(t,e)$.
Moreover, for $X\in\mathfrak n\subseteq\mathfrak s$,
\begin{equation}\label{dshif}
\begin{split}
  R_{(t,e)*}X_{(s,x)}
  &= L_{(s,x)*}L_{(t,e)*}L_{(-t,e)*}R_{(t,e)*}X \\
  &= L_{(s+t,x)*}(\Ad_{(t,e)}^{-1}X)
  = L_{(s+t,x)*} (\exp(tW)X) ,
\end{split}
\end{equation}
where we recall that $(\Phi_t)$ is the one-parameter group of automorphism
of $N$ associated to $-W$
(and where we identify $\mathfrak s\ni X=X_e\in T_eS$).

Endow $S$ with the left-invariant Riemannian metric
which agrees with $g$ along $N$
and such that $\R$ and $\mathfrak n$ are pairwise perpendicular with $|T|=1$.
Note that $T$ is a unit normal field along the cross sections $N_t:=\{t\}\times N$
and that the $T$-lines are unit speed geodesics.
In particular,
\begin{equation}
  f: S \to \R, \quad f(t,x):=t ,
  \label{difus}
\end{equation}
is a smooth distance function on $S$ such that $\grad f=T$
and such that the associated diffeomorphism $F$
is the identity on $S=\R\times N$.
By the Koszul formula and the symmetry of $W$,
\begin{equation}
  \nabla_TX = 0 ,
  \label{dtx}
\end{equation}
for any $X\in\mathfrak s$.
For any $X\in\mathfrak n\subseteq\mathfrak s$,
\begin{equation}
  \nabla_XT = WX ,
  \label{dxt}
\end{equation}
by \eqref{lies} and \eqref{dtx};
that is, except for the compactness of the cross sections,
we are in the situation of Section \ref{secdifu}.
By \eqref{dtx} and \eqref{dxt},
\begin{equation}
  R(T,X)Y = \nabla_{[X,T]}Y = \nabla_{WX}Y ,
  \label{rxt}
\end{equation}
for all $X\in\mathfrak n$ and $Y\in\mathfrak s$.
In  particular,
\begin{equation}
  R(X,T)T = - W^2X ,
  \label{rxtt}
\end{equation}
and hence the sectional curvature of tangential $2$-planes of $S$
containing $T$ is strictly negative.

Let $\Gamma\subseteq N$ be a discrete subgroup
such that the quotient $\Gamma\backslash N$ is compact.
Since $\Gamma\subseteq N$,
the distance function $f$ as in \eqref{difus}
is well defined on $\Gamma\backslash S$.
We keep the notation $f$ and $T=\grad f$ on the quotient.
The cross sections of $f$ are given by $\{t\}\times\Gamma\backslash N$,
and right translation by $(t,e)$ induces the shift $F_t$
from $\Gamma\backslash N$ to $\{t\}\times\Gamma\backslash N$, see \eqref{ltrt}.
By \eqref{dshif}, $F_t$ has derivative $F_{t*}=\exp(tW)$.
The Jacobian of $F_t$ is given by $j(t)=\exp(\kappa t)$,
where $\kappa=\tr W$ as in Section \ref{secdifu}.
It only depends on $t$ and not on $x\in\Gamma\backslash N$.
Moreover, since $W$ is negative definite, $F_t$ is contracting for $t>0$:
If we order the eigenvalues of $W$,
\begin{equation}
  \kappa_2 \le \ldots \le \kappa_m < 0 ,
\end{equation}
then any part $[t_0,\infty)\times N$ of $\R\times N$
models cuspidal ends as in Definition \ref{defthin}
with $c=-2\kappa_m$ and $C=1$.
We call such ends {\em homogeneous cusps}.

If $X_i\in\mathfrak n$ is a unit eigenvector of $W$ for the eigenvalue $\kappa_i$,
then $\exp(\kappa_it)X_i$ is a Jacobi field along each $T$-line and
\begin{equation}
  \langle \nabla_XY,Z \rangle
  = - \int_0^\infty \langle R(T,e^{\kappa_it}X_i)Y,Z \rangle ,
\end{equation}
for all $Y,Z\in\mathfrak s$.
It follows that the flat connection $\bar\nabla$ associated to the cusp
as in Section \ref{susthi} defines left-invariant vector fields on $S$ or, rather,
their image in $\Gamma\backslash S$ to be $\bar\nabla$-parallel.

Let $K_0$ be a connected Lie subgroup
of the orthogonal group $\SO(\mathfrak s)$
which contains the holonomy group of $S$ at $e$.
Denote the Lie algebra of $K_0$ by $\mathfrak k$.
Consider the principal bundle $\mathcal P_0:=S\times K_0$ over $S$,
with structure group $K_0$,
where we view $p=(s,k)\in\mathcal P_0$
as representing the frame $L_s\circ k:T_eS \to T_sS$ of $S$,
where $L_s$ denotes left-translation by $s$ (and its derivative).
This interpretation corresponds to an embedding of $\mathcal P_0$
into the principal bundle of orthonormal frames of $S$.
The group $S$ acts on $\mathcal P_0$ by left translation,
$s(s',k):=(ss',k)$, and the orbits of this action
are the left-invariant frames $F_k:=\{(s,k)\mid s\in S\}$ over $S$.

\begin{lem}\label{holred}
The Levi-Civita connection $\nabla$ and flat connection $\bar\nabla$
of $S$ reduce to $\mathcal P_0$.
That is, if $c:I\to S$ is a smooth curve and $F$ is a parallel frame
along $c$ with respect to $\nabla$ or $\bar\nabla$
such that $F(t_0)\in\mathcal P_0$ for some $t_0\in I$,
then $F(t)\in\mathcal P_0$ for all $t\in I$.
\end{lem}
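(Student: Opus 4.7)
The plan is to verify a single algebraic condition at $e$, namely that the left-invariant Christoffel form $\omega_0\colon\mathfrak s\to\mathfrak{so}(\mathfrak s)$, defined on left-invariant vector fields $X,Y$ by $\omega_0(X)Y:=\nabla_XY$ (and analogously $\bar\omega_0$ for $\bar\nabla$), takes values in $\mathfrak k$. Because both connections are left-invariant and because $\mathfrak k$ is $\Ad(K_0)$-invariant, this condition at $e$ is equivalent to the horizontal distribution of the connection on $S\times\SO(\mathfrak s)$ being tangent to $\mathcal P_0=S\times K_0$ at every point, which is exactly preservation of $\mathcal P_0$ by parallel translation.

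For $\bar\nabla$ this is immediate: by its defining property, all left-invariant vector fields are $\bar\nabla$-parallel, so $\bar\omega_0=0\in\mathfrak k$. More concretely, for each $k\in K_0$ the global section $s\mapsto(s,k)$ of $\mathcal P_0$ represents the left-invariant frame $L_{\cdot *}\circ k$ and is therefore $\bar\nabla$-parallel as a whole; any $\bar\nabla$-parallel frame along $c$ starting at $(c(t_0),k)$ must coincide with this section along $c$ and so remains in $\mathcal P_0$.

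For $\nabla$, equation \eqref{dtx} gives $\omega_0(T)=0$, so it remains to show $\omega_0(\mathfrak n)\subseteq\mathfrak k$. Here the curvature identity \eqref{rxt} does the work: for every $X\in\mathfrak n$ and every left-invariant $Y$, $R(T,X)Y=\nabla_{WX}Y$, hence
\[
  R(T,X) = \omega_0(WX) \in \mathfrak{so}(\mathfrak s) .
\]
By Ambrose--Singer the curvature operator $R(T,X)|_e$ lies in the holonomy algebra $\mathfrak{hol}(\nabla,e)$, which by hypothesis is contained in $\mathfrak k$. Thus $\omega_0(W\mathfrak n)\subseteq\mathfrak k$, and since $W$ is negative definite and hence invertible on $\mathfrak n$, $W\mathfrak n=\mathfrak n$, so $\omega_0(\mathfrak n)\subseteq\mathfrak k$ as required.

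The main point, and the only nontrivial step, is to recognize that the semidirect-product structure of $S$ with $W$ invertible on $\mathfrak n$ lets the single curvature identity \eqref{rxt} absorb the whole Christoffel form $\omega_0$ into the curvature-generated subalgebra of $\mathfrak{so}(\mathfrak s)$; this is precisely what makes the reduction work for the specific trivialization of $\mathcal P_0$ via left translation, which is a priori only guaranteed to contain, not to coincide with, a holonomy subbundle.
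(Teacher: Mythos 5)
Your proof is correct and is essentially the paper's own argument: your Christoffel form $\omega_0$ is exactly the operator $A_X=R(T,W^{-1}X)$ of \eqref{axy}, and both proofs conclude by combining \eqref{dtx} and \eqref{rxt} with the invertibility of $W$ to place the connection form in the holonomy algebra, hence in $\mathfrak k$, while handling $\bar\nabla$ via the left-invariance of its parallel frames. The only cosmetic difference is that the paper phrases the reduction as the ODE $f'+A_{c'}f=0$ remaining in $K_0$, rather than as tangency of the horizontal distribution to $\mathcal P_0$.
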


\begin{proof}
Let $F$ be an orthonormal frame along $c$,
and write $F(t)=L_{c(t)}f(t)$, where $f:I\to\O(\mathfrak s)$.
Then the covariant derivative of $F$ along $c$
with respect to $\nabla$ is given by
\begin{equation}
  F'(t) = L_{c(t)}(f'(t) + A_{c'(t)}f(t)) ,
  \label{frapar}
\end{equation}
where
\begin{equation}\label{axy}
  A_{X} =
 \begin{cases}
  R(T,W^{-1}X) &\text{for $X\in\mathfrak n$} , \\
  0  &\text{for $X=T$} ,
 \end{cases}
\end{equation}
by \eqref{dtx} and \eqref{rxt}.
By \eqref{frapar}, $F$ is $\nabla$-parallel if $f' + A_{c}f = 0$.

Now $R(Y,Z)$ is in the Lie algebra of the holonomy group of $S$ at $e$,
for all $Y,Z\in\mathfrak s$, hence also $A_{c'(t)}$, for all $t\in I$.
Since $K_0$ contains the holonomy group of $S$ at $e$,
we get that $A_{c'(t)}\in\mathfrak k$, for all $t\in I$.
It follows that a solution of $f' + A_{c}f = 0$ is contained in $K_0$
if $f(t_0)$ is in $K_0$, for some $t_0\in I$.
This proves the assertion for $\nabla$.

By what we said above, a frame is $\bar\nabla$-parallel
if and only if it is left-invariant under $S$.
Hence the $\bar\nabla$-parallel frames along $c$
are of the form $F(t)=L_{c(t)}k$, $t\in I$,
where $k\in\O(\mathfrak s)$.
Hence, if $F(t_0)\in\mathcal P_0$ for some $t_0\in I$,
then $k\in K_0$, and then $F=F_k$ is contained in $\mathcal P_0$.
\end{proof}

Let $K\to K_0$ be a covering homomorphism,
where $K$ is a connected Lie group,
and let $\mathcal P:=S\times K$
be the corresponding covering space of $\mathcal P_0$,
a principal bundle over $S$ with structure group $K$.
Via the projection $K\to K_0$,
identify the Lie algebra of $K$
with the Lie algebra $\mathfrak k$ of $K_0$.
As in the case of $\mathcal P_0$,
$S$ acts by left translations on $\mathcal P$,
and we have the corresponding orbits $F_k$, $k\in K$.
Moreover, since $\mathcal P\to\mathcal P_0$ is a covering projection,
Levi-Civita and flat connection lift from $\mathcal P_0$ to $\mathcal P$.

Denote by $\hat\alpha_*:\mathfrak k\to\mathfrak u(\Sigma_{\mathfrak s})$
the composition of the differential
of $\alpha:K\to K_0\subseteq\SO(\mathfrak s)$
with the differential of the spinor representation $\Sigma_{\mathfrak s}$
of $\mathfrak{so}(\mathfrak s)\simeq\mathfrak{spin}(\mathfrak s)$.
Let $V$ be a finite dimensional Hermitian vector space
and $\pi_*:\mathfrak k\to\mathfrak u(V)$ be a unitary representation.
Suppose that there is a unitary representation
$\beta:K\to\Sigma_{\mathfrak s}\otimes V$ with
\begin{equation}
  \hat\alpha_*\otimes\id + \id\otimes\pi_* = \beta_* ,
  \label{hdb}
\end{equation}
and let $E=\mathcal P\times_\beta(\Sigma_{\mathfrak s}\otimes V)$
be the associated Hermitian vector bundle over $S$.
Levi-Civita and flat connection on $\mathcal P$
induce Hermitian connections $\nabla^E$ and $\bar\nabla^E$ on $E$,
respectively.
We extend Clifford multiplication to $\Sigma_{\mathfrak s}\otimes V$ by
\begin{equation}
  X \cdot (u\otimes v) := (X \cdot u) \otimes v ,
  \label{clixuv}
\end{equation}
where $X\in\mathfrak s$, $u\in\Sigma_{\mathfrak s}$, and $v\in V$.
By \eqref{hdb} and since $K$ is connected,
Clifford multiplication commutes with $\beta$, that is
\begin{equation}
  \beta(k)(Xw) = X(\beta(k)w) ,
  \label{clibet}
\end{equation}
for all $k\in K$, $X\in\mathfrak s$, and $w\in\Sigma_{\mathfrak s}\otimes V$.
Hence \eqref{clixuv} induces a Clifford multiplication on $E$
which turns $E$ into a Dirac bundle over $S$.
The canonical action of $S$ on $E$ preserves the Dirac data of $E$;
we say that $E$ is a {\em homogeneous} Dirac bundle over $S$.

Using the left-invariant orbit $F_e$ in $\mathcal P$,
we view sections of $E$ as smooth maps $\sigma:S\to\Sigma_m\otimes V$.
In this interpretation,
covariant derivatives and Dirac operator are given by
\begin{equation}
  \nabla^E_X\sigma = X(\sigma) + \beta_*(A_X) \sigma ,
  \quad
  \bar\nabla^E_X\sigma = X(\sigma) ,
  \label{dxf}
\end{equation}
and
\begin{equation}
  D\sigma
  = \sum\nolimits_{j} X_j\cdot (X_j(\sigma) + \beta_*(A_{X_j}) \sigma) ,
  \label{dif}
\end{equation}
where $X$ is a vector field on $S$,
$(X_1,\ldots,X_m)$ is an orthonormal frame of $S$,
and $A_X$ is as in \eqref{axy}.
In particular,
$\sigma$ is $\bar\nabla^E$-parallel if and only if  $\sigma$ is constant.

Let $\tau$ be a unitary representation of $\Gamma$ on $V$,
the {\em twist}, and assume that $\tau$ and $\pi_*$ commute, that is,
\begin{equation}
  \tau(\gamma) \pi_*(Y) = \pi_*(Y) \tau(\gamma) ,
  \label{gampi}
\end{equation}
for all $\gamma\in\Gamma$ and $Y\in\mathfrak k$.

\begin{lem}\label{tbc}
Extend $\tau$ by the trivial representation on $\Sigma_{\mathfrak s}$
to $\Sigma_{\mathfrak s}\otimes V$.
Then $\tau$ commutes with $\beta$ and Clifford multiplication,
\begin{align*}
  \tau(\gamma)(\beta(k)w) &= \beta(k)(\tau(\gamma)w) , \\
  \tau(\gamma)(Xw) &= X(\tau(\gamma)w) ,
\end{align*}
for all $\gamma\in\Gamma$, $k\in K$,
$X\in\mathfrak s$, and $w\in\Sigma_{\mathfrak s}\otimes V$.
\end{lem}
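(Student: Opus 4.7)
The plan is to verify the two commutation relations separately. The first, concerning Clifford multiplication, is essentially a direct computation from the definitions; the second, concerning $\beta$, is established first at the Lie algebra level and then integrated up to the group, using the connectedness of $K$.

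First I would address Clifford multiplication. By the defining formula \eqref{clixuv}, $X$ acts only on the $\Sigma_{\mathfrak s}$ factor of $\Sigma_{\mathfrak s}\otimes V$, while the extended $\tau$ acts as the identity on $\Sigma_{\mathfrak s}$ and as $\tau$ on $V$. On a pure tensor $u\otimes v$ this gives
\begin{equation*}
  \tau(\gamma)(X(u\otimes v))
  = \tau(\gamma)(Xu\otimes v)
  = Xu\otimes\tau(\gamma)v
  = X(\tau(\gamma)(u\otimes v)) ,
\end{equation*}
and the general case follows by linearity.

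For the commutation with $\beta$, I would first check the infinitesimal statement. For $Y\in\mathfrak k$ and $u\otimes v\in\Sigma_{\mathfrak s}\otimes V$, the decomposition $\beta_* = \hat\alpha_*\otimes\id + \id\otimes\pi_*$ from \eqref{hdb} combined with the triviality of $\tau$ on $\Sigma_{\mathfrak s}$ and the hypothesis \eqref{gampi} yields
\begin{align*}
  \tau(\gamma)\beta_*(Y)(u\otimes v)
  &= \tau(\gamma)(\hat\alpha_*(Y)u\otimes v)
   + \tau(\gamma)(u\otimes\pi_*(Y)v) \\
  &= \hat\alpha_*(Y)u\otimes\tau(\gamma)v
   + u\otimes\pi_*(Y)\tau(\gamma)v \\
  &= \beta_*(Y)(u\otimes\tau(\gamma)v)
  = \beta_*(Y)\tau(\gamma)(u\otimes v) .
\end{align*}
Hence $\tau(\gamma)$ commutes with every $\beta_*(Y)$, $Y\in\mathfrak k$.

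The last step is to pass from $\mathfrak k$ to $K$. Since $K$ is connected, every $k\in K$ is a finite product of exponentials $\exp(Y_j)$ with $Y_j\in\mathfrak k$, and $\beta(\exp(Y_j))=\exp(\beta_*(Y_j))$; commutation with $\tau(\gamma)$ is preserved under exponentiation and products, so $\tau(\gamma)\beta(k)=\beta(k)\tau(\gamma)$ on $\Sigma_{\mathfrak s}\otimes V$. There is no real obstacle here; the only point to watch is that the argument relies essentially on $K$ being connected, as stated just before \eqref{hdb}, since without it the Lie algebra identity would not suffice to recover the group level identity.
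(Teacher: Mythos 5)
Your proof is correct and follows the same route as the paper: the commutation with $\beta$ is checked infinitesimally from \eqref{hdb} and \eqref{gampi} and then integrated up using the connectedness of $K$, while the commutation with Clifford multiplication is immediate since $\tau$ acts trivially on the $\Sigma_{\mathfrak s}$ factor and Clifford multiplication acts trivially on the $V$ factor. The paper's proof is just a terser statement of exactly these two observations.
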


\begin{proof}
Since $K$ is connected,
the first assertion follows from the corresponding
infinitesimal properties in \eqref{hdb} and \eqref{gampi}.
As for the second assertion,
we note that $\tau$ acts trivially on the first and Clifford multiplication
trivially on the second factor of $\Sigma_{\mathfrak s}\otimes V$.
\end{proof}

By \lref{tbc}, $\tau$ induces a Hermitian bundle $E_\tau$
over $\Gamma\backslash S$
such that sections of $E_\tau$ correspond to maps
$\sigma:S\to\Sigma_{\mathfrak s}\otimes V$ which satisfy
\begin{equation}
  \sigma(\gamma s) = \tau(\gamma) \sigma(s) ,
  \label{twist}
\end{equation}
for all $s\in S$.
The connections $\nabla^E$ and $\bar\nabla^E$ on $E$
descend to Hermitian connections on $E_\tau$,
also denoted by $\nabla^E$ and $\bar\nabla^E$, respectively.
Moreover, $E_\tau$ inherits Clifford multiplication from $E$
and thus turns into a Dirac bundle over $\Gamma\backslash S$.

\begin{exas}\label{exdibu}
1) (Spinor bundles) Since $S$ is contractible,
spin structures over $\Gamma\backslash S$ are determined
by homomorphisms $\tau:\Gamma\to\{+1,-1\}$.
In our setup, the corresponding spinor bundles
over $\Gamma\backslash S$ can be given by the data:
$K_0=\SO(\mathfrak s)$ and $K=\Spin(\mathfrak s)$,
$\alpha:\Spin(\mathfrak s)\to\SO(\mathfrak s)$ the canonical covering map,
$V=\C$, $\pi_*=0$, $\beta$ the spinor representation,
extended trivially to the factor $\C$ of $\Sigma_{\mathfrak s}\otimes\C$,
and finally the twist defined by $\tau$,
where $\gamma$ acts by multiplication
with $\tau(\gamma)=\pm1$ on $\C$.

2) (Clifford bundle) If $m$ is even,
then $\cl(\mathfrak s)=\Sigma_{\mathfrak s}\otimes\Sigma_{\mathfrak s}$.
Thus, to obtain the Clifford bundle over $\Gamma\backslash S$,
we may take $K_0=K=\SO(\mathfrak s)$, $\alpha=\id$,
$V=\Sigma_{\mathfrak s}$, $\beta_*$ the differential of the spinor representation,
and $\tau$ the trivial representation of $\Gamma$ on $\Sigma_{\mathfrak s}$.
\end{exas}

If the dimension $m$ of $S$ is even,
then the $\pm1$-eigenspaces $\Sigma_{\mathfrak s}^\pm\otimes V$
of multiplication by the complex volume form (compare Section \ref{decsig})
are invariant under $\beta$, by \eqref{clibet}.
By \lref{tbc}, they are also invariant under $\tau$.
Thus the complex volume form yields
the super-symmetry $E = E^+ \oplus E^-$ with
\begin{equation}
  E^\pm = \mathcal P\times_{\beta}(\Sigma_{\mathfrak s}^\pm\otimes V) .
\end{equation}
In the case of the Clifford bundle,
there is another natural super-symmetry,
namely the even-odd decomposition.
Our methods also allow for a discussion of the latter,
but here and below we concentrate on the decomposition
given by the complex volume form.

We now pass to the Dirac system associated
to the distance function $f$ and the Dirac bundle $E_\tau$
over  $\Gamma\backslash S$.
We identify sections of $E_\tau$ over $\{t\}\times\Gamma\backslash N$
with maps $\sigma:N\to\Sigma_{\mathfrak s}\otimes V$ satisfying \eqref{twist}.
Under this identification,
parallel translation along the $T$-lines is the identity,
and the Hilbert space $L^2(\{t\}\times\Gamma\backslash N,E_\tau)$
corresponds to the Hilbert space of measurable maps
$N\to\Sigma_{\mathfrak s}\otimes V$ satisfying \eqref{twist}
which are square integrable over a fundamental domain of $\Gamma$.
In the notation of \eqref{corrd},
\begin{equation}
  A_t\sigma
  = - \sum_{2\le j\le m} e^{-\kappa_jt}TX_j \cdot X_j(\sigma)
  - \sum_{2\le j\le m} TX_j \cdot \beta_*(A_{X_j})\sigma
  - \frac{\kappa}{2}\sigma ,
  \label{ats}
\end{equation}
where $(X_2,\ldots,X_m)$ is an orthonormal basis of $\mathfrak n$
consisting of eigenvectors of $W$, $WX_i=\kappa_iX_i$.

We may also have a different view on $E_\tau$
over $\{t\}\times\Gamma\backslash N$:
$L_{(t,e)}$ is an isometry of $S$ which maps $N$ to $\{t\}\times N$
and which leaves the normal field $T$ to the cross sections $\{t\}\times N$ invariant.
Suppressing the coordinate $t$ in $\{t\}\times N$,
$L_{(t,e)}$ corresponds to $\Phi_t$, by \eqref{ltrt}.
That is, $E_\tau$ over $\{t\}\times\Gamma\backslash N$
corresponds to $E_{\Phi_t\tau\Phi_t^{-1}}$ over $\Phi_t(\Gamma)\backslash N$,
where $N$ is endowed with the fixed left-invariant metric $g$.
Under this correspondence,
the exponential factors in the expression for $A_t$ in \eqref{ats} disappear.
More precisely, $-A_t$ corresponds to the Dirac operator
\begin{equation}
  D_t\sigma
  = \sum_{2\le j\le m} TX_j \cdot X_j(\sigma)
  + \sum_{2\le j\le m} TX_j \cdot \beta_*(A_{X_j})\sigma
  + \frac{\kappa}{2}\sigma ,
  \label{dts}
\end{equation}
where $\sigma$ satisfies the twist data with respect to $\Phi_t\tau\Phi_t^{-1}$.
In particular, the local data for the different operators $D_t$
coincide under the correspondence.

\subsection{Asymptotic $\eta$-Invariants}
\label{susasy}
Let $L^{2,\pm}(t)$ be the Hilbert space of measurable maps
$N\to\Sigma_{\mathfrak s}^\pm\otimes V$
satisfying \eqref{twist} with respect to $\Phi_t\tau\Phi_t^{-1}$
which are square integrable over a fundamental domain of $\Phi_t(\Gamma)$.
Then $D^\pm_t=-A^\pm_t$ is an unbounded self-adjoint operator on $L^{2,\pm}(t)$.

For the computation of the asymptotic high energy $\eta$-invariant of $D^{+}_t$,
it will be useful to consider the {\em flat Dirac operator} $\bar D^{+}_t$,
defined by
\begin{equation}
  \bar D^+_t\sigma =
  \sum\nolimits_{2\le j\le m} TX_j \cdot X_j(\sigma) .
  \label{tilded}
\end{equation}
We note that $\bar D^+_t$ is a formally self-adjoint operator
and that $D^+_t-\bar D^+_t$ is left-invariant of order zero.
In particular, the principal symbols of $D^+_t$ and $\bar D^+_t$ are the same.
We have
\begin{equation}\label{tilded2}
\begin{split}
  (\bar D^+_t)^2\sigma
  &= \sum_{2\le j,k\le m} TX_j\cdot X_j(TX_k\cdot X_k (\sigma)) \\
  &= - \sum_{2\le j\le m} X_j(X_j (\sigma))
  + \sum_{2\le j\ne k\le m} X_jX_k\cdot X_j(X_k(\sigma)) , \\
  &= \Delta\sigma + \sum_{2\le j<k\le m} X_jX_k\cdot [X_j,X_k](\sigma) ,
\end{split}
\end{equation}
where $\Delta$ denotes the standard Laplace operator of $N$, here acting
on maps with values in the vector space $\Sigma_{\mathfrak s}^\pm\otimes V$.
If $\mathfrak n$ is two-step nilpotent,
then the Lie brackets $[X_j,X_k]$ in the second term on the right of \eqref{tilded2}
are in the center of $\mathfrak n$,
and then the operator defined by the second term commutes with $\Delta$.

The idea to consider $\bar D^+_t$ is taken from \cite{DS}.
The proof of our main result in this direction, \tref{higeta} below,
is a variation of arguments in \S 5 of \cite{DS}.
This line of reasoning was also used by Cheeger and Gromov
in order to show that their $\rho$-invariant is the limit of the (signature)
$\eta$-invariant under a collapse of the corresponding manifold
with bounded covering geometry \cite{CG}.

\begin{thm}\label{higeta}
For $D^+_t$ and $\bar D^+_t$ as above, we have
\begin{equation*}
  \lim_{t\to\infty} \eta(D^{\rm he,+}_t)
  = \lim_{t\to\infty} \eta(\bar D^+_t) ,
\end{equation*}
\end{thm}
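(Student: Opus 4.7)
The plan is to reduce the theorem to a statement about the high-energy parts of the operators involved, and then to control their difference by exploiting the collapsing geometry of the cross sections.

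First, I would verify that both $D^+_t$ and $\bar D^+_t$ preserve the orthogonal decomposition $L^2(\Phi_t(\Gamma)\backslash N, E_\tau) = H^{\rm le}_t \oplus H^{\rm he}_t$. For $D^+_t$ this is assumption \eqref{hass} combined with \eqref{lecheh}. For $\bar D^+_t$ it suffices to note that formula \eqref{tilded} only involves the derivations $X_j$, $2\le j\le m$, so $\bar D^+_t$ annihilates the constant sections that comprise $H^{\rm le}_t = H^{\rm c}_t$; by self-adjointness, $\bar D^+_t$ then preserves the complement $H^{\rm he}_t = (H^{\rm c}_t)^\perp$. In particular $\bar D^+_t$ vanishes identically on $H^{\rm le}_t$, so $\eta(\bar D^+_t) = \eta(\bar D^{\rm he,+}_t)$, and the theorem reduces to
\begin{equation*}
  \lim_{t\to\infty}\bigl[\eta(D^{\rm he,+}_t) - \eta(\bar D^{\rm he,+}_t)\bigr] = 0.
\end{equation*}

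Next, I would introduce the linear homotopy $D_{t,s} := \bar D^{\rm he,+}_t + s B^{\rm he}_t$, $s\in[0,1]$, where $B_t := D^+_t - \bar D^+_t$ is the bounded, left-invariant, zeroth-order remainder coming from the $\beta_*(A_{X_j})$ and $\kappa/2$ terms in \eqref{dts}. Its operator norm is uniformly bounded in $t$ by the curvature data. Since the spectral gap $\Lambda_t$ of $D^{\rm he,+}_t$ tends to infinity by \tref{nonparc}, for $t$ large the family $D_{t,s}$ has spectral gap at least $\Lambda_t - \|B_t\|$, and is uniformly invertible with no spectral flow through zero. The standard variation formula for the $\eta$-invariant of a smooth family of invertible self-adjoint operators then expresses
\begin{equation*}
  \eta(D^{\rm he,+}_t) - \eta(\bar D^{\rm he,+}_t) = \int_0^1 \dot\eta(D_{t,s})\,ds,
\end{equation*}
with $\dot\eta(D_{t,s})$ representable by a heat-trace integral involving $\tr\bigl(B^{\rm he}_t \, e^{-uD_{t,s}^2}\bigr)$.

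The crux, and the main obstacle, is to show that this expression tends to $0$ as $t\to\infty$. The key geometric input is the collapse of the cross sections: $\vol(\Phi_t(\Gamma)\backslash N) = e^{\kappa t}\vol(\Gamma\backslash N)$ with $\kappa = \tr W < 0$. Because $B_t$ is left-invariant, the integral kernel of $B^{\rm he}_t\,e^{-uD_{t,s}^2}$ has constant fiberwise trace, so the trace on $\Phi_t(\Gamma)\backslash N$ scales with this volume factor. Combined with the exponential decay of $e^{-uD_{t,s}^2}$ for $u$ large, provided by the growing gap $\Lambda_t$, and standard Gaussian-type bounds for $u$ small on the nilpotent group $N$, this should yield the asserted vanishing. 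The delicate technical point is uniform control of the short-time heat trace after projection onto $H^{\rm he}_t$; this requires exploiting the compatibility of the projector $\bar Q$ with the left $N$-action and a representation-theoretic decomposition of $L^2(\Phi_t(\Gamma)\backslash N)$ along the lines of the Deninger--Singhof computation for Heisenberg groups.
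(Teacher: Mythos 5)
Your strategy is the one the paper itself follows: deform $\bar D^+_t$ into $D^+_t$ through the bounded, left-invariant, zeroth-order perturbation $D^+_{t,c}=D^+_t-\bar D^+_t$, use the divergence of the spectral gap from \tref{nonparc} to keep the whole family invertible (hence no spectral flow), and conclude that the total variation of $\eta$ along the homotopy is bounded by a constant times $\vol(\Phi_t(\Gamma)\backslash N)\to 0$. Your preliminary reduction is also correct: $\bar D^+_t$ annihilates the constant sections, and for large $t$ its kernel is exactly $H^{\rm le}_t$ because $D^+_{t,c}$ is uniformly bounded while the nonzero spectrum of $D^{\rm he,+}_t$ runs off to infinity.

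The gap is exactly where you flag it, and your heat-trace sketch does not close it. Expressing $\dot\eta(D_{t,s})$ through a regularized trace of $B^{\rm he}_t\,e^{-uD_{t,s}^2}$ requires uniform-in-$t$ control of the \emph{finite part} of the short-time expansion after the projection has been inserted; the constancy of the fiberwise trace and ``Gaussian-type bounds for $u$ small'' give you that the trace is $\vol(\Phi_t(\Gamma)\backslash N)$ times some function $c_t(u,s)$, but say nothing about whether $c_t$ stays bounded as $t\to\infty$ while $u\downarrow 0$, which is the whole point. The paper avoids this analysis. It keeps the operator on the full $L^2$ space by setting $D^+_{t,u}=\bar D^+_t+u(I-P_t)D^+_{t,c}(I-P_t)+P_t$ (the added $P_t$ makes the low-energy block invertible and only shifts $\eta$ by $\dim\im P_t$ at both endpoints), observes that $P_t$ is infinitely smoothing so that $D^+_{t,u}$ and $(I-P_t)D^+_{t,c}(I-P_t)$ have the same complete symbols as $\bar D^+_t+uD^+_{t,c}$ and $D^+_{t,c}$, and then invokes Proposition 2.12 of \cite{APS3}: for a family of \emph{invertible} self-adjoint elliptic operators, $d\eta/du$ is the integral of a local density built from finitely many terms of the complete symbols. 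Under the identification of $N_t$ with $(\Phi_t(\Gamma)\backslash N,g)$ these symbols are independent of $t$ (compare \eqref{dts} and \eqref{tilded}), so the density is bounded by a function $b(u)$ independent of $t$ and the variation is $O(\vol(\Phi_t(\Gamma)\backslash N))$. To complete your argument, replace the heat-trace estimate by this locality statement; the representation-theoretic decomposition you invoke is not needed at this stage (it enters only later, in the explicit computation of $\eta(\bar D^+_t)$ in \tref{etahei}).
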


\begin{proof}
Left-invariant sections in $L^{2,+}(t)$ are in the kernel of $\bar D^+_t$,
for all $t$.
By what we said above, sections in $L^{2,+}(t)$ are $\bar\nabla$-parallel
if and only if they are left-invariant.
Since the difference $D^+_t-\bar D^+_t$ is left-invariant, hence uniformly bounded,
\tref{nonparc} implies that the kernel of the operator $\bar D^+_t$
consists precisely of the left-invariant sections in $L^{2,+}(t)$,
for all sufficiently large $t$.

Let $P_t:L^{2,+}(t)\to L^{2,+}(t)$ be the orthogonal projection onto the space
of left-invariant sections in $L^{2,+}(t)$.
Then $P_t$ commutes with $\bar D^+_t$ and $D^+_{t,c}$,
where we write $D^+_{t,c}=D^+_t-\bar D^+_t$.
For fixed $t$, consider the family of operators
\begin{equation*}
  D^+_{t,u} := \bar D^+_t + u (I-P_t)D^+_{t,c}(I-P_t) + P_t ,
  \quad 0 \le u \le 1.
\end{equation*}
By definition,
\begin{align*}
   \eta(D^+_{t,1}) &= \eta(D^{\rm he,+}_t) + \dim\im P_t , \\
   \eta(D^+_{t,0}) &= \eta(\bar D^+_t) + \dim\im P_t .
\end{align*}
The non-zero eigenvalues of $\bar D^+_t$ tend to infinity as $t$ tends to $\infty$,
whereas $D^+_{t,c}$ is uniformly bounded independently of $t$.
It follows that $D^+_{t,u}$ is invertible, for all sufficiently large $t$.
Now by Proposition 2.12 in \cite{APS3} and the invertibility of $D^+_{t,u}$,
\begin{equation*}
  \frac{d}{du}\eta(D^+_{t,u})
\end{equation*}
is a local invariant\footnote{In \cite{APS3}
this assertion is only stated for the $\eta$-invariant modulo $\Z$.
However,
as is clear from the remarks preceding Proposition 2.12 in \cite{APS3},
this is only because of the possibility of eigenvalues crossing $0$,
which is excluded by invertibility.},
given by an explicit integral formula constructed out of the complete symbols
of $D^+_{t,u}$ and $(I-P_t)D^+_{t,c}(I-P_t)$.
On the other hand, $P_t$ is (infinitely) smoothing,
and hence the complete symbol of $D^+_{t,u}$ and  $(I-P_t)D^+_{t,c}(I-P_t)$
are the same as those of
\begin{equation*}
  L_{t,u} := \bar D^+_t + uD^+_{t,c}
  \quad\text{and}\quad
  D^+_{t,c} .
  \end{equation*}
Now the symbols of $L_{t,u}$ and $D^+_{t,c}$ do not depend on $t$,
by \eqref{dts} and \eqref{tilded}.
It follows that the local invariant for $d\eta(D^+_{t,u})/du$ is bounded in modulus
by a continuous function $b=b(u)$ which does not depend on $t$.
Therefore we have
\begin{align*}
  | \eta(D^{\rm he,+}_t) - \eta(\bar D^+_t) |
  &= | \eta(D^+_{t,1}) - \eta(D^+_{t,0}) | \\
  &\le \text{const}\cdot \vol(\Phi_t(\Gamma)\backslash N) \to 0.
  \qedhere
\end{align*}
\end{proof}

The fact that the high energy $\eta$-invariant has no spectral flow is perhaps
an indication that its limit deserves to be investigated along the lines
of the discussion of the $\rho$-invariant in \cite{CG}.

\subsection{Vanishing of $\eta$-Invariants}
\label{susevan}
Let $Z$ belong to the center of $\mathfrak n$.
Then $Z$ commutes with $\Gamma$ and pushes forward
to a vector field on the quotient $\Gamma\backslash N$, also denoted by $Z$.

\begin{lem}\label{lemcom}
Clifford multiplication with $Z$ commutes with $(\bar D^+_t)^2$.
\end{lem}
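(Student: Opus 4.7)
The plan is to use the formula
\[
  (\bar D^+_t)^2\sigma = \Delta\sigma + \sum_{2\le j<k\le m} X_jX_k\cdot [X_j,X_k](\sigma)
\]
from \eqref{tilded2} and to check separately that Clifford multiplication by $Z$ commutes with each of the two summands on the right. Throughout, I distinguish carefully between Clifford multiplication by $X_j$ (a fibrewise endomorphism of $\Sigma_{\mathfrak s}^+\otimes V$) and differentiation by the left-invariant vector field $X_j$; Clifford multiplication by the fixed vector $Z$ is fibrewise constant and therefore transparent to any differentiation.

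First I would dispatch the Laplacian term: since $N$ is nilpotent and hence unimodular, $\Delta = -\sum_j X_jX_j$ as a differential operator on maps $\sigma\colon N\to \Sigma_{\mathfrak s}^+\otimes V$, and Clifford multiplication by $Z$ commutes with left-invariant differentiations. Hence $Z$ commutes with $\Delta$.

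For the second term I would compute the Clifford commutator via two applications of $ZX_\ell + X_\ell Z = -2\la Z,X_\ell\ra$:
\[
  [Z,X_jX_k] = 2\la Z,X_k\ra X_j - 2\la Z,X_j\ra X_k.
\]
Moving $Z$ past the differentiation $[X_j,X_k]$ freely, I obtain
\[
  \Bigl[Z,\sum_{j<k} X_jX_k\cdot[X_j,X_k]\Bigr]
  = 2\sum_{j<k}\bigl(\la Z,X_k\ra X_j - \la Z,X_j\ra X_k\bigr)\cdot[X_j,X_k].
\]
Reindexing the second summand by the swap $j\leftrightarrow k$ and using antisymmetry $[X_k,X_j]=-[X_j,X_k]$, the right hand side becomes
\[
  2\sum_{j\ne k}\la Z,X_k\ra X_j\cdot[X_j,X_k] = 2\sum_j X_j\cdot\Bigl[X_j,\,\sum_k\la Z,X_k\ra X_k\Bigr] = 2\sum_j X_j\cdot[X_j,Z],
\]
where the last equality uses $\sum_k\la Z,X_k\ra X_k = Z$.

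Since $Z$ belongs to the center of $\mathfrak n$, each bracket $[X_j,Z]$ vanishes, and the total commutator is zero. I do not foresee any real obstacle beyond this bookkeeping: the whole argument rests on the two Clifford relations above combined with the centrality of $Z$, and these dovetail precisely so that the potentially obstructing boundary terms cancel.
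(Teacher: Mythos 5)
Your proof is correct and rests on the same foundation as the paper's: formula \eqref{tilded2} together with the centrality of $Z$. The paper's own argument is just a streamlined version of your commutator computation — it normalizes $|Z|=1$ and chooses the orthonormal basis with $X_2=Z$, so that the terms of the second sum containing $Z$ vanish outright by centrality and the remaining Clifford factors $X_jX_k$ with $j,k>2$ visibly commute with $Z$; your basis-free bookkeeping reaches the same cancellation.
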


\begin{proof}
We can assume that $Z$ has norm one.
Choosing $X_2=Z$,
then, in the second sum on the right in \eqref{tilded2} above,
the terms with $i=2$ vanish since $Z$ commutes with all the $X_i$, $i>2$.
\end{proof}

\begin{thm}\label{vaneta}
If the center of $N$ has dimension at least two,
then the spectrum of $\bar D^+_t$, including multiplicities,
is symmetric about zero.
In other words, the eta function of $\bar D^+_t$ vanishes identically.
\end{thm}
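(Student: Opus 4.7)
The plan is to decompose $L^{2,+}(t)$ into joint eigenspaces for the commuting family of central left-invariant vector fields and, on each eigenspace, exhibit a unitary operator anticommuting with $\bar D^+_t$, which forces the spectrum to be symmetric about zero.

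First I would note that every $Z\in\mathfrak z$ acts on $\Phi_t(\Gamma)\backslash N$ as a left-invariant vector field and hence commutes with the $\Phi_t(\Gamma)$-action, with every other left-invariant field, and with Clifford multiplication; in particular it commutes with $\bar D^+_t$ and preserves the twist equivariance. These operators are mutually commuting and skew-adjoint, and since $\Phi_t(\Gamma)\backslash N$ is compact (so that the Laplacian has discrete spectrum and each eigenspace is a finite-dimensional $\mathfrak z$-invariant subspace), they admit a joint spectral decomposition
\begin{equation*}
  L^{2,+}(t)=\bigoplus_{\mu}H^+_{\mu,t},\qquad Z\sigma=i\mu(Z)\sigma\quad(Z\in\mathfrak z,\ \sigma\in H^+_{\mu,t}),
\end{equation*}
with $\mu$ in a discrete subset of $\mathfrak z^*$ and each $H^+_{\mu,t}$ invariant under $\bar D^+_t$.

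Next, for each $\mu$ I would select a unit vector $Z'\in\mathfrak z$ with $\mu(Z')=0$. This is possible because $\dim\mathfrak z\ge 2$ forces $\dim\ker\mu\ge 1$ when $\mu\ne 0$, while any unit $Z'\in\mathfrak z$ works when $\mu=0$. Completing to an orthonormal basis $X_2=Z',X_3,\ldots,X_m$ of $\mathfrak n$, the $j=2$ term in \eqref{tilded} annihilates $\sigma\in H^+_{\mu,t}$, so that
\begin{equation*}
  \bar D^+_t\sigma=\sum_{j\ge 3}TX_j\cdot X_j(\sigma),\qquad\sigma\in H^+_{\mu,t}.
\end{equation*}
Clifford multiplication by $TZ'$ has even Clifford degree and satisfies $(TZ')^2=-1$ and $(TZ')^*=-TZ'$, so it is a unitary operator on $\Sigma_{\mathfrak s}^+\otimes V$; it acts trivially on the $V$ factor (hence commutes with the twist $\tau$) and commutes with every central differentiation (hence preserves $H^+_{\mu,t}$). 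Since $X_j\perp Z'$ for all $j\ge 3$, a direct computation in $\cl(\mathfrak s)$ gives $TZ'\cdot TX_j=-TX_j\cdot TZ'$, and because $TZ'$ is pointwise constant it commutes with the differentiation $X_j(\cdot)$. Combining these yields
\begin{equation*}
  (TZ')\,\bar D^+_t=-\bar D^+_t\,(TZ')\quad\text{on }H^+_{\mu,t},
\end{equation*}
so $TZ'$ induces a unitary isomorphism between the $\lambda$- and $(-\lambda)$-eigenspaces of $\bar D^+_t|_{H^+_{\mu,t}}$ for every $\lambda$, preserving multiplicities. Summing over $\mu$ proves the theorem.

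The main subtlety to verify is that the joint spectral decomposition above behaves well in the twisted setting. Since $\Phi_t(\Gamma)\cap Z(N)$ is abelian and acts unitarily on $V$ via $\Phi_t\tau\Phi_t^{-1}$ commuting with everything else in the picture, this amounts to simultaneously diagonalizing these central unitaries, after which the argument proceeds uniformly on each summand. The rest of the proof is purely algebraic in $\cl(\mathfrak s)$ and makes no use of two-step nilpotency.
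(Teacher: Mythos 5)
Your proof is correct, but it takes a genuinely different route from the one in the paper. The paper works eigenspace by eigenspace for $(\bar D^+_t)^2$: it shows that the multiplicities of $\pm\sqrt\lambda$ agree by proving that the \emph{trace} of $\bar D^+_t$ on each eigenspace $\mathcal S(\lambda)$ vanishes — the terms $TX_j\cdot X_j(\sigma)$ with $X_j\perp Z$ are off-diagonal with respect to the involution $iZ$ and so contribute nothing to the trace, and the remaining diagonal term $Z\cdot Z(\sigma)$ is killed because it anticommutes with Clifford multiplication by a second central direction $iZ'$, which preserves $\mathcal S(\lambda)$ by \lref{lemcom}. You instead Fourier-decompose $L^{2,+}(t)$ with respect to the commuting central derivatives and, on each frequency block $H^+_{\mu,t}$, choose the central direction $Z'$ inside $\ker\mu$ so that the corresponding derivative term drops out of \eqref{tilded} entirely; the even, unitary Clifford element $TZ'$ (which preserves $\Sigma^+_{\mathfrak s}\otimes V$, the twist equivariance, and the block) then genuinely anticommutes with the restricted operator. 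Both arguments use $\dim\mathfrak z\ge2$ in an essential and correctly placed way — the paper to have two orthonormal central vectors, you to find a central direction annihilating a given nonzero frequency — and both correctly break down for Heisenberg groups. Your version buys a slightly stronger structural conclusion (an explicit unitary conjugating $\bar D^+_t|_{H^+_{\mu,t}}$ to its negative, block by block), at the cost of setting up the joint spectral decomposition of the central derivatives in the twisted setting, which you handle adequately by diagonalizing the commuting central unitaries $\tau(\gamma)$ first; the paper's trace argument avoids that decomposition altogether and needs only the coarser eigenspace structure of $(\bar D^+_t)^2$.
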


\begin{proof}
Choose orthonormal vector fields $Z$ and $Z'$ in the center of $\mathfrak n$
and let $W_\pm$ be the eigenspaces of the involution $iZ$
in $\Sigma_{\mathfrak s}^+\otimes V$ for the eigenvalues $\pm1$.
Since $(\bar D^+_t)^2$ commutes with $iZ$, see \lref{lemcom},
it leaves the spaces of sections
with values in $W_+$ and $W_-$ invariant.
In particular, if $\lambda>0$ is an eigenvalue of $(\bar D^+_t)^2$
and $\mathcal S(\lambda)$ denotes the corresponding eigenspace of sections,
then
\[
  \mathcal S(\lambda)
  = \mathcal S_+(\lambda) \oplus \mathcal S_-(\lambda) ,
\]
where $\mathcal S_+(\lambda)$ and $\mathcal S_-(\lambda)$
consist of eigensections in $\mathcal S(\lambda)$
with values in $W_+$ and $W_-$, respectively.

We note that $\mathcal S(\lambda)$ is invariant under $\bar D^+_t$ and
that $\bar D^+_t$ has eigenvalues $\pm\sqrt\lambda$ on $\mathcal S(\lambda)$.
Furthermore, the multiplicities of $\sqrt\lambda$ and $-\sqrt\lambda$
as eigenvalue of $\bar D^+_t$ coincide if and only if the trace of $\bar D^+_t$
on $\mathcal S(\lambda)$ vanishes.

We let $X_2=Z$.
Then $X_iW_+=W_-$ and $X_iW_-=W_+$ for $3\le i\le m$,
and hence the corresponding terms of $\bar D^+_t$
do not contribute to the trace of $\bar D^+_t$  on $\mathcal S(\lambda)$.
Now the remaining term $X_2\cdot X_2(\sigma)=Z\cdot Z(\sigma)$
of $\bar D^+_t\sigma$ leaves $\mathcal S(\lambda)$ invariant,
and its trace on $\mathcal S(\lambda)$ is equal to the trace of $\bar D^+_t$
on $\mathcal S(\lambda)$, by what we just said.

Clifford multiplication with $Z'$ leaves $\mathcal S(\lambda)$
invariant, by \lref{lemcom}.
On the other hand,
\[
  Z\cdot Z(Z'\cdot\sigma)
  = Z\cdot(Z'\cdot Z(\sigma))
  = - Z'\cdot(Z\cdot Z(\sigma)) ,
\]
that is, the involution $iZ'$ anticommutes with the operator
which sends $\sigma$ to $Z\cdot Z(\sigma)$.
It follows that the trace of $\bar D^+_t$ on $\mathcal S(\lambda)$ vanishes.
\end{proof}

\begin{cor}\label{vaneta2}
If the center of $N$ has dimension at least two,
then the asymptotic high energy $\eta$-invariant
$\lim_{t\to\infty}\eta(A_t^{\rm he,+})=0$.
\end{cor}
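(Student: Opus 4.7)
The plan is to chain together the two immediately preceding theorems via the relation between $A_t$ and $D_t$. Recall from the discussion around equations \eqref{ats} and \eqref{dts} that $-A_t$ corresponds, under the isometric identification induced by left translation $L_{(t,e)}$, to the Dirac operator $D_t$ acting on sections satisfying the twist data with respect to $\Phi_t\tau\Phi_t^{-1}$. This identification is unitary and preserves Clifford multiplication and the super-symmetry, so the spectra (with multiplicities) of $A_t^{\rm he,+}$ and $-D_t^{\rm he,+}$ agree. Consequently $\eta(A_t^{\rm he,+})=-\eta(D_t^{\rm he,+})$ for every sufficiently large $t$.

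Given this, the first step is to invoke \tref{higeta}, which identifies
\begin{equation*}
  \lim_{t\to\infty}\eta(D_t^{\rm he,+}) = \lim_{t\to\infty}\eta(\bar D_t^+),
\end{equation*}
reducing the problem to the flat Dirac operators $\bar D_t^+$. The second step is to apply \tref{vaneta}: since by hypothesis the center of $\mathfrak n$ has dimension at least two, the eta function of $\bar D_t^+$ vanishes identically for every $t$, in particular $\eta(\bar D_t^+)=0$. Combining the two displays with the sign relation above yields
\begin{equation*}
  \lim_{t\to\infty}\eta(A_t^{\rm he,+})
  = -\lim_{t\to\infty}\eta(D_t^{\rm he,+})
  = -\lim_{t\to\infty}\eta(\bar D_t^+) = 0,
\end{equation*}
which is the claim.

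There is no real obstacle here, as both ingredients are already in place; the only thing that needs a moment of care is verifying the sign convention so that $\eta(A_t^{\rm he,+})$ and $\eta(D_t^{\rm he,+})$ are correctly related. One should also note that \tref{higeta} and \tref{vaneta} were stated for $D_t^+$ rather than $A_t^+$, so making the correspondence $-A_t \leftrightarrow D_t$ explicit (as done in the paragraph following \eqref{dts}, where it is observed that the local data of the operators $D_t$ coincide under this correspondence) is the only bookkeeping required.
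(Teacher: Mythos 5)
Your proof is correct and is essentially the paper's own argument: the paper likewise recalls $A_t^+=-D_t^+$ and applies Theorems \ref{higeta} and \ref{vaneta}. Your extra care about the sign is harmless (and moot, since the limit is $0$).
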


\begin{proof}
Recall that $A_t^+=-D_t^+$
and apply Theorems \ref{higeta} and \ref{vaneta}.
\end{proof}

\section{Heisenberg Manifolds}
\label{sushei}

The only simply connected two-step nilpotent Lie groups not
covered by \tref{vaneta} are the standard Heisenberg groups $N=G_n$,
where here $m-1 = \dim N = 2n+1$.
We represent $G_n$ as $\R^n\times\R^n\times\R$ with typical element $(x,y,z)$
and multiplication given by
\begin{equation}\label{aheimul}
  (x,y,z)(x',y',z') = (x+x',y+y',z+z'+ xy') .
\end{equation}
The left-invariant vector fields
\begin{equation}\label{aheibas}
 X_j := \frac{\partial}{\partial x_j} , \quad
 Y_j := \frac{\partial}{\partial y_j} + x_j\frac{\partial}{\partial z} ,
 \quad\text{and}\quad
 Z := \frac{\partial}{\partial z}
\end{equation}
form a basis of the Lie algebra of $G_n$.
They commute pairwise, except for the $n$ Lie brackets $[X_j,Y_j]=Z$.

\subsection{Lattices in Heisenberg groups}\label{sechei}

Lattices in $G_n$ are classified in \cite[Section 2]{GW}:
Let $D_n$ be the set of $n$-tupels $d=(d_1,\dots,d_n)$
of natural numbers such that $d_{i}$ divides $d_{i+1}$, $1\le i<n$.
Then, for any $d\in D_n$,
\begin{equation}\label{aheilat}
   \Gamma_{d}
   := \{ (x,y,z) \mid x,y\in\Z^n,z\in\Z,\text{$d_i$ divides $x_i$}\}
\end{equation}
is a lattice in $G_n$.
The isomorphism type of $\Gamma_d$ is determined by $d$
and, up automorphism of $G_n$,
any lattice $\Gamma$ in $G_n$ is equal to some $\Gamma_d$, $d\in D_n$.
Following the notation in \cite{GW}, we then set
\begin{equation}
  | \Gamma | := d_1 \cdots d_n .
  \label{type}
\end{equation}
Fix $d\in D_n$.
The $2n+1$ elements
\begin{equation}\label{aheilat2}
   \phi_j := (d_je_j,0,0) , \quad \psi_j := (0,e_j,0) , \quad \zeta := (0,0,1)
\end{equation}
generate $\Gamma_d$.
They commute pairwise, except for the $n$ relations
\begin{equation}\label{aheilat4}
   \phi_j\psi_j\phi_j^{-1}\psi_j^{-1} = \zeta^{d_j} = (0,0,d_j) .
\end{equation}

Let $\tau$ be an irreducible unitary representation of $\Gamma_d$
on a finite dimensional Hermitian vector space $V$.
Since $\tau$ is irreducible and $\zeta$ is central,
there is a number $c\in[0,1)$ with
\begin{equation}
  \tau(\zeta) = e^{2\pi ic} I .
  \label{twistpara}
\end{equation}
Let  $A_j:=\tau(\phi_j)$ and $B_j:=\tau(\psi_j)$, for $1\le j\le n$.
Then, if $\lambda$ is an eigenvalue of $B_j$,
for some $j$ and some eigenvector $v\in V$,
then
\begin{equation}
  B_j(A_jv)
  = e^{-2\pi icd_j}(A_jB_jA_j^{-1})(A_jv)
  = e^{-2\pi icd_j}\lambda A_jv ,
\end{equation}
and hence $e^{-2\pi icd_j}\lambda$ is an eigenvalue of $B_j$ as well.
It follows that $c$ is rational, by the finite dimensionality of $V$.

Let $m_j$ be the denominator of $cd_j$.
Consider the sublattice $\Gamma_{md}\subseteq\Gamma_d$,
where $md:=(m_1d_1,...,m_nd_n)$.
The order of $\Gamma_{md}$ in $\Gamma_d$ is
\begin{equation}
  | \Gamma_d/\Gamma_{md} | = m_1\cdots m_n ,
\end{equation}
and $\tau$ restricts to an Abelian representation on $\Gamma_{md}$.
By irreducibility, $\tau$ is induced from a one-dimensional representation
of $\Gamma_{md}$.
That is, there are real numbers $\alpha_1,\beta_1,\dots,\alpha_n,\beta_n$
such that $\phi_j^{m_j}$ and $\psi_j$ act on $\C$
by multiplication with $e^{2\pi i\alpha_j}$ and $e^{2\pi i\beta_j}$, respectively,
and $\tau$ is induced from this representation of $\Gamma_{md}$.
In particular,
\begin{equation}
  \dim V = m_1\cdots m_n .
  \label{dimu}
\end{equation}
For any $n$-tuple
\begin{equation}
  b = (b_1,\dots,b_n)
  = (\beta_1+l_1cd_1,\ldots,\beta_n+l_ncd_n) \in \R^n/\Z^n ,
\end{equation}
where $(l_1,\dots,l_n)\in\Z^n$, we let $V_{b}$ be the subspace of $V$
on which $\psi_j$ acts by $e^{2\pi ib_j}$.
We note that these subspaces $V_{b}$ are one-dimensional
and pairwise orthogonal and that they span $V$.

\subsection{Twisted Right Regular Representation}\label{repmul}
The set
\begin{equation}\label{heifunx}
  F := \{ (x,y,z) \in G_n \mid x\in P, (y,z)\in Q \} ,
\end{equation}
where
\begin{equation}\label{heifuny}
\begin{split}
  P &:= \{ x \in \R^n \mid 0 \le x_j \le d_j \} , \\
  Q &:= \{ (y,z) \in \R^n\times\R \mid 0 \le y_j,z \le 1 \} ,
\end{split}
\end{equation}
is a fundamental domain of the action of $\Gamma_d$ on $G_n$
by left translations.
Observe that, by \eqref{aheibas}, the standard Lebesgue
measure with respect to the $(x,y,z)$-coordinates is left-invariant,
hence bi-invariant, on $G_n$.

Fix an irreducible unitary representation $\tau$ of $\Gamma_d$
on a finite dimensional Hermitian vector space $V$ as above.
and consider the Hilbert space $L^2(\tau)$
of maps $\sigma:G_n\to V$ such that
\begin{equation}\label{tautran}
  \sigma(\gamma g) = \tau(\gamma)\sigma(g)
\end{equation}
for all $\gamma\in\Gamma_d$ and $g\in G_n$
which are square integrable over $F$.
The {\em right regular representation} $\rho$ of $G_n$
acts unitarily on $L^2(\tau)$ by
\begin{equation}\label{arigreg}
  (\rho(g)\sigma)(x,y,z) = \sigma((x,y,z)g) ,
\end{equation}
and our next aim is to determine the multiplicities of the irreducible
unitary representations of $G_n$ in $L^2(\tau)$.
Here we recall that irreducible unitary representations of the
Heisenberg group $G_n$ correspond to coadjoint orbits of $G_n$,
by the classical theorem of Stone and von Neumann
(or by the more general Kirillov theory, respectively).
This correspondence will show up in the following discussion.

Let $\sigma\in L^2(\tau)$. Then
\begin{equation}\label{tautranz}
  e^{2\pi ic} \sigma(x,y,z)
  = \tau(\zeta) \sigma(x,y,z)
  = \sigma(x,y,z+1) .
\end{equation}
Let $\sigma_b$ be the component of $\sigma$ in $V_b$.
Then
\begin{equation}\label{tautrany}
  e^{2\pi ib_j} \sigma_b(x,y,z)
  = B_j \sigma_b(x,y,z)
  = \sigma_b(x,y+e_j,z) .
\end{equation}
The transformation rule with respect to $A_j$ is more complicated,
\begin{equation}\label{tautranyx}
  A_j \sigma_b(x,y,z)
  = \sigma_{b-cd_je_j}(x+d_je_j,y,z+d_jy_j) .
\end{equation}
By \eqref{tautranz} and \eqref{tautrany},
we can develop $\sigma_b$ in a Fourier series,
\begin{equation}\label{taufou}
  \sigma_b(x,y,z)
  = \sum_{\substack{v\equiv b \\ w \equiv c}}
  e^{2\pi i(vy + wz)} \sigma_{v,w}(x)
\end{equation}
where $\equiv$ indicates congruence modulo $\Z^n$.
Fix a $w$ congruent to $c$ and consider the space $L^2(\tau,w)$
of $\sigma\in L^2(\tau)$ with
\begin{equation}\label{tautranw}
  \sigma(x,y,z + t) = e^{2\pi iwt} \sigma(x,y,z) ,
\end{equation}
that is, in the above Fourier development of the components
$\sigma_b$ of $\sigma$, only the terms with the given $w$ occur.
We obtain an orthogonal decomposition
\begin{equation}
  L^2(\tau) = \oplus_{w\equiv c} L^2(\tau,w) .
\end{equation}
Now the spaces $L^2(\tau,w)$ are $\rho$-invariant and,
therefore, it remains to investigate $\rho$ on them.
For $\sigma\in L^2(\tau,w)$, we have
\begin{equation}\label{foutran}
\begin{split}
  \sum_{u\equiv b+cd_je_j} &e^{2\pi i(uy+wz)} A_j\sigma_{u,w}(x)
  = A_j \sigma_{b+cd_je_j}(x,y,z) \\
  &= \sigma_{b}(x+d_je_j,y,z+d_jy_j) \\
  &= e^{2\pi iwd_jy_j} \sigma_{b}(x+d_je_j,y,z) \\
  &= \sum_{v\equiv b}e^{2\pi i((v+wd_je_j)y+wz)} \sigma_{v,w}(x+d_je_j) .
\end{split}
\end{equation}
We conclude that, for any $v\equiv b$ and $x\in\R^n$,
\begin{equation}\label{foutran2}
  \sigma_{v+wd_je_j,w}(x) = A_j^{-1} \sigma_{v,w}(x+d_je_j) .
\end{equation}
There are two cases, $w=0$ and $w\ne0$, respectively.

If $w=0$, then $w=c=0$ and $\dim V=1$.
By \eqref{foutran2}, the Fourier coefficients $\sigma_{v,0}$
are $d_j$-periodic in $x_j$ up to the twists
by the complex numbers $A_j$ of norm one.

Suppose now that $w\ne0$.
Then, by \eqref{foutran2},
the Fourier coefficients $\sigma_{u,w}$ with
\begin{equation}\label{founum}
  u = b + k_1e_1 + \dots + k_ne_n , \quad
  0 \le k_j < |w|d_j ,
\end{equation}
determine all the Fourier coefficients of $\sigma$.
We also get
\begin{equation}\label{founor}
  || \sigma ||_{L^2(\tau,w)}^2
  = \sum || \sigma_{u,w} ||_{L^2(\R^n,V_b)}^2 ,
\end{equation}
where the sum is over all $u$ as in \eqref{founum}.
Here we recall that, on the left hand side,
the $L^2$-norms are given by the corresponding integrals
over the fundamental domain $F$ of $\Gamma_d$
as in \eqref{heifunx},
whereas the integrals on the right hand side are over
Euclidean $x$-space.
We obtain
\begin{equation}
  L^2(\tau,w) \cong \oplus L^2(\R^n,V_b) ,
\end{equation}
where we have $m_1d_1\cdots m_nd_n|w|^n$ summands
$L^2(\R^n,V_b)$ on the right hand side,
namely one for each $u$ as in \eqref{founum}.

To identify $\rho$ on $\oplus L^2(\R^n,V_b)\cong L^2(\tau,w)$,
let $g=(x',y',z')\in G_n$ and recall \eqref{aheimul} and \eqref{arigreg}.
We compute
\begin{multline}
  e^{2\pi i(u(y+y') + w(z+z'+xy'))} \sigma_{u,w}(x+x') \\
  = e^{2\pi i(uy + wz)} e^{2\pi i(uy' + w(z'+xy'))} \sigma_{u,w}(x+x') ,
\end{multline}
hence $g$ acts on $\sigma_{u,w}\in L^2(\R^n,V_b)$ by
\begin{equation}
  (\rho(g)\sigma_{u,w})(x)
  = e^{2\pi i(uy' + w(z'+xy'))} \sigma_{u,w}(x+x') .
\end{equation}
Via a unitary  identification $V_b\cong\C$
and the substitution $x+u/c$ for $x$,
we see that $\rho$ on $L^2(\R^n,V_b)$ is unitarily equivalent to the
irreducible unitary representation $\rho_w$ of $G_n$ on $L^2(\R^n,\C)$
with
\begin{equation}\label{rhow}
  (\rho_w(g)f)(x) = e^{2\pi iw(z'+xy')}f(x+x') .
\end{equation}
This is the standard representation of $G_n$ associated to the
coadjoint orbit of linear functionals on the Lie algebra of $G_n$
which send $Z$ to $w$.
Hence $L^2(\tau,w)$ is a corresponding isotypical component
of $\rho_w$ in $L^2(\tau)$.
By \eqref{founum} and \eqref{founor},
the multiplicity of $\rho_w$ in $L^2(\tau)$ and $L^2(\tau,w)$ is
\begin{equation}\label{mulmul}
  m_1|w|d_1\cdots m_n|w|d_n = |\Gamma| \dim V |w|^n .
\end{equation}

\subsection{Spectrum of Twisted Laplacians}\label{replap}
Let $w\ne0$.
To determine the spectrum of the Laplacian $\Delta_w$
of a given left-invariant Riemannian metric on $L^2(\R^n,\C)$
with respect to the representation $\rho_w$ as in \eqref{rhow},
we follow the discussion in the proof of Theorem 3.3 of \cite{GW}:
With respect to the given metric,
there is an orthonormal basis
\begin{equation}\label{heibaso}
  X_1', \dots , X_n' , Y_1' , \dots Y_n' , Z'
\end{equation}
of the Lie algebra of $G_n$ with $Z'=rZ$, $r=1/|Z|>0$, such that
\begin{equation}\label{heibaso2}
  [X_j',X_k'] = [X_j',Y_k'] = [Y_j',Y_k'] = 0
\end{equation}
for all $j\ne k$ and such that there are numbers $r_j>0$ with
\begin{equation}\label{heibaso4}
  [X_j',Y_j'] = r_j^2Z .
\end{equation}
The pull back of the metric under the automorphism $\Phi$
of $G_n$ with
\begin{equation}
  \Phi_*(r_jX_j) = X_j' ,  \quad
  \Phi_*(r_jY_j) = Y_j' ,  \quad
  \Phi_*(Z) =Z
\end{equation}
is the left-invariant Riemannian metric on $G_n$ for which the fields
\begin{equation}\label{normet}
  r_1X_1 , \dots , r_nX_n , r_1Y_1 , \dots , r_nY_n , rZ
\end{equation}
are orthonormal.
Since $\Phi_*(Z)=Z$,
$\rho_w\circ\Phi$ is still an irreducible unitary representation of $G_n$
associated to the coadjoint orbit of linear functions on the Lie algebra
of $G_n$ which send $Z$ to $w$,
hence $\rho_w\circ\Phi$ is unitarily equivalent to $\rho_w$.
In other words,
we can assume without loss of generality that the given left-invariant
Riemannian metric on $G_n$ has an orthonormal basis as in \eqref{normet}.
As for the Laplacian  on $L^2(\R^n,\C)$ with respect to $\rho_w$,
we obtain
\begin{equation}\label{laprhow}
  \Delta_w
  = -\sum_{1\le j\le n} r_j^2\frac{\partial^2}{\partial x_j^2}
  + 4\pi^2w^2 \big(r^2 + \sum_{1\le j\le n} x_j^2r_j^2 \big) ,
\end{equation}
by \eqref{rhow}.
Now the Hermite functions
\begin{equation}
  h_p(x) = \exp(x^2/2) \frac{\partial^{p_1+\cdots+p_n}}
  {\partial x_1^{p_1}\cdots\partial x_n^{p_n}}\exp(-x^2) ,
\end{equation}
where $p=(p_1,\dots,p_n)$ runs over all $n$-tuples of non-negative integers,
form an orthogonal basis of $L^2(\R^n,\C)$ and satisfy
\begin{equation}
  x_j^2h_p - \frac{\partial^2h_p}{\partial x_j^2} = (2p_j+1) h_p .
\end{equation}
It follows that the functions $f_p(x)=h_p(\sqrt{2\pi|w|}x)$ are an orthogonal
basis of $L^2(\R^n,\C)$ and that they satisfy
\begin{equation}
  \Delta_w f_p =\lambda(w,p) f_p ,
\end{equation}
where
\begin{equation}\label{wlapeig}
  \lambda(w,p)
  := 4\pi^2w^2r^2 + 2\pi |w|
  \sum_{1\le j\le n} (2p_j + 1) r_j^2 .
\end{equation}
Thus, by \eqref{mulmul},
the multiplicity of $\lambda(w,p)$ in $L^2(\tau,w)$ is equal to
\begin{equation}\label{mulmul2}
  d_1\cdots d_n m_1 \cdots m_n |w|^n = |\Gamma| \dim V |w|^n ,
\end{equation}
when counted according to the $n$-tuples $p$.

In our application of the above in the proof of \tref{etahei},
we will vary the parameter $r=1/|Z|$ of the metric, keeping
\begin{equation*}
  r_1X_1, \dots , r_nX_n , r_1Y_1, \dots , r_nY_n
\end{equation*}
orthonormal and perpendicular to $Z$.
Then the above functions $f_p$ remain eigenfunctions
of $\Delta_w$ in $L^2(\R^n,\C)$
and the corresponding eigenvalues vary according
to \eqref{wlapeig}.
Hence the eigensections in $L^2(\tau,w)$
corresponding to the above eigenfunctions $f_p$
remain the same during this variation of the metric
and the corresponding eigenvalues vary according
to \eqref{wlapeig} as well.

\subsection{$\eta$-Invariants for Heisenberg Manifolds}
\label{heieta}
In this section,
we study the $\eta$-invariant of the operator $\bar D_t^+$ as in \eqref{tilded}.
The solvable extension $S$ of $N=G_n$ as in Chapter \ref{sechc}
and the connection $\nabla^E$ do not enter in this discussion.
We recall though that $\Sigma_{\mathfrak s}^+\simeq\Sigma_{\mathfrak n}$,
where $\mathfrak n$ denotes the Lie algebra of $G_n$
and where Clifford multiplication with $X$ in  $\Sigma_{\mathfrak n}$
corresponds to Clifford multiplication with $TX$ in  $\Sigma_{\mathfrak s}^+$,
for all $X\in\mathfrak n$.
This should be kept in mind, see e.g. \eqref{bard}.

Let $\Gamma$ be a lattice in $G_n$ of type $d$.
It is clear from \eqref{aheilat} that there is a smallest $s>0$
such that $\zeta:=\exp(s^2Z)$ is contained in $\Gamma$
and that $\zeta$ is a generator of the center of $\Gamma$.
The automorphism $\Phi(x,y,z)=(sx,sy,s^2z)$ of $G_n$
maps $\exp Z$ to $\zeta$, and, therefore, we may assume that
\begin{equation}\label{norgam}
  \zeta = \exp Z
\end{equation}
generates the center of $\Gamma$.
For any left-invariant Riemannian metric on $G_n$,
$N=\Gamma\backslash G_n$ is a Riemannian submersion
over a flat torus with closed geodesics as fibers,
given as orbits of the one-parameter group generated by $Z$.
By our normalization \eqref{norgam},
the length of the fibers is given by $|Z|$.

Let $\tau$ be an irreducible unitary representation of $\Gamma_d$
on a finite dimensional Hermitian vector space $V$
as in Section \ref{sechei} and extend $\tau$ by the trivial representation
on $\Sigma_{\mathfrak n}$ to $\Sigma_{\mathfrak n}\otimes V$
as in Chapter \ref{sechc}.
Recall from Section \ref{sechei} that $\zeta$ acts by
multiplication with $\exp(2\pi ic)$ for some $c=c(\tau)\in[0,1)\cap\Q$
and that
\begin{equation}
  \dim V = \delta(c,d) := m_1 \cdots m_n ,
\end{equation}
where $d=d(\Gamma)$ and $m_j$ is the denominator of $cd_j$.
In the notation of this chapter,
and in terms of an orthonormal frame $(E_j)$ of $G_n$,
we study the unbounded operator
\begin{equation}
  \bar D\sigma = \sum E_j \cdot E_j(\sigma) ,
  \label{bard}
\end{equation}
in the Hilbert space $L^2(\tau)$ of measurable maps
$G_n\to\Sigma_{\mathfrak n}\otimes V$ satisfying \eqref{twist}
which are square integrable over a fundamental domain
of $\Gamma=\Gamma_d$ in the Heisenberg group $G_n$.

Before stating the next result,
we recall the definition of the {\em Hurwitz zeta function},
for $c>0$ and $\Re s>1$ given by the infinite sum
\begin{equation}\label{zetaw}
  \zeta_c(s) = \zeta(s,c) := \sum\nolimits_{k\ge0}(k+c)^{-s} .
\end{equation}
We have $\zeta_1=\zeta$, the {\em Riemann zeta function}.
We also set $\zeta_0:=\zeta$.
For each $c\ge0$, $\zeta_c$ can be extended to a meromorphic
function on the complex plane, defined for all $s\ne1$,
and with a simple pole at $s=1$,
where the residue is equal to $1$.

It is maybe interesting to note that, for $0<c<1$,
\begin{equation}\label{etazeta}
  \zeta_c(s)-\zeta_{1-c}(s)
  \quad\text{and}\quad
  \zeta_c(2s)+\zeta_{1-c}(2s)
\end{equation}
are the eta and zeta function
of the operator $id/dt$ and $-d^2/dt^2$, respectively,
on the Hermitian line bundle over $\R/2\pi\Z$
with twist $e^{-2\pi ic}$.

\begin{thm}\label{etahei}
Endow $G_n$ with a left-invariant Riemannian metric,
let $\Gamma$ be a lattice in $G_n$ such that $\zeta=\exp Z$
generates the center of $\Gamma$,
and set $r:=1/|Z|$.
Consider a Clifford module $\Sigma_{\mathfrak n}\otimes V$ as above
and let $c=c(\tau)$.
Then we have, for all $s\in\C$ with sufficiently large real part,
\begin{align}
  \eta(\bar D,s)
  &= | \Gamma | \dim V (2\pi r)^{-s}
  (\zeta_c(s-n) - \zeta_{1-c}(s-n))
  \tag{1}
\intertext{if $n$ is even}
  \eta(\bar D,s)
  &= - | \Gamma | \dim V (2\pi r)^{-s}
  (\zeta_c(s-n) + \zeta_{1-c}(s-n))
  \tag{2}
\end{align}
if $n$ is odd.
\end{thm}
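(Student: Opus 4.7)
The plan is to decompose $L^2(\tau)$ via the Plancherel formula developed in Sections \ref{repmul}--\ref{replap}, diagonalize $\bar D$ on each irreducible isotypic piece, identify the modes that survive cancellation in the $\eta$-function, and recognize the resulting Dirichlet series as Hurwitz zeta functions.

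First I would invoke the decomposition $L^2(\tau) = \bigoplus_{w \equiv c} L^2(\tau,w)$ from Section \ref{repmul}, where for $w \ne 0$ the isotypic component is a direct sum of $|\Gamma|\dim V \cdot |w|^n$ copies of $\rho_w$ realized on $L^2(\R^n,\C)$; when $c = 0$, the extra summand $L^2(\tau,0)$ reduces $\bar D$ to the flat Dirac operator on the quotient torus of $\Gamma\backslash G_n$ by the center, whose spectrum is symmetric about zero and so contributes nothing to $\eta(\bar D,s)$. Next I would invoke \eqref{tilded2} together with the Heisenberg brackets $[X'_j,Y'_j] = r_j^2 Z$ in the orthonormal basis \eqref{heibaso}--\eqref{heibaso4} adapted to the metric to identify, on a single $\rho_w$-block tensored with $\Sigma_{\mathfrak n}$,
\[
\bar D^2 = \Delta_w \otimes I + 2\pi w \sum_{j=1}^n r_j^2\, \omega_j,
\]
where $\omega_j = i X'_j Y'_j$ is the involution from Section \ref{decsig}. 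Combining the Hermite-basis diagonalization of $\Delta_w$ given by \eqref{wlapeig} with the simultaneous eigenspace decomposition $\Sigma_{\mathfrak n} = \bigoplus_\varepsilon \Sigma^\varepsilon$ of Section \ref{decsig}, the spectrum of $\bar D^2$ on the $(p,\varepsilon)$-piece becomes
\[
\mu(w,p,\varepsilon)^2 = 4\pi^2 w^2 r^2 + 2\pi|w|\sum_j (2p_j + 1 + \operatorname{sgn}(w)\varepsilon_j) r_j^2.
\]

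The central calculational task is then to pass from the spectrum of $\bar D^2$ to that of $\bar D$ and to isolate the contributions that do not cancel in $\eta(\bar D,s)$. Following the strategy of \S 5 of \cite{DS}, I would exhibit on each non-minimal $(p,\varepsilon)$-piece a natural symmetry---built from Heisenberg-type creation/annihilation operators $a_j,a_j^\dagger$ acting on Hermite functions via $\rho_w$ together with Clifford multiplication by a suitable combination of $X'_j,Y'_j$ that flips one $\varepsilon_j$---that conjugates $\bar D$ to $-\bar D$ and hence pairs these modes with opposite eigenvalues. The only modes surviving this pairing are the ground states $p = 0$, $\varepsilon = -\operatorname{sgn}(w)(1,\ldots,1)$, on which $\bar D$ acts as a scalar of magnitude $2\pi|w|r$ with multiplicity $|\Gamma|\dim V \cdot |w|^n$. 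Summing these contributions and writing $w = k+c > 0$ or $|w| = k + 1 - c > 0$ with $k \ge 0$ identifies the two resulting subseries with the Hurwitz values $\zeta_c(s-n)$ and $\zeta_{1-c}(s-n)$ and produces formulas (1) and (2) according to the parity of $n$.

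The main obstacle is producing the cancellation symmetry on the non-ground-state modes and determining the precise sign of $\bar D$ on the surviving ground state. Both require careful bookkeeping of the spinor orientation inherited from the identification $\Sigma_{\mathfrak s}^+ \simeq \Sigma_{\mathfrak n}$ and from Clifford multiplication by $T$; the parity-of-$n$ dichotomy in the statement arises precisely from how the chirality factor $\varepsilon_1 \cdots \varepsilon_n$ of the ground state interacts with the sign $(-\operatorname{sgn}(w))^n$ when the contributions from positive and negative $w$ are combined, producing the difference $\zeta_c(s-n) - \zeta_{1-c}(s-n)$ for $n$ even and the negative of the sum for $n$ odd.
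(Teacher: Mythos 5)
Your architecture coincides with the paper's almost everywhere: the decomposition $L^2(\tau)=\oplus_{w\equiv c}L^2(\tau,w)$, the vanishing of the $w=0$ contribution, the spectrum $\lambda(w,p,\varepsilon)$ of $\bar D^2$ on the $(p,\varepsilon)$-pieces, the multiplicity $|\Gamma|\dim V\,|w|^n$, the identification of the surviving modes $p=0$, $\varepsilon=-\sign(w)(1,\dots,1)$, the sign $(-\sign w)^n$ coming from the action of $irZ$ by $\varepsilon_1\cdots\varepsilon_n$ on $\Sigma_\varepsilon$, and the final resummation into Hurwitz zeta values are all exactly as in the paper. The one place you genuinely diverge is the cancellation of the non-ground-state modes, and that is also where your proposal has its gap: you assert the existence of an operator, built from Hermite ladder operators and Clifford multiplications, conjugating $\bar D$ to $-\bar D$ away from the ground states, but you do not construct it, and you yourself identify this as the main obstacle. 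Such an intertwiner very likely exists (it does for $n=1$), but it must anticommute simultaneously with the horizontal part of $\bar D$ and with the zero-order term $2\pi wr\cdot irZ$, and verifying this with the correct multiplicity bookkeeping is essentially the whole content of the theorem.

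The paper sidesteps the construction entirely with an indirect argument you may prefer to adopt. On each eigenspace $W$ of $\bar D^2$, the trace of $\bar D$ is evaluated in a basis adapted to the $\Sigma_\varepsilon$-decomposition: Clifford multiplication by $r_jX_j$ and $r_jY_j$ maps $L^2(\tau,w,\varepsilon)$ to $L^2(\tau,w,\delta)$ with $\delta\ne\varepsilon$ and hence contributes nothing to the trace, so the trace is an integer multiple $k\cdot 2\pi wr$ coming from the $Z$-term alone; on the other hand it equals $l\sqrt{\lambda}$, where $l$ is the multiplicity defect of $\pm\sqrt{\lambda}$. Since (Section \ref{replap}) the eigenspaces $W$ and the integers $k,l$ do not depend on the metric parameter $r$, the identity $k^2\,4\pi^2w^2r^2=l^2(4\pi^2w^2r^2+S)^2$ holds for all $r>0$, which forces either $l=0$ (the eigenvalues cancel in $\eta$) or $S=0$ (which pins down $p=0$ and $\varepsilon=-\sign w$). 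Either carry out your ladder-operator construction in full, or replace that step by this trace-and-variation argument; as written, the cancellation claim is the unproven heart of your proof.
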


We conclude that, under the assumptions of the above theorem,
the eta function of $\bar D$ is holomorphic if $n$ is even and is
meromorphic with a simple pole at $s=n+1$ if $n$ is odd.
We also see that the $\eta$-invariant $\eta(\bar D)=\eta(\bar D,0)$
of $\bar D$ only depends on $n$, the type of $\Gamma$, and $c$.

\begin{proof}[Proof of \tref{etahei}]
The main argument in the proof is modeled
along the lines of the proof of Proposition 4.1 of \cite{DS}.
For $w\equiv c$ modulo integers, we let
\begin{equation}
  L^2(\tau,w) := \{ \sigma \in L^2(\tau) :
  \sigma(x,y,z+t) = e^{2\pi iwt}\sigma(x,y,z) \}
\end{equation}
and get an orthogonal decomposition
\begin{equation}
  L^2(\tau) = \oplus_{w\equiv c} L^2(\tau,w) ,
\end{equation}
where $L^2(\tau)$ is the Hilbert space of measurable maps
$G_n\to\Sigma_{\mathfrak n}\otimes V$ satisfying \eqref{twist}
which are square integrable over a fundamental domain
of $\Gamma=\Gamma_d$ in $G_n$ as above.
Since the spaces $L^2(\tau,w)$ are invariant under $\bar D$,
the eta function of $\bar D$ is the sum of the eta functions
of the restrictions of $\bar D$ to the different $L^2(\tau,w)$.
Thus we can consider the latter separately.

There are two cases, $w=0$ and $w\ne 0$.
As for $w=0$,
we note that $Z(\sigma)=0$ for any $\sigma\in L^2(\tau,0)$.
Hence the unitary involution $\omega_0$ of $L^2(\tau,0)$
given by Clifford multiplication with $irZ$ anti-commutes with $\bar D$.
Hence the spectrum of $\bar D$ is symmetric about $0$,
and, therefore, the eta function of $\bar D$ on $L^2(\tau,0)$
vanishes identically.

Suppose now that $w\ne0$.
We want to apply the results from the beginning of this capter
and note, to that end, that the spaces $L^2(\tau)$ and $L^2(\tau,w)$ here
are isomorphic to the corresponding spaces in Section \ref{repmul},
tensored  with $\Sigma_{\mathfrak n}$.

It follows from the discussion in Section \ref{replap} that,
except for the determination of multiplicities,
the particular lattice does not enter into the discussion.
By what we explain in Section \ref{replap},
we can assume that
\begin{equation}
  r_1X_1, r_1Y_1 , \dots , r_nX_n, r_nY_n , rZ
\end{equation}
is an orthonormal basis of the given left-invariant metric on $G_n$.
Then \eqref{bard} turns into
\begin{align}\label{bard2ei}
  \bar D\sigma &= \sum_{1\le j\le n}
  r_j^2 \big(X_j\cdot  X_j(\sigma) + Y_j\cdot Y_j(\sigma) \big) + r^2 Z\cdot Z(\sigma) ,
\intertext{and \eqref{tilded2} turns into}
  \bar D^2\sigma
  &= \Delta\sigma + \sum_{1\le j\le n} r_j^4X_jY_j \cdot Z(\sigma) ,
\end{align}
where $\sigma\in L^2(\tau,w)$ is smooth.

We let $\omega_j$, $1\le j\le n$, be the unitary involutions
on $\Sigma_{\mathfrak n}\otimes V$ and $L^2(\tau,w)$
given by Clifford multiplication with $ir_j^2X_jY_j$,
respectively.
Then
\begin{equation}
  \Sigma_{\mathfrak n}
  = \oplus_{\varepsilon\in\{1,-1\}^n} \Sigma_{\varepsilon} ,
\end{equation}
where
\begin{equation}
  \Sigma_\varepsilon
  = \{ \sigma \in \Sigma_{\mathfrak n} :
  \text{$\omega_j\sigma = \varepsilon_j\sigma$ for all $1 \le j \le n$} \} .
\end{equation}
Now the unitary involutions $\omega_j$ commute with $\Delta$.
Thus on
\begin{equation}
  L^2(\tau,w,\varepsilon)
  := \{ \sigma \in L(\tau,w) :
  \text{$\sigma$ has values in $\Sigma_\varepsilon\otimes V$} \} ,
\end{equation}
$\bar D^2$ has eigenvalues
\begin{equation}\label{heifd2ei}
\begin{split}
  \lambda(w,p,\varepsilon)
  &= \lambda(w,p)
  + 2\pi w(r_1^2\varepsilon_1+\dots + r_n^2\varepsilon_n) \\
  &= 4\pi^2w^2r^2
  + 2\pi|w|\sum_{1\le j\le n}(2p_j+1+\varepsilon_j\sign w)r_j^2
\end{split}
\end{equation}
with multiplicity $2^nm_1d_1\cdots m_nd_n|w|^n$,
where $p$ runs over all $n$-tuples of non-negative integers,
by \eqref{wlapeig} and \eqref{mulmul2}.
For all $p$, we have
\begin{equation}
  \lambda(w,p,\varepsilon)\ge 4\pi^2w^2r^2>0 .
\end{equation}

Let $W$ be an eigenspace of $\bar D^2$ in $L^2(\tau,w)$
for the eigenvalue $\lambda$,
and recall from Section \ref{replap} that $W$ is independent
of the parameter $r$ of the metric.
Since $\bar D^2$ commutes with the involutions $\omega_j$,
$W$ has an orthonormal basis consisting of eigensections of $\bar D^2$
such that each of them belongs to some $L^2(\tau,w,\varepsilon)$,
where $p$ and $\varepsilon$ satisfy
\begin{equation}
  S := 2\pi|w| \sum_{1\le j\le n}(2p_j+1+\varepsilon_j\sign w) r_j^2
  = \lambda - 4\pi^2w^2r^2 ,
\end{equation}
by \eqref{heifd2ei}.
Now Clifford multiplication by the unit vector $rZ$ commutes with $\bar D^2$
and leaves the subspaces $L^2(\tau,w,\varepsilon)$ invariant,
whereas Clifford multiplication by the unit vectors $r_jX_j$
and $r_jY_j$ maps $L^2(\tau,w,\varepsilon)$ to $L^2(\tau,w,\delta)$
for $\delta\ne\varepsilon$.
Hence using an orthonormal basis of eigensections of $W$ as above,
we see that the trace of $\bar D$ on $W$ is equal to an integral
multiple $k2\pi wr$ of $2\pi wr$.
On the other hand, the trace of $\bar D$ on $W$
is also equal to $l\sqrt\lambda$ for some integer $l$.
Now $0$ is not an eigenvalue of $\bar D^2$ on $W$,
independently of $r>0$.
Hence $k$ and $l$ do not depend on $r$,
and we get an equality of functions of $r\in(0,\infty)$,
\begin{equation}
  k^24\pi^2w^2r^2 = l^2(4\pi^2w^2r^2 + S)^2 .
\end{equation}
If $l=0$, then the eigenvalues $\pm\sqrt\lambda$ of $\bar D$
occur with equal multiplicity in $W$ and, therefore, their contributions
to the eta function of $\bar D$ on $L^2(\tau,w)$ cancel.
If $l\ne0$, then $S=0$, since $S$ does not depend on $r$.
But then, since $w\ne0$, $p_j\ge0$, and $\varepsilon_j=\pm1$ for all $j$,
we conclude that $\lambda(w,p,\varepsilon)=4\pi^2w^2r^2$ and that
\begin{equation}\label{heifdei6}
  p_1 = \dots = p_n = 0
  \quad\text{and}\quad
  \varepsilon_1 = \dots = \varepsilon_n = - \sign w
\end{equation}
for $1\le j\le n$.
This will be denoted by $p=0$ and $\varepsilon=-\sign w$.

To determine the contribution of the corresponding eigenspaces,
we note that,
by our identification $\Sigma_{\mathfrak n}=\Sigma_{\mathfrak s}^+$,
Clifford multiplication by $irZ\omega_1\cdots\omega_n$
is equal to the identity on $\Sigma_{\mathfrak n}$.
Since Clifford multiplication with $irZ$ commutes
with Clifford multiplication with the $\omega_j$,
it leaves the subspaces $\Sigma_{\varepsilon}$ invariant
and acts by multiplication with $\varepsilon_1\cdots\varepsilon_n$
on them.
Now  $Z(\sigma)=2\pi iw\sigma$ for any $\sigma$ in $L^2(\tau,w)$.
Hence the eigenspace for $\bar D^2$ in $L^2(\tau,w)$ with
eigenvalue $\lambda(w,0,-\sign w)=4\pi^2w^2r^2$
is an eigenspace of $\bar D$ with eigenvalue
\begin{equation}
  \begin{split}
  2\pi wr  &\quad\text{if $n$ is even} , \\
  - 2\pi |w| r  &\quad\text{if $n$ is odd} ,
  \end{split}
\end{equation}
and dimension $m_1\dots m_nd_1\dots d_n|w|^n=|\Gamma|\dim V$.
Thus, for all $s\in\C$ with sufficiently large real part,
\begin{equation}\label{heifdetae}
  \eta(\bar D,s)
  = |\Gamma | \dim V (2\pi r)^{-s}
  \sum_{w\equiv c ,\, w \ne 0} \sign(w)|w|^{n-s}
\end{equation}
if $n$ is even and
\begin{equation}\label{heifdetao}
  \eta(\bar D,s)
  = - | \Gamma | \dim V (2\pi r)^{-s}
  \sum_{w\equiv c ,\, w \ne 0} |w|^{n-s}
\end{equation}
if $n$ is odd.
\end{proof}

We apply the results of this chapter to Dirac operators on
homogeneous vector bundles over complex hyperbolic cusps
of complex dimension $n$.
Such cusps are homogeneous in the sense of Chapter \ref{sechc},
where the nilpotent Lie group is given by the Heisenberg group $N=G_{n-1}$
of dimension $2n-1$ and $\Gamma\subseteq G_{n-1}$ is a lattice.
In our formulas above we therefore need to substitute $n$ by $n-1$.

\begin{cor}\label{etacor}
In the sense of Chapter \ref{sechc}, suppose that a complex hyperbolic cusp
is determined by a lattice $\Gamma\subseteq G_{n-1}$ and that
the homogeneous Dirac bundle over the cusp is given
by unitary representations $\pi_*$ of $\mathfrak u(n)$ and $\tau$ of $\Gamma$
on a Hermitian vector space $V$.
Assume that $V$ is irreducible as a joint $\mathfrak u(n)$ and $\Gamma$ module.
Then the twist parameter $c$ of $\tau$ is well defined and
\begin{equation*}
  \lim_{t\to\infty} \eta(A_t^{\rm he,+}) =
  (-1)^n | \Gamma | \dim V
  \big(\zeta_c(1-n) + (-1)^n \zeta_{1-c}(1-n)\big) .
\end{equation*}
\end{cor}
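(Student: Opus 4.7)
The plan is first to show the twist parameter $c$ is well defined. The central element $\zeta$ commutes with $\tau(\Gamma)$ and, by \eqref{gampi}, with $\pi_*(\mathfrak u(n))$; joint irreducibility of $V$ and Schur's lemma then force $\tau(\zeta)=e^{2\pi ic}I$ for a unique $c\in[0,1)$. A priori $c$ need only be real, but the same Schur argument applied to the finite-dimensional $\tau$-isotypical subrepresentations shows $c$ is rational, as in Section \ref{sechei}.

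Second, we reduce the problem to the flat Dirac operator. \tref{higeta} gives
\[
\lim_{t\to\infty}\eta(D_t^{\mathrm{he},+})=\lim_{t\to\infty}\eta(\bar D_t^+),
\]
and the definition \eqref{defat} of $A_t$ as the transfer of $-D_t$ yields $\eta(A_t^{\mathrm{he},+})=-\eta(D_t^{\mathrm{he},+})$. It thus suffices to compute $\lim_{t\to\infty}\eta(\bar D_t^+)$ and reverse the sign.

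Third, decompose $V=\bigoplus_i V_i$ into $\tau$-irreducible pieces. Since $\tau(\zeta)=e^{2\pi ic}I$ on all of $V$, every $V_i$ has the same twist parameter $c$. The flat Dirac operator $\bar D_t^+$ is built from $\tau$ alone, not from $\pi_*$; hence it splits orthogonally along $\Sigma_{\mathfrak n}\otimes V=\bigoplus_i\Sigma_{\mathfrak n}\otimes V_i$, where we use the identification of $\Sigma_{\mathfrak s}^+$ with $\Sigma_{\mathfrak n}$ via $T$-Clifford multiplication that is already fixed in Chapter \ref{sechc}. Its eta invariant is then the sum of the contributions from the $V_i$, and \tref{etahei} applies to each with the Heisenberg group $G_{n-1}$ (so the dimension parameter $n$ in that theorem becomes $n-1$).

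Fourth, handle the $t$-dependence. Under the identification of the cross section $\{t\}\times\Gamma\backslash N$ with $\Phi_t(\Gamma)\backslash N$ carrying the fixed left-invariant metric $g$, the operator $\bar D_t^+$ corresponds to the operator $\bar D$ of \eqref{bard} attached to $\Phi_t(\Gamma)$ and the twist $\Phi_t\tau\Phi_t^{-1}$. Because $\Phi_t$ is a Lie-algebra automorphism, $\Phi_t(\Gamma)$ has the same isomorphism type $d$ as $\Gamma$ and therefore the same invariant $|\Gamma|$; the conjugated twist $\Phi_t\tau\Phi_t^{-1}$ inherits the central character $e^{2\pi ic}$ (since $\Phi_t$ sends the generator of the center of $\Gamma$ to that of $\Phi_t(\Gamma)$) and has the same dimension $\dim V$. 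After the normalization of \tref{etahei} requiring $\exp Z$ to generate the center (achieved by rescaling $Z$ by a $t$-dependent factor), the only surviving $t$-dependence in the formula is through the factor $(2\pi r)^{-s}$, which equals $1$ at $s=0$. Hence $\eta(\bar D_t^+,0)$ is independent of $t$.

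Summing over $i$ with $\sum_i\dim V_i=\dim V$, \tref{etahei} gives $\eta(\bar D_t^+,0)=|\Gamma|\dim V(\zeta_c(1-n)-\zeta_{1-c}(1-n))$ when $n$ is odd and $-|\Gamma|\dim V(\zeta_c(1-n)+\zeta_{1-c}(1-n))$ when $n$ is even. Negating and unifying both parities yields
\[
\lim_{t\to\infty}\eta(A_t^{\mathrm{he},+})=(-1)^n|\Gamma|\dim V\bigl(\zeta_c(1-n)+(-1)^n\zeta_{1-c}(1-n)\bigr),
\]
as asserted. The main obstacle is the fourth step: the identified lattice $\Phi_t(\Gamma)$ truly varies with $t$, and one must take care to check that the abstract invariants $(|\Gamma|,c,\dim V)$ entering \tref{etahei} are $t$-invariant while the residual metric dependence is absorbed by the factor $(2\pi r)^{-s}$ that trivializes at $s=0$.
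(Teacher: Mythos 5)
Your proposal is correct and follows essentially the same route as the paper: reduce to the flat operator $\bar D_t^+$ via \tref{higeta}, decompose $V$ into $\Gamma$-irreducible summands sharing a common twist parameter $c$ (forced by joint irreducibility), and apply \tref{etahei} with the dimension parameter shifted to $n-1$. Your explicit tracking of the sign coming from $A_t=-D_t$ and of the $t$-independence of the invariants $|\Gamma|$, $c$, $\dim V$ (with the residual metric dependence confined to the factor $(2\pi r)^{-s}$, which is $1$ at $s=0$) only spells out details the paper's terse proof leaves implicit.
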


\begin{proof}
We recall that $A_t=-D_t$, see \eqref{defat}.
By \tref{higeta},
we have $\lim_{t\to\infty} \eta(D_t^{\rm he,+})=\lim_{t\to\infty} \eta(\bar D_t^+)$.
Now the operator $\bar D_t^+$ corresponds to the operator $\bar D$
considered above, where the left-invariant metric on $G_n$ comes from
the cross section $\{t\}\times N$ in $S$.
Since $V$ is irreducible as a joint $\mathfrak u(n)$ and $\Gamma$ module,
it is a direct sum of isotypical irreducible representations of $\Gamma$
as used in \tref{etahei} so that the number $c$ is the same for each summand.
Hence \tref{etahei} applies and shows
that $\eta(\bar D_t^+)$ does not depend on $t$
and that it is given by the formula in \cref{etacor}
\end{proof}

\begin{exa}\label{exaspin}
Spinor bundles as in Example \ref{exdibu}  are given by the
trivial representation of $\mathfrak u(n)$
and classified by twists $\tau:\Gamma_d\to\{+1,-1\}$.
Since $\tau(\zeta)=\pm 1$, we have $c=0$ or $c=1/2$.
Hence the asymptotic high energy $\eta$-invariant of $A_t^+$
vanishes identically if $n$ is odd.
If $n$ is even and $c=0$, then
\begin{equation}\label{heifdeta8}
  \lim_{t\to\infty} \eta(A_t^{\rm he,+}) = 2 |\Gamma | \zeta(1-n) ,
\end{equation}
which agrees with Proposition 4.1 in \cite{DS}
in the case $\Gamma=\Gamma_{(1,\dots,1)}$ considered there
(with a different choice of orientation).
If $n$ is even and $c=1/2$, then
\begin{equation}\label{heifdeta9}
  \lim_{t\to\infty} \eta(A_t^{\rm he,+}) = 2(2^{1-n} - 1) |\Gamma | \zeta(1-n) ,
\end{equation}
where we use that $\zeta_{1/2}(s)=(2^s-1)\zeta(s)$.
Recall also that
\begin{equation}
  \zeta(1-n) = - B_n/n ,
\end{equation}
where $B_n$ denotes the $n$-th Bernoulli number.
\end{exa}

\section{Low Energy $\eta$-Invariants}
\label{secle}

\subsection{General Remarks and Computations}
\label{susgrc}
We return to the situation and notation considered in Chapter \ref{sechc}
and let $E=\mathcal P\times_\beta(\Sigma_{\mathfrak s}\otimes V)$
be a homogeneous Dirac bundle over $S$.
As in Chapter \ref{sushei},
we view sections of $E^+$ as smooth maps
$\sigma:S\to\Sigma_{\mathfrak n}\otimes V$.

The vector field $T$ is left-invariant and a global unit normal field
along the hypersurfaces $N_t:=\{t\}\times N$ of $S$.
In accordance with this,
we choose frames $(X_1,\ldots,X_m)$ of $S$ to be left-invariant and
orthonormal with $X_1=T$.
Then $X_2,\ldots,X_m$ are tangent to the hypersurfaces $N_t$.

Let $\Gamma\subseteq N$ be a lattice,
$\tau:\Gamma\to V$ be a unitary representation,
and $E_\tau$ be the induced Dirac bundle
over $\Gamma\backslash S=\R\times(\Gamma\backslash N)$.
Then we have, for any $t\in\R$, the orthogonal decomposition
\begin{equation}
  L^2(N_t,E_\tau^+) = H^{\rm le,+}(A_t) \oplus H^{\rm he,+}(A_t) ,
\end{equation}
where $H^{\rm le,+}(A_t)$ is the space of constant maps
$N_t\to\Sigma_{\mathfrak n}\otimes V$,
compare Chapter \ref{secthi} and, in particular, \eqref{lecheh}.

\begin{prop}\label{triviata}
If $V$ is irreducible as a joint $\mathfrak k$ and $\Gamma$ module
and $\tau$ is non-trivial,
then the low energy spaces $H_t^{\rm le,+}(A_t)$ are trivial and,
therefore, $\eta(A_t^{\rm he,+})=0$, for all $t\in\R$.
\qed
\end{prop}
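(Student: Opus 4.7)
The strategy is to unpack the definition of $H_t^{\rm le,+}(A_t)$ and reduce the claim to a statement about $\Gamma$-fixed vectors in $V$, which can then be eliminated via the joint irreducibility hypothesis. By \eqref{lecheh}, $H_t^{\rm le,+}(A_t)$ coincides with $H_t^{\rm c,+}$, the space of $\bar\nabla^E$-parallel sections of $E_\tau^+$ over $N_t = \{t\}\times \Gamma\backslash N$. Using the left-invariant orbit $F_e$ in $\mathcal P$ as in \eqref{dxf}, such a section corresponds to a map $\sigma:N\to \Sigma^+_{\mathfrak n}\otimes V$ that is annihilated by every left-invariant vector field on $N$ (hence constant) and that transforms under $\Gamma$ by $\sigma(\gamma n)=\tau(\gamma)\sigma(n)$ per \eqref{twist}, where $\tau$ is extended trivially to $\Sigma^+_{\mathfrak n}$ as in \lref{tbc}.

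Combining these two conditions, a low-energy section is determined by a vector $w\in \Sigma^+_{\mathfrak n}\otimes V$ with $(\mathrm{id}\otimes \tau(\gamma))w = w$ for all $\gamma\in\Gamma$. Expanding $w$ in a basis of $\Sigma^+_{\mathfrak n}$, each coefficient must lie in the $\Gamma$-fixed subspace
\begin{equation*}
  V^{\Gamma} := \{ v \in V : \tau(\gamma)v = v \text{ for all } \gamma \in \Gamma \} ,
\end{equation*}
so $H_t^{\rm le,+}(A_t) \cong \Sigma^+_{\mathfrak n}\otimes V^{\Gamma}$.

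The crux is to show $V^{\Gamma}=0$. By the compatibility \eqref{gampi}, for any $v\in V^{\Gamma}$ and $Y\in\mathfrak k$ we have $\tau(\gamma)\pi_*(Y)v = \pi_*(Y)\tau(\gamma)v = \pi_*(Y)v$, so $V^{\Gamma}$ is stable under $\pi_*$; it is trivially stable under $\tau$ as well. The joint irreducibility of $V$ as a $\mathfrak k$-$\Gamma$ module then leaves only two possibilities, $V^{\Gamma}=0$ or $V^{\Gamma}=V$; the second would force $\tau$ to act by the identity, contradicting the non-triviality of $\tau$. Hence $V^{\Gamma}=0$, so $H_t^{\rm le,+}(A_t)=0$ for every $t\in\R$, and the corresponding $\eta$-invariant vanishes tautologically (as the $\eta$-invariant of an operator on the zero space).

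This argument has no real obstacle beyond correctly identifying the constant sections with $\Sigma^+_{\mathfrak n}\otimes V^{\Gamma}$ and invoking the Schur-type dichotomy supplied by joint irreducibility; everything else is immediate from the setup of Chapter \ref{sechc}.
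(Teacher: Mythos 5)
Your proof is correct and is exactly the argument the paper leaves implicit (the proposition carries no written proof): by \eqref{lecheh} and \eqref{dxf} the low-energy space consists of constant maps subject to the twist condition \eqref{twist}, hence is $\Sigma_{\mathfrak n}\otimes V^\Gamma$, and the $\Gamma$-fixed subspace $V^\Gamma$ is a joint $\mathfrak k$- and $\Gamma$-submodule by \eqref{gampi}, so joint irreducibility together with the non-triviality of $\tau$ forces $V^\Gamma=0$. Note that the ``$\eta(A_t^{\rm he,+})=0$'' in the statement is evidently a misprint for the \emph{low}-energy $\eta$-invariant (compare the sentence following the proposition and Theorem \ref{corrcom}, where the high-energy limit is explicitly non-zero for non-trivial $\tau$), and your reading of the conclusion as the tautological vanishing of the $\eta$-invariant on the zero space is the intended one.
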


Thus the low energy $\eta$-invariant can only be non-trivial
when $\tau$ is trivial.
We refer to this as the {\em untwisted case}
and assume for the rest of this section that we are in this case,
whether $V$ is irreducible as a $\mathfrak k$ module or not.
Then the space $H^{\rm le,+}(A_t)$ is isomorphic
to $\Sigma_{\mathfrak n}\otimes V$,
by identifying constant maps with their respective values.

For $\sigma\in H^{\rm le,+}(A_t)$ and with $A_X$ as in \eqref{axy},
we have
\begin{equation}
  D_t^+\sigma = \sum_{2\le j\le m} TX_j \cdot \beta_*(A_{X_j})\sigma
  + \frac\kappa2 \sigma ,
  \label{hdbdt}
\end{equation}
by \eqref{dts},
where we recall our convention $X_1=T$.
Our objective in this chapter is the $\eta$-invariant of $D_t^+$
on $H^{\rm le}(A_t^+)$.
We view elements of $H^{\rm le,+}(A_t)$ as constant maps on $S$.
Then $H^{\rm le,+}(A_t)$ becomes independent of $\Gamma$ and $t$.
By \eqref{hdbdt}, $D_t^+$ does not depend on $t$ either.
As a shorthand, we will write
\begin{equation}
  \text{$H^{\rm le}_N$ for $H^{\rm le,+}(A_t)\quad$
  and $\quad D_N^{\rm le}$ for $D_t^+$ on $H^{\rm le}_N$.}
  \label{short}
\end{equation}
Recall that $\beta_*=\hat\alpha_*\otimes\id + \id\otimes\pi_*$,
by \eqref{hdb}, and that
\begin{equation}
  \hat\alpha_{*}(A_X) = \frac12\sum_{1\le j<k\le m}
  \langle\nabla_XX_j,X_k\rangle X_jX_k .
  \label{dspin}
\end{equation}
where $X_jX_k$ stands for Clifford multiplication by $X_jX_k$.
With our convention $X_1=T$, \eqref{dspin} turns into
\begin{equation}
  \hat\alpha_{*}(A_X)
  = \frac12T\nabla_XT + \frac12\sum\nolimits_{2\le j<k\le m}
  \langle\nabla_XX_j,X_k\rangle X_jX_k .
  \label{dspin2}
\end{equation}
It follows that \eqref{tast} and \eqref{nast} define
the Dirac structure on $E$ associated to the Riemannian metric of $N$.
As for the first term on the right hand side of \eqref{dspin2}, we note that
\begin{equation}
  \sum_{j\ge2}T\nabla_{X_j}T = - \kappa .
  \label{cancelkap}
\end{equation}
Choose an orthonormal frame $(X_2,\ldots,X_m)$ of $\mathfrak n$
such that $[X_j,X_k]$ is contained in the linear hull of the $X_l$
with $l<\min\{j,k\}$.
On $H^{\rm le}_N$, we then have
\begin{equation}
  D^{\rm le}_N - \sum_{j\ge2} TX_j\otimes\pi_*(A_{X_j})
  =  \frac12 \sum_{j\ge2\le k<l}TX_j
  \langle\nabla_{X_j}X_k,X_l\rangle X_kX_l ,
  \label{4dn1}
\end{equation}
by \eqref{hdbdt}, \eqref{dspin2}, and \eqref{cancelkap}.
By the Koszul formula and since $[X_j,X_k]$ is perpendicular to $X_l$ for $k<l$,
the right hand side is equal to
\begin{equation}
  -\frac14 \sum_{j\ge2\le k<l}TX_j
  \big(\langle X_j,[X_k,X_l]\rangle + \langle X_k,[X_j,X_l]\rangle \big) X_kX_l .
  \label{4dn2}
\end{equation}
Now $[X_j,X_l]$ is a linear combination of the $X_k$ for $k<l$,
hence the terms in \eqref{4dn2} can be rewritten as
\begin{equation}
  - \frac14 \sum_{2\le k<l}T[X_k,X_l]X_kX_l
  -\frac14 \sum_{2\le j,l}TX_j[X_j,X_l]X_l .
  \label{4dn3}
\end{equation}
Since $X_2$ is central, these terms are equal to
\begin{equation}
  \frac14 \sum_{2<j<k}T[X_j,X_k]X_jX_k
  = \frac 18 \sum_{j,k>2}T[X_j,X_k]X_jX_k .
  \label{4dn4}
\end{equation}
In conclusion,
\begin{equation}
  D^{\rm le}_N
  = \frac18 \sum_{j,k>2}T[X_j,X_k]X_jX_k
  + \sum_{j\ge2} TX_j\otimes\pi_*(A_{X_j}) ,
  \label{4dn}
\end{equation}
our main formula regarding $D^{\rm le}_N$.

For general nilpotent Lie groups $N$, it will probably be hard to get an explicit
formula for the low energy $\eta$-invariant.
We expect that a simple explicit formula does exist for groups of Heisenberg type
as in \cite{Ka}.
So far, we have been able to achieve this for the standard Heisenberg groups
as in Chapter \ref{sushei};
their low energy $\eta$-invariant will be the topic in Section \ref{suschc} below.
Recall that cusps of complex hyperbolic manifolds correspond to the case
where $N$ is the standard Heisenberg group
and that cusps of quaternionic and Cayley hyperbolic manifolds also give rise
to groups of Heisenberg type.

Suppose now that $N$ is of Heisenberg type.
That is, we are given an orthogonal decomposition
\begin{equation}
  \mathfrak n = \mathfrak z + \mathfrak x ,
  \label{hety}
\end{equation}
where $\mathfrak z$ is contained in the center of $\mathfrak n$,
and a linear map $J$ from $\mathfrak z$ into the space of skew-symmetric
endomorphisms of $\mathfrak x$ such that the Clifford relations hold,
\begin{equation}
  J_{Z_1}J_{Z_2} + J_{Z_2}J_{Z_1} + 2 \langle Z_1,Z_2\rangle = 0 ,
  \label{hety2}
\end{equation}
for all $Z_1,Z_2\in\mathfrak z$.
Moreover, the Lie brackets of vectors in $\mathfrak x$ are contained
in $\mathfrak z$ and satisfy, by definition,
\begin{equation}
   \langle [X_1,X_2],Z\rangle
   = 2c \langle J_ZX_1,X_2\rangle ,
   \label{hety4}
\end{equation}
for all $X_1,X_2\in\mathfrak x$ and $Z\in\mathfrak z$,
where $c>0$ is a fixed constant.
Compare with \eqref{chj}--\eqref{chht},
where we have the case $\dim\mathfrak z=1$ and $c=1$.

The constant $c>0$ in \eqref{hety4} is arbitrary, and the derivation $W$
is defined to have $\mathfrak x$ and $\mathfrak z$
as eigenspaces with  $-c$  and $-2c$ as respective eigenvalues.
This normalization has the following amazing formula as a consequence.

\begin{lem}\label{amaz}
For all $Z\in\mathfrak z$ and $X\in\mathfrak x$, we have
\begin{equation*}
  R(Z,X) = R(J_ZX,T) .
\end{equation*}
\end{lem}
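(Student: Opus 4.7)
My plan is to prove the identity by expanding both sides in the basis $T$, $Z' \in \mathfrak{z}$, $Y \in \mathfrak{x}$, using explicit formulas for the Levi-Civita connection on left-invariant vector fields.

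First, I would use Koszul's formula to extend the general formulas \eqref{dtx}--\eqref{dxt} to all pairs of left-invariant fields, now inputting the Heisenberg-type data $\langle[X_1,X_2],Z\rangle=2c\langle J_ZX_1,X_2\rangle$, $WX=-cX$ for $X\in\mathfrak{x}$, and $WZ=-2cZ$ for $Z\in\mathfrak{z}$. A direct computation yields
\begin{align*}
  \nabla_X Y &= c\langle X,Y\rangle T + c\,\omega(X,Y), \quad X,Y\in\mathfrak{x},\\
  \nabla_X Z &= \nabla_Z X = -c J_Z X, \quad X\in\mathfrak{x},\ Z\in\mathfrak{z},\\
  \nabla_Z Z' &= 2c\langle Z,Z'\rangle T, \quad Z,Z'\in\mathfrak{z},
\end{align*}
where $\omega(X,Y)\in\mathfrak{z}$ is characterized by $\langle\omega(X,Y),Z\rangle=\langle J_ZX,Y\rangle$. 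Together with \eqref{dtx} and \eqref{dxt}, this determines $\nabla$ on $\mathfrak{s}\otimes\mathfrak{s}$. The only subtle point here is to notice the $T$-component in $\nabla_Z Z'$ and in $\nabla_X Y$ for $X,Y\in\mathfrak{x}$, which arises because $\mathfrak{n}$ is not totally geodesic in $S$.

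For the right-hand side I would invoke \eqref{rxt}, which gives $R(J_ZX,T)B=-\nabla_{WJ_ZX}B=c\,\nabla_{J_ZX}B$ for every left-invariant $B$, since $J_ZX\in\mathfrak{x}$. For the left-hand side I would use $[Z,X]=0$ to write $R(Z,X)B=\nabla_Z\nabla_XB-\nabla_X\nabla_ZB$, and then evaluate on the three basis types:

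\emph{Case $B=T$.} Both sides equal $-c^2 J_ZX$: the right-hand side from $\nabla_{J_ZX}T=-cJ_ZX$, the left-hand side from $\nabla_Z(-cX)-\nabla_X(-2cZ)=c^2J_ZX-2c^2J_ZX$.

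\emph{Case $B=Z'\in\mathfrak{z}$.} The right-hand side equals $c\nabla_{J_ZX}Z'=-c^2J_{Z'}J_ZX$. The left-hand side equals $\nabla_Z(-cJ_{Z'}X)-\nabla_X(2c\langle Z,Z'\rangle T)=c^2J_ZJ_{Z'}X+2c^2\langle Z,Z'\rangle X$, which coincides with $-c^2J_{Z'}J_ZX$ by the Clifford relation $J_ZJ_{Z'}+J_{Z'}J_Z=-2\langle Z,Z'\rangle$.

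\emph{Case $B=Y\in\mathfrak{x}$.} This is the most involved case, and it will be the main obstacle: one has to track two $T$-components and a $\mathfrak{z}$-component carefully. Expanding both sides with the above formulas and using $\langle X,J_ZY\rangle=-\langle J_ZX,Y\rangle$, equality reduces to the identity
\[
  \omega(J_ZX,Y)-\omega(X,J_ZY) = -2\langle X,Y\rangle\,Z
\]
in $\mathfrak{z}$. Pairing with an arbitrary $Z'\in\mathfrak{z}$ turns the left-hand side into $\langle(J_{Z'}J_Z+J_ZJ_{Z'})X,Y\rangle$, which by the Clifford relation equals $-2\langle Z,Z'\rangle\langle X,Y\rangle$, matching the right-hand side. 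This completes the verification.
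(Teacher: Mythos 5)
Your proof is correct, and it is precisely the computation the paper's one-line proof alludes to: the paper simply says ``by straightforward computations, using \eqref{dxt}, \eqref{rxt}, \eqref{hety2}, and \eqref{hety4}'', and your argument carries that out explicitly via the Koszul formula, the reduction $R(J_ZX,T)=c\nabla_{J_ZX}$, and the Clifford relation. All three case checks, including the reduction of the $\mathfrak x$-case to $\omega(J_ZX,Y)-\omega(X,J_ZY)=-2\langle X,Y\rangle Z$, are accurate.
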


\begin{rems}
1) In Lemma \ref{amaz}, we consider $R(Z,X)$ as an endomorphism of $\mathfrak s$,
the tangent space of $S$ at the neutral element,
but note that $R(Z,X)$ acts also on associated bundles.

2) If $N$ is the standard Heisenberg group of dimension $2n+1$,
then $S$ is isometric to the complex hyperbolic space $\C H^{n+1}$
of dimension $2n+2$ with sectional curvature in $[-4c^2,-c^2]$
and complex structure $J$ with $JT=Z$
and such that $J$ coincides with $J_Z$ on $N$.
In this case, the equation in \lref{amaz} is a special case of the more
general $R(JU,V)=-R(U,JV)$ which says that the curvature tensor
of $\C H^{n+1}$ is a differential form of type $(1,1)$.
\end{rems}

\begin{proof}[Proof of \lref{amaz}]
By straightforward computations,
using \eqref{dxt}, \eqref{rxt}, \eqref{hety2}, and \eqref{hety4},
\end{proof}

Let $Z\in\mathfrak z$ with $|Z|=1$.
Then $J_Z$ is an orthogonal complex structure on $\mathfrak x$.
In particular, the dimension of $\mathfrak x$ is even,
and we denote it by $2n$.
Moreover, there is an orthonormal basis $(X_1,\ldots,X_{2n})$
of $\mathfrak x$ such that $J_ZX_{2j-1}=X_{2j}$, for $1\le j\le n$.
Given any such basis, set
\begin{equation}
  D_Z := \frac{c}2 \sum\nolimits_{1\le j\le n}
  TZX_{2j-1}X_{2j} + TZ\otimes\pi_*(A_Z) .
  \label{dz}
\end{equation}
Observe that, for any orthonormal basis $(Y_1,\ldots,Y_{2n})$
of $\mathfrak x$,
\begin{equation}\label{dz2}
\begin{split}
  D_Z &= \frac18 \sum\nolimits_{j,k}
  \langle[Y_j,Y_k],Z\rangle TZY_{j}Y_{k} + TZ\otimes\pi_*(A_Z) \\
  &=  \frac{c}4 \sum\nolimits_{j,k}
  \langle J_ZY_j,Y_k\rangle TZY_{j}Y_{k} + TZ\otimes\pi_*(A_Z) \\
  &=  \frac{c}2 \sum\nolimits_{j<k}
  \langle J_ZY_j,Y_k\rangle TZY_{j}Y_{k} + TZ\otimes\pi_*(A_Z)
\end{split}
\end{equation}
In what follows, let $\{A,B\}:=AB+BA$.

\begin{lem}\label{anco}
For any $X\in\mathfrak x$, we have
\begin{equation*}
  \{ D_Z, TX\otimes\pi_*(A_X) + TJ_ZX\otimes\pi_*(A_{J_ZX}) \} = 0 .
\end{equation*}
\end{lem}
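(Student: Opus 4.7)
The plan is to reduce the vanishing of $\{D_Z, B_X\}$, where $B_X := TX \otimes \pi_*(A_X) + TJ_Z X \otimes \pi_*(A_{J_Z X})$, to the single algebraic identity
\[
[A_Z, A_X] = c\, A_{J_Z X} \quad \text{in } \mathfrak{k},
\]
and then to establish the latter from \lref{amaz} together with \eqref{rxt}.

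First I would carry out a symmetry reduction. Because $J_Z^2 = -\mathrm{id}$, the operator $B_X$ satisfies $B_{J_Z X} = B_X$, so it depends only on the $J_Z$-invariant $2$-plane spanned by $X$ and $J_Z X$. For $X=0$ there is nothing to prove; otherwise, pick an orthonormal basis $(X_1, \ldots, X_{2n})$ of $\mathfrak{x}$ with $X_1 = X/|X|$ and $J_Z X_{2k-1} = X_{2k}$ for all $k$. By \eqref{dz2} $D_Z$ has the same form in this basis, and since $B_X$ is quadratic in $X$, it suffices to treat the case $X = X_1$, $J_Z X = X_2$. Next I would split $D_Z = D_Z^{\mathrm{Cl}} + D_Z^{\mathrm{cur}}$ with $D_Z^{\mathrm{Cl}} := \tfrac{c}{2}\sum_k TZ X_{2k-1} X_{2k}$ and $D_Z^{\mathrm{cur}} := TZ \otimes \pi_*(A_Z)$. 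Because Clifford multiplication on $\Sigma_{\mathfrak{s}}$ and the endomorphisms in $\pi_*(\mathfrak{k})$ act on different tensor factors of $\Sigma_{\mathfrak{s}} \otimes V$, the anticommutator $\{D_Z, B_X\}$ splits into a sum of tensor products of Clifford-algebra anticommutators with products of endomorphisms of $V$.

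A short Clifford computation---using $YY' = -Y'Y$ for distinct orthonormal basis vectors and $T^2 = -1$---gives $\{TZ X_{2k-1}X_{2k}, TX_l\} = 0$ whenever $k \neq 1$ and $l \in \{1, 2\}$, while $\{TZ X_1 X_2, TX_1\} = 2 Z X_2$ and $\{TZ X_1 X_2, TX_2\} = -2Z X_1$. Together with $\{TZ, TX_l\} = 0$ and $TZ \cdot TX_l = Z X_l$ (since $T, Z, X_l$ are mutually orthogonal unit vectors), this yields
\[
\{D_Z, B_X\} = ZX_1 \otimes \bigl([\pi_*(A_Z), \pi_*(A_{X_1})] - c\, \pi_*(A_{X_2})\bigr) + Z X_2 \otimes \bigl([\pi_*(A_Z), \pi_*(A_{X_2})] + c\, \pi_*(A_{X_1})\bigr).
\]
By linear independence of $ZX_1$ and $ZX_2$ in the Clifford algebra, the vanishing of $\{D_Z, B_X\}$ reduces to $[\pi_*(A_Z), \pi_*(A_X)] = c\,\pi_*(A_{J_Z X})$ for $X \in \{X_1, X_2\}$. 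Since $\pi_*$ is a Lie algebra homomorphism, this follows from the displayed identity in the opening paragraph.

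The hard part is proving $[A_Z, A_X] = c\, A_{J_Z X}$ in $\mathfrak{k}$. I would unwind the definitions using the Heisenberg-type normalization $WZ = -2cZ$ and $WX = -cX$, so that $A_Z = -\tfrac{1}{2c}R(T,Z)$ and $A_X = -\tfrac{1}{c}R(T,X)$, whence $[A_Z, A_X] = \tfrac{1}{2c^2}[R(T,Z), R(T,X)]$. By \eqref{rxt}, $R(T, Y) = \nabla_{WY}$ for $Y \in \mathfrak{n}$, hence $R(T, Z) = -2c\nabla_Z$ and $R(T, X) = -c\nabla_X$ as operators on tensor fields, so $[R(T,Z), R(T,X)] = 2c^2\,[\nabla_Z, \nabla_X]$. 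Because $Z$ is central in $\mathfrak{n}$, $[Z, X] = 0$ and the standard identity $[\nabla_Z, \nabla_X] = R(Z, X) + \nabla_{[Z,X]}$ collapses to $[\nabla_Z, \nabla_X] = R(Z, X)$. Finally \lref{amaz} gives $R(Z, X) = R(J_Z X, T) = -R(T, J_Z X) = c\, A_{J_Z X}$, completing the identification. The main obstacle is recognizing that \lref{amaz}, combined with the centrality of $Z$ in $\mathfrak{n}$, is exactly what produces the required commutation relation on the endomorphism side; once these two inputs are identified, the algebra is short.
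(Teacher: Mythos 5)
Your proof is correct and follows essentially the same route as the paper's: both split $D_Z$ into its Clifford part and the term $TZ\otimes\pi_*(A_Z)$, compute the same Clifford anticommutators (with only the terms involving the $X$--$J_ZX$ plane surviving), and reduce the cancellation to the identity $[A_Z,A_X]=R(Z,X)=R(J_ZX,T)=c\,A_{J_ZX}$ obtained from \lref{amaz} together with the centrality of $Z$ in $\mathfrak n$. The only difference is organizational: you package the cancellation as a single commutation relation in $\mathfrak k$ pushed through $\pi_*$, whereas the paper exhibits the four surviving anticommutators and cancels them pairwise.
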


\begin{proof}
We have $cA_X=R(X,T)$, by \eqref{rxt}, and hence
\begin{equation}
  c\{TZXJ_ZX, TX\otimes\pi_*(A_X) \}
  = 2ZJ_ZX\otimes\pi_*(R(X,T)) .
  \label{anco1}
\end{equation}
By substituting $J_ZX$ for $X$ in \eqref{anco1}, we obtain
\begin{equation}
  c\{TZXJ_ZX, TJ_ZX\otimes\pi_*(A_{J_ZX}) \}
  = -2ZX\otimes\pi_*(R(J_ZX,T)) .
  \label{anco1j}
\end{equation}
We also have $[Z,X]=0$, hence $[A_Z,A_X] = R(Z,X)$.
Furthermore, $R(Z,X) = R(J_ZX,T)$, by \lref{amaz},
hence
\begin{equation}
  \{TZ\otimes\pi_*(A_Z), TX\otimes\pi_*(A_X) \}
  = ZX\otimes\pi_*(R(J_ZX,T)) .
  \label{anco2}
\end{equation}
By substituting $J_ZX$ for $X$ in \eqref{anco2},
we obtain
\begin{equation}
    \{TZ\otimes\pi_*(A_Z), TJ_ZX\otimes\pi_*(A_{J_ZX}) \}
  = -ZJ_ZX\otimes\pi_*(R(X,T)) .
  \label{anco2j}
\end{equation}
Moreover, we have
\begin{multline}
  \{TZYJ_ZY, TX\otimes\pi_*(A_{X}) \} \\
  = \{TZYJ_ZY, TJ_ZX\otimes\pi_*(A_{J_ZX}) \} = 0 ,
  \label{anco3}
\end{multline}
for all $Y\in\mathfrak x$ perpendicular to $X$ and $J_ZX$.
Now we may assume that $X$ is of norm $1$.
Then there is an orthonormal basis $(X_1,\ldots,X_{2n})$
of $\mathfrak x$ such that $J_ZX_{2j-1}=X_{2j}$, for $1\le j\le n$,
and such that $X=X_1$.
By \eqref{anco3}, the terms of $D_Z$ involving $TZX_{2j-1}X_{2j}$,
$j\ge2$, do not contribute to the anti-commutator
$\{D_Z,TX\otimes\pi_*(A_X) + TJ_ZX\otimes\pi_*(A_{J_ZX})\}$.
The four remaining terms cancel pairwise,
by \eqref{anco1}--\eqref{anco2j}.
\end{proof}

For an orthonormal basis $(X_1,\ldots,X_{2n})$ of $\mathfrak x$, set
\begin{equation}
  D_{\mathfrak x}
  := TX_1\otimes\pi_*(A_{X_1}) +\dots+ TX_{2n}\otimes\pi_*(A_{X_{2n}}) ,
  \label{dxx}
\end{equation}
and note that $D_{\mathfrak x}$ does not depend on the choice
of $(X_1,\ldots,X_{2n})$.

\begin{rem}\label{abel}
If $\mathfrak z=0$, then $\mathfrak n$ is Abelian and we are in the case
of real hyperbolic spaces or cusps, respectively,
and we get $D^{\rm le}_N=D_{\mathfrak x}$ on $H^{\rm le}_N$.
The contribution of cusps in the case $\dim N=1$
follows easily from the more general discussion in \cite{BB1}.
If $\dim N\ge2$, then the arguments in the proof of \tref{vaneta} apply
and show that the low energy $\eta$-invariant vanishes.
\end{rem}

\begin{lem}\label{anco5}
For any unit vectors $Z\in\mathfrak z$,
\begin{equation*}
  \{D_Z,D_{\mathfrak x}\} = 0 .
\end{equation*}
\end{lem}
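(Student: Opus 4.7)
The plan is to reduce the claim directly to Lemma \ref{anco} by choosing an orthonormal basis of $\mathfrak{x}$ compatible with the complex structure $J_Z$. Since $Z$ has unit norm, $J_Z$ is an orthogonal complex structure on $\mathfrak{x}$, so I may pick an orthonormal basis $(X_1,\ldots,X_{2n})$ of $\mathfrak{x}$ with $J_Z X_{2j-1} = X_{2j}$ for $1 \le j \le n$. The operator $D_{\mathfrak{x}}$ defined in \eqref{dxx} is manifestly independent of the choice of orthonormal basis of $\mathfrak{x}$, so I am free to work with this adapted basis.

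Next, I would regroup the sum defining $D_{\mathfrak{x}}$ according to the $J_Z$-pairs:
\begin{equation*}
  D_{\mathfrak{x}} = \sum_{j=1}^{n} \Bigl( TX_{2j-1}\otimes\pi_*(A_{X_{2j-1}})
  + TJ_Z X_{2j-1}\otimes\pi_*(A_{J_Z X_{2j-1}}) \Bigr) .
\end{equation*}
By Lemma \ref{anco} applied with $X = X_{2j-1}$, each summand anti-commutes with $D_Z$. Since the anti-commutator $\{D_Z,\cdot\}$ is linear in the second argument, summing over $j$ gives $\{D_Z, D_{\mathfrak{x}}\} = 0$, as desired.

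There is no real obstacle here: all the work has been done in Lemma \ref{anco}, where the delicate cancellation between the spinorial piece $TZXJ_ZX$ and the curvature term $TZ\otimes\pi_*(A_Z)$ of $D_Z$ was established using the identity $R(Z,X) = R(J_ZX,T)$ from Lemma \ref{amaz}. The only point to verify is the compatibility of the regrouping with the basis-independence of $D_{\mathfrak{x}}$, which is immediate.
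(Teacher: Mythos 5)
Your proposal is correct and is exactly the argument the paper gives: choose an orthonormal basis of $\mathfrak x$ adapted to $J_Z$, regroup $D_{\mathfrak x}$ into $J_Z$-pairs, and apply Lemma \ref{anco} to each pair together with linearity of the anti-commutator. No issues.
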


\begin{proof}
Apply \lref{anco},
using an orthonormal basis $(X_1,\ldots,X_{2n})$ of $\mathfrak x$
with $J_ZX_{2j-1}=X_{2j}$, for $1\le j\le n$.
\end{proof}

Assume from now on that $\mathfrak z\ne0$, compare Remark \ref{abel}.
For an orthonormal basis $(Z_1,\ldots,Z_\ell)$ of $\mathfrak z$, set
\begin{equation}
  D_{\mathfrak z} := D_{Z_1} +\dots+ D_{Z_\ell} ,
  \label{dzz}
\end{equation}
and note that $D_{\mathfrak z}$ does not depend on the choice
of $(Z_1,\ldots,Z_\ell)$.

\begin{cor}\label{anco7}
On $H^{\rm le}_N$, we have
\begin{equation*}
  D^{\rm le}_N =  D_{\mathfrak z} + D_{\mathfrak x}
  \quad\text{and}\quad
  \{D_{\mathfrak z},D_{\mathfrak x}\} = 0 .
  \qed
\end{equation*}
\end{cor}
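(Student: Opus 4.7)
The goal splits into the identity $D^{\rm le}_N = D_{\mathfrak z} + D_{\mathfrak x}$ and the anticommutation relation $\{D_{\mathfrak z}, D_{\mathfrak x}\} = 0$. The second assertion is essentially immediate: fix an orthonormal basis $(Z_1,\ldots,Z_\ell)$ of $\mathfrak z$, so that $D_{\mathfrak z} = \sum_i D_{Z_i}$ by the definition \eqref{dzz}; bilinearity of the anticommutator then gives $\{D_{\mathfrak z}, D_{\mathfrak x}\} = \sum_i \{D_{Z_i}, D_{\mathfrak x}\} = 0$ by \lref{anco5}. So the real content of the corollary lies in the first identity.

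For the identity, the plan is to start from formula \eqref{4dn} for $D^{\rm le}_N$. Although \eqref{4dn} was written using a basis in which only $X_2$ is singled out as central, its derivation via the Koszul formula and Clifford algebra goes through verbatim once one orders the basis of $\mathfrak n$ so that central vectors $(Z_1,\ldots,Z_\ell)$ come first, followed by an orthonormal basis $(Y_1,\ldots,Y_{2n})$ of $\mathfrak x$: the key cancellations between the three Koszul terms rely only on (i) each $Z_i$ being central, so $[Z_i,\cdot]=0$, and (ii) each $Z_i$ Clifford-anticommuting with each $Y_p$, both of which hold for any orthonormal central element, not just $X_2$. The upshot is
\begin{equation*}
   D^{\rm le}_N = \tfrac18\sum_{p,q} T[Y_p, Y_q] Y_p Y_q + \sum_i TZ_i \otimes \pi_*(A_{Z_i}) + \sum_p TY_p \otimes \pi_*(A_{Y_p}),
\end{equation*}
the first sum collapsing to pairs from $\mathfrak x$ because $\mathfrak z$ is central.

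The third sum on the right equals $D_{\mathfrak x}$ by its definition \eqref{dxx}, so it remains to show that the bracket sum together with the $\mathfrak z$-piece of the $\pi_*$-sum builds up $D_{\mathfrak z}$. Substituting the Heisenberg-type relation $[Y_p, Y_q] = 2c\sum_i \langle J_{Z_i} Y_p, Y_q\rangle Z_i$ from \eqref{hety4} into the bracket sum gives
\begin{equation*}
    \tfrac18\sum_{p,q} T[Y_p, Y_q] Y_p Y_q = \tfrac{c}{4}\sum_i \sum_{p,q} \langle J_{Z_i} Y_p, Y_q \rangle TZ_i Y_p Y_q ,
\end{equation*}
and by the middle line of \eqref{dz2} the inner sum over $p,q$ equals $D_{Z_i} - TZ_i \otimes \pi_*(A_{Z_i})$. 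Summing over $i$, the correction term $\sum_i TZ_i \otimes \pi_*(A_{Z_i})$ exactly cancels the second sum in the displayed formula for $D^{\rm le}_N$, leaving $D^{\rm le}_N = D_{\mathfrak z} + D_{\mathfrak x}$.

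The only point requiring care is the claim that \eqref{4dn} extends cleanly to a multi-dimensional center; this amounts to re-running \eqref{4dn1}--\eqref{4dn4} with the indices $2,\ldots,\ell+1$ playing the role of the single index $2$, and the Clifford-anticommutation between the $Z_i$'s and the $Y_p$'s supplying the sign flip that $X_2$ provided in the original argument. This is routine bookkeeping; once it is in hand, the identity follows entirely from the two symbolic manipulations above, and the corollary is proved.
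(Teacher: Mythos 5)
Your proof is correct and follows exactly the route the paper intends: the corollary carries no written proof, and the intended argument is precisely your assembly of \eqref{4dn} (re-read with a basis of $\mathfrak n$ adapted to the splitting $\mathfrak z\oplus\mathfrak x$), the rewriting \eqref{dz2} via \eqref{hety4}, and \lref{anco5} summed over an orthonormal basis of $\mathfrak z$. Your flagged point of care --- that the passage \eqref{4dn1}--\eqref{4dn4} must be re-run for a multi-dimensional center, with the Clifford anticommutation of central and non-central directions supplying the sign --- is the right one, and the check goes through as you describe.
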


\begin{prop}\label{etadd}
On $H^{\rm le}_N$, we have
\begin{align}
  \ker(D^{\rm le}_N) &= \ker D_{\mathfrak z}\cap\ker D_{\mathfrak x} ,
  \tag{1} \label{etadd1}  \\
  \eta(D^{\rm le}_N) &= \eta(D_{\mathfrak z})
  = \eta (D_{\mathfrak z}|_{\ker D_{\mathfrak x}}) .
  \tag{2} \label{etadd2}
\end{align}
\end{prop}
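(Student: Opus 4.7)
The plan is to leverage the two structural facts from \cref{anco7}, namely $D^{\rm le}_N = D_{\mathfrak z} + D_{\mathfrak x}$ and $\{D_{\mathfrak z}, D_{\mathfrak x}\} = 0$, and to exploit that $H^{\rm le}_N \cong \Sigma_{\mathfrak n}\otimes V$ is finite-dimensional so that the $\eta$-invariant reduces to the signature of an invertible piece plus a kernel correction.

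For assertion (1), first observe that the anti-commutation gives
\begin{equation*}
  (D^{\rm le}_N)^2 = D_{\mathfrak z}^2 + D_{\mathfrak x}^2 ,
\end{equation*}
an identity of non-negative self-adjoint operators. Hence $D^{\rm le}_N\sigma=0$ is equivalent to $\|D_{\mathfrak z}\sigma\|^2+\|D_{\mathfrak x}\sigma\|^2=0$, which in turn forces $D_{\mathfrak z}\sigma=D_{\mathfrak x}\sigma=0$. For assertion (2), I would split $H^{\rm le}_N=\ker D_{\mathfrak x}\oplus(\ker D_{\mathfrak x})^\perp$; both summands are $D_{\mathfrak z}$-invariant (since anti-commuting operators preserve each other's kernels and their orthogonal complements) and therefore also $D^{\rm le}_N$-invariant. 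On $\ker D_{\mathfrak x}$ we simply have $D^{\rm le}_N=D_{\mathfrak z}$, so this part contributes $\eta(D_{\mathfrak z}|_{\ker D_{\mathfrak x}})$ to both $\eta(D^{\rm le}_N)$ and $\eta(D_{\mathfrak z})$. The real task is to show that both $\eta(D_{\mathfrak z})$ and $\eta(D^{\rm le}_N)$ pick up no contribution from $(\ker D_{\mathfrak x})^\perp$.

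The vanishing of $\eta(D_{\mathfrak z}|_{(\ker D_{\mathfrak x})^\perp})$ is immediate: on this subspace $D_{\mathfrak x}$ is invertible, so $U:=D_{\mathfrak x}|D_{\mathfrak x}|^{-1}$ is a self-adjoint involution. Since anti-commutation squared is commutation, $D_{\mathfrak z}$ commutes with $D_{\mathfrak x}^2$ and hence with $|D_{\mathfrak x}|$, and a short computation then gives $\{U,D_{\mathfrak z}\}=0$, forcing symmetric spectrum. For $\eta(D^{\rm le}_N|_{(\ker D_{\mathfrak x})^\perp})$, however, $U$ does \emph{not} anti-commute with the sum $D_{\mathfrak z}+D_{\mathfrak x}$. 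I would proceed by the deformation $D(t):=D_{\mathfrak z}+tD_{\mathfrak x}$, for which $D(t)^2=D_{\mathfrak z}^2+t^2D_{\mathfrak x}^2\ge t^2\lambda_0 I$ on $(\ker D_{\mathfrak x})^\perp$, where $\lambda_0>0$ is the smallest eigenvalue of $D_{\mathfrak x}^2$ there. Thus $D(t)$ is invertible for every $t>0$, so in the finite-dimensional setting $\eta(D(t))$ equals the signature of $D(t)$ and is constant on $(0,\infty)$. Rescaling, $D(t)/t\to D_{\mathfrak x}$ in operator norm as $t\to\infty$, and $D_{\mathfrak x}$ stays invertible on $(\ker D_{\mathfrak x})^\perp$, so the signature of $D(t)$ converges to $\eta(D_{\mathfrak x}|_{(\ker D_{\mathfrak x})^\perp})$.

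The last piece is to show $\eta(D_{\mathfrak x})=0$. Choose any unit $Z\in\mathfrak z$ (available because we are in the non-abelian case of \cref{anco7}). I would verify, using the Clifford relations together with $Z\perp T$ and $Z\perp X_j$ for $X_j\in\mathfrak x$, that $iTZ$ is a self-adjoint involution on $\Sigma_{\mathfrak n}\otimes V$ with $\{iTZ,TX_j\}=0$; since $\pi_\ast(A_{X_j})$ acts on the $V$-factor and commutes with Clifford multiplication, this yields $\{iTZ,D_{\mathfrak x}\}=0$, so $D_{\mathfrak x}$ has spectrum symmetric about $0$ on $H^{\rm le}_N$ and a fortiori on $(\ker D_{\mathfrak x})^\perp$. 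Chaining the steps gives $\eta(D^{\rm le}_N|_{(\ker D_{\mathfrak x})^\perp})=\eta(D_{\mathfrak x}|_{(\ker D_{\mathfrak x})^\perp})=0$, proving (2). The main obstacle I anticipate is the invertibility/continuity step for the family $D(t)$: one must justify that constancy of the signature together with the norm limit $D(t)/t\to D_{\mathfrak x}$ transports the computation from $t=1$ to $t=\infty$, and this is precisely what the uniform lower bound on $D(t)^2$ buys.
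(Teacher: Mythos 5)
Your proposal is correct, and its skeleton coincides with the paper's: both rest on \cref{anco7} (the sum decomposition together with $\{D_{\mathfrak z},D_{\mathfrak x}\}=0$), derive (1) from $(D^{\rm le}_N)^2=D_{\mathfrak z}^2+D_{\mathfrak x}^2$, reduce $\eta(D^{\rm le}_N)$ and $\eta(D_{\mathfrak z})$ to $\eta(D_{\mathfrak z}|_{\ker D_{\mathfrak x}})$, and kill the leftover term by the anticommuting involution built from $TZ$. The differences are in how the leftover is identified and shown to vanish. The paper asserts $\eta(D^{\rm le}_N)=\eta(D_{\mathfrak z}|_{\ker D_{\mathfrak x}})+\eta(D_{\mathfrak x}|_{\ker D_{\mathfrak z}})$ without proof; the implicit standard argument is algebraic, conjugating $D_{\mathfrak z}+D_{\mathfrak x}$ to its negative by $U_{\mathfrak z}U_{\mathfrak x}$ on the subspace where both operators are invertible. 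You instead split off $(\ker D_{\mathfrak x})^\perp$ and compute its contribution by the homotopy $D_{\mathfrak z}+tD_{\mathfrak x}$, using the uniform lower bound $D(t)^2\ge t^2\lambda_0$ to keep invertibility and hence constancy of the signature, ending at $\eta(D_{\mathfrak x}|_{(\ker D_{\mathfrak x})^\perp})$ rather than $\eta(D_{\mathfrak x}|_{\ker D_{\mathfrak z}})$; both vanish by the same $iTZ$-symmetry, so the discrepancy is harmless. Your deformation argument is slightly longer than the conjugation trick but is fully justified in this finite-dimensional setting (the two-parameter family $sD_{\mathfrak z}+tD_{\mathfrak x}$, $t>0$, stays invertible on $(\ker D_{\mathfrak x})^\perp$, which transports the signature from $t=1$ to the limit), and it has the merit of making explicit the steps the paper leaves to the reader, including why $D_{\mathfrak z}$ preserves $\ker D_{\mathfrak x}$ and why $\eta(D_{\mathfrak z})$ picks up nothing from $(\ker D_{\mathfrak x})^\perp$.
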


\begin{proof}
By \cref{anco7}, \eqref{etadd1} is clear and
\begin{align*}
  \eta(D^{\rm le}_N)
  &= \eta (D_{\mathfrak z}|_{\ker D_{\mathfrak x}})
  + \eta (D_{\mathfrak x}|_{\ker D_{\mathfrak z}}) , \\
  \eta (D_{\mathfrak z}|_{\ker D_{\mathfrak x}}) &=
  \eta(D_{\mathfrak z}) , \\
  \eta (D_{\mathfrak x}|_{\ker D_{\mathfrak z}})
  &=\eta (D_{\mathfrak x}) .
\end{align*}
Now $D_{\mathfrak x}$ anticommutes with the involution $TZ_1$
of $\Sigma^+\otimes V$,
hence $\eta(D_{\mathfrak x})=0$, hence \eqref{etadd2}.
\end{proof}

\subsection{Contribution of Complex Hyperbolic Cusps}
\label{suschc}
As in Section \ref{susech}, we represent complex hyperbolic space $\C H^n$
by the symmetric pair $(G,K)$,
where $G=\SU(1,n)$ and $K=\Se(\U(1)\times\U(n))\cong\U(n)$,
and recall the decomposition $\mathfrak{su}(1,n)=\mathfrak k+\mathfrak p$,
where $\mathfrak k\cong\mathfrak u(n)$.
We decompose matrices $X\in\mathfrak{su}(1,n)$ correspondingly
and write
\begin{equation}
  X = X^{\mathfrak k}+X^{\mathfrak p}
  \quad\text{with $X^{\mathfrak k}\in\mathfrak k$
  and $X^{\mathfrak p}\in\mathfrak p$.}
  \label{decokp}
\end{equation}
We recall that,
after identifying $\mathfrak p$  with the tangent space of $\C H^n$
at the point fixed by $\U(n)$ as usual, we have
\begin{equation}
  R(X,Y)Z = - [[X,Y], Z] ,
  \label{rxyz}
\end{equation}
for all $X,Y,Z\in\mathfrak p$, where the Lie brackets on the right hand side
are taken in $\mathfrak{su}(1,n)$.

Let $\mathfrak n\subset\mathfrak s\subset\mathfrak{su}(1,n)$
be as in Section \ref{susech}.
Recall that we identify $\C H^n$ in \eqref{chom} also with
the solvable Lie subgroup $S$ of $SU(1,n)$ corresponding to $\mathfrak s$,
endowed with the left-invariant Riemannian metric
such that the isomorphism $\mathfrak s\to\mathfrak p$, $X\mapsto X^{\mathfrak p}$,
preserves scalar products.
Furthermore, the nilpotent Lie subgroup $N$ of $S$
corresponding to $\mathfrak n$ is isomorphic to the standard Heisenberg group.
Thus we are in the situation of Section \ref{susgrc},
where $N$ is the standard Heisenberg group and where $c=1$ in \eqref{hety4}.

Let $X\in\mathfrak n$.
By \eqref{chtz} and \eqref{chtx}, we have $[T,X]=-WX$
and hence
\begin{equation}
  [T,X^{\mathfrak p}] = - (WX)^{\mathfrak k} ,
  \label{txp}
\end{equation}
where we use the notation from \eqref{decokp}.
Using \eqref{rxt} and \eqref{rxyz}, we obtain therefore that
\begin{equation}
  A_{WX}Y = R(T,X^{\mathfrak p})Y^{\mathfrak p}
  = - [[T,X^{\mathfrak p}], Y^{\mathfrak p}]
  = [(WX)^{\mathfrak k}, Y^{\mathfrak p}] .
\end{equation}
Since $W$ is invertible,
we conclude that, for any $X\in\mathfrak n$,
\begin{equation}
  A_X = X^{\mathfrak k} .
  \label{axxk}
\end{equation}

With $\alpha$ as in $\eqref{chir}$,
we let $\hat\alpha_*:\mathfrak u(n)\to\mathfrak u(\Sigma)$
be the composition of the differential $\alpha_*$ of $\alpha$
with the differential of the spinor representation
of $\mathfrak{so}(\mathfrak p)\simeq\mathfrak{spin}(\mathfrak p)$
on $\Sigma:=\Sigma_{\mathfrak p}$.
Following Chapter \ref{sechc}, we choose $K=\U(n)$
and let $\pi_*$ be a unitary representation of $\mathfrak u(n)$
on a Hermitian vector space $V$.
We assume that there exists a unitary representation
$\beta$ of $K=\U(n)$ on $\Sigma\otimes V$ satisfying \eqref{hdb}
and get the associated Dirac bundle $E$ over $\C H^n$.
Clifford multiplication by the complex volume form $\omega_{\C}$
determines a super-symmetry $E=E^+\oplus E^-$,
and this super-symmetry is induced by the corresponding
decomposition $\Sigma=\Sigma^+\oplus\Sigma^-$.

To distinguish it from multiplication with $i$ in $\C^n\simeq\mathfrak p$,
we denote the complex structure in $\cl(\mathfrak p)$ by $\sqrt{-1}$.
With the corresponding changes in notation,
we follow Section \ref{decsig} and set
\begin{equation}
  \omega_j := \sqrt{-1}X_j^{\mathfrak p}Y_j^{\mathfrak p} ,
  \quad 1\le j\le n ,
  \label{invol}
\end{equation}
where $X_1=T,Y_1=Z,X_2,Y_2,\dots,X_n,Y_n$ are as in \eqref{heibas}.
By the discussion in Section \ref{decsig}, we have
\begin{equation}
  \Sigma^+ = \oplus_{\epsilon \in \{-1,1\}^{n-1}} \Sigma_\epsilon^+ ,
  \label{decosi2}
\end{equation}
where
\begin{equation}
  \Sigma_\epsilon^+ := \{ \sigma\in \Sigma^+ :
  \text{$\omega_j\sigma=\epsilon_j\sigma$ for $2\le j\le n$} \} .
  \label{decosi4}
\end{equation}
Since $X_j$ commutes with $\omega_k$ for $k\ne j$
and anti-commutes with $\omega_j$,
all the subspaces $\Sigma_\epsilon$ are isomorphic.
In particular, for all $\epsilon \in \{-1,1\}^{n-1}$,
\begin{equation}
  \dim \Sigma_\epsilon^+
  = \dim\Sigma^+/{\rm card} \{-1,1\}^{n-1} = 1 .
  \label{decosi5}
\end{equation}
For any $\epsilon\in\{-1,1\}^{n-1}$,
let $\nu(\epsilon)\in\{0,\dots,n-1\}$ be the number of $j$ with $\epsilon_j=-1$,
for $2\le j\le n$.
Then
\begin{equation}
  \Sigma^+ = \oplus_k \Sigma^+_k ,
  \quad\text{where}\quad
  \Sigma^+_k := \oplus_{\nu(\epsilon)=k} \Sigma_{\epsilon}^+ .
  \label{decosi6}
\end{equation}
By definition, $\omega_\C$ acts as identity on $\Sigma^+$,
hence $\omega_1=\omega_2\cdots\omega_n$ on $\Sigma^+$.
Therefore
\begin{equation}
  \Sigma^+_{\rm even} =\oplus_{\text{$k$ even}} \Sigma_k^+
  \quad\text{and}\quad
  \Sigma^+_{\rm odd} = \oplus_{\text{$k$ odd}} \Sigma_k^+
    \label{decosi7}
\end{equation}
are the eigenspaces of $\omega_1$ for the eigenvalues $1$ and $-1$,
respectively.
In passing we note that the left side of \eqref{decosi6} gives the decomposition
of $\Sigma^+$ into irreducible representations of the stabilizer of $T$ in $\U(n)$,
by work of Camporesi and Pedon, see \cite[Lemma 3.1]{CP}.

We recall that the complexification of $\mathfrak u(n)$ is $\mathfrak{gl}(n,\C)$,
where the complex structure of $\mathfrak{gl}(n,\C)$
is given by multiplication of matrix coefficients with $i$.
The space $\mathfrak h\subseteq\mathfrak{gl}(n,\C)$ of diagonal matrices
is a Cartan subalgebra of $\mathfrak{gl}(n,\C)$, and the roots
\begin{equation}
  \rho_j({\rm diag}(h_1,...,h_n)) := h_j
\end{equation}
constitute a basis of $\mathfrak h^*$.
The associated Weyl group $\mathcal W$ of automorphisms of $\mathfrak h$
leaves the set $\{\rho_1,\dots,\rho_n\}$ invariant
and acts on it as the (complete) group of permutations.
As usual, we choose
\begin{equation}
  \{ h = {\rm diag}(h_1,...,h_n) : h_j \in \R, h_1 > h_2> \dots >h_n \}
  \label{uweyl}
\end{equation}
as positive Weyl chamber.
The corresponding set of positive roots of $\mathfrak{gl}(n,\C)$ is given by
\begin{equation}
  \Delta^+ = \{\rho_j - \rho_k : 1\le j < k \le n \} .
  \label{uproot}
\end{equation}
Irreducible complex representations of $\mathfrak u(n)$ are classified
by their {\em highest weight} $\lambda= \sum\lambda_j \rho_j$,
where $\lambda$ is {\em dominant},
that is, $\lambda_1 \ge \lambda_2 \ge \dots \ge \lambda_n$,
and algebraically integral, that is, $\lambda_i-\lambda_j\in \Z$ for all $i,j$.
The dimension of the corresponding representation space $V_\lambda$ is
\begin{equation}
  \dim V_\lambda = \prod_{j<k}\frac{k-j-\lambda_k+\lambda_j}{k-j} ,
  \label{udimrep}
\end{equation}
by the Weyl dimension formula.
The irreducible representation with highest weight $\lambda$
is induced by a representation of $\U(n)$ if all the $\lambda_j$ are integral.
The representation $\alpha$ as above is the irreducible representation
of $\U(n)$ with highest weight $(2,1,\dots,1)$ (and complex dimension $n$).

For the discussion of $\hat\alpha_*$,
we identify $\mathfrak p=\R^{2n}$ and $\Sigma=\Sigma_{2n}$.
We let $(e_1,\dots,e_{2n})$ be the standard basis of $\R^{2n}\simeq\C^n$
with $e_{2j}=ie_{2j-1}$, $1\le j\le n$,
and denote the complex structure of $\Sigma_{2n}$ by $\sqrt{-1}$ as above.
For $h=(it_1,\dots,it_n)$ in $\mathfrak h\cap\mathfrak u(n)$, we get
\begin{align}
  \hat\alpha_*(h)
  &= \frac14 \sum_{1\le j\le 2n} e_j \big( it_je_j + (it_1+\dots+it_n)e_j \big)
  \notag \\
  &= - \frac12\sqrt{-1} \sum_{1\le j\le n} (t_1+\dots+2t_j+\dots+t_n)\omega_j .
  \label{uspin}
\end{align}
by the Parthasarathy formula \cite[Lemma 2.1]{Pa},
where $e_j$ and $\omega_j$ stand for Clifford multiplication
by $e_j$ and $\omega_j$, respectively.
Hence the subspaces $\Sigma_\varepsilon$ of $\Sigma_{2n}$
as in Section \ref{decsig} are weight spaces.
For $0\le l\le n$,
we let $V_l$ be the sum over all $\Sigma_\varepsilon$
such that $l$ is the number of $j$ with $\varepsilon_j=-1$,
that is, $\varepsilon_1+\dots+\varepsilon_n=n-2l$.
Then $V_l$ is the irreducible representation of $\mathfrak u(n)$
with heighest weight
\begin{equation}
  \lambda_1 = \dots = \lambda_l = l - \frac{n -1}2
  > \lambda_{l+1} = \dots = \lambda_n = l - \frac{n +1}2 ,
  \label{uspinl}
\end{equation}
and is of dimension $\binom{n}{l}$, in agreement with Weyl's dimension formula.

As an example, we discuss differential forms.
Since $\alpha$ is the irreducible representation
with maximal weight $(2,1,\dots,1)$,
the bundles of differential forms of type $(p,0)$ and $(0,q)$ are associated
to the irreducible representations $\beta$ of $\U(n)$ with maximal weights
\begin{align}
  \lambda_1 = \dots = \lambda_{n-p} = - p
  &> \lambda_{n-p+1} = \dots = \lambda_n = - (p + 1)
  \label{up0}
\intertext{and}
  \lambda_1 = \dots = \lambda_{q} = q+1
  &> \lambda_{q+1} = \dots = \lambda_n = q ,
  \label{u0q}
\end{align}
respectively.
We see that the sum of the bundles of differential forms of type $(0,q)$,
$0\le q\le n$, is given by $\Sigma\otimes V_n$,
where $V_n$ is as above.
That is, $\pi_*$ is the one-dimensional irreducible representation
of $\mathfrak u(n)$ with highest weight $\lambda_j=(n+1)/2$, $1\le j\le n$.

\begin{rem}\label{remlift}
 From \eqref{uspinl}, we see that $\hat\alpha_*$ comes from
a representation of $\U(n)$ if $n$ is odd,
and then the spinor bundle of $\C H^n$ descends
to quotients of $\C H^n$ by discrete subgroups of $\SU(1,n)$.
On the other hand,
if  $\widetilde\SU(1,n)$ denotes the non-trivial twofold cover of $\SU(1,n)$,
then $\hat\alpha_*$ comes from a representation of the corresponding
twofold cover $\tilde\U(n)$ of $\U(n)$, for all $n$.
Hence, if the discrete subgroup of $\SU(n)$ under consideration
admits a lift into $\tilde\SU(1,n)$,
then the spinor bundle also decends to the corresponding quotient of $\C H^n$.
A similar remark applies to $\beta_*$.
\end{rem}

We note that $D_{\mathfrak x}$ is an odd operator with respect to the grading
\begin{equation}
  \Sigma^+\otimes V_\pi
  = ( \Sigma^+_{\rm even}\otimes V_\pi )
  \oplus ( \Sigma^+_{\rm odd}\otimes V_\pi ) ,
  \label{decosi8}
\end{equation}
whereas $D_{\mathfrak z}=D_Z$ is an even operator.

\begin{thm}\label{leech}
With $H^k:=\ker D_{\mathfrak x}\cap (\Sigma^+_k\otimes V_\pi)$
and $b_k:=\dim H^k(\pi)$, for $0\le k\le n-1$, we have
\begin{align}
  &\ker D_{\mathfrak x} = \oplus H^k , \tag{1} \\
  &b_k =
  (n-1)! \dim V_\pi\prod_{\substack{1\le j \le n \\ j\ne k+1}}
  |\lambda_j-\lambda_{k+1}+k+1-j|^{-1} ,  \tag{2} \\
  &D_Z|_{H^k}
  = (-1)^k ( 2k - 2\lambda_{k+1} - n + 1 )/2 . \tag{3}
\end{align}
\end{thm}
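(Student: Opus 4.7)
The plan is to identify $D_{\mathfrak{x}}$ as the Laplacian of a Koszul-type differential computing Lie algebra cohomology $H^*(\mathfrak{u}^+, V_\pi)$ of an abelian nilradical, invoke Kostant's theorem to describe this cohomology, and then use Schur's lemma together with an explicit highest-weight computation to evaluate $D_Z$ on each irreducible summand. For (1), the complex structure $J_Z$ on $\mathfrak{x}$ induces a splitting $\mathfrak{x}_{\C} = \mathfrak{u}^+ \oplus \mathfrak{u}^-$ and a Fock-space identification $\Sigma^+ \cong \Lambda^{\mathrm{even}}(\mathfrak{u}^{+*})$ (up to a half-density twist) under which $\Sigma^+_k$ corresponds to $\Lambda^k(\mathfrak{u}^{+*})$. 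Using \eqref{axxk} to write $A_{X_j} = X_j^{\mathfrak{k}}$, and the complexification map $\mathfrak{x}_\C \hookrightarrow \mathfrak{gl}(n,\C)$ sending $\mathfrak{u}^+$ onto the abelian nilradical $\mathrm{span}\{E_{1j} : j \ge 2\}$ of the maximal parabolic with Levi $\mathfrak{l} = \mathfrak{gl}(1) \oplus \mathfrak{gl}(n-1)$, one organizes the terms of $D_{\mathfrak{x}}$ into creation--annihilation pairs to identify $D_{\mathfrak{x}} = d + d^*$, with $d$ the Chevalley--Eilenberg differential of $\mathfrak{u}^+$ with coefficients in $V_\pi$. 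Assertion (1) then follows by Hodge theory.

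For (2), Kostant's theorem (Theorem 4.139 in \cite{KV}) identifies $H^k(\mathfrak{u}^+, V_\pi) \cong V^{\mathfrak{l}}_{w_{k+1} \cdot \lambda}$, where $W^1 = \{w_1, \dots, w_n\}$ consists of the cyclic permutations $w_p$ satisfying $w_p(1) = p$ and preserving the order on $\{2, \dots, n\}$, with $\ell(w_p) = p - 1$. A direct computation of $w_{k+1} \cdot \lambda = w_{k+1}(\lambda + \rho) - \rho$ gives $\mathfrak{gl}(1)$-weight $\lambda_{k+1} - k$ and $\mathfrak{gl}(n-1)$-highest weight $(\lambda_1 + 1, \dots, \lambda_k + 1, \lambda_{k+2}, \dots, \lambda_n)$. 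Applying the Weyl dimension formula to the latter, the numerator product factors into those terms involving the index $k+1$ (which cancel $\prod_{j \ne k+1}|\lambda_j - \lambda_{k+1} + (k+1) - j|$) and the remaining terms (which combine with $\dim V_\pi \cdot (n-1)!$, after accounting for the Weyl denominators $1!\,2!\cdots(n-2)!$), producing (2).

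For (3), $D_Z$ is equivariant under the stabilizer $\mathfrak{m} \subset \mathfrak{u}(n)$ of $T$, since $T$ and $Z = JT$ are $\mathfrak{m}$-fixed in $\mathfrak{p}$ and $Z^{\mathfrak{k}} = -iE_{11}$ is $\mathfrak{m}$-central. The restriction of the $\mathfrak{l}$-irreducible module $V^{\mathfrak{l}}_{w_{k+1}\cdot\lambda}$ to $\mathfrak{m}$ remains irreducible (the $\mathfrak{gl}(1)$-part acts by a scalar and $\mathfrak{u}(n-1) \subset \mathfrak{m}$ handles the $\mathfrak{gl}(n-1)$-structure), so Schur's lemma forces $D_Z$ to act on each $H^k$ as a scalar. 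Rewriting
\[
  D_Z = TZ \cdot \bigl[ -\tfrac{1}{2}\sqrt{-1}\textstyle\sum_{j \ge 2}\omega_j + \pi_*(Z^{\mathfrak{k}}) \bigr]
\]
and evaluating on a Kostant highest-weight vector $\xi_{\{2,\dots,k+1\}} \otimes v$ (the $V_\pi$-factor $v$ carrying the $E_{11}$-weight determined by matching total weights with $w_{k+1}\cdot\lambda$ and the $\Sigma^+_k$-weight read off from \eqref{uspin}), with $\omega_1 = (-1)^k$ on $\Sigma^+_k$ so that $TZ = (-1)^{k+1}\sqrt{-1}$, the scalar reduces to $(-1)^k(2k - 2\lambda_{k+1} - n + 1)/2$.

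The principal obstacle is the first step: verifying that $D_{\mathfrak{x}}$ is exactly the Chevalley--Eilenberg $d + d^*$ rather than a twisted or rescaled variant. The Fock-to-exterior-algebra identification involves a canonical-bundle (half-density) twist whose effect must be tracked carefully so that the $\rho$-shift built into Kostant's dotted action matches the spin weight shift arising from $\hat\alpha_*$; any discrepancy would propagate into a constant offset in the scalar of (3).
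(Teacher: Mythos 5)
Your proposal follows essentially the same route as the paper's proof: the Fock-space identification $\Sigma^+_k\simeq\Lambda^k(\C^{n-1})$, the recognition of $D_{\mathfrak x}=\mathcal D_{\mathfrak x}+\mathcal D_{\mathfrak x}^*$ as the Chevalley--Eilenberg complex of the abelian nilradical $\mathfrak u=\operatorname{span}\{E_{1j}\}$, Kostant's theorem plus the Weyl dimension formula for (2), and the $E_{11}$-weight of $w_k(\lambda+\delta)-\delta$ combined with the spin weight for (3). The worry you flag at the end is resolved exactly as in the paper by the explicit formula $\mathcal D_{\mathfrak x}(\omega\otimes v)=2\sum_j(F_j\wedge\omega)\otimes\pi_*(E_{1j})v$, which shows the differential agrees with the Koszul one up to a harmless overall factor.
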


\begin{proof}
Our proof relies on Kostant's theorem,
see \cite{Kos} or Theorem 4.139 in \cite{KV}.
We start by describing an explicit model of $\Sigma$,
compare Chapter 5 of \cite{Wu}.
For $1\le j\le n$, let
\begin{equation}
  F_j := \frac12(X_j^{\mathfrak p} - \sqrt{-1}Y_j^{\mathfrak p})
  \quad\text{and}\quad
  \bar F_j := \frac12(X_j^{\mathfrak p} + \sqrt{-1}Y_j^{\mathfrak p}) .
  \label{sigma}
\end{equation}
As elements of $\cl(\mathfrak p)$, they satisfy
\begin{equation}
  F_jF_j = \bar F_j\bar F_j = 0 , \quad
  \bar F_jF_j = - F_j\bar F_j - 1 ,
  \label{sigmaj}
\end{equation}
for $1\le j\le n$, and
\begin{equation}
  F_jF_k = - F_kF_j , \quad
  \bar F_j\bar F_k = - \bar F_k\bar F_j , \quad
  F_j\bar F_k = - \bar F_kF_j ,
  \label{sigmajk}
\end{equation}
for $1\le j\ne k \le n$.
We identify $\Sigma$ with the left ideal in the Clifford algebra
generated by $\bar F=\bar F_1\cdots\bar F_n$.
Then the monomials $F_I\bar F$ over all $0\le k\le n$
and multi-indices $I=\{i_1,\cdots,i_k\}$ with $i_1<\dots<i_k$
constitute a basis of $\Sigma$.
The relations \eqref{sigmaj} and \eqref{sigmajk} determine an isomorphism
$\Sigma\simeq\Lambda(\C^n)$,
where $\C^n$ is spanned by $F_1,\dots,F_n$.
We have
\begin{equation}
  \omega_j\cdot F_I\bar F
  = \begin{cases}
  \phantom{-} F_{I}\bar F &\text{if $j\notin I$} , \\
  - F_{I}\bar F &\text{if $j\in I$} .
  \end{cases}
  \label{involsi}
\end{equation}
so that, under the identification $\Sigma\simeq\Lambda(\C^n)$,
\begin{equation}
  \Sigma^+_k
  \simeq \begin{cases}
  \Lambda^{k}(\C^{n-1}) &\text{if $k$ is even} , \\
  F_1\wedge\Lambda^{k}(\C^{n-1})
  \simeq \Lambda^{k}(\C^{n-1}) &\text{if $k$ is odd} ,
  \end{cases}
  \label{siglam}
\end{equation}
where $\C^{n-1}$ is spanned by $F_2,\dots,F_n$.

Recall that, by complexification,
$\pi_*$ induces a representation of $\mathfrak{gl}(n,\C)$.
Following the exposition in \cite[\S IV.8]{LM}, we set
\begin{align}
  \mathcal D_{\mathfrak x}
  &:= \frac12 \sum_{2\le j\le n}
  T(X_j^{\mathfrak p} - \sqrt{-1}Y_j^{\mathfrak p})
  \otimes \pi_*(X_j^{\mathfrak k} + iY_j^{\mathfrak k}) \\
  &\phantom{:}= 2 \sum_{2\le j\le n} TF_j \otimes \pi_*(E_{1j}) ,
  \notag \\
  \bar{\mathcal D}_{\mathfrak x}
  &:= \frac12 \sum_{2\le j\le n}
  T(X_j^{\mathfrak p} + \sqrt{-1}Y_j^{\mathfrak p})
  \otimes \pi_*(X_j^{\mathfrak k} - iY_j^{\mathfrak k}) \\
  &\phantom{:}= - 2 \sum_{2\le j\le n} T\bar F_j \otimes \pi_*(E_{j1}) ,
  \notag
\end{align}
where we note that factors $\sqrt{-1}$ on the left and $i$ on the right of $\otimes$
multiply to $-1$ in the tensor product.
Using \eqref{axxk}, we have
\begin{equation}
  D_{\mathfrak x} = \mathcal D_{\mathfrak x} + \bar{\mathcal D}_{\mathfrak x} ,
  \quad
  \bar{\mathcal D}_{\mathfrak x} = \mathcal D_{\mathfrak x}^* ,
  \quad\text{and}\quad
  \mathcal D_{\mathfrak x}\mathcal D_{\mathfrak x}
  = \bar{\mathcal D}_{\mathfrak x}\bar{\mathcal D}_{\mathfrak x} = 0 .
\end{equation}
Moreover,
\begin{equation}
  {\mathcal D}_{\mathfrak x}(\Sigma^+_k\otimes V_\pi)
  \subset \Sigma^+_{k+1}\otimes V_\pi
  \quad\text{and}\quad
  \bar{\mathcal D}_{\mathfrak x}(\Sigma^+_k\otimes V_\pi)
  \subset \Sigma^+_{k-1}\otimes V_\pi .
\end{equation}
Hence $\ker D_{\mathfrak x}$ is equal to the space of
$\mathcal D_{\mathfrak x}$-harmonic cocycles of the cochain complex
\begin{equation}
  \cdots \xrightarrow{{\mathcal D}_{\mathfrak x}} \Sigma^+_{k-1} \otimes V_\pi
  \xrightarrow{{\mathcal D}_{\mathfrak x}}\Sigma^+_k\otimes V_\pi
  \xrightarrow{{\mathcal D}_{\mathfrak x}}\Sigma^+_{k+1}\otimes V_\pi
  \xrightarrow{{\mathcal D}_{\mathfrak x}} \cdots
\end{equation}
This shows the first assertion of the theorem and
that $\ker D_{\mathfrak x}$ is isomorphic to the cohomology of the complex.
Moreover, under the above identification $\Sigma^+=\Lambda(\C^{n-1})$,
we have
\begin{equation}
  \mathcal D_{\mathfrak x}(\omega\otimes v)
  = 2  \sum_{2\le j\le n} (F_j\wedge\omega) \otimes \pi_*(E_{1j})v .
  \label{coho2}
\end{equation}
Following the notation in \cite{KV},
we consider the subalgebras $\mathfrak u$ and $\mathfrak l$
of $\mathfrak{gl}(n,\C)$,
where $\mathfrak u$ is spanned by the  $E_{1j}$, $2\le j\le n$,
and
\begin{equation}
  \mathfrak l :=
  \left\{ \begin{pmatrix} x & 0 \\ 0 & B \end{pmatrix} :
  \text{$x\in\C$ and $B\in \mathfrak{gl}(n-1,\C)$} \right\} .
\end{equation}
Then $\mathfrak q=\mathfrak l\oplus\mathfrak u$ is a parabolic
subalgebra of $\mathfrak{gl}(n,\C)$.
By \eqref{coho2}, the kernel of the restriction of $D_{\mathfrak x}$
is isomorphic to $H^k(\mathfrak u,\pi)$,
the Lie algebra cohomology of $\mathfrak u$ with respect to $\pi$.
Now Kostant's theorem determines the latter as an $\mathfrak l$-module,
where $l\in\mathfrak l$ acts on $\Lambda^k(\mathfrak u)\otimes V_\pi$ by
\begin{equation}
  -\ad(l)^*\otimes \id + \id\otimes \pi_*(l) ,
\end{equation}
see (4.138b) in \cite{KV}.
To apply Kostant's theorem, we introduce
\begin{align}
  \Delta^+(\mathfrak u) &= \{\rho_1- \rho_j : 2\le j\le n \} , \\
  \Delta^+(\mathfrak l) &= \{\rho_i- \rho_j : 2 \le i < j \le n \} .
\end{align}
For $w\in \mathcal W$, we also introduce
\begin{equation}
  \Delta^+(w) := \{ \lambda \in \Delta^+ : w^{-1}\lambda < 0 \} ,
  \quad
  \ell(w) := |\Delta^+(w)| ,
\end{equation}
and
\begin{equation}
  \mathcal W^1
  := \{ w \in \mathcal W : \Delta^+(w) \subseteq \Delta^+(\mathfrak u) \} .
\end{equation}
Then $\mathcal W^1=\{w_0,\dots,w_{n-1} \}$, where $w_0=\id$ and
\begin{equation}
  w_k^{-1} =
  \begin{pmatrix} 1 & 2 & \cdots & k+1 \\ k+1 & 1 & \cdots & k \end{pmatrix} ,
\end{equation}
for $1\le k\le n-1$.
We note that $\ell(w_k)=k$, for $0\le k\le n-1$.

Kostant's theorem implies that, as an $\mathfrak l$-module,
$H^k(\mathfrak u,\pi)$ is the irreducible representation of $\mathfrak l$
with highest weight
\begin{multline}
  w_k (\lambda + \delta) - \delta = \\
  (\lambda_{k+1} - k)\rho_1
  + \sum_{2\le j\le k+1} (\lambda_{j-1} + 1)\rho_j
  + \sum_{j>k+1} \lambda_j\rho_j ,
\end{multline}
where $\delta$ is the half sum of the positive roots of $\mathfrak{gl}(n,\C)$,
\begin{equation}
  \delta := \frac12 \sum_{j=1}^n (n+1-2j)\rho_j .
\end{equation}
Moreover, the action of the $\mathfrak k$-component
$Z^{\mathfrak k}\simeq -iE_{1,1}$ of $Z$ on $H^k(\mathfrak u,\pi)$
is given by multiplication with
\begin{equation}
  ik - i\lambda_{k+1} = ik + \id\otimes \pi(Z^{\mathfrak k}) .
\end{equation}
In particular, $\id\otimes \pi(Z^{\mathfrak k})=-i\lambda_{k+1}$.
It follows that, on $H^k(\mathfrak u,\pi)$,
\begin{equation}
  D_{\mathfrak z} = D_Z
  = (-1)^k \frac12 (2k - 2\lambda_{k+1} - n + 1) ,
\end{equation}
which is the third assertion of the theorem.
We have
\begin{equation}
  b_k = \dim H^k(\mathfrak u,\pi)
  = \prod_{\alpha\in \Delta^+(\mathfrak l)} \frac
  {(\alpha, w_k(\lambda+\delta)-\delta+\delta_\mathfrak l)}
  {(\alpha, \delta_\mathfrak l)} ,
\end{equation}
by Weyl's dimension formula,
where $\delta_\mathfrak l$ is the half sum of the positive roots of $\mathfrak l$,
\begin{equation}
  \delta_\mathfrak l = \frac12 \sum_{j=2}^n (n+2-2j) \rho_j .
\end{equation}
The second assertion of the theorem is an easy consequence.
\end{proof}

\subsection{R\'esum\'e and Examples}
\label{exaeta}
For the convenience of the reader,
we give a r\'esum\'e of the correction terms ${\rm Corr}(\mathcal C)$
to the extended index of $D^+$
for complex hyperbolic cusps $\mathcal C$ of complex dimension $n$.
We assume that the cross section of $\mathcal C$
is given as $\Gamma\backslash G_{n-1}$
as in \cref{etacor},
where $N:=G_{n-1}$ is the Heisenberg group of dimension $2n-1$
and where $\Gamma$ is a uniform lattice in $G_{n-1}$.
We denote by $d$ the type of $\Gamma$
and set $|\Gamma|=d_1\cdots d_{n-1}$ as in Section \ref{sechei}.

Assume that, over $\mathcal C$,
the Dirac bundle $E$ is up to twist of the form
$E=\mathcal P\times_\beta(\Sigma_{\mathfrak s}\otimes V)$
with $\mathfrak s\subset\mathfrak{su}(1,n)$ as in Section \ref{susech}
and $\beta$ as in \eqref{hdb}, that is, $\beta_*=\hat\alpha_*\otimes\id+\id\otimes\pi_*$,
where $\pi_*$ is a unitary representation of $\mathfrak k=\mathfrak u(n)$ on $V$.
Then sections of $E^+$ over $\mathcal C$
correspond to maps $S\to\Sigma_{\mathfrak n}\otimes V$
satisfying the twist condition \eqref{twist}.
Clearly, ${\rm Corr}(\mathcal C)$ is the sum of the parts
which are determined by the subbundles of $E^+$ over $\mathcal C$ of the form
$\mathcal P\times_\beta(\Sigma_{\mathfrak n}\otimes W)$,
where $W$ is irreducible as a joint $\mathfrak u(n)$ and $\Gamma$ module.
Therefore, we may restrict ourselves to the case where $V$ is irreducible
as a joint $\mathfrak u(n)$ and $\Gamma$ module.

Recall that we use an index $\mathcal C$ to denote objects connected
to the cusp $\mathcal C$ and that
\begin{equation}\label{corrc}
  {\rm Corr}(\mathcal C) = \frac12\big(
  \lim_{t\to\infty}\eta(A_{\mathcal C,t}^{{\rm he},+})
  + \eta(A_{\mathcal C,0}^{{\rm le},+})
  + \dim\ker A_{\mathcal C,0}^{{\rm le},+} \big) ,
\end{equation}
see \tref{exinfin}.

\begin{thm}\label{corrcom}
Suppose that $V$ is irreducible as a joint $\mathfrak u(n)$ and $\Gamma$ module.
Then there are two cases:

\noindent (1) If $\tau$ is non-trivial,
\begin{align*}
  \lim_{t\to\infty}\eta(A_{\mathcal C,t}^{{\rm he},+})
  &= (-1)^n | \Gamma | \dim V \big(\zeta_c(1-n) + (-1)^n \zeta_{1-c}(1-n)\big) , \\
  \eta(A_{\mathcal C,0}^{{\rm le},+}) &= \dim\ker(A_{\mathcal C,0}^{{\rm le},+}) = 0 ,
\end{align*}
where $c$ denotes the twist parameter of $\tau$ as in \eqref{twistpara}.

\noindent (2) If $\tau$ is trivial,
then $\pi_*$ is irreducible and
\begin{align*}
  \lim_{t\to\infty}\eta(A_{\mathcal C,t}^{{\rm he},+})
  &= (-1)^n | \Gamma | \dim V \big(\zeta(1-n) + (-1)^n \zeta(1-n)\big) , \\
  \eta(A_{\mathcal C,0}^{{\rm le},+})
  &= \sum (-1)^k b_k \sign(n - 1 - 2k + 2\lambda_{k+1}) , \\
  \dim\ker A_{\mathcal C,0}^{{\rm le},+}
  &= \sum b_k \cdot \delta_{2\lambda_{k+1},2k+1-n} ,
\end{align*}
where $\lambda_1\ge\ldots\ge\lambda_n$ denotes the highest weight of $\pi_*$ and
\begin{equation*}
  b_k =
  (n-1)! \dim V\prod_{\substack{1\le j \le n \\ j\ne k+1}}
  |\lambda_j-\lambda_{k+1}+k+1-j|^{-1} .
\end{equation*}
\end{thm}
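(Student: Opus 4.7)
The plan is to assemble the theorem from Proposition \ref{triviata}, Corollary \ref{etacor}, Proposition \ref{etadd}, Corollary \ref{anco7}, and Theorem \ref{leech}, substituted into the formula \eqref{corrc} for ${\rm Corr}(\mathcal C)$. The one subtlety to track throughout is the sign convention $A_{\mathcal C,t} = -D_t$ from \eqref{defat}, which negates each $\eta$-invariant but leaves kernels unchanged.

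For case (1), I would invoke Proposition \ref{triviata}: when $\tau$ is non-trivial and $V$ is jointly irreducible, the low energy space is trivial, so $\eta(A_{\mathcal C,0}^{\rm le,+})$ and $\dim\ker A_{\mathcal C,0}^{\rm le,+}$ both vanish automatically, and the high energy formula is Corollary \ref{etacor} applied verbatim.

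For case (2), I would first observe that if $\tau$ is trivial, then every $\mathfrak u(n)$-invariant subspace of $V$ is automatically $\Gamma$-invariant, so joint irreducibility forces $\pi_*$ to be irreducible. The high energy contribution is Corollary \ref{etacor} specialized to $c = 0$, where $\zeta_0 = \zeta_1 = \zeta$. For the low energy contribution, I would proceed as follows. First, for a complex hyperbolic cusp, $N = G_{n-1}$ is a Heisenberg group, so $\mathfrak n$ is of Heisenberg type with $\dim\mathfrak z = 1$ and canonical generator $Z$; Corollary \ref{anco7} then yields $D_N^{\rm le} = D_Z + D_{\mathfrak x}$ with anticommuting summands, and Proposition \ref{etadd} reduces the low energy $\eta$-invariant and kernel to $\eta(D_Z|_{\ker D_{\mathfrak x}})$ and $\ker D_Z \cap \ker D_{\mathfrak x}$, respectively. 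Second, Theorem \ref{leech} identifies $\ker D_{\mathfrak x}$ with $\bigoplus_{k=0}^{n-1} H^k$, gives $\dim H^k = b_k$, and shows that $D_Z$ acts on $H^k$ by the scalar $\mu_k := (-1)^k(2k - 2\lambda_{k+1} - n + 1)/2$. Summing the contributions yields
\begin{align*}
  \eta(D_N^{\rm le})
  &= \sum\nolimits_{k=0}^{n-1} b_k \sign \mu_k
  = \sum\nolimits_{k=0}^{n-1} (-1)^k b_k \sign(2k - 2\lambda_{k+1} - n + 1) , \\
  \dim\ker D_N^{\rm le}
  &= \sum\nolimits_{k=0}^{n-1} b_k\,\delta_{\mu_k,0}
  = \sum\nolimits_{k=0}^{n-1} b_k\,\delta_{2\lambda_{k+1},\,2k+1-n} .
\end{align*}
Passing from $D_N^{\rm le}$ to $A_{\mathcal C,0}^{\rm le,+} = -D_N^{\rm le}$ flips the sign inside $\sign(\cdot)$, turning $2k - 2\lambda_{k+1} - n + 1$ into $n - 1 - 2k + 2\lambda_{k+1}$, and leaves the kernel count untouched, producing exactly the formulas of case (2).

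The main obstacle lies not in this final assembly but upstream: Corollary \ref{etacor} rests on the Kirillov-type spectral analysis of Section \ref{replap} together with the spectral-flow argument of Theorem \ref{higeta}, while Theorem \ref{leech} rests on Kostant's theorem on the Lie algebra cohomology of $\mathfrak u$ as an $\mathfrak l$-module. Once those ingredients are in hand, the present theorem is a direct bookkeeping consequence, and the only care required is to keep straight the sign and normalization conventions scattered across Sections \ref{susifo}, \ref{sechc}, and \ref{secle}.
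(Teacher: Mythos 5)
Your proposal is correct and follows essentially the same route as the paper: the high energy limit is Corollary \ref{etacor}, the low energy terms come from Proposition \ref{etadd} combined with Theorem \ref{leech}, and the only bookkeeping point is the sign convention $D^{\rm le}_N = -A^{\rm le,+}_{\mathcal C,0}$, which you handle correctly (flipping the argument of $\sign$ in the $\eta$-formula while leaving the kernel count unchanged). Your explicit appeals to Proposition \ref{triviata} for case (1) and to the observation that triviality of $\tau$ forces $\pi_*$ to be irreducible merely spell out steps the paper leaves implicit.
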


\begin{rems}\label{corrrem}
1) Recall that $D^+$ is of Fredholm type
if and only if $\dim\ker A_{\mathcal C,0}^{{\rm le},+}=0$,
for all cusps $\mathcal C$ of $M$.

2) The dimension of $V$ in Assertion 2 is determined in \eqref{udimrep}.
\end{rems}

\begin{proof}[Proof of \tref{corrcom}]
The formulas for $ \lim_{t\to\infty}\eta(A_{\mathcal C,t}^{{\rm he},+})$
are taken from \cref{etacor}.
The formulas for $\eta(A_{\mathcal C,0}^{{\rm le},+})$
and $ \dim\ker A_{\mathcal C,0}^{{\rm le},+}$ follow from
\pref{etadd} and \tref{leech},
where we observe that $D^{\rm le}_N$ in \pref{etadd}
corresponds to $-A^{\rm le,+}_{\mathcal C,0}$.
\end{proof}

\begin{exas}\label{exaeta2}
We discuss the low energy invariants in some cases,
where the twist $\tau$ is trivial.
For convenience, we assume that all ends of $M$ are complex hyperbolic cusps.
Furthermore, in our examples,
the Dirac bundle is of the same kind along all the cusps of $M$,
that is, $\pi_*$ is the same for all the cusps.
We discuss the more precise quantities from \tref{leech}.
The formulas for the low energy invariants as in \tref{corrcom} will follow easily.

1) {\sc Dolbeault operator} on forms of bi-degree $(0,q)$:
In this case, $\pi$ is the irreducible representation
with highest weight $\lambda_j=(n+1)/2$, $1\le j\le n$.
We compute
\begin{equation}
  b_k = \dim H^k(\pi) = {\textstyle\binom{n-1}k}
\end{equation}
and
\begin{equation}
  D_Z|_{H^k(\pi)} = (-1)^{k}(k-n) ,
\end{equation}
by \tref{leech}.3.
Hence $\eta(A_{\mathcal C,0}^{\rm le,+})=0$
and  $\dim\ker A_{\mathcal C,0}^{{\rm le},+}=0$,
for all cusps $\mathcal C$ of $M$.
In particular, $D^+$ is of Fredholm type.

2) {\sc Signature operator}:
In this case, $\pi$ is the spin representation $\Sigma=\Sigma_{\mathfrak p}$,
which is the sum of the irreducible representations $V_l$
with highest weight as in \eqref{uspinl}, where $0\le l\le n$.
As for the dimension $b_{k,l}$ of $H^k(\mathfrak u, V_l)$, there are two cases:
\begin{equation}
  b_{k,l} =
  \begin{cases}
  \binom{n}{l}\binom{n}{k}\frac{l-k}{n}
  &\text{if $k<l$} , \\
  \binom{n}{l}\binom{n}{k+1}\frac{k+1-l}{n}
  &\text{if $k\ge l$} .
  \end{cases}
\end{equation}
Furthermore, we have
\begin{equation}
  D_Z|_{H^k(V_l)} =
  \begin{cases}
  (-1)^{k} (k - l)
  &\text{if $k<l$} , \\
  (-1)^{k} (k + 1 - l)
  &\text{if $k\ge l$} ,
  \end{cases}
\end{equation}
by \tref{leech}.3.
Hence $\dim\ker A_{\mathcal C,0}^{{\rm le},+}=0$, for all cusps $\mathcal C$ of $M$,
and therefore $D^+$ is a Fredholm operator.
We also get
\begin{equation}
  \eta(A_{\mathcal C,0}^{\rm le,+})
  = \sum_{k < l}(-1)^k{\textstyle\binom{n}{l}\binom{n}{k}\frac{l-k}{n}}
  + \sum_{k \ge l}(-1)^{k+1}{\textstyle\binom{n}{l}\binom{n}{k+1}\frac{k+1-l}{n}} .
\end{equation}
If we change $l$ in $n-l$ and $k$ in $n-1-k$ in the second sum,
we obtain
\begin{equation}
  \eta(A_{\mathcal C,0}^{\rm le,+}) = 0 \quad\text{if $n$ is odd} .
\end{equation}
For even $n$, we get
\begin{equation}
  \eta(A_{\mathcal C,0}^{\rm le,+})
  = 2\sum_{k < l}(-1)^k{\textstyle\binom{n}{l}\binom{n}{k}\frac{l-k}{n}} .
\end{equation}
For $1\le l\le n$, we have
\begin{equation*}
  \sum_{0\le k < l}(-1)^k{\textstyle\binom{n}{k}}
  = (-1)^{l-1} {\textstyle\binom{n-1}{l-1}}
\end{equation*}
and
\begin{equation*}
  \sum_{0\le k < l} (-1)^k {\textstyle\binom{n}{k} \frac{k}{n}}
  = \sum_{0\le k < l-1} (-1)^{k+1} {\textstyle\binom{n-1}{k}}
  = (-1)^{l-1} {\textstyle\binom{n-2}{l-2}} .
\end{equation*}
Hence
\begin{equation*}
\begin{split}
  \eta(A_{\mathcal C,0}^{\rm le,+})
  &=  2 \sum_{1\le l\le n} (-1)^{l-1} {\textstyle\binom{n}{l}  \left\{
  \binom{n-1}{l-1} \frac{l}{n} - \binom{n-2}{l-2} \right\}} \\
  &=2 \sum_{1\le l\le n} (-1)^{l-1} {\textstyle\binom{n-1}{l-1}^2
  + 2 \sum_{1\le l\le n} (-1)^{l} \binom{n}{l}\binom{n-2}{l-2}}
\end{split}
\end{equation*}
The first sum is zero since $n$ is even.
The second sum is the coefficient of $x^n$
in $(1-x)^n(1+x)^{n-2}=(1-x)^2(1-x^2)^{n-2}$
and hence
\begin{equation}
  \eta(A_{\mathcal C,0}^{\rm le,+})
 =2 (-1)^{n/2} {\textstyle\left(\binom{n-2}{n/2}-\binom{n-2}{n/2-1}\right)} .
\end{equation}

3) {\sc Dirac operator} on spinors:
In this case, $\pi_*$ is the irreducible representation with highest weight
$\lambda=0$ (where the spin structure along the cusps is periodic).
If $n$ is even, then $\ker A_{\mathcal C,0}^{\rm le,+}=0$,
for all cusps $\mathcal C$.
In particular, $D$ is a Fredholm operator.
Furthermore, each cusp $\mathcal C$ contributes a low energy $\eta$-invariant,
\begin{equation}
\begin{split}
  \eta(A_{\mathcal C,0}^{\rm le,+})
  &= \sum_{0\le k\le n-1} (-1)^k {\textstyle\binom{n-1}{k}} \sign(n-1 - 2k) \\
  &=2\sum_{0\le2k\le n-2} (-1)^k {\textstyle\binom{n-1}{k}} \\
  &=2\sum_{0\le2k\le n-2} (-1)^k
  \left( {\textstyle\binom{n-2}{k}} + {\textstyle\binom{n-2}{k-1}} \right) \\
  &=2(-1)^{\frac{n-2}{2}} {\textstyle\binom{n-2}{\frac{n-2}{2}}} .
\end{split}
\end{equation}
If $n$ is odd, we have $\eta(A_{\mathcal C,0}^{\rm le,+})=0$, for each cusp $\mathcal C$.
Furthermore, each cusp $\mathcal C$ contributes to the kernel,
\begin{equation}\label{spin17}
  \dim \ker A_{\mathcal C,0}^{\rm le,+}
  = {\textstyle\binom{n-1}{\frac{n-1}{2}}} .
\end{equation}
In particular, $D^+$ is not a Fredholm operator for odd $n$.
\end{exas}

\begin{rem}\label{remspin}
For $n$ odd, the bilinear form $\beta$ considered in Exercise 20.38 of \cite{FH}
defines a real $\Spin(2n)$-equivariant isomorphism
between $\Sigma_{2n}^+$ and $\Sigma_{2n}^-$.
By equivariance, this isomorphism induces a parallel isomorphism
between the positive and negative spinor bundles $\Sigma^\pm$
of a spin manifold $M$ of dimension $2n$
and therefore also an isomorphism between the spaces of harmonic sections
of bundles of the form $E^\pm=\Sigma^\pm\otimes_{\pi_*}V$.
We conclude that the $L^2$-index of the corresponding Dirac operator $D^+$
vanishes and that the extended indices of $D^+$ and $D^-$ coincide.
In particular, we get
\begin{equation}
  \ind D^+_{\ext} = h^+_\infty = h^-_\infty ,
\end{equation}
compare \tref{intmaxinf}.
By a similar reasoning,
we also obtain that the index densities $\omega_{D^\pm}$ vanish.
In the case of the Dirac operator as in Example \ref{exaeta2}.3 above,
the term in \eqref{spin17} times the number $\nu$ of ends of $M$
gives the extended index of $D$ 
which is equal to $h^+_\infty + h^-_\infty$, by \eqref{maxinf5}.
Hence $ \ind D^+_{\ext}$ is in this case equal to
$\nu/2$ times $\binom{n-1}{\frac{n-1}{2}}$.
\end{rem}




\begin{thebibliography}{Am56}

\bibitem[Ag]{Ag}
S.~Agmon,
{\em Lectures on elliptic boundary value problems}.
Van Nostrand Company, Princeton-Toronto-London, 1965.

\bibitem[An]{An}
N.~Anghel,
An abstract index theorem on non-compact Riemannian manifolds.
{\em Houston J. Math.} {\bf 19} (2) (1993), 223--237.

\bibitem[APS1]{APS}
M.F.~Atiyah, V.K.~Patodi, \& I.M.~Singer,
Spectral asymmetry and Riemannian geometry. {I}.
{\em Math. Proc. Cambridge Phil. Soc.} {\bf 77} (1975), 43--69.

\bibitem[APS3]{APS3}
M.F.~Atiyah, V.K.~Patodi, \& I.M.~Singer,
Spectral asymmetry and Riemannian geometry. III.
{\em Math. Proc. Cambridge Phil. Soc.} {\bf 79}  (1976), 71--99.

\bibitem[B\"a]{Bae}
C.~B\"ar,
The Dirac operator on hyperbolic manifolds of finite volume.
{\em J. Differential Geom.} \textbf{ 54} (2000), no. 3, 439--488.

\bibitem[Ba]{Ba}
W.~Ballmann,
{\em Lectures on spaces of nonpositive curvature.}
With an appendix by Misha Brin.
DMV Seminar, 25. Birkh\"auser Verlag, Basel, 1995. viii+112 pp.

\bibitem[BBB]{BBB}
W.~Ballmann, M.~Brin, \& K.~Burns,
On the differentiability of horocycles and horocycle foliations.
{\em J. Differential Geom.}  {\bf 26}  (1987),  no. 2, 337--347.

\bibitem[BB1]{BB1}
W.~Ballmann \& J.~Br\"uning,
On the spectral theory of surfaces with cusps.
In: {\em Geometric analysis and partial differential equations},
13--37, Springer, Berlin, 2003.

\bibitem[BB2]{BB2}
W.~Ballmann \& J.~Br\"uning,
On the spectral theory of manifolds with cusps.
{\em J. Math. Pures Appl.} {\bf 80} (2001), 593--625.

\bibitem[BBC1]{eiho}
W.~Ballmann, J.~Br\"uning, \& G.~Carron,
Eigenvalues and holonomy.
{\em Int. Math. Res. Notes} {\bf 2003}, 657--665.

\bibitem[BBC2]{disy}
W.~Ballmann, J.~Br\"uning, \& G.~Carron,
Regularity and index theory for Dirac-Schr\"odinger systems
with Lipschitz coefficients.
{\em J. Math. Pures Appl.} (9)  {\bf 89}  (2008),  no. 5, 429--476.

\bibitem[BGS]{BGS}
W.~Ballmann, M.~Gromov, \& V.~Schroeder,
{\em Manifolds of nonpositive curvature.}
Progress in Mathematics, 61. Birkh\"auser Boston, Inc.,
Boston, MA, 1985. vi+263 pp.

\bibitem[BaMo]{BM}
D.~Barbasch  \& H.~Moscovici,
$L^{2}$-index and the Selberg trace formula.
{\em J. Funct. Anal.} {\bf 53} (1983), no. 2, 151--201.

\bibitem[BeKa]{BeKa}
I.~Belegradek \& V.~Kapovitch,
Classification of negatively pinched manifolds
with amenable fundamental groups.
{\em Acta Math.}  {\bf 196}  (2006),  no. 2, 229--260.

\bibitem[BoWo]{BW}
B.~Boo\ss{}-Bavnbek \& K.~Wojciechowski:
{\em Elliptic boundary problems for Dirac operators}.
Birkh\"auser, Basel etc. 1993.

\bibitem[Bo]{Bow}
B.H.~Bowditch,
Discrete parabolic groups.
{\em J. Differential Geom.} {\bf 38}  (1993),  no. 3, 559--583.

\bibitem[BrKa]{BrKa}
M.~Brin \& H.~Karcher,
Frame flows on manifolds with pinched negative curvature.
{\em Compositio Math.}  {\bf 52}  (1984),  no. 3, 275--297.

\bibitem[BuKa]{BK}
P.~Buser \& H.~Karcher,
{\em Gromov's almost flat manifolds}.
Ast\'erisque, 81.
Soci\'et\'e Math. de France, Paris, 1981. 148 pp.

\bibitem[CaPe]{CP}
R.~Camporesi \& E.~Pedon,
The continuous spectrum of the Dirac operator
on noncompact Riemannian symmetric spaces of rank one.
{\em Proc. Amer. Math. Soc.}  {\bf 130}  (2002),  no. 2, 507--516

\bibitem[Ca1]{Ca1}
G.~Carron,
Un th\'eor\`eme de l'indice relatif.
{\em Pacific J. Math.} {\bf 198} (2001), 81--107.

\bibitem[Ca2]{Ca2}
G.~Carron,
Th\'eor\`emes de l'indice sur les vari\'et\'es non-compactes.
{\em J. Reine Angew. Math.} {\bf 541} (2001), 81--115.

\bibitem[ChGr]{CG}
J.~Cheeger \& M.~Gromov,
Bounds on the von Neumann dimension of $L^2$-cohomology
and the Gauss-Bonnet theorem for open manifolds.
{\em  J. Differential Geom.} {\bf 21}  (1985),  no. 1, 1--34.

\bibitem[DeSi]{DS}
C.~Deninger \& W.~Singhof,
The $e$-invariant and the spectrum of the Laplacian
for compact nilmanifolds covered by Heisenberg groups.
{\em Invent. Math.} {\bf 78} (1984), no. 1, 101--112.

\bibitem[Do]{Do}
H.~Donnelly,
Essential spectrum and heat kernel.
{\em J. Funct. Anal.} {\bf 75}  (1987),  no. 2, 362�381.

\bibitem[Eb]{Eb}
P.~Eberlein,
Lattices in spaces of nonpositive curvature.
{\em Ann. of Math.} (2)  {\bf 111}  (1980), no. 3, 435--476.

\bibitem[FuHa]{FH}
W.~Fulton \& J.~Harris,
{\em Representation theory. A first course.}
Graduate Texts in Mathematics, 129.
Readings in Mathematics. Springer-Verlag, New York, 1991. xvi+551 pp.

\bibitem[GiTr]{GT}
D.~Gilbarg \& N.S.~Trudinger,
{\em Elliptic partial differential equations of second order.}
Second edition. Grundlehren der Mathematischen Wissenschaften 224.
Springer-Verlag, Berlin, 1983. xiii+513 pp.

\bibitem[Gi1]{Gi1}
P.B.~Gilkey,
On the index of geometrical operators
for Riemannian manifolds with boundary.
{\em Adv. Math.}  {\bf 102}  (1993),  no. 2, 129--183.

\bibitem[Gi2]{Gi}
P.B.~Gilkey,
{\em Invariance theory, the heat equation,
and the Atiyah-Singer index theorem.}
Second edition.
Studies in Advanced Mathematics.
CRC Press, Boca Raton, FL, 1995. x+516 pp.

\bibitem[GoWi]{GW}
C.S.~Gordon \& E.N.~Wilson,
The spectrum of the Laplacian on Riemannian Heisenberg manifolds.
{\em Michigan Math. J.} {\bf 33}  (1986),  no. 2, 253--271.

\bibitem[GrHu]{GH}
D.~Grieser \& E.~Hunsicker,
Pseudodifferential operator calculus for generalized $\Q$-rank 1
locally symmetric spaces I.
{\em J. Funct. Anal.} {\bf 257 } (2009), 3748--3801.

\bibitem[Gr]{Gr}
M.~Gromov,
Almost flat manifolds.
{\em J. Differential Geom.} {\bf 13}  (1978), no. 2, 231--241.

\bibitem[GrLa]{GL}
M.~Gromov \& H.B.~Lawson, Jr.,
Positive scalar curvature and the Dirac operator
on complete Riemannian manifolds.
{\em Inst. Hautes \'Etudes Sci. Publ. Math.}  {\bf  58}  (1983), 83--196.

\bibitem[GKR]{GKR}
K.~Grove, H.~Karcher, \& E.A.~Ruh,
Jacobi fields and Finsler metrics on compact Lie groups
with an application to differentiable pinching problems.
{\em Math. Ann.}  {\bf 211} (1974), 7--21.

\bibitem[Ha]{Ha}
G.~Harder,
A Gauss-Bonnet formula for discrete arithmetically defined groups.
{\em Ann. Sci. \'Ecole Norm. Sup.} (4) {\bf 4} (1971), 409--455.

\bibitem[HeIH]{HI}
E.~Heintze \& H.~C.~Im Hof,
Geometry of horospheres.
{\em J. Differential Geom.} {\bf 12} (1977), 481--491.

\bibitem[Hi]{Hi}
M.~Hilsum,
L'invariant $\eta$ pour les vari\'et\'es lipschitziennes.
{\em J. Differential Geom.}  {\bf 55}  (2000),  no. 1, 1--41.

\bibitem[Ka]{Ka}
A.~Kaplan,
Fundamental solutions for a class of hypoelliptic PDE
generated by composition of quadratic forms.
{\em Trans. Amer. Math. Soc.} {\bf 258} (1980), no. 1, 147--153.

\bibitem[KnVo]{KV}
A.~Knapp \& D.~Vogan,
{\em  Cohomological induction and unitary representations.}
Princeton Mathematical Series, 45.
Princeton University Press, Princeton, NJ, 1995.

\bibitem[Ko]{Kos}
B. Kostant,
Lie algebra cohomology and the generalized Borel-Weil theorem.
{\em Ann. of Math.} {\bf 74}  (1961) 329--387

\bibitem[LaMi]{LM}
H.B.~Lawson Jr. \& M.-L.~Michelsohn,
{\em Spin geometry.}
Princeton Mathematical Series, 38.
Princeton University Press, Princeton, NJ, 1989. xii+427 pp.

\bibitem[LMP]{LMP}
E.~Leichtnam, R.~Mazzeo, \& P. Piazza,
The index of Dirac operators on manifolds with fibered boundaries.
{\em Bull. Belg. Math. Soc. Simon Stevin} {\bf 13} (2006), 845--855.

\bibitem[LiYa]{LY}
P.~Li \& S.T.~Yau,
Eigenvalues of a compact Riemannian manifold.
{\em Proc.~Symp.~Pure Math.} {\em 36} (1980), 205--239.

\bibitem[Lo1]{Lo1}
J.~Lott,
On the spectrum of a finite-volume negatively-curved manifold.
{\em Amer. J. of Math.} {\bf 123} (2001) 185--205.

\bibitem[Lo2]{Lo2}
J.~Lott,
Collapsing and Dirac-type operators.
 {\em Geom. Ded.} {\bf 91}, (2002), 175-196.

\bibitem[MaMe]{MM}
R.~Mazzeo \& R.B.~Melrose,
Pseudodifferential operators on manifolds with fibered boundaries.
{\em Asian J. Math.} {\bf 2} (1998), 833--866.

\bibitem[Mi]{Mi}
M.~Mitrea,
Generalized Dirac operators on nonsmooth manifolds
and Maxwell's equations.
{\em J. Fourier Anal. Appl.} {\bf 7}  (2001),  no. 3, 207--256.

\bibitem[M\"u1]{Mu1}
W.~M\"uller,
$L^2$-index and resonances.
In {\em Geometry and Analysis on manifolds},
Katata/Kyoto (1987),
Lectures Notes in Math. 1339,  203--211.

\bibitem[M\"u2]{Mu2}
W.~M\"uller,
{\em  Manifolds with cusps of rank one, spectral theory
and $L^2$-index theorem},
Lecture Notes in Math. 1244,
Springer-Verlag, Berlin [e.a.], 1987.

\bibitem[Pa]{Pa}
R.~Parthasarathy,
Dirac operator and the discrete series,
{\em Ann. of Math.} {\bf 96} (1972), 1--30.

\bibitem[Ru]{Ru}
E.~Ruh,
Almost flat manifolds.
{\em J. Differential Geom.} {\bf 17}  (1982), 1--14.

\bibitem[St1]{St1}
M.~Stern,
 $L^2$-index theorems on locally symmetric spaces.
{\em Invent. Math.} {\bf 96} (1989), no. 2, 231--282.

\bibitem[St2]{St}
M.~Stern,
Eta invariants and Hermitian locally symmetric spaces.
{\em J. Differential Geom.}  {\bf 31}  (1990),  no. 3, 771--789.

\bibitem[Ta]{Ta}
M.~Taylor,
{\em Partial differential equations I. Basic theory.}
Appl. Math. Sciences 115, Springer, Berlin etc. 1996.

\bibitem[Va]{Va}
B.~Vaillant,
{\em Index- and spectral theory for manifolds with generalized fibred cusps.}
Bonner Mathematische Schriften 344,
Universit\"at Bonn, Mathematisches Institut, Bonn, 2001. ii+124 pp.

\bibitem[Wo]{Wo}
J.A.~Wolf,
Essential self-adjointness for the Dirac operator and its square.
{\em Indiana Univ. Math. J.} {\bf 22} (1972/73), 611--640.

\bibitem[Wu]{Wu}
H.H.~Wu,
{\em The Bochner technique in differential geometry.}
Math. Rep.  3  (1988),  no. 2, i--xii and 289--538.

\bibitem[Zu]{Zu}
S. Zucker,
$L_{2}$ cohomology of warped products and arithmetic groups.
{\em Invent. Math.}{\bf 70} (1982/83), no. 2, 169--218.

\end{thebibliography}
\end{document}